\newtheorem{theorem}{Theorem}[section]
\newtheorem{prop}[theorem]{Proposition}
\newtheorem{lemma}[theorem]{Lemma}
\newtheorem{cor}[theorem]{Corollary}
\theoremstyle{definition}
\newtheorem{definition}[theorem]{Definition}
\newtheorem{observation}[theorem]{Observation}
\newtheorem{construction}[theorem]{Construction}
\newtheorem{terminology}[theorem]{Terminology}
\newtheorem{remark}[theorem]{Remark}
\newtheorem{convention}[theorem]{Convention}
\newtheorem{example}[theorem]{Example}
\newtheorem{q}[theorem]{Question}
\newtheorem{problem}[theorem]{Problem}
\newtheorem{notation}[theorem]{Notation}
\newtheorem{obs/def}[theorem]{Observation/Definition}
\theoremstyle{remark}
\definecolor{orange}{rgb}{.95,0.5,0}
\definecolor{light-gray}{gray}{0.75}
\definecolor{brown}{cmyk}{0, 0.8, 1, 0.6}
\definecolor{plum}{rgb}{.5,0,1}
\DeclareMathOperator{\CAlg}{\sf CAlg}
\DeclareMathOperator{\TwAr}{\sf TwAr}
\DeclareMathOperator{\Psh}{\sf PShv}
\DeclareMathOperator{\PShv}{\sf PShv}
\DeclareMathOperator{\colim}{{\sf colim}}
\DeclareMathOperator{\limit}{{\sf lim}}
\DeclareMathOperator{\Hom}{\sf Hom}
\DeclareMathOperator{\Fun}{{\sf Fun}}
\DeclareMathOperator{\Map}{{\sf Map}}
\DeclareMathOperator{\Cat}{{\sf Cat}}
\DeclareMathOperator{\cat}{{\sf Cat}}
\DeclareMathOperator{\CAT}{{\sf CAT}}
\DeclareMathOperator{\fCAT}{{\sf fCAT}}
\DeclareMathOperator{\fCat}{{\sf fCat}}
\DeclareMathOperator{\Ar}{{\sf Ar}}
\DeclareMathOperator{\symcat}{\sf Sym-fCAT}
\DeclareMathOperator{\psh}{\sf PShv}
\DeclareMathOperator{\op}{\mathsf{op}}
\DeclareMathOperator{\ev}{\mathsf{ev}}
\DeclareMathOperator{\sd}{\mathsf{sd}}
\DeclareMathOperator{\spaces}{\cS\mathsf{paces}}
\DeclareMathOperator{\Spaces}{\cS\mathsf{paces}}
\DeclareMathOperator{\SPACES}{\cS\mathsf{PACES}}
\DeclareMathOperator{\Corr}{{\sf Corr}}
\DeclareMathOperator{\corr}{{\sf Corr}}
\DeclareMathOperator{\LCorr}{{\sf LCorr}}
\DeclareMathOperator{\RCorr}{{\sf RCorr}}
\def\ot{\otimes}
\DeclareMathOperator{\oo}{\infty}
\DeclareMathOperator{\cCart}{\sf cCart}
\DeclareMathOperator{\Cart}{\sf Cart}
\DeclareMathOperator{\RFib}{\sf RFib}
\DeclareMathOperator{\LFib}{\sf LFib}
\DeclareMathOperator{\tr}{\triangleright}
\DeclareMathOperator{\tl}{\triangleleft}
\newcommand{\lag}{\langle}
\newcommand{\rag}{\rangle}
\newcommand{\w}{\widetilde}
\newcommand{\ov}{\overline}
\newcommand{\ra}{\rightarrow}
\newcommand{\la}{\leftarrow}
\newcommand{\xra}{\xrightarrow}
\newcommand{\xla}{\xleftarrow}
\def\cA{\mathcal A}\def\cB{\mathcal B}\def\cC{\mathcal C}\def\cD{\mathcal D}
\def\cE{\mathcal E}\def\cF{\mathcal F}\def\cG{\mathcal G}
\def\cI{\mathcal I}\def\cJ{\mathcal J}\def\cK{\mathcal K}
\def\cP{\mathcal P}
\def\cS{\mathcal S}
\def\cU{\mathcal U}\def\cW{\mathcal W}\def\cX{\mathcal X}
\def\cY{\mathcal Y}\def\cZ{\mathcal Z}
\def\sB{\mathsf B}
\def\bdelta{\mathbf\Delta}
\DeclareMathOperator{\id}{{\sf id}}
\DeclareMathOperator{\pr}{\sf pr}
\DeclareMathOperator{\Exp}{\sf EFib}
\DeclareMathOperator{\Span}{\sf Span}
\def\bDelta{\mathbf\Delta}
\DeclareMathOperator{\EFib}{\sf EFib}
\DeclareMathOperator{\efib}{\sf EFib}
\begin{document}

\title{Fibrations of $\infty$-categories}

\author{David Ayala \& John Francis}

\address{Department of Mathematics\\Montana State University\\Bozeman, MT 59717}
\email{david.ayala@montana.edu}
\address{Department of Mathematics\\Northwestern University\\Evanston, IL 60208}
\email{jnkf@northwestern.edu}
\thanks{This work was supported by the National Science Foundation under award 1508040.}

\begin{abstract}
We construct a flagged $\infty$-category ${\sf Corr}$ of $\infty$-categories and bimodules among them. We prove that ${\sf Corr}$ classifies exponentiable fibrations.  
This representability of exponentiable fibrations extends that established by Lurie of both coCartesian fibrations and Cartesian fibrations, as they are classified by the $\infty$-category of $\infty$-categories and its opposite, respectively.
We introduce the flagged $\infty$-subcategories ${\sf LCorr}$ and ${\sf RCorr}$ of ${\sf Corr}$, whose morphisms are those bimodules which are \emph{left-final} and \emph{right-initial}, respectively.
We identify the notions of fibrations these flagged $\infty$-subcategories classify, and show that these $\infty$-categories carry universal left/right fibrations.

\end{abstract}

\keywords{Infinity categories. Exponentiable fibrations. Cartesian and coCartesian fibrations. Correspondences. Segal spaces. Final functors. Initial functors.}

\subjclass[2010]{Primary 18A22. Secondary 55U35, 55P65.}

\maketitle

\tableofcontents

\section*{Introduction}

The theory of fibrations of $\oo$-categories differs from that of fibrations of spaces in two respects. For one, there are a host of differing notions of fibrations for $\oo$-categories. For another, every map of spaces can, up to homotopy, be replaced by one which is a fibration; in contrast, not every functor is homotopy equivalent to one which is a fibration, depending which notion one uses.

\smallskip

The following diagram depicts a variety of notions of fibrations among $\oo$-categories, each of which can be thought of homotopy-invariantly. 
\[
\Small
\xymatrix{
&
{\rm Kan \ fibrations}\ar@{=>}[dr]\ar@{=>}[dl]
&
\\
{\rm left \ fibrations}\ar@{=>}[d]  \ar@{=>}[ddr]
&&
{\rm right \ fibrations}\ar@{=>}[d]  \ar@{=>}[ddl]
\\
{\rm coCartesian \ fibrations}\ar@{=>}[d]     \ar@(l,l)@{=>}[dd]
&&
{\rm Cartesian \ fibrations}     \ar@{=>}[d]    \ar@(r,r)@{=>}[dd]
\\
{\rm  \hspace{1.1cm} left \ final\ fibrations}    \ar@{=>}[dr]   
&
{\rm conservative \ exponentiable \ fibrations}  \ar@{=>}[d]
&
{\rm right \ initial \ fibrations  \hspace{1.1cm}}     \ar@{=>}[dl]
\\
{\rm locally \ coCartesian \ fibrations} &{\rm exponentiable \ fibrations} & {\rm locally \ Cartesian \ fibrations} }
\]
It is known~(\cite{HTT}) that each of the notions of fibration in the top half of the diagram have the following properties, which are familiar from topos theory:
\begin{enumerate}
\item They are closed under the formation of compositions.
\item They are closed under the formation of base change.
\item Base change along each is a left adjoint.  
\item They are classified by an $\infty$-category.

\end{enumerate}
In this work we explore the notion of an exponentiable fibration, and variations thereof.
We show that exponentiable fibrations are classified by an object $\Corr$, which carries a universal exponentiable fibration $\ov{\Corr}\to \Corr$.    
We identify $\Corr$ as the \emph{Morita} $\infty$-category, of $\infty$-categories and bimodules among them.
Phrased differently, we show that functors to this Morita $\infty$-category can be unstraightened as exponentiable fibrations, and every exponentiable fibration arises in this way.  
This result extends the unstraightening construction concerning (co)Cartesian fibrations, and (left)right fibrations, as established by Lurie in~\cite{HTT}.
There is then a diagram of classifying objects and monomorphisms among them
\[
\xymatrix{
&
\Spaces^\sim   \ar[dr]        \ar[dl]
&
\\
\Spaces  \ar[d]  \ar[ddr]
&&
\Spaces^{\op}  \ar[d]  \ar[ddl]
\\
\Cat  \ar[d]   
&&
\Cat^{\op}    \ar[d]  
\\
\LCorr    \ar[dr]   
&
\Corr[\Spaces]  \ar[d]
&
\RCorr     \ar[dl]
\\
&
\Corr
& 
}
\]
corresponding to the first diagram formed by notions of fibrations.

\begin{remark}
Our notion of an \emph{exponentiable fibration} is a homotopy-invariant formulation of that of a \emph{flat inner} fibration in the quasi-category model, developed by Lurie in \S B.3 of \cite{HA}. The relation of these notions follows from the equivalence of conditions (1) and (6) given in Lemma \ref{exp-char}.
There is an accessible survey~\cite{barwick.shah} on various notions of fibrations in the quasi-category model for $\infty$-categories.
Proposition 4.8 of that survey, whose proof is deferred to upcoming work of Peter Haine, is particularly consonant with our main results.
\end{remark}

\subsection{Main results}
We now precisely articulate the main results of this work.
To state them we give the following definition and basic results from~\cite{flagged}.
\begin{definition}[\cite{flagged}]\label{def.flagged}
A \emph{flagged $\infty$-category} is a functor $\cG \to \cC$ from an $\infty$-groupoid $\cG$ to an $\infty$-category $\cC$ which is {\it surjective}, i.e., for which every object in $\cC$ is equivalent to one in the image of $\cG$.  
For $\cG\to \cC$ a flagged $\infty$-category, its \emph{underlying $\infty$-groupoid} is $\cG$, while its \emph{underlying $\infty$-category} is $\cC$.
The $\infty$-category of flagged $\infty$-categories is the full $\infty$-subcategory of arrows
\[
{\sf f}{\sf CAT} ~\subset~ \Ar({\sf CAT})
\]
consisting of the flagged $\infty$-categories.  

\end{definition}

\begin{theorem}[\cite{flagged}]\label{maximal.underlying}
Evaluation at the target defines a left adjoint in a localization
\[
{\sf f}{\sf CAT} \longrightarrow {\sf CAT}
\]
whose right adjoint carries an $\infty$-category $\cC$ to the flagged $\infty$-category $(\cC^\sim \to \cC)$ whose underlying $\infty$-groupoid is the maximal $\infty$-subgroupoid of $\cC$.  
\end{theorem}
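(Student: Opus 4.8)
The plan is to identify $\CAT$ as a reflective $\infty$-subcategory of $\fCAT$ with reflector $\ev_1$, by producing an explicit fully faithful right adjoint $R\colon \CAT \to \fCAT$ sending $\cC$ to $(\cC^\sim \to \cC)$; once the adjunction $\ev_1 \dashv R$ is in hand, the localization assertion is formal, since the counit of the adjunction is visibly invertible.

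First I would recall the elementary adjunction $\iota \dashv (-)^\sim$ between the inclusion $\iota\colon \Gpd \hookrightarrow \CAT$ and the maximal-$\infty$-subgroupoid functor: a functor out of an $\infty$-groupoid sends every morphism to an equivalence, so for an $\infty$-groupoid $\cG$ and an $\infty$-category $\cD$ the map $\Map_\CAT(\cG, \cD^\sim) \to \Map_\CAT(\cG, \cD)$ induced by postcomposition with $\cD^\sim \hookrightarrow \cD$ is an equivalence. The counit $\iota(-)^\sim \Rightarrow \id_\CAT$ of this adjunction is precisely the data of a functor $\CAT \to \Ar(\CAT)$ carrying $\cC$ to the arrow $(\cC^\sim \to \cC)$. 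This arrow is surjective with $\infty$-groupoid source, so by Definition~\ref{def.flagged} the functor factors through the full $\infty$-subcategory $\fCAT \subset \Ar(\CAT)$; write $R\colon \CAT \to \fCAT$ for the resulting functor. This also settles functoriality of the assignment $\cC \mapsto (\cC^\sim\to\cC)$, which is otherwise the fiddly part.

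Next I would verify $\ev_1 \dashv R$ on mapping spaces. For a flagged $\infty$-category $(\cG \xrightarrow{p} \cC)$ and an $\infty$-category $\cD$, fullness of $\fCAT \subset \Ar(\CAT)$ together with the standard pullback description of mapping spaces in an arrow $\infty$-category gives
\[
\Map_{\fCAT}\!\big((\cG \xrightarrow{p} \cC),\, R\cD\big) ~\simeq~ \Map_\CAT(\cC, \cD) \times_{\Map_\CAT(\cG, \cD)} \Map_\CAT(\cG, \cD^\sim),
\]
where the map out of the last factor is postcomposition with $\cD^\sim \hookrightarrow \cD$ and the map out of the first factor is precomposition with $p$. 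By the previous paragraph the map out of the last factor is an equivalence, so the projection induced by $\ev_1$ exhibits the left-hand side as equivalent to $\Map_\CAT(\cC, \cD)$. Assembling this equivalence as an equivalence of functors $\fCAT^{\op} \times \CAT \to \Spaces$ (rather than merely objectwise) promotes it to an adjunction $\ev_1 \dashv R$, whose unit at $(\cG \xrightarrow{p} \cC)$ is the square over $\id_\cC$ whose top arrow is the canonical factorization $\cG \to \cC^\sim$ of $p$ through the maximal subgroupoid.

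Finally, the counit $\ev_1 \circ R \Rightarrow \id_\CAT$ is, at each $\cC$, the identity of $\ev_1(\cC^\sim \to \cC) = \cC$, hence an equivalence; equivalently, $R$ is fully faithful. Therefore $\ev_1\colon \fCAT \to \CAT$ is a left adjoint exhibiting $\CAT$ as a localization of $\fCAT$ with right adjoint $R$ carrying $\cC$ to $(\cC^\sim \to \cC)$, as claimed. The main point requiring care is the bookkeeping in the third paragraph: making the mapping-space pullback and the naturality of the collapsing equivalence precise at the $\infty$-categorical level, so that the pointwise universal property upgrades to an honest adjunction — everything else is soft.
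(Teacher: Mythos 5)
This theorem is not proved in the present paper at all --- it is quoted from \cite{flagged} (``in preparation''), so there is no in-paper argument to measure yours against; judged on its own terms, your proof is correct and is the argument one would expect. Your construction of $R$ as the counit of the inclusion/maximal-subgroupoid adjunction, viewed as a functor $\CAT\to\Ar(\CAT)$ landing in $\fCAT$, the pullback formula for mapping spaces in the full subcategory $\fCAT\subset\Ar(\CAT)$, and the collapse of the leg $\Map(\cG,\cD^\sim)\to\Map(\cG,\cD)$ because $\cG$ is an $\infty$-groupoid (Terminology~\ref{mxml.gpd}) are all correct, and invertibility of the counit indeed gives full faithfulness of $R$, hence the localization. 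The one step carrying real content is exactly the one you flag --- promoting the objectwise equivalence to an adjunction --- and it can be discharged without any hand assembly by the paper's own Lemma~\ref{adjointcriterion}: your pullback computation says precisely that, for each $\infty$-category $\cD$, the object $\bigl((\cD^\sim\to\cD),\id_\cD\bigr)$ is final in $\fCAT\underset{\CAT}\times\CAT_{/\cD}$, so evaluation at the target is a left adjoint whose right adjoint is computed by these final objects, i.e.\ is $R$; alternatively, one constructs the unit square you describe naturally (by adjointing the tautological transformation from the source functor $\fCAT\to\SPACES$ to the target functor across the inclusion $\SPACES\hookrightarrow\CAT$) and applies the pointwise mapping-space criterion for a unit to exhibit an adjunction. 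Either way the argument closes, so there is no gap, only the routine bookkeeping you already identified.
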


\begin{theorem}[\cite{flagged}]\label{flagged.thm}
The restricted Yoneda functors along $\bDelta \hookrightarrow \Cat \hookrightarrow \fCat$ determine fully-faithful functors
\[
{\sf fCAT} \hookrightarrow \Fun\bigl(\Cat^{\op}, \cS{\sf PACES}\bigr)
\qquad \text{ and }\qquad
{\sf fCAT} \hookrightarrow \Fun\bigl(\bDelta^{\op} , \cS{\sf PACES}\bigr)~.
\]
The image of the latter consists of those (large) presheaves on $\bDelta$ that satisfy the Segal condition, i.e., that preserve limit diagrams in the subcategory $\bDelta^{\sf inrt,\op}\subset \bDelta^{\op}$ of inert morphisms, which are the consecutive inclusions among finite non-empty linearly ordered sets.  

\end{theorem}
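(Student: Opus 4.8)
The plan is to reduce the theorem to the single statement that restricted Yoneda identifies $\fCAT$ with the $\infty$-category $\mathsf{Seg}\subset\PShv(\bDelta,\SPACES)$ of Segal presheaves; the first displayed functor then follows formally. Throughout I would use the equivalence $\Cat\simeq\mathsf{CSS}$ between $\infty$-categories and complete Segal spaces, realized inside $\PShv(\bDelta,\SPACES)$ (Rezk; Joyal--Tierney), under which $[n]\in\bDelta$ is the representable presheaf $\Delta^n$; since $\Delta^n$ satisfies the Segal condition we get $\bDelta\subset\mathsf{CSS}\subset\mathsf{Seg}$, and the fully faithful inclusion $\Cat\hookrightarrow\fCAT$ of Theorem~\ref{maximal.underlying} carries $[n]$ to the flagged $\infty$-category $([n]^\sim\to[n])$, with $[n]^\sim$ the discrete groupoid on $\{0,\dots,n\}$. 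Restricted Yoneda along $\bDelta\hookrightarrow\Cat\hookrightarrow\fCAT$ is thus the functor $N\colon\fCAT\to\PShv(\bDelta,\SPACES)$ with $N(\cG\to\cC)_n=\Map_{\fCAT}\bigl(([n]^\sim\to[n]),(\cG\to\cC)\bigr)$. Because $\fCAT$ is the full $\infty$-subcategory of $\Ar(\CAT)$ on the surjective functors out of $\infty$-groupoids, a point of this space is a commuting square, i.e. an $n$-simplex $\sigma$ of $\cC$ together with a lift of its $n+1$ vertices to $\cG$; hence $N(\cG\to\cC)_n\simeq\cC_n\times_{(\cC^\sim)^{n+1}}\cG^{n+1}$, the pullback of $\cC_\bullet$ along the vertices map $\cC_\bullet\to(\cC^\sim)^{\bullet+1}$ over $\cG^{\bullet+1}\to(\cC^\sim)^{\bullet+1}$. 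A routine rearrangement of iterated fiber products, using that $\cC_\bullet$ is Segal and $(-)^{\bullet+1}$ trivially is, shows $N(\cG\to\cC)$ is a Segal presheaf, so $N$ factors through $\mathsf{Seg}$.

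For the inverse I would send a Segal presheaf $X$ to $M(X):=(X_0\to\widehat{X})$, where $\widehat{X}\in\mathsf{CSS}=\Cat$ is the completion and $X_0\to\widehat{X}_0=\widehat{X}^\sim\hookrightarrow\widehat{X}$ is induced by the completion unit $X\to\widehat{X}$; this functor is surjective because the completion unit is $\pi_0$-surjective on spaces of objects (completion only identifies and contracts objects). Then $M$ and $N$ are mutually inverse. In one direction, $NM(X)=\widehat{X}_\bullet\times_{(\widehat{X}^\sim)^{\bullet+1}}X_0^{\bullet+1}$, and the canonical map $X\to NM(X)$ is an equivalence: at levels $0$ and $1$ this unwinds to the statement that $X_0\to\widehat{X}^\sim$ is identified on the nose and that $\Map_X(x,y)\weq\Map_{\widehat{X}}(x,y)$, and the higher levels follow by the Segal condition. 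In the other direction, for a flagged $\infty$-category $(\cG\to\cC)$ the projection $N(\cG\to\cC)\to\cC_\bullet$ is surjective on objects (as $\cG\to\cC$ is) and an equivalence on mapping spaces, hence a categorical equivalence onto the complete $\cC_\bullet$, so $\widehat{N(\cG\to\cC)}\simeq\cC$; since also $N(\cG\to\cC)_0=\cG$, we get $MN(\cG\to\cC)\simeq(\cG\to\cC)$, compatibly with the underlying-$\infty$-groupoid and underlying-$\infty$-category descriptions of Theorem~\ref{maximal.underlying}. This yields $\fCAT\simeq\mathsf{Seg}\subset\PShv(\bDelta,\SPACES)$, which is the second displayed statement.

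For the first displayed functor, restricted Yoneda along $\Cat\hookrightarrow\fCAT$ is $\fCAT\to\PShv(\Cat,\SPACES)$, $X\mapsto\Map_{\fCAT}(R(-),X)$, with $R\colon\Cat\hookrightarrow\fCAT$. Having established $\fCAT\simeq\mathsf{Seg}$, the subcategory $\bDelta$ is dense in $\fCAT$; explicitly, $R\cD\simeq\colim_{\bDelta_{/\cD}}R[n]$ in $\fCAT$, using full-faithfulness of $R$ to identify $\bDelta\times_{\fCAT}\fCAT_{/R\cD}\simeq\bDelta_{/\cD}$ with the simplex category of $\cD$, together with the density $\cD_\bullet\simeq\colim_{\bDelta_{/\cD}}\Delta^n$ inside $\mathsf{Seg}$ (and presentability of $\fCAT$, so this colimit exists). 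Hence $\Map_{\fCAT}(R\cD,X)\simeq\limit_{(\bDelta_{/\cD})^{\op}}\Map_{\fCAT}(R[n],X)$; that is, the presheaf $\Map_{\fCAT}(R(-),X)$ on $\Cat$ is the right Kan extension, along $\bDelta^{\op}\hookrightarrow\Cat^{\op}$, of its restriction $N(X)$ to $\bDelta$. Right Kan extension along the fully faithful $\bDelta^{\op}\hookrightarrow\Cat^{\op}$ is a fully faithful right adjoint to restriction, so post-composing the fully faithful $N$ with it is again fully faithful; this is the first displayed functor.

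The hard part will be the equivalence $\fCAT\simeq\mathsf{Seg}$, and inside it the reconstruction $NM(X)\simeq X$ --- equivalently, the assertion that the completion unit $X\to\widehat{X}$ of a Segal space is levelwise a pullback along the vertices maps, i.e. that completion is fully faithful on mapping spaces. This is the one input that is not formal bookkeeping with adjunctions and iterated fiber products; I would import it from the theory of complete Segal spaces (Rezk; Joyal--Tierney), via the explicit completion functor or the Dwyer--Kan-type statement that completion is an equivalence on homotopy categories and on mapping spaces, and that categorical equivalences become equivalences after completion. The remaining ingredients --- the square description of mapping spaces in $\fCAT\subset\Ar(\CAT)$, stability of the Segal condition under the pullbacks above, presentability of $\fCAT$, and the right-Kan-extension step --- are routine.
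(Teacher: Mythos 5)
There is nothing in this paper to compare your argument against: Theorem~\ref{flagged.thm} is imported from \cite{flagged} (in preparation) and is never proved here; indeed Remark~\ref{doesntdepend} explicitly allows one to take the second displayed statement as the \emph{definition} of a flagged $\infty$-category, in which case only the first displayed functor requires an argument. Your proposal supplies a self-contained proof along the lines one would expect that reference to take, and I see no genuine gap in it. The nerve computation $N(\cG\to\cC)_n\simeq \cC_n\times_{(\cC^\sim)^{n+1}}\cG^{n+1}$ is correct (mapping spaces in the full subcategory $\fCAT\subset\Ar(\CAT)$, plus the adjunction defining $\cC^\sim$), and the fiber-product rearrangement giving the Segal condition checks out. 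The real content, as you say, is that $N$ is an equivalence onto Segal presheaves, and your inverse $X\mapsto(X_0\to\widehat{X})$ reduces this exactly to the two standard facts about Rezk completion of a Segal space: the unit is essentially surjective (so $M$ lands in flagged $\infty$-categories, i.e.\ the flag is surjective) and fully faithful on mapping spaces (so $X\to NM(X)$ is an equivalence in levels $0$ and $1$, hence in all levels by Segal); conversely $N(\cG\to\cC)\to\cC_\bullet$ is a Dwyer--Kan equivalence onto a complete object precisely because $\cG\to\cC$ is surjective and the mapping spaces of $N$ are those of $\cC$, giving $MN\simeq\id$ via the natural comparison maps you indicate. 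Your treatment of the first functor is also the right one: $\Map_{\fCAT}(R\cD,-)$ is computed as a limit over $(\bDelta_{/\cD})^{\op}$ because $\colim_{\bDelta_{/\cD}}\Delta^{n}\simeq\cD_\bullet$ already satisfies the Segal condition (so the presheaf-level colimit is the colimit in the localization), whence restricted Yoneda on $\Cat$ is the right Kan extension along $\bDelta^{\op}\hookrightarrow\Cat^{\op}$ of the fully faithful $N$, and is therefore fully faithful. The only points to tighten in a written version are bookkeeping: naturality of your unit/counit equivalences (which follows from the canonical maps you use, e.g.\ the factorization $\cG\to\cC^\sim$ coming from the adjunction of Terminology~\ref{mxml.gpd}), the precise meaning of ``surjective'' in Definition~\ref{def.flagged} (essential surjectivity, which is what completion provides on $\pi_0$ of object spaces), and size — the theorem concerns large flagged $\infty$-categories and large Segal presheaves, so you should either run the completion argument with Rezk's explicit construction (which is size-agnostic) or invoke a universe enlargement rather than presentability per se.
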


\begin{remark}\label{doesntdepend}
As formulated, the present work depends on the preceding results from \cite{flagged}. However, the dependence is slight: if one replaces every occurrence of ``flagged $\oo$-category" with ``Segal space" (or interprets Theorem \ref{flagged.thm} as the definition of a flagged $\oo$-category) then the present work becomes independent of \cite{flagged}.
\end{remark}

\begin{remark}\label{more.representables}
A flagged $\infty$-category is a stack on $\infty$-categories that satisfies descent with respect to those colimit diagrams among $\infty$-categories that additionally determine colimit diagrams among their maximal $\infty$-subcategories.
(We elaborate on this in~\cite{flagged}.)
This slight enlargement of $\infty$-categories $\Cat\hookrightarrow \fCat$ to \emph{flagged} $\infty$-categories accommodates representability of presheaves on $\Cat$ as \emph{flagged} $\infty$-categories which might not be representable by $\infty$-categories.  
Notably, as we will see in the present work, \emph{exponentiable fibrations} are \emph{not} classified by an $\infty$-category, though they are classified by a flagged $\infty$-category (Main Theorem 1, below).
The essential explanation for why exponentiable fibrations are \emph{not} classified by an $\infty$-category is because not all $\infty$-categories are idempotent complete; see Example~\ref{not.univalent} for more discussion.

\end{remark}

We recall the definition of an exponentiable fibration between $\infty$-categories, an $\oo$-categorical generalization of a notion first developed by Giraud \cite{giraud} and Conduch\'e \cite{conduche}.
\begin{definition}[\cite{emb1a}]\label{def.exp-fib}
A functor $\pi\colon \cE\to \cK$ between $\infty$-categories is an \emph{exponentiable fibration} if the pullback functor
\[
\pi^\ast\colon \Cat_{/\cK} \longrightarrow \Cat_{/\cE}
\]
is a left adjoint.
The $\infty$-category of exponentiable fibrations over an $\infty$-category $\cK$ is the full $\infty$-subcategory
\[
\EFib_{\cK}~\subset~\Cat_{/\cK}
\]
consisting of the exponentiable fibrations; its maximal $\infty$-subgroupoid is $\Exp^\sim_{\cK}$.  

\end{definition}

The following result articulates how exponentiable fibrations are classified by the flagged $\infty$-category of correspondences (among $\infty$-categories).
Its proof is the content of~\S\ref{sec.corr}.  
\begin{theorem}[Main Theorem 1]\label{main.thm}
\begin{enumerate}
\item[~]
\item
There is a (large) flagged $\infty$-category $\Corr$ with the following properties.
\begin{enumerate}
\item The underlying $\infty$-groupoid of $\Corr$ is that of (small) $\infty$-categories.
In particular, an object is the datum of a (small) $\infty$-category.

\item A morphism from $\cA$ to $\cB$ is the datum of a $(\cB,\cA)$-bimodule: 
\[
P\colon \cA^{\op}\times \cB \longrightarrow \Spaces~.
\]

\item For $P$ a $(\cB,\cA)$-bimodule, and for $Q$ a $(\cC,\cB)$-bimodule, their composition is the $(\cC,\cA)$-bimodule which is a coend over $\cB$:
\[
Q\circ P\colon \cA^{\op}\times\cC \xra{~P\underset{\cB}\otimes Q~} \Spaces~.
\]

\end{enumerate}

\item This flagged $\infty$-category $\Corr$ classifies exponentiable fibrations: for each $\infty$-category $\cK$, there is an equivalence between $\infty$-groupoids
\[
\fCat(\cK,\Corr)~\simeq ~ \EFib_{\cK}^\sim
\]
between that of functors from $\cK$ to $\Corr$ and that of exponentiable fibrations over $\cK$.

\item
This flagged $\infty$-category carries a natural symmetric monoidal structure.
On objects, this symmetric monoidal structure is given by products of $\infty$-categories:
\[
\bigotimes \colon (\cC , \cD)~\mapsto~ \cC\times \cD~,
\]

\end{enumerate}
\end{theorem}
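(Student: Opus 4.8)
The plan is to realize $\Corr$ as the flagged $\infty$-category corresponding, under Theorem~\ref{flagged.thm}, to the simplicial space
\[
\Corr_\bullet\colon\bDelta^{\op}\longrightarrow\SPACES~,\qquad [n]\longmapsto\EFib_{[n]}^\sim~,
\]
whose simplicial structure is base change of exponentiable fibrations along $[m]\to[n]$ (legitimate by the base-change stability of exponentiable fibrations, \cite{emb1a}). To apply Theorem~\ref{flagged.thm} one must verify the Segal condition
\[
\EFib_{[n]}^\sim~\xra{~\sim~}~\EFib_{[1]}^\sim\times_{\EFib_{[0]}^\sim}\cdots\times_{\EFib_{[0]}^\sim}\EFib_{[1]}^\sim\qquad(n\text{ factors})
\]
and then identify the structure. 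We have $\EFib_{[0]}^\sim=(\Cat_{/[0]})^\sim=\Cat^\sim$ since every functor to the point is exponentiable (the right adjoint to $\pi^\ast$ being formation of sections), which is part (1a); likewise $\EFib_{[1]}^\sim=(\Cat_{/[1]})^\sim$ since $[1]$ has no nondegenerate $2$-simplex to obstruct exponentiability, and the classical equivalence $(\Cat_{/[1]})^\sim\simeq\{(\cA,\cB,P\colon\cA^{\op}\times\cB\to\Spaces)\}$---with $\cA,\cB$ the fibers of $\cM\to[1]$ and $P(a,b)=\Map_\cM(a,b)$---gives part (1b); finally the composition $d_1\colon\Corr_{[2]}\to\Corr_{[1]}$, read through $\Corr_{[2]}\simeq\Corr_{[1]}\times_{\Corr_{[0]}}\Corr_{[1]}$, is computed by the flatness analysis below to be the coend $(P,Q)\mapsto P\underset{\cB}{\otimes}Q$, which is part (1c).

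\textbf{The main obstacle.} Verifying the Segal condition is the one genuinely technical step; everything else is formal modulo it. One must show that an exponentiable fibration $\cE\to[n]$ is freely built from, and determined by, its restrictions to the edges $\{i{-}1<i\}\subset[n]$. This is the $\infty$-categorical Conduch\'e/flatness analysis of \S B.3 of~\cite{HA}, accessed via the equivalence between exponentiability and flatness recorded in Lemma~\ref{exp-char}: flatness of $\cE\to[n]$ forces $\Map_\cE(x_i,x_j)$ to be the iterated coend over the intermediate fibers of the edgewise profunctors, so $\cE$ is reconstructed from its spine; conversely, given composable correspondences one glues their mapping cylinders into an $\infty$-category over $[n]$ and checks exponentiability edgewise via Lemma~\ref{exp-char}---exponentiability over $[n]$ being detected on $2$-simplices, where the coend-composite satisfies the Conduch\'e condition by construction.

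\textbf{Classification (part (2)).} Since $\Corr$ is flagged, Theorem~\ref{flagged.thm} together with the Yoneda lemma in $\Fun(\bDelta^{\op},\SPACES)$ gives, naturally in $\cK$,
\[
\fCat(\cK,\Corr)~\simeq~\lim_{([n]\to\cK)\in(\bDelta_{/\cK})^{\op}}\EFib_{[n]}^\sim~,
\]
so it remains to identify this limit with $\EFib_\cK^\sim$. The restriction map $\EFib_\cK^\sim\to\lim_{\bDelta_{/\cK}}\EFib_{[n]}^\sim$ is a monomorphism of spaces: for $\pi\colon\cE\to\cK$ exponentiable, $\pi^\ast\colon\Cat_{/\cK}\to\Cat_{/\cE}$ is a left adjoint, hence preserves the canonical colimit $\cK\simeq\colim_{\bDelta_{/\cK}}[n]$, exhibiting $\cE\simeq\colim_{\bDelta_{/\cK}}(\cE|_{[n]})$, i.e.\ $\cE$ is recovered from its simplexwise restrictions. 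For essential surjectivity I would construct the universal exponentiable fibration $\ov\Corr\to\Corr$---assembled, via Theorem~\ref{flagged.thm}, from the simplicial spaces of total spaces of exponentiable fibrations over the simplices, so that $\sigma^\ast\ov\Corr$ is the family named by $\sigma\in\Corr_{[n]}=\EFib_{[n]}^\sim$---and show that for $F\colon\cK\to\Corr$ the pullback $F^\ast\ov\Corr\to\cK$ is exponentiable: checking this on each $\tau\colon[2]\to\cK$ via Lemma~\ref{exp-char}, where $(F^\ast\ov\Corr)|_\tau=(F\tau)^\ast\ov\Corr$ lies in $\EFib_{[2]}^\sim$ by construction. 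The assignment $F\mapsto F^\ast\ov\Corr$ is then inverse to restriction: one composite is the identity by the defining property of $\ov\Corr$, the other by that property together with the monomorphism statement above.

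\textbf{Symmetric monoidal structure (part (3)).} The external product of bimodules, $(\cA,\cB,P)\boxtimes(\cA',\cB',P')=(\cA\times\cA',\cB\times\cB',P\boxtimes P')$, is compatible with coend composition by Fubini for coends; at the level of the defining simplicial space it is the transformation
\[
\EFib_{[n]}^\sim\times\EFib_{[n]}^\sim~\xra{~\boxtimes~}~\EFib_{[n]\times[n]}^\sim~\xra{~\delta^\ast~}~\EFib_{[n]}^\sim~,
\]
$\delta\colon[n]\to[n]\times[n]$ the diagonal, which is well defined since products and base changes of exponentiable fibrations are exponentiable, is natural in $[n]\in\bDelta$, and is compatible with the Segal maps; by Theorem~\ref{flagged.thm} it descends to a functor $\Corr\times\Corr\to\Corr$. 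Iterating, and carrying along the coherences of the cartesian symmetric monoidal structure on $\Cat$ (external products of exponentiable fibrations along the diagonals $[k]\to[k]^{\times m}$), upgrades this to a symmetric monoidal structure $\Corr^\otimes$. On objects, i.e.\ at the $0$-simplex level where $\delta\colon[0]\to[0]\times[0]$ is an equivalence, this restricts on $\Corr_{[0]}=\Cat^\sim$ to $(\cC,\cD)\mapsto\cC\times\cD$, as claimed; the only work is the operadic bookkeeping, which is routine given the Segal description and parts (1)--(2).
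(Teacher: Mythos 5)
Your architecture is the paper's own: the simplicial space $[n]\mapsto\EFib^\sim_{[n]}$, the Segal condition, representability as a flagged $\infty$-category via Theorem~\ref{flagged.thm}, identification of the $\cK$-points, and the product symmetric monoidal structure (the paper's \S\ref{sym.stctr} implements your diagonal-pullback-of-external-products as fiberwise products, Lemma~\ref{exp-products}). The gap sits in the step you yourself flag as the crux, and it is not filled by the source you defer to. For the Segal condition you need both directions, and the flatness analysis of \cite{HA}~B.3 (i.e.\ Lemma~\ref{exp-char}) only gives one: that an exponentiable fibration over $[2]$ is the pushout of its restrictions to the spine --- the counit in the proof of Corollary~\ref{corr.segal}. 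What it does not give is the unit: that for arbitrary composable correspondences $\cE_{01}\leftarrow \cE_1 \rightarrow \cE_{12}$, the pushout $\cE_{01}\underset{\cE_1}\amalg\cE_{12}$ in $\Cat$ is an $\infty$-category over $[2]$ whose restrictions to $\{0<1\}$, $\{1\}$, $\{1<2\}$ recover the inputs and whose mapping spaces from the fiber over $0$ to the fiber over $2$ are the asserted coend. Pushouts of $\infty$-categories do not in general preserve fibers or full subcategories, so ``glue their mapping cylinders \dots{} and check exponentiability edgewise'' presupposes exactly what must be proved. This is the content of the paper's Lemma~\ref{pushout}, whose proof is a genuine construction (a left Kan extension of a presheaf on $\bDelta_{/[p]}$ built from the spine data, followed by Segal, univalence, and mapping-space verifications); without it, parts (1b), (1c) and the Segal equivalence --- hence part (2) --- are not established.

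A second, smaller omission is in your essential-surjectivity step for part (2): for $F\colon \cK\to\Corr$, the pullback $F^\ast\ov{\Corr}\to\cK$ is a priori only a presheaf on $\bDelta$ over $\cK$, so Lemma~\ref{exp-char} cannot be invoked until you have shown this pullback satisfies the Segal and univalence conditions over $\cK$, i.e.\ is an $\infty$-category. The paper's proof of Theorem~\ref{representablecorr} does exactly this, passing through descent for the $\infty$-topos $\PShv(\bDelta)$ (Theorem~6.1.0.6 of \cite{HTT}) and then checking, simplex by simplex, that a presheaf over $\cK$ all of whose restrictions to $[p]\to\cK$ are exponentiable fibrations is itself an $\infty$-category exponentiable over $\cK$. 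Your monomorphism argument via $\cE\simeq \colim_{\bDelta_{/\cK}}\cE_{|[p]}$ is sound and is essentially how the paper obtains fully faithfulness; it is the surjectivity half that needs the above verification spelled out.
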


The next result articulates how the classification of exponentiable fibrations of Main Theorem 1 extends the classification of other notions of fibrations.  
Its proof is the content of~\S\ref{sec.handed}.  
(Compare~\S2.1 of~\cite{HTT} for established definitions of left and right fibrations as well as Kan fibrations (in this context); see~\S2.4 of~\cite{HTT} for established definitions of coCartesian and Cartesian fibrations.)

\begin{theorem}[Main Theorem 2]\label{main.thm'}
The representability of exponentiable fibrations stated in Theorem~\ref{main.thm}(2) extends the representability of Kan fibrations, left fibrations, coCartesian fibrations, Cartesian fibrations, and right fibrations, in the following senses.

\begin{enumerate}

\item   
There are monomorphisms among flagged $\infty$-categories
\[
\Spaces^\sim ~\hookrightarrow~\Spaces~\hookrightarrow~\Cat~\hookrightarrow~ \Corr~\hookleftarrow~\Cat^{\op}~\hookleftarrow~\Spaces^{\op}~.
\]
With respect to finite products of $\infty$-categories, each of these monomorphisms lifts as symmetric monoidal monomorphisms between flagged $\infty$-categories.  

\item The images of the above monomorphisms are characterized as follows.
Let $F\colon \cK \xra{~\lag \cE\to \cK\rag~} \Corr$ be a functor from an $\infty$-category. 
\begin{enumerate}
\item There is a factorization $F\colon \cK \dashrightarrow \Cat \hookrightarrow \Corr$ if and only if any of the following equivalent conditions are satisfied:
\begin{enumerate}
\item $\cE\to \cK$ is a locally coCartesian fibration.
\item For each morphism $c_1\to \cK$, the fully-faithful functor $\cE_{|t} \hookrightarrow \cE_{|c_1}$ is a right adjoint.
\item For each morphism $c_1\to \cK$, the Cartesian fibration $\Fun_{/\cK}(c_1,\cE) \xra{\ev_s} \cE_{|s}$ is a right adjoint. 
\item For each point $c_0 \xra{~\lag y\rag~} \cK$, the fully-faithful functor $\cE_{|y} \longrightarrow \cE_{/y}:=\cE\underset{\cK}\times \cK_{/y}$ is a right adjoint.
\end{enumerate}

\item There is a factorization $F\colon \cK \dashrightarrow \Cat^{\op} \hookrightarrow \Corr$ if and only if any of the following equivalent conditions are satisfied:
\begin{enumerate}
\item $\cE\to \cK$ is a locally Cartesian fibration.
\item For each morphism $c_1\to \cK$, the fully-faithful functor $\cE_{|s} \hookrightarrow \cE_{|c_1}$ is a left adjoint.
\item For each morphism $c_1\to \cK$, the coCartesian fibration $\Fun_{/\cK}(c_1,\cE) \xra{\ev_t} \cE_{|t}$ is a left adjoint. 
\item For each point $c_0 \xra{~\lag x\rag~} \cK$, the fully-faithful functor $\cE_{|x} \longrightarrow \cE^{x/}:=\cE\underset{\cK}\times \cK^{x/}$ is a left adjoint.
\end{enumerate}

\item There is a factorization $F\colon \cK \dashrightarrow \Spaces \hookrightarrow \Corr$ if and only if any of the following equivalent conditions are satisfied:
\begin{enumerate}
\item $\cE\to \cK$ is a conservative locally coCartesian fibration.
\item $\cE \to \cK$ is a conservative coCartesian fibration.  
\item For each morphism $c_1\to \cK$, the functor $\Fun_{/\cK}(c_1,\cE) \xra{\ev_t}\cE_{|t}$ is an equivalence between $\infty$-groupoids.
\item For each point $c_0 \xra{~\lag y\rag~} \cK$, the functor $\cE_{|y} \longrightarrow \cE_{/y}:=\cE\underset{\cK}\times \cK_{/y}$ is an equivalence between $\infty$-groupoids.
\end{enumerate}

\item There is a factorization $F\colon \cK \dashrightarrow \Spaces^{\op} \hookrightarrow \Corr$ if and only if any of the following equivalent conditions are satisfied:
\begin{enumerate}
\item $\cE\to \cK$ is a conservative locally Cartesian fibration.
\item $\cE \to \cK$ is a conservative Cartesian fibration.  
\item For each morphism $c_1\to \cK$, the functor $\Fun_{/\cK}(c_1,\cE) \xra{\ev_s}\cE_{|s}$ is an equivalence between $\infty$-groupoids.
\item For each point $c_0 \xra{~\lag x\rag~} \cK$, the functor $\cE_{|x} \longrightarrow \cE^{x/}:=\cE\underset{\cK}\times \cK^{x/}$ is an equivalence between $\infty$-groupoids.
\end{enumerate}

\end{enumerate}

\end{enumerate}
\end{theorem}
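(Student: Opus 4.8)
The plan is to bootstrap from Theorem~\ref{main.thm}: part~(1) builds the monomorphisms, and part~(2) identifies their images by an edgewise analysis of maps into $\Corr$.

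For part~(1): the maps $\Spaces^\sim\hookrightarrow\Spaces\hookrightarrow\Cat$ are the evident inclusions of flagged $\infty$-categories --- a maximal-subgroupoid inclusion followed by the full inclusion of $\infty$-groupoids among $\infty$-categories. The essential map $\Cat\hookrightarrow\Corr$ sends a functor $f\colon\cA\to\cB$ to the \emph{representable} bimodule $\Map_\cB(f(-),-)\colon\cA^{\op}\times\cB\to\Spaces$; the co-Yoneda lemma identifies the coend over $\cB$ of two representable bimodules with the representable bimodule of the composite functor, so --- using Theorem~\ref{flagged.thm} and that a Segal space is determined in degrees $\le 1$ together with its composition --- this assembles into a morphism of flagged $\infty$-categories. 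It is a monomorphism because in degree $1$ it is the full faithfulness of the Yoneda embedding $\Fun(\cA,\cB)\hookrightarrow\Fun(\cA,\PShv(\cB))$, and the Segal maps in higher degrees are limits of copies of this. Dually, a morphism $\cA\to\cB$ in $\Cat^{\op}$ --- i.e.\ a functor $g\colon\cB\to\cA$ --- is sent to the \emph{corepresentable} bimodule $(a,b)\mapsto\Map_\cA(a,g(b))$, again functorial by co-Yoneda, and $\Spaces^{\op}\hookrightarrow\Cat^{\op}$ is the evident inclusion. Conceptually, since coCartesian and Cartesian fibrations are exponentiable (Lemma~\ref{exp-char}), the inclusions of presheaves on $\Cat$ sending a coCartesian (resp.\ Cartesian) fibration to its underlying exponentiable fibration correspond, via Theorem~\ref{flagged.thm} and Theorem~\ref{main.thm}(2), to these morphisms $\Cat\to\Corr$ (resp.\ $\Cat^{\op}\to\Corr$). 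All the symmetric monoidal structures in sight are Cartesian product of $\infty$-categories (Theorem~\ref{main.thm}(3)), and each of the relevant classes of fibrations is closed under external products compatibly with that structure; this upgrades each monomorphism to a symmetric monoidal one.

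For part~(2): fix $F\colon\cK\to\Corr$ classifying the exponentiable fibration $\pi\colon\cE\to\cK$. Each of $\Cat$, $\Cat^{\op}$, $\Spaces$, $\Spaces^{\op}$ is a sub-flagged-$\infty$-category of $\Corr$, so the inclusion is a levelwise monomorphism of Segal spaces; since $\Corr$ satisfies the Segal condition, $F$ factors through such a subobject iff it carries each morphism of $\cK$ into the corresponding subspace of $\Corr_1$ --- together with, for $\Spaces$ and $\Spaces^{\op}$, each object of $\cK$ into $\Spaces\subset\Cat$, these inclusions being full on $\infty$-groupoids. Two facts are then needed. First: under the equivalence of Theorem~\ref{main.thm}(2), the morphism in $\Corr$ that $F$ assigns to an edge $c_1\xra{e}\cK$ is the bimodule $(a,b)\mapsto\Map_{\cE_{|c_1}}(a,b)$ on $\cE_{|s}^{\op}\times\cE_{|t}$ --- this is built into the construction of $\Corr$ in \S\ref{sec.corr}. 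Second: such a bimodule is representable (resp.\ corepresentable) precisely when $\cE_{|c_1}\to\Delta^1$ is a coCartesian (resp.\ Cartesian) fibration --- the standard recognition of (co)Cartesian fibrations over $\Delta^1$ via the parallel-transport functor. Combining: $F$ factors through $\Cat$ iff every $c_1\to\cK$ pulls $\pi$ back to a coCartesian fibration over $\Delta^1$, i.e.\ iff $\pi$ is locally coCartesian; for an exponentiable $\pi$ this is equivalent to being coCartesian, because functoriality of $F$ makes the bimodule over a composite the coend of the bimodules over its factors, which forces the parallel-transport functors to compose. The cases $\Cat^{\op}$, $\Spaces$, $\Spaces^{\op}$ run identically, the dual clauses interchanging source and target, with ``conservative'' expressing that all fibers of $\pi$ are $\infty$-groupoids --- a coCartesian fibration is conservative iff its fibers are groupoids, and this is exactly the extra condition cut out by $\Spaces\hookrightarrow\Cat$.

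Finally, the equivalences (i)$\Leftrightarrow$(ii)$\Leftrightarrow$(iii)$\Leftrightarrow$(iv) inside each clause are standard adjunction criteria, applied to $\cE_{|c_1}\to\Delta^1$ and, after base change, to $\cE_{/y}\to\cK_{/y}$ and $\cE^{x/}\to\cK^{x/}$: an exponentiable fibration over $\Delta^1$ is coCartesian iff the inclusion of the terminal fiber is a right adjoint, iff the Cartesian fibration of sections $\Fun_{/\Delta^1}(\Delta^1,\cE)\to\cE_{\{0\}}$ is a right adjoint, the left adjoint in each case picking out coCartesian lifts; and the passage between the edgewise and the pointwise/slicewise forms uses that (co)Cartesian fibrations are stable under base change and detected locally, so testing over all $c_1\to\cK$ agrees with testing over all $c_0\xra{y}\cK$ after replacing $\cK$ by $\cK_{/y}$. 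I expect the main obstacle to be the first of the two facts in part~(2) --- pinning down exactly which bimodule, with which variance, Theorem~\ref{main.thm}(2) attaches to an exponentiable fibration over $\Delta^1$, so that ``representable'' matches ``coCartesian'' on the nose; after that, everything reduces to Segal-space bookkeeping and well-known recognition principles for (co)Cartesian fibrations.
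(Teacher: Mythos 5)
Your ``conceptual'' route is in fact the paper's proof, and read that way the proposal is essentially correct: the monomorphisms of part~(1) are obtained by combining Lurie's representability of coCartesian/Cartesian fibrations by $\Cat$/$\Cat^{\op}$ (Theorem~\ref{st.un.cCart}) and of left/right fibrations by $\Spaces$/$\Spaces^{\op}$ (Theorem~\ref{st.un}) with the representability of $\EFib^\sim$ by $\Corr$ (Theorem~\ref{representablecorr}), the natural transformation of presheaves being the levelwise inclusion of path components $\cCart^\sim_{\cK}\subset \EFib^\sim_{\cK}$ coming from Lemma~\ref{exp-examples}; part~(2) then reduces to the fibration-recognition results (Proposition~\ref{just.over.2}, Proposition~\ref{left.is.coCart}, and Lemmas~\ref{firsthalf-locallycocart}, \ref{secondhalf-locallycocart}, \ref{locallycocart}), which is exactly how \S\ref{cart.exp} and \S\ref{sec.l.r.fib} proceed. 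Two cautions about the way you present it. First, your primary construction of $\Cat\hookrightarrow\Corr$ --- assign to a functor its representable bimodule, check composition by co-Yoneda, and invoke ``a Segal space is determined in degrees $\leq 1$ together with its composition'' --- is not by itself a construction of a morphism of flagged $\infty$-categories: specifying values on objects and morphisms plus a composition law does not supply the coherence data of a map of simplicial spaces. The presheaf-level argument you relegate to a side remark must carry the construction (it does, and it also gives the monomorphism and symmetric monoidal statements for free, via Corollaries~\ref{cCart.functor} and \ref{LFib.fctr}). Second, your edgewise factorization criterion together with the one-line ``the coend of representables is representable, which forces the parallel transports to compose'' is precisely the content of Proposition~\ref{just.over.2} (a locally coCartesian exponentiable fibration is coCartesian); this is the genuinely laborious step of the paper's proof --- in particular verifying that the comparison morphism $\ov{e}_2\to e_2$ of condition~(f) there is an equivalence, and that coCartesian lifts over edges are coCartesian over all of $\cK$ --- so it cannot be dispatched as bookkeeping, though your sketch identifies the right mechanism. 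With those two points supplied, the remaining equivalences (ii)--(iv) in each clause are indeed the cited adjunction lemmas, and the $\Spaces$/$\Spaces^{\op}$ cases follow from Proposition~\ref{left.is.coCart} exactly as you say.
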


The next result articulates a few other notions of fibrations, and offers flagged $\infty$-categories classifying them.
In future works, we find this result useful for constructing presheaves on various $\infty$-categories.
Its proof is the content of~\S\ref{sec.final}.  
\begin{theorem}[Main Theorem 3]\label{other.thm}
There are symmetric monoidal flagged $\infty$-subcategories
\[
\Corr[\Spaces]~,~\LCorr~,~\RCorr~\subset~\Corr
\]
with the following properties.  
Let $F\colon \cK \xra{\lag \cE\to \cK\rag} \Corr$ be a functor which classifies the exponentiable fibration $\cE\to \cK$.
\begin{enumerate}
\item There is a factorization $F\colon \cK \dashrightarrow \Corr[\Spaces] \hookrightarrow \Corr$ if and only if any of the following equivalent conditions are satisfied:
\begin{enumerate}
\item $\cE\to \cK$ is conservative.
\item For each morphism $c_1\to \cK$, the $\infty$-category $\Fun_{/\cK}(c_1,\cE)$ is an $\infty$-groupoid.
\item For each object $x\in \cK$, the fiber $\cE_{|x}$ is an $\infty$-groupoid.
\end{enumerate}

\item There is a factorization $F\colon \cK \dashrightarrow \LCorr \hookrightarrow \Corr$ if and only if any of the following equivalent conditions are satisfied:
\begin{enumerate}
\item For each morphism $c_1\to \cK$, the fully-faithful functor $\cE_{|t} \hookrightarrow \cE_{|c_1}$ is final.
\item For each morphism $c_1\to \cK$, the Cartesian fibration $\Fun_{/\cK}(c_1,\cE) \xra{\ev_s} \cE_{|s}$ is final.
\item For each point $c_0 \xra{~\lag y\rag~} \cK$, the fully-faithful functor $\cE_{|y} \longrightarrow \cE_{/y}:=\cE\underset{\cK}\times \cK_{/y}$ is final.
\end{enumerate}

\item There is a factorization $F\colon \cK \dashrightarrow \RCorr \hookrightarrow \Corr$ if and only if any of the following equivalent conditions are satisfied:
\begin{enumerate}
\item For each morphism $c_1\to \cK$, the fully-faithful functor $\cE_{|s} \hookrightarrow \cE_{|c_1}$ is initial.
\item For each morphism $c_1\to \cK$, the coCartesian fibration $\Fun_{/\cK}(c_1,\cE) \xra{\ev_t} \cE_{|t}$ is initial.
\item For each point $c_0 \xra{~\lag x\rag~} \cK$, the fully-faithful functor $\cE_{|x} \longrightarrow \cE^{x/}:=\cE\underset{\cK}\times \cK^{x/}$ is initial.
\end{enumerate}

\end{enumerate}
Furthermore, taking fiberwise groupoid completions of exponentiable fibrations defines morphisms between symmetric monoidal flagged $\infty$-categories:
\[
\sB\colon \LCorr \longrightarrow \Spaces
\qquad\text{ and }\qquad
\sB\colon \RCorr \longrightarrow \Spaces^{\op}~.
\]

\end{theorem}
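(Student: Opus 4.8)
The plan is to realize each of $\Corr[\Spaces]$, $\LCorr$, and $\RCorr$ as a sub-Segal-space of $\Corr$, using the identification of flagged $\infty$-categories with Segal spaces from Theorem~\ref{flagged.thm}. Here $\LCorr$ and $\RCorr$ will have the same space of objects as $\Corr$ — all (small) $\infty$-categories — while the objects of $\Corr[\Spaces]$ are the $\infty$-groupoids; the space of morphisms of $\Corr[\Spaces]$ from $\cA$ to $\cB$ will consist of all $(\cB,\cA)$-bimodules (necessarily between $\infty$-groupoids), whereas that of $\LCorr$ (resp.\ $\RCorr$) will consist of those $(\cB,\cA)$-bimodules $P$ for which the fully faithful functor $\cB\hookrightarrow \cM_P$ (resp.\ $\cA\hookrightarrow \cM_P$) into the correspondence $\cM_P\to c_1$ associated with $P$ is final (resp.\ initial). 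The first task is to verify that these subspaces are preserved by degeneracies and by the Segal composition maps, so that they do assemble into flagged $\infty$-subcategories. Preservation by identities is immediate: the identity bimodule $\Hom_\cA$ corresponds to the trivial correspondence $\cA\times c_1\to c_1$, whose two fiber inclusions are each both final and initial, and whose fibers are $\infty$-groupoids when $\cA$ is. Preservation by composition is the crux: the coend composition of two composable bimodules corresponds, in the correspondence picture, to gluing two correspondences over $c_2$ along their middle fiber and restricting along the outer edge of $c_2$; one must then show that if the two constituent target-fiber inclusions are final, so is the target-fiber inclusion into the restricted correspondence. I would deduce this from stability of final functors under composition together with an analysis — via the criterion that $f\colon \cC\to \cD$ is final exactly when each $\cC\times_\cD\cD_{d/}$ is weakly contractible, or via the ``right-adjoint'' reformulations of Theorem~\ref{main.thm'} — of how finality propagates through the stratified structure of a correspondence over $c_2$. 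Compatibility with the symmetric monoidal (product) structure is then a short check, since being an $\infty$-groupoid, being final, and being initial are each stable under products of $\infty$-categories.

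Granting the three subcategories, statements (1), (2), and (3) follow formally from Theorem~\ref{main.thm}. Each inclusion into $\Corr$ is a monomorphism of flagged $\infty$-categories, and, by the Segal condition, a functor $F$ from an $\infty$-category $\cK$ into a flagged $\infty$-category is determined by its restrictions along $c_0\to \cK$ and $c_1\to \cK$ and factors through a sub-flagged-$\infty$-category precisely when each of those restrictions does. By Theorem~\ref{main.thm}, for a morphism $c_1\to \cK$ the associated bimodule is the one attached to the correspondence $\cE_{|c_1}\to c_1$ obtained by pulling back the classified exponentiable fibration $\cE\to \cK$. Hence $F$ factors through $\Corr[\Spaces]$, $\LCorr$, or $\RCorr$ precisely when, for each $c_1\to \cK$, the fibers of $\cE_{|c_1}\to c_1$ are $\infty$-groupoids, or $\cE_{|t}\hookrightarrow \cE_{|c_1}$ is final, or $\cE_{|s}\hookrightarrow \cE_{|c_1}$ is initial, respectively — these being condition (a) of the respective part.

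It then remains to prove the equivalences among (a), (b), and (c) within each part; these are assertions about exponentiable fibrations over $c_1$ or over a point, independent of $\Corr$, and I would prove them as in \S\ref{sec.handed}. The identifications $\Fun_{/\cK}(c_1,\cE)\simeq \Fun_{/c_1}(c_1,\cE_{|c_1})$, and $\cE_{/y}\simeq \cE\times_\cK\cK_{/y}$ for a point $\lag y\rag\colon c_0\to \cK$, reduce the ``for each morphism $c_1$'' and ``for each point'' formulations to one another and to the fiberwise statement; the correspondence-to-bimodule dictionary translates finality (resp.\ initiality) of $\cE_{|t}\hookrightarrow \cE_{|c_1}$ into finality (resp.\ initiality) of $\Fun_{/c_1}(c_1,\cE_{|c_1})\xra{\ev_s}\cE_{|s}$; and for part~(1) one uses that an exponentiable fibration over $c_1$ has $\infty$-groupoid fibers if and only if its $\infty$-category of sections is an $\infty$-groupoid if and only if it is conservative — the last equivalence being where exponentiability is genuinely used, since for a general functor groupoid fibers do not force conservativity. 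The variance bookkeeping is handled exactly as in the parallel arguments for coCartesian and Cartesian fibrations in \S\ref{sec.handed}.

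Finally, for the morphisms $\sB$: groupoid completion $\sB\colon \Cat\to \Spaces$ is symmetric monoidal, being left adjoint to a symmetric monoidal inclusion, and I would extend it over $\LCorr$ by sending a left-final bimodule $P$ from $\cA$ to $\cB$ — that is, a correspondence $\cM_P\to c_1$ with $\cB\hookrightarrow \cM_P$ final — to the span $\sB(\cA)\to \sB(\cM_P)\xleftarrow{\sim}\sB(\cB)$ in $\Spaces$, whose right leg is an equivalence precisely because a final functor induces an equivalence on classifying $\infty$-groupoids; globally this is the fiberwise groupoid completion of $\cE\to \cK$, which, having $\infty$-groupoid fibers, is classified by a functor $\cK\to \Spaces$. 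Functoriality — that $\sB$ carries coend composition of left-final bimodules to composition of spans of $\infty$-groupoids — reduces, through the gluing-over-$c_2$ description of composition, to the statement that groupoid completion carries the relevant colimit of $\infty$-categories to the corresponding colimit of $\infty$-groupoids, the left-finality of both factors being exactly what makes the intermediate completion $\sB(\cB)\to \sB(\cM)$ an equivalence so that the spans compose; the map $\sB\colon \RCorr\to \Spaces^{\op}$ is constructed verbatim with ``final'' replaced by ``initial'' and the variance reversed. The step I expect to be the main obstacle is the composition-closure of $\LCorr$ and $\RCorr$ — equivalently, the well-definedness and functoriality of $\sB$ — because finality is not stable under arbitrary base change, so passing from finality of two bimodules to finality of their coend composite requires a genuine argument about how finality interacts with the stratified structure of a correspondence over $c_2$, not a formal manipulation.
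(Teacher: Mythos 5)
Your outline follows the paper's route: define $\LCorr$, $\RCorr$, $\Corr[\Spaces]$ morphism-wise as flagged (Segal) subcategories of $\Corr$, reduce the factorization criteria (1)--(3) to the single-morphism condition (a) via the Segal description, and prove the (a)$\Leftrightarrow$(b)$\Leftrightarrow$(c) equivalences by Theorem~A-style comma-category arguments (this is Lemma~\ref{equiv.final} in the paper, and your sketch of it is fine, as is the treatment of $\Corr[\Spaces]$). The genuine gap is exactly the step you defer as ``the main obstacle'': closure of $\LCorr$ and $\RCorr$ under composition. This is the mathematical heart of the theorem, and ``stability of final functors under composition together with an analysis of how finality propagates through the stratified structure over $c_2$'' does not identify the inputs that make it work. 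The paper's argument (Lemma~\ref{fcorr-exists}) is: for $\cE\to[2]$ exponentiable with both edge conditions, (i) the restriction $\Fun_{/[2]}([2],\cE)\to\Fun_{/[2]}(\{0<2\},\cE)$ is a \emph{localization} --- this is where exponentiability genuinely enters, via Lemma~\ref{exp-char}(6) and Lemma~\ref{exponentiable.localization} --- hence final by Proposition~\ref{localization.final}; (ii) the restriction $\Fun_{/[2]}([2],\cE)\to\Fun_{/[2]}(\{0<1\},\cE)$ is a base change of the final functor $\Fun_{/[2]}(\{1<2\},\cE)\to\cE_{|1}$ along a coCartesian fibration, and finality is stable under such base change (Corollary~\ref{finalpullback}); (iii) conclude by two-out-of-three for final functors (Lemma~\ref{2.o.3}). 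Without something equivalent to (i) and (ii) your proof does not close. Relatedly, your aside that exponentiability is what makes ``groupoid fibers'' imply ``conservative'' is not correct: that equivalence holds for an arbitrary functor (any functor to an $\infty$-groupoid is a coCartesian fibration, and coCartesian lifts of equivalences are equivalences); where exponentiability is really used in this theorem is in point (i) above.

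The construction of $\sB$ has the same gap in a second guise. Your justification ``the fiberwise groupoid completion has $\infty$-groupoid fibers, hence is classified by a functor $\cK\to\Spaces$'' is insufficient: groupoid fibers only give conservativity, and, e.g., right fibrations have groupoid fibers yet are classified by functors out of $\cK^{\op}$. The nonformal content is that for a \emph{left final} fibration the fiberwise localization $\sB^{\sf rel}\cE\to\cK$ is a left fibration whose fibers really are the classifying spaces of the fibers of $\cE$; in the paper this is Lemma~\ref{l.final.B.fib} (left finality is used precisely to show the relevant squares of classifying spaces of section categories are pullbacks) together with Corollaries~\ref{fib.B.exp} and~\ref{indeed.fibers}, and then compatibility with base change and finite products (Corollaries~\ref{fib.B.base.change}, \ref{fib.B.fib.prods}) assembles this into a morphism of symmetric monoidal flagged $\infty$-categories (Theorem~\ref{fib.B.ladj}). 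Your span-level description of $\sB$ on a single correspondence, with functoriality ``reducing to groupoid completion preserving the relevant colimit,'' again postpones rather than proves how localization and finality interact with the gluing over $[2]$; supplying that interaction --- or the relative-classifying-space machinery that packages it --- is what the proof still requires.
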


\subsection{Motivation}
We make some informal comments on our motivations for this work, which is designed to support the $\oo$-categorical argumentation employed in our works on differential topology, such as \cite{fact}, \cite{aft1}, \cite{aft2}, \cite{striation}, and \cite{emb1a}.
We are often interested in constructing a functor $\cK \to \cZ$ between $\infty$-categories, where $\cZ$ is more or less fixed and $\cK$ is somewhat arbitrary.  
One strategy for doing so is to find an enlargement, specifically a monomorphism $\cZ \hookrightarrow \w{\cZ}$, then divide the problem of constructing a functor $\cK \to \cZ$ as two steps: first construct a functor $\cK \to \w{\cZ}$, then {\bf check} that it factors through $\cZ$.  
The problem of constructing a functor $\cK \to \w{\cZ}$ becomes a practical one to solve once $\w{\cZ}$ is recognized as classifying certain \emph{fibrations}; more precisely, a functor $\cK \to \w{\cZ}$ is determined by a functor $\cE\to \cK$ satisfying certain properties, which can be {\bf checked}.  
The weakest notion of such a fibration that admits such a classification is that of an \emph{exponentiable fibration}.
We demonstrate this technique for constructing a functor $\cK \to \cZ$ through a simple example.

Let $\cK$ be an $\infty$-category.
Constructing a presheaf
\[
\cK^{\op} \longrightarrow \Spaces
\]
is often not practical in $\infty$-category theory.  
This is pointedly demonstrated by the impracticality of constructing, for each $x\in \cK$, the representable presheaf:
\[
\cK(-,x) \colon \cK^{\op} \longrightarrow \Spaces~.
\]
On the otherhand, it is easy to construct the right fibration
\[
\cK_{/x}~\longrightarrow~ \cK~,
\]
as we now demonstrate.
\begin{itemize}

\item[{\bf Step 1:}]
For each functor $\cJ \xra{f} \cK$ between $\infty$-categories, declare the space of lifts
\[
\xymatrix{
&&
\cK_{/x}  \ar[d]
\\
\cJ  \ar[rr]^-f  \ar@{-->}[urr]
&&
\cK
}
\]
to be the space of fillers
\[
\xymatrix{
\ast  \ar[d]  \ar[drr]^-{\lag x\rag}  
&&
\\
\cJ^{\tr}  \ar@{-->}[rr]  
&&
\cK
\\
\cJ  \ar[u]  \ar[urr]_-{f}
&&
.
}
\]
It must be checked that this defines an $\infty$-category over $\cK$.
Said another way, because the (large) $\infty$-category of $\infty$-categories is presentable, it must be {\bf checked} that the presheaf
\[
\bigl(\Cat_{/\cK}\bigr)^{\op} \longrightarrow \Spaces~,\qquad \cJ \mapsto \Cat^{\cJ \amalg \ast/}(\cJ^{\tr},\cK)~,
\]
carries the opposites of colimit diagrams to limit diagrams.  
This check is manageable, using that the construction of right cones is a colimit construction.
(Note that the values of the above asserted presheaf on $\Cat$ are in terms of limit and colimit constructions.  
The requisite functoriality of this presheaf on $\Cat$ then follows, ultimately, from suitable functoriality of limit and colimit constructions.)

\item[{\bf Step 2:}]
It must be checked that the functor $\cK_{/x} \to \cK$ is a right fibration.  
Said another way, it must be {\bf checked} that this functor is conservative and locally Cartesian.  
This check is manageable, using that each solid diagram of $\infty$-categories
\[
\xymatrix{
\ast  \ar[rr]  \ar[d]
&&
\cK_{/x}  \ar[d]
\\
\cJ^{\tr}  \ar[rr]  \ar@{-->}[urr]
&&
\cK
}
\]
admits a unique filler.

\end{itemize}
We summarize:
\begin{itemize}
\item[~]

To construct the functor $\cK(-,x)\colon  \cK^{\op}\to \Spaces$ we construct a right fibration $\cK_{/x}\to \cK$; the latter which amounts to specifying $\cJ$-points of $\cK_{/x}$ over $\cJ$-points $\cK$, more precisely a presheaf on $\Cat_{/\cK}$, followed by a series of {\bf checks}.  

\end{itemize}
We see this as an adaptable technique for constructing a functor $\cK \to \cZ$ whenever $\cZ$ can be recognized as classifying certain types of fibrations. It is the essential technique we use for constructing functors between $\oo$-categories of differential topological origin.

\subsection{Questions/problems}

The following technical questions and problems are suggested by this work.

\begin{problem}
Identify the class of functors which have the left lifting property with respect to exponentiable fibrations.
\end{problem}

\begin{q}
Are epimorphisms among $\infty$-categories, or are localizations among $\infty$-categories, closed under base change along exponentiable fibrations?
\end{q}

\begin{q}
What are some practical criteria for detecting (co)limit diagrams, such as pushouts and pullbacks, in $\Corr$?
\end{q}

\begin{q}\label{q.2.Corr}
What is the native $(\infty,2)$-categorical enhancement of the flagged $\infty$-category $\Corr$ of Definition~\ref{def.Corr}?

\end{q}

\begin{q}\label{q.n.Corr}
What are some $(\infty,n)$-categorical counterparts of an exponentiable fibration?
(The plural is used to accommodate various conceivable degrees of laxness.)
For each such notion, is there a comprehensible flagged $(\infty,n)$-category that classifies this notion?  
\end{q}

The next problem makes reference to the following construction.
Denote by ${\sf Idem}\subset {\sf Ret}$ the $\infty$-categories corepresenting an idempotent and a retraction, respectively.  
Let $\cK$ be an $\infty$-category.
Consider the full $\infty$-subcategory $\Cat_{/^{\sf idem}\cK}\subset \Cat_{/\cK}$ consisting of those functors $\cE\to \cK$ that are \emph{idempotent complete}: i.e., each solid diagram of $\infty$-categories
\[
\xymatrix{
{\sf Idem}  \ar[rr]  \ar[d]
&&
\cE  \ar[d]
\\
{\sf Ret}  \ar[rr]  \ar@{-->}[urr]
&&
\cK
}
\]
admits a filler (which will necessarily be unique since ${\sf Idem}\to {\sf Ret}$ is an epimorphism).  
Presentability of $\Cat$ offers a localization functor $(-)^{\widehat{~}}_{\sf idem}\colon \Cat_{/\cK}  \rightleftarrows \Cat_{/^{\sf idem}\cK}$ which is left adjoint to the inclusion; we refer to the values of this left adjoint as \emph{idempotent completion} of a functor.  
\begin{problem}\label{q.idemp}
Consider the full flagged $\infty$-subcategory $\Corr^{\omega}\subset \Corr$ consisting of the idempotent complete $\infty$-categories.
Show that this flagged $\infty$-subcategory is in fact an $\infty$-category.
Show that the assignment of $\cK$-points
\[
(\cE\xra{\sf e.fib} \cK)\mapsto (\cE^{\widehat{~}}_{\sf idem} \to \cK)
\]
defines a functor $\Corr \to \Corr^{\omega}$.
Show that this functor identifies $\Corr^{\omega}$ as the underlying $\infty$-category of the flagged $\infty$-category $\Corr$.  

\end{problem}

The next problem involves the relation of correspondences with spans. A first account of $\infty$-categories of spans in an $\infty$-category is given in \cite{barwick-q} and further studied in \cite{barwick}. An $(\oo,2)$-categorical account is given in \cite{gaitsgory.rozenblyum}, and an $(\infty,n)$-categorical account is given in \cite{haugseng}. From a span $\cE_0 \la \cX \ra \cE_1$, one can construct a correspondence as the parametrized join $\cE_0\bigstar_{\cX}\cE_1\ra [1]$. Conversely, from a correspondence $\cE\ra [1]$, one can construct a span as the $\oo$-category of sections $\cE_{|0}\la \Gamma(\cE\ra [1])\ra \cE_{|1}$. This resulting span has the special property that the functor
\[
\Gamma(\cE\ra [1])\longrightarrow\cE_{|0}\times\cE_{|1}
\]
is a bifibration; see Lemma \ref{equiv.correspondences}.

\begin{problem}\label{q.spans}
Relate $\corr$ and ${\Span}(\Cat)$, the $\oo$-category of spans of $\oo$-categories. In particular, taking sections $\Gamma$ defines a {\it lax} functor from $\corr$ to ${\Span}(\Cat)$. This laxness appears due to the necessity of localizing in the composition rule (\ref{comp.bifib}) of Lemma \ref{composition}. Show that parametrized join defines a functor from ${\Span}(\Cat)$ to $\corr$, and that this functor restricts to an equivalence between the $\infty$-category $\Corr[\Spaces]$ and the $\infty$-category $\Span(\spaces)$ of spans among spaces.
\end{problem}

\begin{q}\label{q.cons.B}
Does $\infty$-groupoid completion define a, possibly symmetric monoidal, functor $\sB\colon \Corr \to \Corr[\spaces]$ between flagged $\infty$-categories?
Supposing not, is there a largest flagged $\infty$-subcategory of $\Corr$ on which this is defined?
For instance, is there a flagged $\infty$-subcategory of $\Corr$ classifying exponentiable fibrations $\cE\to \cK$ for which, for each morphism $c_1\xra{\lag x\to y\rag}\cK$ and each morphism $e_t\to e'_t$ in the fiber $\infty$-category $\cE_{|y}$, the canonical functor between $\infty$-overcategories $(\cE_{|s})_{/e_t} \to (\cE_{|s})_{/e'_t}$ induces an equivalence on classifying spaces?
(See the proof of Lemma~\ref{l.final.B.fib}.)

\end{q}

\begin{q}\label{cCart.lifting}
What is the class of functors that have the left lifting property with respect to Cartesian fibrations?  
What is a checkable condition on a functor between $\infty$-categories $\cJ_0 \to \cJ$ to be \emph{lax final}, by which we mean restriction of diagrams along this functor determines an equivalence on lax colimits? 
Is the condition that the functor is a right adjoint?
(See~\S\ref{sec.l.fib.lifting} for a discussion in the case of right fibrations.)

\end{q}

\begin{problem}\label{q.pshv}
Give a direct $\cK$-point description of a fully-faithful filler among flagged $\infty$-categories as in this diagram:
\[
\xymatrix{
\Cat  \ar[d]_-{\rm univ.Cart}  \ar[rr]^-{\PShv}
&&
{\sf Pr^L}_{/^{\sf lax}\Spaces}  
\\
\Corr^{\op}  \ar@{-->}[urr]_-{\PShv}
&&
.
}
\]
(Here, ${\sf Pr^L}_{/^{\sf lax}\Spaces}$ is an $\infty$-category for which a $\cK$-point is a diagram of $\infty$-categories
\[
\xymatrix{
\cP \ar[rr]    \ar[d]
&&
\Spaces
\\
\cK
&
}
\]
in which the vertical functor has presentable fibers and is a coCartesian fibration as well as a Cartesian fibration, and the horizontal functor preserves colimit diagrams in each fiber over $\cK$.)
\end{problem}

\begin{problem}\label{q.rcorr.pshv}
Premised on an answer to Problem~\ref{q.pshv}, find a direct $\cK$-point description of a lift among flagged $\infty$-categories,
\[
\xymatrix{
\RCorr^{\op}  \ar@{-->}[rr]^-{\PShv}  \ar[d]
&&
({\sf Pr^L})_{/\Spaces}  \ar[d]
\\
\Corr^{\op}  \ar[rr]^-{\PShv}
&&
{\sf Pr^L}_{/^{\sf lax}\Spaces},
}
\]
making this square a limit diagram.
\end{problem}

\subsection{Conventions}

\begin{remark}

In this work, we use the quasi-category model for $\oo$-categories, as developed by Joyal \cite{joyal1} and Lurie \cite{HTT}. However, this choice is primarily for ease of reference. All of the arguments herein are homotopy-invariant and so translate to any model for $\oo$-categories. We could just as well have used, for instance, Rezk's complete Segal spaces \cite{rezk}.

\end{remark}

\begin{terminology}
Given a simplicial space $\bDelta^{\op}\xra{\cF}\Spaces$ that satisfies the Segal condition, we say it is \emph{univalent}, or satisfies the \emph{univalence condition}, if it is \emph{complete}, or satisfies the \emph{completeness condition}, in the sense of~\cite{rezk}.  
\end{terminology}

\begin{convention}[Univalence/completeness]
After Rezk~\cite{rezk} and Joyal--Tierney~\cite{joyaltierney}, the restricted Yoneda functor associated to the inclusion $\bDelta\hookrightarrow \Cat$ induces an equivalence $\Cat \subset \psh(\bdelta)$ with the $\oo$-subcategory of Segal presheaves satisfying the {\it univalence}, or {\it completeness}, condition. In the terms of Theorem~\ref{flagged.thm}, univalence is equivalent to the condition on a flagged $\oo$-category $(\cG \ra \cC)$ that the natural map $\cG \ra \cC^\sim$ is an equivalence, i.e., that the underlying $\oo$-groupoid is equivalent to the space of isomorphisms.
\end{convention}

\begin{convention}
For $\cC\hookrightarrow \cD$ a fully-faithful functor between $\infty$-categories, we typically do not distinguish in notation or terminology between objects/morphisms in $\cC$ and their images in $\cD$. 
In particular, we regard posets, as well as ordinary categories, as $\infty$-categories without notation or further comment.  

\end{convention}

\begin{terminology}
The \emph{1-cell} is the poset $c_1:=\{s\to t\}$.

\end{terminology}

\begin{convention}
Unless stated otherwise, all diagrams are commutative.  
\end{convention}

\begin{convention}
For $x,y\in \cC$ two objects in an $\infty$-category, $\cC(x,y)$ is the space of morphisms in $\cC$ from $x$ to $y$.  
\end{convention}

\begin{terminology}

We require the use of both small, large, and very large $\oo$-categories in this work. Our convention is that all-capitalization represents the very large version of an $\oo$-category. In particular, $\Cat$ is the $\oo$-category whose objects are small $\oo$-categories; $\CAT$ is the very large $\oo$-category whose objects are $\oo$-categories. In particular, $\Cat$ is an object in $\CAT$. We use matching notation for the $\oo$-category $\Spaces$ and the very large $\oo$-category $\SPACES$.

\end{terminology}

\begin{terminology}\label{mxml.gpd}
The fully-faithful functor
\[
\Spaces~\hookrightarrow~ \Cat
\]
has both a left and a right adjoint.  
Let $\cC$ be an $\infty$-category.
We denote the value of this left adjoint on $\cC$ as $\sB\cC$, referring to it as the \emph{classifying space of $\cC$}, or sometimes as its \emph{$\infty$-groupoid completion}.
We denote the value of this right adjoint on $\cC$ as $\cC^\sim$, referring to it as its \emph{maximal $\infty$-subcategory}.  
In particular, for each $\infty$-groupoid $\cG$, the canonical map between spaces of morphisms
\[
\Spaces(\cG,\cC^\sim) \xra{~\simeq~} \Cat(\cG,\cC)
\]
is an equivalence.

\end{terminology}

\begin{convention}
For $\oo$-categories $\cC$ and $\cD$, we denote the $\oo$-category of functors as $\Fun(\cC,\cD)$. The {\it space} of functors is its underlying $\oo$-groupoid:
\[
\Map(\cC,\cD) = \Fun(\cC,\cD)^\sim~.
\]
We also denote this space of maps as $\Cat(\cC,\cD)$.
\end{convention}

\begin{convention}
For $\cC\ra \cD$ a functor and $d\in\cD$ an object, we write the associated $\oo$-overcategory as
\[
\cC_{/d} := \cC\underset{\cD}\times\cD_{/d}
\]
and likewise the $\oo$-undercategory as $\cC^{d/} := \cC\times_\cD \cD^{d/}$.
\end{convention}

\begin{terminology}
A functor $F\colon \cC\to \cD$ between $\infty$-categories is a \emph{localization} if, for each $\infty$-category $\cZ$, precomposition with $F$ defines a monomorphism between spaces of functors,
\[
-\circ F\colon 
\Cat(\cD,\cZ)
\longrightarrow
\Cat(\cC,\cZ)~,
\]
with image consisting of those functors $\cC\to \cZ$ that carry a morphism in $\cC$ to an equivalence in $\cZ$ whenever it is carried to an equivalence in $\cD$.
In this case, for $W:= F^{-1}(\cD^{\sim}) \subset \cC$, we write
\[
\cC[W^{-1}]
~:=~
\cD~.
\]
\end{terminology}
Note that a localization is, in particular, an epimorphism in $\Cat$.

\begin{terminology}\label{def.p.join}
Let $\cA \la \cX \ra \cB$ be a diagram of $\infty$-categories.
Their \emph{parametrized join} is the iterated pushout
\[
\cA \underset{\cX} \bigstar \cB
~:=~
\cA \underset{\cX\times \{s\}}\coprod \cX\times c_1 \underset{\cX\times \{t\}}\coprod \cB~.
\]
The unique morphism between diagrams $(\cA\la \cX \to \cB) \ra(\ast \la \ast \ra \ast)$ equips the parametrized joint with a canonical functor
\[
\cA \underset{\cX} \bigstar \cB
\longrightarrow
\ast \underset{\ast}\bigstar \ast
~\simeq~
c_1
\]
to the 1-cell.

\end{terminology}

\begin{remark}
We make substantial use of \emph{final} and \emph{initial} functors.  
We refer the reader to~\S\ref{subsec.final} for definitions and some observations concerning their basic features.  
	
\end{remark}

\subsection{Acknowledgements}
We thank both referees for considered and expert reviews of this work.
We are grateful to one of the referees for suggesting a simpler proof of Lemma~\ref{adjointcriterion}.
We thank Aaron Mazel-Gee for pointing out a counterexample to the statement of Lemma~\ref{lemma.new.loc.cocart} which appeared in an earlier version of this paper.

\section{Correspondences}\label{sec.corr}
We now construct a flagged $\infty$-category of $\infty$-categories and \emph{correspondences} among them.  
We then show that this flagged $\infty$-category classifies exponentiable fibrations.

\subsection{Correspondence between two $\infty$-categories}
We define a \emph{correspondence} beteen two $\infty$-categories.

\begin{definition}\label{def.a.corr}
A \emph{correspondence (from an $\infty$-category $\cE_s$ to an $\infty$-category $\cE_t$)} is a pair of pullback diagrams among $\infty$-categories,
\[
\xymatrix{
\cE_s  \ar[r]  \ar[d]
&
\cE  \ar[d]
&
\cE_t  \ar[l]  \ar[d]
\\
\{s\} \ar[r]
&
c_1
&
\{t\}  ~.\ar[l]  
}
\]
The space of correspondences from $\cE_s$ to $\cE_t$ is the maximal $\infty$-subgroupoid of
\[
\{\cE_s\}\underset{\Cat}\times \Cat_{/c_1} \underset{\Cat}\times \{\cE_t\}~.
\]
\end{definition}

\begin{example}\label{ex.identity.corr}
Let $\cC$ be an $\infty$-category.
The \emph{identity correspondence} is the projection $\cC\times c_1\xra{\pr}c_1$.  

\end{example}

\begin{remark}
Given a correspondence $\cE_{01} \to \{0<1\}$ from $\cE_0$ to $\cE_1$, and another $\cE_{12}\to \{1<2\}$ from $\cE_1$ to $\cE_2$, taking a pushout over $\cE_1$ determines an $\infty$-category
\[
\cE_{012}:=\cE_{01}\underset{\cE_1}\amalg \cE_{12}  \longrightarrow \{0<1\}\underset{\{1\}}\amalg \{1<2\} = [2]
\]
over $[2]$.  
Base change along $\{0<2\} \to [2]$ then determines an $\infty$-category $\cE_{02}\to\{0<2\}$, which is a correspondence from $\cE_0$ to $\cE_2$.   
This suggests that correspondences form the morphisms in an $\infty$-category -- an assertion which our main result (Theorem~\ref{main.thm}) states is essentially correct. 
This also suggests that this $\infty$-category is presented as a simplicial space for which a 2-simplex is an $\infty$-category $\cE\to [2]$; equivalently, the datum of a pair of composable correspondences, together with a choice of composite, is the datum of an $\infty$-category $\cE\to [2]$ over $[2]$.  
This, however, is \emph{not} correct.  
The obstruction is manifested in the non-contractibility of the collection of $\infty$-categories $\cE\to [2]$ over $[2]$ with specified restrictions over $\{0<1\}$ and $\{1<2\}$.  
The key to circumnavigating this obstruction is to observe that the $\infty$-category $\cE_{012}\to [2]$ over $[2]$ is \emph{not} an arbitrary $\infty$-category over $[2]$, but is in fact an \emph{exponentiable fibration} over $[2]$.  
The content of the coming subsection justifies that, indeed, there is an $\infty$-category whose morphisms are correspondences, and which is presented as a simplicial space for which a $p$-simplex is an exponentiable fibration over $[p]$.

\end{remark}

\subsection{Exponentiable fibrations}\label{sec.efibs}
Recall Definition~\ref{def.exp-fib} of an exponentiable fibration.

\begin{observation}\label{exp-op}
A functor $\cE\to \cK$ is an exponentiable fibration if and only if its opposite $\cE^{\op}\to \cK^{\op}$ is an exponentiable fibration.

\end{observation}

\begin{example}\label{over-point}
Each functor $\cE \to \ast
$ to the terminal $\oo$-category is an exponentiable fibration. The right adjoint to base change along this functor is \emph{global sections}:
\[
\Gamma: \Cat_{/\cE}\longrightarrow \Cat_{/\ast} = \Cat~,\qquad (\cX\to \cE) \mapsto \Fun_{/\cE}(\cE,\cX)~=:~\Gamma(\cX\ra \cE)~.
\]
\end{example}

\begin{example}\label{non-example}
The inclusion $\{0<2\}\hookrightarrow [2]$ is not an exponentiable fibration. For instance, base change along $\{0<2\}\hookrightarrow [2]$ fails to preserve the colimit $\{0<1\}\underset{\{1\}}\amalg \{1<2\} \xra{\simeq} [2]$.
\end{example}

Exponentiable fibrations are precisely those for which the next definition has meaning.
\begin{definition}\label{def.rel.fun}
Let $\cE\xra{\pi}\cK$ be an exponentiable fibration.
For each $\infty$-category $\cZ\to \cE$ over $\cE$, the $\infty$-category of \emph{relative functors (over $\cK$)}
\[
\bigl(\Fun_{\cK}^{\sf rel}(\cE,\cZ) \to\cK\bigr)~:=~\pi_\ast(\cZ\to \cE)
\]
is the $\infty$-category over $\cK$ that is the value of the right adjoint to the base change functor $\pi^\ast$ on $\cZ\to \cE$.

\end{definition}

\begin{observation}\label{rel.fun.global.sec}
Let $\cE\xra{\pi}\cK$ be an exponentiable fibration; let $\cZ\to \cE$ be an $\infty$-category over $\cE$.
For an $\infty$-category $\cJ\to \cK$ over $\cK$, there is a canonical identification between $\infty$-categories
\[
\Fun_{/\cK}\bigl(\cJ,\Fun_{\cK}^{\sf rel}(\cE,\cZ)\bigr)~\simeq~\Fun_{/\cE}(\cE_{|\cJ},\cZ)~.
\]
In particular, the global sections of the relative functor $\infty$-category
\[
\Fun_{/\cK}\bigl(\cK,\Fun_{\cK}^{\sf rel}(\cE,\cZ)\bigr)~\simeq~\Fun_{/\cE}(\cE,\cZ)
\]
is identified as the global sections of $\cZ$.

\end{observation}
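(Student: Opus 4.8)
The plan is to identify the two $\infty$-categories by comparing their functors of points. Since a (small) $\infty$-category $\cX$ is determined by the presheaf $\cT \mapsto \Map(\cT,\cX)$ on $\infty$-categories, it suffices to produce an equivalence of spaces
\[
\Map\bigl(\cT,\ \Fun_{/\cK}(\cJ, \Fun^{\sf rel}_{\cK}(\cE,\cZ))\bigr)\ \simeq\ \Map\bigl(\cT,\ \Fun_{/\cE}(\cE_{|\cJ},\cZ)\bigr)
\]
that is natural in the $\infty$-category $\cT$ — and also natural in $\cJ$ and $\cZ$, which will witness the asserted canonicity of the identification.

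First I would unwind the left-hand side. By the exponential adjunction in $\Cat$, for any $\cW\to\cK$ a functor $\cT \to \Fun_{/\cK}(\cJ,\cW)$ is precisely the datum of a functor $F\colon \cT\times\cJ \to \cW$ whose composite $\cT\times\cJ \xra{F} \cW \to \cK$ agrees with $\cT\times\cJ \xra{\pr_{\cJ}} \cJ \to \cK$; thus $\Map(\cT,\Fun_{/\cK}(\cJ,\cW)) \simeq \Map_{\Cat_{/\cK}}(\cT\times\cJ, \cW)$, where $\cT\times\cJ$ is regarded over $\cK$ through $\pr_{\cJ}$. Applying this with $\cW = \Fun^{\sf rel}_{\cK}(\cE,\cZ) = \pi_\ast(\cZ\to\cE)$ and invoking the adjunction $\pi^\ast \dashv \pi_\ast$ that is supplied by the hypothesis that $\pi$ is an exponentiable fibration, the mapping space becomes $\Map_{\Cat_{/\cE}}\bigl(\pi^\ast(\cT\times\cJ),\cZ\bigr)$. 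Because the structure map $\cT\times\cJ\to\cK$ factors through $\pr_{\cJ}$, the pullback $\pi^\ast$ (being a base change) distributes over the constant factor: $\pi^\ast(\cT\times\cJ) = \cE\underset{\cK}\times(\cT\times\cJ) \simeq \cT\times\bigl(\cE\underset{\cK}\times\cJ\bigr) = \cT\times\cE_{|\cJ}$, compatibly with the projections to $\cE$. Hence the displayed mapping space is $\Map_{\Cat_{/\cE}}(\cT\times\cE_{|\cJ},\cZ)$, and running the first step in reverse — now with base $\cE$ in place of $\cK$, with $\cE_{|\cJ}\to\cE$ in place of $\cJ\to\cK$, and with $\cZ\to\cE$ in place of $\cW\to\cK$ — identifies this with $\Map\bigl(\cT,\Fun_{/\cE}(\cE_{|\cJ},\cZ)\bigr)$. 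Each equivalence in this chain is natural in $\cT$ (and in $\cJ$ and $\cZ$), so by the Yoneda lemma they assemble into the asserted canonical equivalence of $\infty$-categories. The ``in particular'' statement is the special case $\cJ = \cK$ with structure map $\id_{\cK}$, for which $\cE_{|\cK} = \cE$.

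I do not anticipate a serious obstacle here: the content is the adjunction $\pi^\ast\dashv\pi_\ast$ (which is exactly the definition of exponentiability) together with the elementary observation that $\pi^\ast$, being a base change, commutes with forming products against a constant $\infty$-category. The only points demanding care are bookkeeping ones — making precise the functor-of-points description of $\Fun_{/\cK}(\cJ,-)$ and tracking which structure maps are in play at each stage — so that the chain of equivalences is genuinely natural in $\cT$ and the appeal to Yoneda is legitimate.
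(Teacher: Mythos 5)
Your argument is correct and is exactly the intended justification: the paper records this as an unproved Observation, and the content is precisely the adjunction $\pi^\ast\dashv\pi_\ast$ from exponentiability together with the identifications $\Map(\cT,\Fun_{/\cK}(\cJ,\cW))\simeq\Map_{\Cat_{/\cK}}(\cT\times\cJ,\cW)$ and $\pi^\ast(\cT\times\cJ)\simeq\cT\times\cE_{|\cJ}$, assembled by Yoneda. No gaps; the bookkeeping you flag (naturality in $\cT$ of each step) is standard and unproblematic.
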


\begin{prop}\label{exps.compose}
Let $\cE \xra{\pi} \cK \xra{\pi'} \cU$ be a composable sequence of functors between $\infty$-categories.
If both $\pi$ and $\pi'$ are exponentiable fibrations, then the composition $\pi'\circ\pi$ is an exponentiable fibration.

\end{prop}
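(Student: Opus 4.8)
The plan is to observe that $(\pi'\circ\pi)^\ast$ is the composite of the two base-change functors $\pi^\ast$ and $(\pi')^\ast$, and that a composite of left adjoints is again a left adjoint.

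Recall first that for \emph{any} functor $f\colon \cA\to \cB$ between $\infty$-categories the base-change functor $f^\ast\colon \Cat_{/\cB}\to \Cat_{/\cA}$ is defined, using only that $\Cat$ admits pullbacks; and that the assignment $f\mapsto f^\ast$ is pseudofunctorial in the sense that, for a composable pair $\cA\xra{f}\cB\xra{g}\cC$, there is a canonical equivalence $(g\circ f)^\ast\simeq f^\ast\circ g^\ast$ of functors $\Cat_{/\cC}\to \Cat_{/\cA}$. This is the pasting lemma for pullback squares: a commutative rectangle formed from a pullback square on the left and a pullback square on the right is again a pullback square. (Equivalently, it records the Cartesian-transport functoriality of the Cartesian fibration $\ev_t\colon \Ar(\Cat)\to \Cat$ given by evaluation at the target.) Applying this to the given pair $\cE\xra{\pi}\cK\xra{\pi'}\cU$ yields a canonical equivalence $(\pi'\circ\pi)^\ast\simeq \pi^\ast\circ(\pi')^\ast$ between functors $\Cat_{/\cU}\to \Cat_{/\cE}$.

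Now suppose $\pi$ and $\pi'$ are exponentiable fibrations, so that $\pi^\ast$ and $(\pi')^\ast$ admit right adjoints, namely $\pi_\ast=\Fun^{\sf rel}_{\cK}(\cE,-)$ and $(\pi')_\ast=\Fun^{\sf rel}_{\cU}(\cK,-)$ respectively (Definition~\ref{def.rel.fun}). Since a composite of adjunctions is an adjunction, the composite functor $(\pi')_\ast\circ\pi_\ast$ is right adjoint to $\pi^\ast\circ(\pi')^\ast\simeq(\pi'\circ\pi)^\ast$. In particular $(\pi'\circ\pi)^\ast$ is a left adjoint, i.e., $\pi'\circ\pi$ is an exponentiable fibration; moreover its relative-functor construction is computed by the iterated formula
\[
\Fun^{\sf rel}_{\cU}(\cE,\cZ)~\simeq~\Fun^{\sf rel}_{\cU}\bigl(\cK,\Fun^{\sf rel}_{\cK}(\cE,\cZ)\bigr)~,\qquad (\cZ\to \cE)\in \Cat_{/\cE}~.
\]

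The only step requiring any care is the pseudofunctoriality identification $(\pi'\circ\pi)^\ast\simeq\pi^\ast\circ(\pi')^\ast$; the remainder is formal manipulation of adjunctions. That identification is entirely standard, but since the rest of the argument is so short it is worth making explicit, either directly via the pasting lemma for pullbacks or by invoking the Cartesian fibration $\ev_t\colon \Ar(\Cat)\to\Cat$ and its transport functors.
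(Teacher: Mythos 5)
Your proof is correct and is essentially the paper's own argument: both hinge on the identification $(\pi'\circ\pi)^\ast\simeq\pi^\ast\circ(\pi')^\ast$ followed by the observation that a composite of left adjoints is a left adjoint. The only cosmetic difference is that the paper deduces this identification by passing to right adjoints from the evident equivalence $(\pi'\circ\pi)_!\simeq\pi'_!\circ\pi_!$ of postcomposition functors, whereas you invoke the pasting lemma for pullbacks directly; both are standard and equivalent justifications.
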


\begin{proof}
Directly, the canonical morphism $(\pi'\circ\pi)_! \xra{\simeq}\pi'_!\circ \pi_!$ between functors $\Cat_{/\cE}\to \Cat_{/\cU}$ is an equivalence, as indicated.
It follows that the canonical morphism $\pi^\ast \circ {\pi'}^\ast \xra{\simeq} (\pi'\circ \pi)^\ast$ between functors $\Cat_{/\cU}\to \Cat_{/\cE}$ is an equivalence.  
By assumption, both $\pi^\ast$ and ${\pi'}^\ast$ are left adjoints.
Because the composition of left adjoints is a left adjoint, the composition $\pi^\ast \circ {\pi'}^\ast \simeq (\pi'\circ \pi)^\ast$ is a left adjoint, as desired.

\end{proof}

Our next main result, Lemma~\ref{exp-char}, gives useful criteria for exponentiability. In order to prove it, we will need to the following technical result for computing spaces of morphisms in certain pushouts.

\begin{lemma}\label{pushout}
Let $p>0$ be a positive integer.
Let
\[
\xymatrix{
\cE_{01}   \ar[d]
&
\cE_1   \ar[d]   \ar[r]  \ar[l]
&
\cE_{1p}    \ar[d]
\\
\{0<1\}
&
\{1\}   \ar[r]  \ar[l]
&
\{1<\dots<p\}
}
\]
be a diagram of $\infty$-categories in which each square is a pullback.
Consider the functor $\cE\to [p]$ between pushouts of the horizontal diagrams. Let $e_i,e_j\in \cE$ be objects over $i\leq j\in [p]$.
Then the space of morphisms in $\cE$ from $e_i$ to $e_j$ abides by the following expressions.
\begin{enumerate}
\item[~]
\begin{enumerate}
\item If $0\leq i \leq j \leq 1$, the canonical map between spaces
\[
\cE_{01}(e_i,e_j) \longrightarrow \cE(e_i,e_j)
\]
is an equivalence.

\item If $1\leq i \leq j \leq p$, the canonical map between spaces
\[
\cE_{1p}(e_i,e_j) \longrightarrow \cE(e_i,e_j)
\]
is an equivalence.

\item If $0=i<1<j\leq p$, composition defines a map from the coend
\[
\cE_{01}(e_0,-)\underset{\cE_{1}}\bigotimes \cE_{1p}(-,e_j)
\xra{~\circ~}
\cE(e_0,e_j)~,
\]
which is an equivalence between spaces.
\end{enumerate}
\end{enumerate}

\end{lemma}

\begin{proof}
Here is our strategy of proof.
We construct a simplicial space over $[p]$ via a left Kan extension, designed to achieve the colimit expression of~(c).
We verify that this simplicial space over $[p]$ satisfies the Segal and univalence conditions.
Finally, we verify that this univalent Segal space over $[p]$ possesses the universal property of the pushout, $\cE$.

For each object $[p]\in \bDelta$, consider the full subcategory $\cP_{[p]}  \subset \bDelta_{/[p]}$ of the overcategory consisting of those morphisms $[q] \to [p]$ for which the canonical diagram of $\infty$-categories
\[
\xymatrix{
[q]_{|\{1\}}  \ar[r]  \ar[d]
&
[q]_{|\{1<\dots<p\}}  \ar[d]
\\
[q]_{|\{0<1\}}  \ar[r]
&
[q]
}
\]
is a pushout.
Explicitly, $\cP_{[p]}$ consists of those $[q]\xra{\rho} [p]$ for which $\rho^{-1}(1) \neq \emptyset$ is not empty.  
Consider the full $\infty$-subcategory $Z \subset \Cat_{/[p]}$ consisting of the three objects $\{0<1\}\to [p]$, $\{1\}\to [p]$, and $\{1<\dots<p\}\subset [p]$.
This $\infty$-category corepresents a zig-zag among three objects.
Denote the composite functor $\alpha \colon Z\hookrightarrow \Cat_{/[p]} \to \Cat$.
Base change along each $(S\to [p])\in Z$ determines a functor
\begin{equation}\label{12}
\cP_{[p]} \longrightarrow \Fun(Z , \Cat)_{/\alpha}~.
\end{equation}

Let $\cZ\to [p]$ be an $\infty$-category over $[p]$; we use the same notation $\cZ\colon (\bDelta_{/[p]})^{\op} \to \Spaces$ for the restricted Yoneda presheaf.
The definition of $\cP_{[p]}$ ensures that, for each object $([q]\to [p])\in \cP_{[p]}$, the canonical diagram of spaces
\begin{equation}\label{32}
\xymatrix{
\cZ([q])  \ar[r]  \ar[d]
&
\cZ([q]_{|\{0<1\}}) \ar[d]
\\
\cZ([q]_{|\{1<\dots<q\}})  \ar[r]
&
\cZ([q]_{|\{1\}})
}
\end{equation}
is a pullback.
As a consequence, the canonical lax commutative diagram of $\infty$-categories
\begin{equation}\label{34}
\xymatrix{
\cP_{[p]}^{\op}  \ar@(u,u)[rrrr]^-{~}  \ar[d]_-{(\ref{12})}
&&
\Downarrow
&&
(\bDelta_{/[p]})^{\op}  \ar[d]^-{\cZ}
\\
\Fun(Z , \Cat)_{/\alpha}^{\op}  \ar[rrrr]^-{\Hom(-,\cZ_\bullet)}
&&
&&
\Spaces
}
\end{equation}
in fact commutes; here $\cZ_\bullet\in \Fun(Z,\Cat)_{/\alpha}$ is the diagram $\cZ_\bullet:=\bigl((\cZ_{01} \la \cZ_{0} \to \cZ_{1p})\to (\{0<1\}\la \{1\} \to \{1<\dots<p\})\bigr)$ given by way of base change.

Now, the diagram $\cE_\bullet :=\bigl((\cE_{01} \la \cE_{0} \to \cE_{1p})\to (\{0<1\}\la \{1\} \to \{1<\dots<p\})\bigr)$ defines an object in the $\infty$-category $ \Fun(Z,\Cat)_{/\alpha}$.
Precomposition with the presheaf represented by this object is a presheaf
\[
\w{\cE}\colon \cP_{[p]}^{\op} \xra{~(\ref{12})~}  ( \Fun(Z , \Cat)_{/\alpha})^{\op} \xra{~{\sf Hom}(-,\cE_\bullet)~} \Spaces~.
\]
Denote by
\[
\cE\colon (\bDelta_{/[p]})^{\op} \longrightarrow  \Spaces~,\qquad ([q]\to [p])\mapsto \cE([q])~,
\]
the presheaf which is the left Kan extension as in this diagram:
\[
\xymatrix{
\cP_{[p]}^{\op}  \ar[rr]^-{\w{\cE}}  \ar[d]
&&
\Spaces
\\
(\bDelta_{/[p]})^{\op}  \ar[urr]_-{{~}{~}{~}\cE:={\sf LKan}}
&&
.
}
\]
We first identify some values of $\cE$.
Let $([q]\to [p])$ be an object in $\bDelta_{/[p]}$.
\begin{itemize}
\item
Suppose $([q]\to [p])$ belongs to the full subcategory $\cP_{[p]}$.
In this case, we identify the value
\begin{eqnarray}
\label{20}
\cE([q])
&
\simeq
&
\w{\cE}([q])
\\
\nonumber
&
\simeq
&
\Map_{\Fun(Z,\Cat)_{/\alpha}}\bigl(([q]_{|\{0<1\}} \la [q]_{|\{1\}} \to [q]_{|\{1<\dots<p\}}), (\cE_{01}\la \cE_1 \to \cE_{1p})\bigr)
\\
\nonumber
&
\simeq
&
\cE_{01}([q]_{|\{0<1\}})\underset{  \cE_{1}([q]_{|\{1\}}) }\times \cE_{1p}([q]_{|\{1<\dots<p\}})~.
\end{eqnarray}
(In the last expression, for $\cK$ an $\infty$-category, and for $\cJ\to \cK$ and $\cC\to \cK$ two $\infty$-categories over $\cK$, we denote $\cC(\cJ):=\Cat_{/\cK}(\cJ,\cC)$ for the space of functors over $\cK$ from $\cJ$ to $\cC$.)

\item
Suppose that the object $([q]\to [p])$ does not belong to the full subcategory $\cP_{[p]}$.
Being a left Kan extension, the value is computed as a colimit,
\[
\cE([q])~\simeq~\colim\bigl((\cP_{[p]}^{([q]\to [p])/})^{\op}\simeq (\cP_{[p]})^{\op}_{/([q]\to [p])}  \to \cP_{[p]}^{\op} \xra{\w{\cE}}\Spaces \bigr)~,
\]
which we now simplify.
Consider the pullback functor
\[
 \cP_{[p]}^{([q]\to [p])/} \longrightarrow \Cat~,\qquad ([q]\to [q']\to [p])\mapsto [q']_{|\{1\}}~.
\]
The definition of $\cP_{[p]}$ is just so that this functor factors as
$
\cP_{[p]}^{([q]\to [p])/} \to  \bDelta .
$
This factorized functor is a right adjoint, with left adjoint
\[
\bDelta \longrightarrow \cP_{[p]}^{([q]\to [p])/} ~,\qquad [r]\mapsto ([q]\to [q]_{|\{0\}}\star [r] \star [q]_{|\{2<\dots<p\}}  \to [p])~,
\]
given in terms of joins.
In particular, this functor $\bDelta^{\op} \to (\cP_{[p]}^{([q]\to [p])/})^{\op}$ is final.
Combined with the identification~(\ref{20}), we thereby identify the value of $\cE$ on $([q]\to [p])$ as
\begin{eqnarray}
\label{30}
\cE([q])
&
\simeq
&
\colim\Bigl(\bDelta^{\op} \to (\cP_{[p]}^{([q]\to [p])/})^{\op}\simeq (\cP_{[p]})^{\op}_{/([q]\to [p])}  \to \cP_{[p]}^{\op} \xra{\w{\cE}}\Spaces \Bigr)
\\
\nonumber
&
\simeq
&
\bigl |  \cE_{01}([q]_{|\{0\}}\star [\bullet])\underset{\cE_1([\bullet])}\times \cE_{1p}([\bullet]\star [q]_{|\{2<\dots<p\}})  \bigr |~.
\end{eqnarray}

\end{itemize}

We now verify that $\cE$ presents an $\infty$-category over $[p]$.
Specifically, we show that $\cE$ satisfies the Segal condition over $[p]$, and the univalence condition over $[p]$.
Let $[q]\to [p]$ be an object in $\bDelta_{/[p]}$.
Consider the canonical diagram of spaces:
\begin{equation}\label{13}
\xymatrix{
\cE([q])  \ar[r]  \ar[d]
&
\cE(\{0<1\}) \ar[d]
\\
\cE(\{1<\dots<q\})  \ar[r]
&
\cE(\{1\}) .
}
\end{equation}
We show this square is a pullback through a few cases.
\begin{itemize}
\item Suppose the object $([q]\to [p])$ belongs to the full subcategory $\cP_{[p]}\in \bDelta_{/[p]}$.
In this case, the square
\begin{equation}\label{14}
\xymatrix{
(\{1\}\to [p])  \ar[r]  \ar[d]
&
(\{0<1\}\to [p])  \ar[d]
\\
(\{1<\dots<q\}\to [p])    \ar[r]
&
([q]\to [p])
}
\end{equation}
in $\bDelta_{/[p]}$ in fact belongs to the full subcategory $\cP_{[p]}\subset \bDelta_{/[p]}$.
From the definition of $\cE$ as the left Kan extension along the fully-faithful inclusion $\cP_{[p]}^{\op} \hookrightarrow (\bDelta_{/[p]})^{\op}$, the square~(\ref{13}) is identified as the square
\begin{equation}\label{15}
\xymatrix{
\w{\cE}([q])  \ar[r]  \ar[d]
&
\w{\cE}(\{0<1\}) \ar[d]
\\
\w{\cE}(\{1<\dots<q\})   \ar[r]
&
\w{\cE}(\{1\}) .
}
\end{equation}
Observe that the functor~(\ref{12}) carries the square~(\ref{14}) in $\cP_{[p]}$ to a pushout square in the $\infty$-category $\Fun(Z , \Cat)_{/\alpha}$.
Consequently, as the functor $\w{\cE}$ is a restricted Yoneda functor, the diagram~(\ref{15}) is a pullback, as desired.

\item Suppose the object $([q]\to [p])$ does not belong to the full subcategory $\cP_{[p]}\subset \bDelta_{/[p]}$.
From the explicit description of $\cP_{[p]}$, the image of $[q]\to [p]$ does not contain $1\in [p]$.  
In particular, the composition $\{1\}\hookrightarrow [q]\to [p]$ factors through either $\{0\}\hookrightarrow [p]$ or $\{2<\dots<p\}\hookrightarrow [p]$.  
There are two cases.
\begin{itemize}

\item Suppose the composite functor $\{1\} \hookrightarrow [q]\to [p]$ factors through $\{0\}\hookrightarrow [p]$.
In this case the expression~(\ref{30}) identifies the square~(\ref{15}) as
\[
\Small
\xymatrix{
\bigl |  \cE_{01}([q]_{|\{0\}}\star [\bullet])\underset{\cE_1([\bullet])}\times \cE_{1p}([\bullet]\star [q]_{|\{2<\dots<p\}})  \bigr |  \ar[rr]  \ar[d]
&&
\cE_{01}(\{0<1\})   \ar[d]
\\
\bigl |  \cE_{01}([q-1]_{|\{0\}}\star [\bullet])\underset{\cE_1([\bullet])}\times \cE_{1p}([\bullet]\star [q-1]_{|\{2<\dots<p\}})  \bigr | \ar[rr]
&&
\cE_{01}(\{1\}),
}
\]
where, here, we use the condensed notation $[q-1]:=\{1<\dots<q\}\subset [q]$.
Because the formation of joins preserves colimits in each of its arguments, for each $[r]\in \bDelta$, the canonical functor from the pushout
\[
\{0<1\}   \underset{ \{1\}} \amalg ([q-1]_{|\{0\}}\star [r]) \xra{~\simeq~} [q]_{|\{0\}}\star[r]
\]
is an equivalence between $\infty$-categories over $\{0<1\}$.
Because $\cE_{01} \to \{0<1\}$ is an $\infty$-category over $\{0<1\}$, using that base change in $\Spaces$ preserves colimits, we identify this last square as the canonical square among spaces
\[
\Small
\xymatrix{
\cE_{01}(\{0<1\})\underset{\cE_{1}(\{1\})  }\times
\bigl |  \cE_{01}([q-1]_{|\{0\}}\star [\bullet])\underset{\cE_1([\bullet])}\times \cE_{1p}([\bullet]\star [q-1]_{|\{2<\dots<p\}})  \bigr |  \ar[rr]  \ar[d]
&&
\cE_{01}(\{0<1\})   \ar[d]
\\
\bigl |  \cE_{01}([q-1]_{|\{0\}}\star [\bullet])\underset{\cE_1([\bullet])}\times \cE_{1p}([\bullet]\star [q-1]_{|\{2<\dots<p\}})  \bigr |  \ar[rr]
&&
\cE_{01}(\{1\}).
}
\]
This square is a pullback, as desired.

\item Suppose the composite functor $\{1\} \hookrightarrow [q]\to [p]$ factors through $\{2<\dots<p\}\hookrightarrow [p]$.
This case is nearly identical to that above; we omit the details.

\end{itemize}

\end{itemize}
We now establish that $\cE$ is univalent over $[p]$.
Consider a diagram of $\infty$-categories:
\[
\xymatrix{
&
\{0<2\}  \ar[dl]  \ar[dr]
&&
\{1<3\}  \ar[dl]  \ar[dr]
&
\\
\{-\}  \ar[drr]
&&
\{0<1<2<3\}  \ar[d]
&&
\{+\}  \ar[dll]
\\
&&
[p]
&&.
}
\]
This is the datum of a functor from the pushout
\[
\{-\}\underset{\{0<2\}}\amalg \{0<1<2<3\}\underset{\{1<3\}}\amalg \{+\} \xra{~\simeq~} \ast \xra{~\lag i\rag~} [p]~.
\]
We must show that the canonical diagram involving spaces of lifts
\[
\Small
\xymatrix{
&&
\Cat_{/[p]}\bigl(\{i\},\cE\bigr)  \ar[d]  \ar[drr]  \ar[dll]
&&
\\
\Cat_{/[p]}\bigl(\{-\} , \cE\bigr)  \ar[dr]
&&
\Cat_{/[p]}\bigl(\{0<1<2<3\}, \cE\bigr)   \ar[dr]  \ar[dl]
&&
\Cat_{/[p]}\bigl(\{+\}, \cE\bigr)   \ar[dl]
\\
&
\Cat_{/[p]}\bigl(\{0<2\}, \cE\bigr)
&&
\Cat_{/[p]}\bigl(\{1<3\}, \cE\bigr)
&
}
\]
is a limit diagram.
Through the composition-restriction adjunction $\Cat_{/\{i\}} \rightleftarrows \Cat_{/[p]}$, this diagram of spaces is identified as the diagram of spaces
\[
\Small
\xymatrix{
&&
\Cat(\ast , \cE_{|i})   \ar[d]  \ar[drr]  \ar[dll]
&&
\\
\Cat\bigl(\{-\},\cE_{|i}\bigr)  \ar[dr]
&&
\Cat\bigl(\{0<1<2<3\}, \cE_{|i}\bigr)   \ar[dr]  \ar[dl]
&&
\Cat\bigl(\{+\},\cE_{|i}\bigr) \ar[dl]
\\
&
\Cat\bigl(\{0<2\}, \cE_{|i}\bigr)
&&
\Cat\bigl(\{1<3\}, \cE_{|i}\bigr)
&.
}
\]
From the definition of $\cE$, we have equivalences $\cE_{|i}\simeq (\cE_{01})_{|i}$ if $i\leq 1$ and $\cE_{|i}\simeq (\cE_{1p})_{|i}$ if $i\geq 1$.
Consequently, this square is a limit diagram precisely because $\cE_{01}$ and $\cE_{1p}$ are each $\infty$-categories over $[p]$.
This concludes the verification that $\cE \to [p]$ is an $\infty$-category over $[p]$.

The construction of this $\infty$-category $\cE$ over $[p]$ was tailored to satisfy the conditions of this result.
Namely, (a) follows directly from the expression~(\ref{20}), applied to the case that $[q]=[1]$ and the morphism $[q]\to [p]$ factors through $\{0<1\}\hookrightarrow [p]$.
Statement~(b) follows directly from expression~(\ref{20}), applied to the case that $[q]=[1]$ and the morphism $[q]\to [p]$ factors through $\{1<\dots<p\}\hookrightarrow [p]$.
Statement~(c) follows directly from expression~(\ref{30}), applied to the case that $[q]=[1]$ and the morphism $[q]\to [p]$ does not factor through either of the monomorphisms $\{0<1\}\hookrightarrow [p]\hookleftarrow\{1<\dots<p\}$.

It remains to show that $\cE\to [p]$ presents the named pushout, as in the statement of the lemma.
First, from the construction of $\cE$, it fits into a diagram of $\infty$-categories over $[p]$:
\[
\xymatrix{
\cE_{1}  \ar[r]  \ar[d]
&
\cE_{1p}  \ar[d]
\\
\cE_{01}  \ar[r]
&
\cE.
}
\]
We must show that this diagram of $\infty$-categories over $[p]$ is a pushout.
Let $\cZ \to [p]$ be an $\infty$-category over $[p]$.
We must show that the canonical square among spaces of functors over $[p]$,
\[
\xymatrix{
\Cat_{/[p]}(\cE,\cZ)  \ar[r]  \ar[d]
&
\Cat_{/[p]}(\cE_{1p}, \cZ)  \ar[d]
\\
\Cat_{/[p]}(\cE_{01},\cZ)  \ar[r]
&
\Cat_{/[p]}(\cE_1,\cZ)
}
\]
is a pullback.
From the definition of $\cE$ as a left Kan extension, this square is canonically identified with the square
\[
\xymatrix{
\Map^{\cP_{[p]}^{\op}}(\w{\cE},\cZ_{|\cP_{[p]}})  \ar[r]  \ar[d]
&
\Cat_{/\{1<\dots<p\}}(\cE_{1p}, \cZ_{|\{1<\dots<p\}})  \ar[d]
\\
\Cat_{/\{0<1\}}(\cE_{01},\cZ_{|\{0<1\}})  \ar[r]
&
\Cat_{/\{1\}}(\cE_1,\cZ_{|\{1\}})~.
}
\]
This last square is a pullback because the lax commutative diagram~(\ref{34}) in fact commutes -- indeed, commutativity of~(\ref{34}) precisely gives that spaces of morphisms to $\cZ_{|\cP_{[p]}}$ is such a pullback. 
\end{proof}

We now arrive at our characterization of exponentiable fibrations, useful both for identifying examples and for structural results.  
This characterization is an $\infty$-categorical version of the Conduch\'e criterion~(\cite{conduche}, \cite{giraud}). A quasi-categorical account of parts of this appears in Appendix B.3 of~\cite{HA}. 
This result and its proof is from ~\cite{emb1a}, which we include here for ease of reference.

\begin{lemma}[\cite{emb1a}]\label{exp-char}
The following conditions on a functor $\pi: \cE \ra \cK$ between $\infty$-categories are equivalent. 
\begin{enumerate}

\item The functor $\pi$ is an exponentiable fibration.

\item The base change functor $\pi^\ast \colon {\Cat}_{/\cK} \to {\Cat}_{/\cE}$ preserves colimits.  

\item\label{used} For each functor $[2]\to \cK$, the diagram of pullbacks
\[
\xymatrix{
\cE_{|\{1\}}  \ar[r]  \ar[d]
&
\cE_{|\{1<2\}}   \ar[d]
\\
\cE_{|\{0<1\}}   \ar[r]
&
\cE_{|[2]} 
}
\]
is a pushout among $\infty$-categories.

\item\label{coend.condition} For each functor $[2]\to \cK$, and for each lift $\{0\} \amalg\{2\} \xra{\{e_0\}\amalg \{e_2\}} \cE$ along $\pi$, the canonical functor from the coend
\[
\cE_{|\{0<1\}}(e_0,-)\underset{\cE_{|\{1\}}}\bigotimes \cE_{|\{1<2\}}(-,e_2)
\xra{~\circ~}  
\cE_{|[2]}(e_0,e_2)
\]
is an equivalence between spaces.

\item
For each functor $[2]\to \cK$, the canonical map between spaces
\[
\underset{[p]\in \bdelta^{\op}}\colim~\Map_{/\cK}([p]^{\tl \tr}, \cE)\xra{~\circ~} \Map_{/\cK}(\{0<2\},\cE)
\]
is an equivalence.
Here we have identified $[2]\simeq \ast^{\tl\tr}$ as the suspension of the terminal $\infty$-category, and we regard each suspension $[p]^{\tl\tr}$ as an $\infty$-category over $\ast^{\tl\tr}$ by declaring the fiber over the left/right cone point to be the left/right cone point.

\item For each functor $[2]\to \cK$, and for each lift $\{0<2\}\xra{(e_0\xra{h} e_2)}\cE$ along $\pi$, the $\infty$-category of factorizations of $h$ through $\cE_{|\{1\}}$ over $[2]\to \cK$
\[
\sB ({\cE_{|\{1\}}}^{e_0/})_{/(e_0\xra{h}e_2)}~\simeq~\ast~\simeq~\sB ({\cE_{|\{1\}}}_{/e_2})^{(e_0\xra{h}e_2)/}
\]
has contractible classifying space.  
Here, the two $\infty$-categories in the above expression agree and are alternatively expressed as the fiber of the functor
\[
\ev_{\{0<2\}}\colon \Fun_{/\cK}([2],\cE) \to \Fun_{/\cK}(\{0<2\},\cE)
\]
over $h$.

\end{enumerate}

\end{lemma}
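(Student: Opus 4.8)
The strategy is to prove a cycle of implications linking the six conditions, routing everything through the pushout condition (3), which is both the most combinatorially tractable and the one from which the coend/mapping-space reformulations follow by Yoneda-type arguments. The backbone is:
\[
(1) \Longleftrightarrow (2) \Longrightarrow (3) \Longrightarrow (4) \Longleftrightarrow (5) \Longleftrightarrow (6) \Longrightarrow (2).
\]
The equivalence $(1)\Leftrightarrow(2)$ is essentially a formal consequence of the adjoint functor theorem: $\Cat_{/\cK}$ is presentable and $\pi^\ast$ always preserves limits, so $\pi^\ast$ is a left adjoint if and only if it preserves colimits. The implication $(2)\Rightarrow(3)$ is immediate by applying colimit-preservation of $\pi^\ast$ to the specific pushout square $\{0<1\}\amalg_{\{1\}}\{1<2\}\xrightarrow{\simeq}[2]$ in $\Cat_{/\cK}$ (pulled back along $[2]\to\cK$), since pullback along the inclusions of $\{1\},\{0<1\},\{1<2\},[2]$ into $\cK$ commutes with $\pi^\ast$.

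For the harder direction $(6)\Rightarrow(2)$ — which closes the loop — the plan is to reduce general colimit preservation to the single generating case of the pushout $[2]\simeq \{0<1\}\amalg_{\{1\}}\{1<2\}$, using that every $\infty$-category is built from simplices under colimits and that $\pi^\ast$ always preserves those colimits it can be tested on by mapping out of $[p]$'s over $\cK$; condition (6), reformulated via the fiber of $\ev_{\{0<2\}}\colon \Fun_{/\cK}([2],\cE)\to\Fun_{/\cK}(\{0<2\},\cE)$, is exactly the statement that the canonical comparison map $\pi^\ast(\{0<1\})\amalg_{\pi^\ast(\{1\})}\pi^\ast(\{1<2\})\to \pi^\ast([2])$ is an equivalence after probing by all $[p]$-points over $\cK$; since $\Cat$ is generated under colimits by $\bDelta$, a $\cK$-point being an equivalence on all such probes forces it to be an equivalence. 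The middle equivalences $(3)\Leftrightarrow(4)\Leftrightarrow(5)\Leftrightarrow(6)$ are extended-Yoneda translations: a pushout of $\infty$-categories over $[2]$ is detected on mapping spaces $\cE_{|[2]}(e_0,e_2)$, which unwinds to the coend formula (4); rewriting the coend as a colimit over the twisted arrow category or equivalently over $\bDelta^{\op}$ of $[p]^{\tl\tr}$-points gives (5); and (6) is the dual/op reformulation of (5) recognizing the relevant colimit as the classifying space of an over/under-category of factorizations, via the join decomposition $[2]\simeq\ast^{\tl\tr}$.

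The main obstacle I anticipate is the $(6)\Rightarrow(2)$ step: making rigorous the claim that it suffices to check colimit preservation of $\pi^\ast$ on the single pushout diagram $[2]\simeq\{0<1\}\amalg_{\{1\}}\{1<2\}$ (with its higher-simplicial refinements). This requires care because colimit preservation is not a ``local'' condition testable one diagram at a time; the right framework is to observe that $\pi^\ast$ is a morphism of presentable $\infty$-categories, that $\Cat_{/\cK}$ is generated under colimits by the objects $[p]\to\cK$, and that preservation of \emph{all} colimits follows once $\pi^\ast$ preserves the defining relations among simplices — the inner-horn and Segal-type pushouts — all of which are instances of, or are assembled from, the basic $[2]$-pushout by iterating over the simplices of an arbitrary $\infty$-category. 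The bookkeeping that condition (6), stated fiberwise over each $[2]\to\cK$, globalizes correctly is where the argument needs to be written with some care; I would cite the treatment in \cite{emb1a} for the details rather than reproduce the full verification here.
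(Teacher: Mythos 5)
Your outline agrees with the paper on the easy legs: $(1)\Leftrightarrow(2)$ by the adjoint functor theorem, $(2)\Rightarrow(3)$ by applying $\pi^\ast$ to the pushout $\{0<1\}\amalg_{\{1\}}\{1<2\}\simeq[2]$, and the translations among $(4)$, $(5)$, $(6)$ (the paper does these via a map of fiber sequences and Quillen's Theorem A). But the step that carries all the weight --- getting from the fiberwise condition over $[2]$-points back to the existence of a right adjoint, your $(6)\Rightarrow(2)$ --- is a genuine gap as proposed. Colimit preservation by $\pi^\ast$ is not a condition you can verify on a generating family of pushouts: the fact that $\Cat_{/\cK}$ is generated under colimits by the objects $[p]\to\cK$ gives no mechanism for reducing preservation of an arbitrary colimit to preservation of the single Segal-type pushout, because an arbitrary diagram is not functorially assembled from instances of that pushout in a way that $\pi^\ast$ is known to respect (that is precisely what is at stake), and colimits in $\Cat$ are not computed in $\PShv(\bDelta)$, so ``built from simplices'' does not commute with base change for free. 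Your own text concedes the point by deferring ``the bookkeeping'' to a citation of \cite{emb1a}; since the statement being proved \emph{is} that lemma, this is circular rather than a proof.

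The idea that is missing is the one the paper uses: do not try to prove colimit preservation in $\Cat_{/\cK}$ directly, but pass to $\PShv(\bDelta)$, where colimits are universal, so that the base change functor $\pi^\ast\colon\PShv(\bDelta)_{/\cK}\to\PShv(\bDelta)_{/\cE}$ automatically admits a right adjoint $\w{\pi}_\ast$. Since $\Cat\hookrightarrow\PShv(\bDelta)$ preserves limits, exponentiability then reduces to showing that $\w{\pi}_\ast$ carries complete Segal spaces over $\cE$ to complete Segal spaces over $\cK$; the Segal condition for $\w{\pi}_\ast\cA$ unwinds, by adjunction, to the statement that $\cE_{|\{0<1\}}\amalg_{\cE_{|\{1\}}}\cE_{|\{1<\dots<p\}}\to\cE_{|[p]}$ is an equivalence, whose only nontrivial mapping-space comparison is exactly the coend condition $(4)$, and univalence is a separate (easier) check using completeness of $\cK$ and $\cA$. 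Relatedly, your $(3)\Rightarrow(4)$ also needs an argument: mapping spaces in a pushout of $\infty$-categories are not computed by inspection, and the paper spends real effort here (the consecutive/non-consecutive case analysis in its $(3)\Leftrightarrow(5)$ step, and Lemma~\ref{pushout}). As written, your proposal establishes the formal equivalences but not the two implications that make the lemma nontrivial.
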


\begin{proof}
By construction, the $\infty$-category $\Cat$ is presentable, and thereafter each over $\infty$-category ${\Cat}_{/\cC}$ is presentable.
The equivalence of~(1) and~(2) follows by way of the adjoint functor theorem (Cor. 5.5.2.9 of~\cite{HTT}), using that base-change is defined in terms of finite limits.  
The equivalence of~(4) and~(6) follows from Quillen's Theorem A.
The equivalence of~(4) and~(5) follows upon observing the map of fiber sequences among spaces
\[
\xymatrix{
\cE_{|\{0<1\}}(e_0,-)\underset{\cE_{|\{1\}}}\bigotimes \cE_{|\{1<2\}}(-,e_2)  \ar[r]  \ar[d]^-{\circ}
&
\underset{[p]\in \bdelta^{\op}}\colim~\Map_{/\ast^{\tl\tr}}([p]^{\tl \tr}, \cE_{|\ast^{\tl\tr}})  \ar[r]^-{\ev_{0,2}}  \ar[d]^-{\circ}
&
\cE_{|\{0\}}^{\sim} \times \cE_{|\{2\}}^{\sim}  \ar[d]^-{=}
\\
\cE_{|[2]}(e_0,e_2)  \ar[r]
&
\Map_{/\{0<2\}}(\{0<2\},\cE_{|\{0<2\}})  \ar[r]^-{\ev_{0,2}} 
&
\cE_{|\{0\}}^{\sim} \times \cE_{|\{2\}}^{\sim}
,
}
\]
where the top sequence is indeed a fibration sequence because colimits are universal in the $\infty$-category of spaces.  
By construction, there is the pushout expression $\{0<1\}\underset{\{1\}}\amalg \{1<2\} \xra{\simeq}[2]$ in $\Cat$; this shows~(2) implies~(3).

\medskip

We now prove the equivalence between~(3) and~(5).
Consider an $\infty$-category $\cZ$ under the diagram $\cE_{|\{0<1\}} \la \cE_{|\{1\}} \to \cE_{|\{1<2\}}$.  
We must show that there is a unique functor $\cE_{|[2]} \to \cZ$ under this diagram.  
To construct this functor, and show it is unique, it is enough to do so between the univalent Segal spaces these $\infty$-categories present:
\[
\Map([\bullet],\cE_{|[2]})~ \overset{\exists !}\dashrightarrow~ \Map([\bullet],\cZ)
\]
under $\Map([\bullet],\cE_{|\{0<1\}}) \la \Map([\bullet],\cE_{|\{1\}}) \to \Map([\bullet],\cE_{|\{1<2\}})$.

So consider a functor $[p]\xra{f} [2]$ between finite non-empty linearly ordered sets.  
Denote the linearly ordered subsets $A_i:=f^{-1}(i)\subset [p]$.  
We have the diagram of $\infty$-categories
\begin{equation}\label{two-squares}
\xymatrix{
A_1  \ar[r]  \ar[d]
&
A_1\star A_2  \ar[d]
&&
\{1\}  \ar[r]  \ar[d]
&
\{1<2\}  \ar[d]
\\
A_0\star A_1  \ar[r]
&
[p]
&
\text{ over the diagram }
&
\{0<1\} \ar[r]
&
[2].
}
\end{equation}
We obtain the solid diagram of spaces of functors
\begin{equation}\label{flat}
\Small
\xymatrix{
\Map_{/\{0<1\}}(A_0\star A_1,\cE_{|\{0<1\}})   \ar[ddd]
&
\Map_{/\{1\}}(A_1,\cE_{|\{1\}})  \ar[r]  \ar[l]  \ar@(l,l)[ddd]
&
\Map_{/\{1<2\}}(A_1\star A_2, \cE_{|\{1<2\}})  \ar[ddd]
\\
&
\Map_{/[2]}([p],\cE) \ar@{-->}[d]^-{\exists !}  \ar[ul]  \ar[ur]
&
\\
&
\Map([p],\cZ)   \ar[dl]  \ar[dr]
&
\\
\Map_{/\{0<1\}}(A_0\star A_1,\cZ) 
&
\Map_{/\{1\}}(A_1,\cZ)  \ar[r]  \ar[l]
&
\Map_{/\{1<2\}}(A_1\star A_2, \cZ)  
}
\end{equation}
and we wish to show there is a unique filler, as indicated.  
\\
{\bf Case that $f$ is consecutive:}
In this case the left square in~(\ref{two-squares}) is a pushout.
It follows that the upper and the lower flattened squares in~(\ref{flat}) are pullbacks.
And so there is indeed a unique filler making the diagram~(\ref{flat}) commute.  
\\
{\bf Case that $f$ is not consecutive:}
In this case $A_1=\emptyset$ and $A_0\neq \emptyset \neq A_2$.  
Necessarily, there are linearly ordered sets $B_0$ and $B_2$ for which $B_0^{\tr} \simeq A_0$ and $B_2^{\tl} \simeq A_2$.  
We recognize $B_0^{\tr} \underset{\{0\}}\amalg \{0<2\}\underset{\{2\}} \amalg B_2^{\tl} \xra{\simeq} [p]$ as an iterated pushout.  
So the canonical maps among spaces to the iterated pullbacks
\[
\Small
\xymatrix{
\Map_{/[2]}([p],\cE_{|[2]})  \ar[rr]^-{\simeq}
&&
\Map(B_0^{\tr},\cE_{|\{0\}})  \underset{\cE_{|\{0\}}^{\sim}}\times \Map_{/\{0<2\}}(\{0<2\},\cE_{|\{0<2\}}) \underset{\cE_{|\{2\}}^{\sim}} \times \Map(B_2^{\tl},\cE_{|\{2\}})  
}
\]
and
\[
\Small
\xymatrix{
\Map([p],\cZ)  \ar[rr]^-{\simeq}
&&
\Map(B_0^{\tr},\cZ)  \underset{\cZ^{\sim}}\times \Map(\{0<2\},\cZ) \underset{\cZ^{\sim}} \times \Map(B_2^{\tl},\cZ)   
}
\]
are equivalences.
This reduces us to the case that $[p] \to [2]$ is the functor $\{0<2\} \to [2]$.  
We have the solid diagram of spaces
\[
\xymatrix{
\Map_{/\{0<2\}}(\{0<2\},\cE_{|\{0<2\}})   \ar@{-->}[rr]^-{\exists !}
&&
\Map(\{0<2\},\cZ)  
\\
\bigl|\Map_{/[2]}([\bullet]^{\tl \tr}, \cE_{|\ast^{\tl\tr}})\bigr|  \ar[rr]  \ar[u]^-{\circ}
&&
\bigl|\Map([\bullet]^{\tl \tr}, \cZ)\bigr|  \ar[u]^-{\simeq}_-{\circ}
.
}
\]
(Here, $\bigl| - \bigr|$ denotes geometric realization of the simplicial space whose space of $[\bullet]$-points is present in the notation.)
The right vertical map is an equivalence by the Yoneda lemma for $\infty$-categories. 
(Alternatively, the domain is the classifying space of the $\infty$-category which is the unstraightening of the indicated functor from $\bdelta^{\op}$ to spaces, and the codomain maps to this $\infty$-category finally.)
Assumption~(5) precisely gives that the left vertical map is an equivalence.  
The unique filler follows.

\medskip

It remains to show~(4) implies~(1). To do this we make use of the presentation $\Cat\hookrightarrow \Psh(\bdelta)$ as univalent Segal spaces.  
Because limits and colimits are computed value-wise in $\Psh(\bdelta)$, and because colimits in the $\infty$-category $\Spaces$ are universal, then colimits in $\Psh(\bdelta)$ are universal as well.
Therefore, the base change functor
\[
\pi^\ast \colon \Psh(\bdelta)_{/\cK} \leftrightarrows \Psh(\bdelta)_{/\cE} \colon \w{\pi}_\ast
\]
has a right adjoint, $\w{\pi}_\ast$.
Because the presentation $\Cat \hookrightarrow \Psh(\bdelta)$ preserves limits, then the functor $\cE\xra{\pi}\cK$ is exponentiable provided this right adjoint $\w{\pi}_\ast$ carries univalent Segal spaces over $\cE$ to univalent Segal spaces over $\cK$.  

So let $\cA \to \cE$ be a univalent Segal space over $\cE$.  
To show the simplicial space $\w{\pi}_\ast \cA$ satisfies the Segal condition we must verify that, for each functor $[p]\to \cE$ with $p>0$, the canonical map of spaces of simplicial maps over $\cE$
\[
\Map_{/\cE}([p],\w{\pi}_\ast \cA) \longrightarrow \Map_{/\cE}(\{0<1\},\w{\pi}_\ast \cA)  \underset{\Map_{/\cE}(\{1\},\w{\pi}_\ast \cA)} \times  \Map_{/\cE}(\{1<\dots<p\},\w{\pi}_\ast \cA)
\]
is an equivalence.
Using the defining adjunction for $\w{\pi}_\ast$, this map is an equivalence if and only if the canonical map of spaces of functors
\[
\Map_{/\cK}(\pi^\ast [p],\cA) \longrightarrow \Map_{/\cK}(\pi^\ast \{0<1\},\cA)  \underset{\Map_{/\cK}(\pi^\ast \{1\},\cA)} \times  \Map_{/\cK}(\pi^\ast \{1<\dots<p\}, \cA)
\]
is an equivalence.
This is the case provided the canonical functor among pullback $\infty$-categories from the pushout $\infty$-category
\[
\cE_{|\{0<1\}} \underset{\cE_{|\{1\}}} \amalg \cE_{|\{1<\dots<p\}} \longrightarrow \cE_{|[p]}
\]
is an equivalence between $\infty$-categories over $\cK$.  
(Here we used the shift in notation $\pi^\ast \cJ := \cE_{|\cJ}$ for each functor $\cJ\to \cK$.)
This functor is clearly essentially surjective, so it remains to show this functor is fully-faithful.
Let $e_i$ and $e_j$ be objects in $\cE$, each of which lies over the object in $[p]$ indicated by the subscript.
We must show that the map between spaces of morphisms
\[
\bigl(\cE_{|\{0<1\}} \underset{\cE_{|\{1\}}} \amalg \cE_{|\{1<\dots<p\}} \bigr)(e_i,e_j) \longrightarrow \cE_{|[p]}(e_i,e_j)
\]
is an equivalence.  
Lemma~\ref{pushout} immediately grants that this is the case in the ranges $1<i\leq j\leq p$ or $0\leq i \leq j \leq 1$.  
We are reduced to the case $i=0<j$.  
Using Lemma~\ref{pushout}, this map is identified with the map from the coend
\[
\cE_{|\{0<1\}}(e_0,-)\underset{\cE_{|\{1\}}}\bigotimes \cE_{|\{1<j\}}(-,e_j)
\xra{~\circ~}  
\cE_{|\{0<1<j\}}(e_0,e_j)~.
\]
Condition~(4) exactly gives that this map is an equivalence, as desired.

It remains to verify this Segal space $\w{\pi}_\ast \cA$ satisfies the univalence condition.
So consider a univalence diagram $\cU^{\tr} \to \cK$.  
We must show that the canonical map
\[
\Map_{/\cK}(\ast,\w{\pi}_\ast \cA) \longrightarrow \Map_{/\cK}(\cU,\w{\pi}_\ast \cA)
\]
is an equivalence of spaces of maps between simplicial spaces over $\cK$.
Using the defining adjunction for $\w{\pi}_\ast$, this map is an equivalence if and only if the map of spaces
\[
\Map_{/\cE}(\cE_{|\ast},\cA)\longrightarrow \Map_{/\cK}(\cE_{|\cU},\cA)
\]
is an equivalence.
Because the presentation of $\cK$ as a simplicial space is complete, there is a canonical equivalence $\cE_{|\cU} \simeq \cE_{|\ast}\times \cU$ over $\cU$.  
That the above map is an equivalence follows because the presentation of $\cA$ as a simplicial space is complete.

\end{proof}

\begin{remark}
The equivalence of (3) and (6) was shown previously by Lurie in Proposition B.3.14 of \cite{HA}.
\end{remark}

We have immediate corollaries, using condition (\ref{used}) of Lemma \ref{exp-char}.

\begin{cor}\label{over-cell}
Each functor $\cE \to [1]
$ to the 1-cell is an exponentiable fibration.  
\end{cor}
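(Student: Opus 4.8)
The plan is to apply criterion~(\ref{used}) of Lemma~\ref{exp-char}, which reduces the problem to a statement about pushouts of $\infty$-categories. Fix a functor $[2]\to [1]$; since $[1]$ is the $1$-cell, such a functor is determined by a monotone map $\{0<1<2\}\to\{s<t\}$, of which there are exactly three. Write $\cE_{|i}$ for the pullback of $\cE\to[1]$ along the corresponding point of $[1]$, and similarly for edges. I must verify that in each case the square of pullbacks
\[
\xymatrix{
\cE_{|\{1\}}  \ar[r]  \ar[d]
&
\cE_{|\{1<2\}}   \ar[d]
\\
\cE_{|\{0<1\}}   \ar[r]
&
\cE_{|[2]}
}
\]
is a pushout of $\infty$-categories.

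First, the two cases where the map $[2]\to[1]$ is not injective. If $\{0,1,2\}\mapsto\{s,s,t\}$, then the composite $[2]\to[1]$ factors so that $\{0<1\}$ maps to $\{s\}$, hence $\cE_{|\{0<1\}}\simeq\cE_{|s}\simeq\cE_{|\{1\}}$, and all the horizontal maps in the square are equivalences, so the square is trivially a pushout. If $\{0,1,2\}\mapsto\{s,t,t\}$, then $\{1<2\}$ maps to $\{t\}$, so $\cE_{|\{1\}}\simeq\cE_{|\{1<2\}}\simeq\cE_{|t}$, and the vertical maps are equivalences; again the square is a pushout. Next, the case where $[2]\to[1]$ is the surjection $\{0,1,2\}\mapsto\{s,t,t\}$ — wait, that is the previous one — so the remaining genuinely new case is $\{0,1,2\}\mapsto\{s,s,t\}$ handled above together with the non-surjective-looking possibility. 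In fact the three monotone maps $[2]\to[1]$ are: the constant map at $s$, the constant map at $t$, and the two surjections; but the two surjections are exactly the two cases just treated, and a constant map makes $\cE_{|[2]}\simeq\cE_{|\{0<1\}}\simeq\cE_{|\{1\}}\simeq\cE_{|\{1<2\}}$ all equivalent, so the square is a pushout on the nose.

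There is no main obstacle here: the point is simply that a functor $[2]\to[1]$ always collapses at least one of the two edges $\{0<1\}$ or $\{1<2\}$ onto a vertex, which forces one pair of parallel edges in the square to be equivalences, and a square with one pair of parallel maps being equivalences is automatically a pushout (and a pullback). So the verification of condition~(\ref{used}) is immediate, and Lemma~\ref{exp-char} then gives that $\cE\to[1]$ is an exponentiable fibration. Alternatively, one could note that this follows from Corollary~\ref{over-point} (the case $\cE\to\ast$) combined with Proposition~\ref{exps.compose}, provided one first checks $[1]\to\ast$ is exponentiable — but that itself requires the same kind of computation, so the direct argument via criterion~(\ref{used}) is cleanest.
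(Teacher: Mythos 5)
Your overall strategy (verify condition~(3) of Lemma~\ref{exp-char} for each functor $[2]\to[1]$) is the same as the paper's, but the key step of your verification is wrong. The issue is the meaning of the notation $\cE_{|\{0<1\}}$: it is the base change of $\cE\to[1]$ along the composite $\{0<1\}\to[2]\to[1]$, not the fiber over the image point. So if $[2]\to[1]$ is $(s,s,t)$, then $\cE_{|\{0<1\}}\simeq \cE_{|s}\times\{0<1\}$, which is \emph{not} equivalent to $\cE_{|\{1\}}\simeq\cE_{|s}$ (the inclusion $\cE_{|s}\times\{1\}\hookrightarrow\cE_{|s}\times c_1$ is not essentially surjective unless $\cE_{|s}=\emptyset$). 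The same problem occurs for the constant functors $[2]\to[1]$, where the four corners of the square are $\cE_{|x}\times\{1\}$, $\cE_{|x}\times\{0<1\}$, $\cE_{|x}\times\{1<2\}$ and $\cE_{|x}\times[2]$, and \emph{none} of the four maps is an equivalence. Hence your claim that ``collapsing an edge forces one pair of parallel maps in the square to be equivalences, so the square is trivially a pushout'' fails in every case, and the proof as written does not go through. (Minor further slips: there are four monotone maps $[2]\to[1]$, not three; and the suggested alternative via Corollary~\ref{over-point} and Proposition~\ref{exps.compose} is backwards, since exponentiability of $\cE\to\ast$ and $[1]\to\ast$ says nothing about the first factor $\cE\to[1]$ of the composite.)

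The squares are in fact pushouts, but for different reasons, and this is where the actual content lies. For the two constant functors $[2]\to[1]$, the square is $\cE_{|x}\times\bigl(\{1\}\rightrightarrows\{0<1\},\{1<2\}\to[2]\bigr)$, i.e.\ the product of $\cE_{|x}$ with the Segal pushout $\{0<1\}\underset{\{1\}}\amalg\{1<2\}\xra{\simeq}[2]$; this is again a pushout because $\Cat$ is Cartesian closed, so $\cE_{|x}\times-$ preserves colimits. For the two surjections, say $(s,s,t)$, one must genuinely check that $(\cE_{|s}\times\{0<1\})\underset{\cE_{|s}\times\{1\}}\amalg\,\cE\to[2]\underset{[1]}\times\cE$ is an equivalence; this amounts to a mapping-space computation in the pushout of the type carried out in Lemma~\ref{pushout}(1) (where the relevant coend collapses by the co-Yoneda lemma, $\cE_{|s}(e_0,-)\underset{\cE_{|s}}\otimes\cE(-,e_2)\simeq\cE(e_0,e_2)$). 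Alternatively, it is cleaner for this corollary to verify condition~(4) or~(6) of Lemma~\ref{exp-char} directly: for any $[2]\to[1]$ the middle vertex $1$ maps to $s$ or $t$, so one of the two legs of a factorization lies over an identity, and the factorization $\infty$-category $({\cE_{|\{1\}}}^{e_0/})_{/(e_0\to e_2)}$ has an initial (or final) object, hence contractible classifying space. Either way, some argument beyond ``two parallel maps are equivalences'' is required.
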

\qed

\begin{cor}\label{cons.exp}
For each $0\leq i \leq j \leq n$, the standard inclusion $\{i<\ldots <j\}\hookrightarrow [n]$ is an exponentiable fibration.
\end{cor}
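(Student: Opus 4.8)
The plan is to verify condition~(\ref{used}) of Lemma~\ref{exp-char}, exploiting that $\{i<\ldots<j\}$ is a \emph{convex} subposet of $[n]$. Write $\pi\colon\cE:=\{i<\ldots<j\}\hookrightarrow[n]=:\cK$, so that $\cE\hookrightarrow\cK$ is the inclusion of the full subposet spanned by the interval $[i,j]$. The preliminary point is that, for any monotone map $S\to[n]$ from a poset, the pullback $\cE\underset{[n]}\times S$ is simply the full subposet of $S$ on the elements whose image lies in $[i,j]$; this is immediate since $\cE\hookrightarrow[n]$ is a monomorphism of posets and every map in sight is monotone. Consequently, for a fixed functor $[2]\to[n]$, the poset $P:=\cE_{|[2]}$ is the preimage of the order-convex subset $[i,j]$ under the monotone map $[2]\to[n]$, hence is itself a convex subposet of $\{0,1,2\}$, and the three other pullbacks appearing in condition~(\ref{used}) are the intersections $\cE_{|\{1\}}=P\cap\{1\}$, $\cE_{|\{0<1\}}=P\cap\{0<1\}$, and $\cE_{|\{1<2\}}=P\cap\{1<2\}$.

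The next step is a short case analysis on $P$. The convex subposets of $\{0<1<2\}$ are exactly $\emptyset,\{0\},\{1\},\{2\},\{0<1\},\{1<2\},[2]$; in particular $P$ is never the non-convex $\{0<2\}$. If $P\neq[2]$, then $0\notin P$ or $2\notin P$: in the first case both vertical maps of the square in condition~(\ref{used}) are identities, and in the second case both horizontal maps are, so in either case the square is trivially a pushout of $\infty$-categories. In the remaining case $P=[2]$ the square is precisely the standard pushout $\{0<1\}\underset{\{1\}}\amalg\{1<2\}\xra{\simeq}[2]$ in $\Cat$ recorded in the proof of Lemma~\ref{exp-char}. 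Since this holds for every functor $[2]\to[n]$, Lemma~\ref{exp-char} yields that $\pi$ is an exponentiable fibration.

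I do not anticipate a genuine obstacle. The only way base change along a subposet of $[n]$ could destroy the pushout in condition~(\ref{used}) is when the relevant preimage is the non-convex subposet $\{0<2\}\subset[2]$ --- this is exactly the failure of exponentiability of $\{0<2\}\hookrightarrow[2]$ exhibited in Example~\ref{non-example} --- and convexity of the interval $[i,j]$ is precisely what prevents this. The one step deserving a line of care is the preliminary identification of the pullbacks $\cE_{|S}$ as full subposets on preimages, which should be stated explicitly so that the ensuing mechanical case check is unambiguous.
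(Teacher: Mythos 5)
Your proof is correct and follows exactly the route the paper intends: the corollary is stated as an immediate consequence of condition~(\ref{used}) of Lemma~\ref{exp-char}, and your convexity observation together with the short case analysis on the pullback $\cE_{|[2]}$ is just that check written out in full. Nothing is missing; the preliminary identification of the pullbacks as full subposets on preimages (valid since the inclusion is fully faithful) is the only point needing the line of care you already flag.
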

\qed

\begin{cor}\label{over-groupoid}
Each functor $\cE\to \cG$ to an $\infty$-groupoid is an exponentiable fibration.  
\end{cor}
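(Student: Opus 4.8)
The plan is to verify condition~(\ref{used}) of Lemma~\ref{exp-char} for a functor $\pi\colon\cE\to\cG$ with target an $\infty$-groupoid: it suffices to show that, for each functor $[2]\to\cG$, the square of iterated pullbacks
\[
\xymatrix{
\cE_{|\{1\}} \ar[r] \ar[d] & \cE_{|\{1<2\}} \ar[d] \\
\cE_{|\{0<1\}} \ar[r] & \cE_{|[2]}
}
\]
is a pushout among $\infty$-categories.

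The first step is to reduce every such functor to a constant one. Since $\cG$ is an $\infty$-groupoid, Terminology~\ref{mxml.gpd} identifies $\Cat([2],\cG)\simeq\Spaces(\sB[2],\cG)$; and $\sB[2]\simeq\ast$ because $[2]$ has a terminal object. Hence each functor $[2]\to\cG$ factors, up to equivalence, as $[2]\xra{~p~}\ast\xra{~\lag x\rag~}\cG$ through a point of $\cG$.

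The second step is to compute the four pullbacks. For any $\infty$-category $\cI\to[2]$ over $[2]$, pasting of pullback squares along $\cI\to[2]\xra{p}\ast\xra{\lag x\rag}\cG$ yields a canonical equivalence $\cE\underset{\cG}\times\cI\simeq\cE_{|x}\times\cI$, where $\cE_{|x}:=\cE\underset{\cG}\times\ast$ is the fiber of $\pi$ over $x$. So the displayed square is obtained by applying the functor $\cE_{|x}\times(-)$ to the square $\{1\}\to\{1<2\}$, $\{0<1\}\to[2]$, which is a pushout in $\Cat$ --- this is exactly the pushout expression $\{0<1\}\underset{\{1\}}\amalg\{1<2\}\xra{\simeq}[2]$ already invoked in the proof of Lemma~\ref{exp-char}. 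Because $\Cat$ is cartesian closed, the functor $\cE_{|x}\times(-)$ is a left adjoint, with right adjoint $\Fun(\cE_{|x},-)$, and therefore preserves this pushout. Condition~(\ref{used}) of Lemma~\ref{exp-char} is thus satisfied, so $\pi$ is an exponentiable fibration.

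I anticipate no genuine obstacle here; the only point requiring a moment's care is the identification of the base change of $\pi$ along $[2]\to\cG$ with a product, which rests on that functor factoring through a point --- the content of the first step. (Alternatively one could observe that each $\cE_{|x}\times[2]\to[2]$ is an exponentiable fibration and recover the general statement by descent along the surjection onto $\cG$ from the coproduct of its points, but the direct verification above is shorter.)
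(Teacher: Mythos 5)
Your proof is correct and follows the same route the paper intends: the paper derives Corollary~\ref{over-groupoid} as an immediate consequence of condition~(\ref{used}) of Lemma~\ref{exp-char}, and your argument simply fills in the details — factoring each $[2]\to\cG$ through a point via $\sB[2]\simeq\ast$, identifying the base change as $\cE_{|x}\times(-)$, and using cartesian closedness of $\Cat$ to see that this product preserves the pushout $\{0<1\}\underset{\{1\}}\amalg\{1<2\}\simeq[2]$. No gap; the aside about descent is unnecessary and can be dropped.
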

\begin{proof}
Since $\cG$ is an $\oo$-groupoid, any functor $[2] \ra \cG$ factors through the classifying space $\sB[2]\simeq\ast$. Therefore, the base change $\cE_{|[2]}$ is canonically equivalent with a product $\cE_{|[2]}\simeq \cE_{|\{1\}}\times [2]$ over $[2]$; likewise for the further base changes over $\{0<1\}$ and $\{1<2\}$. 
Consequently, the relevant diagram from condition (\ref{used}) of Lemma \ref{exp-char} becomes
\[
\xymatrix{
\cE_{|\{1\}} \times\{1\}  \ar[r]  \ar[d]
&
\cE_{|\{1\}}  \times\{1<2\}   \ar[d]
\\
\cE_{|\{1\}} \times\{0<1\}   \ar[r]
&
\cE_{|\{1\}} \times[2] ~.
}
\]
This is a pushout since the product functor $\cE_{|\{1\}}\times(-)$ preserves colimits.
\end{proof}

\begin{cor}\label{chopped.from.lemma.pushout}
Consider $\cE\ra[p]$ an $\oo$-category over $[p]$ such that the natural functor from the pushout
\[
\cE_{|\{0<1\}}    \underset{\cE_{|\{1\}}}  \amalg  \cE_{|\{1<\dots<p\}} \longrightarrow \cE
\]
is an equivalence (as in the statement of Lemma~\ref{pushout}).  If the restriction 
$\cE_{|\{1<\dots<p\}} \to \{1<\dots<p\}$ is an exponentiable fibration, then the functor $\cE\to [p]$ is an exponentiable fibration.
\end{cor}
\qed

\subsection{Classifying correspondences}
We define a presheaf on $\Cat$ classifying exponentiable fibrations.  
Later, we will show this presheaf is representable, in a certain sense.

The following corollary of Lemma~\ref{exp-char} shows that the assignment of exponentiable fibrations defines a functor.

\begin{cor}\label{exps-pullback}
Exponentiable fibrations are stable under base change.  That is, given a pullback square among $\infty$-categories
\[
\xymatrix{
\cE'  \ar[r]  \ar[d]
&
\cE  \ar[d]
\\
\cK'  \ar[r]
&
\cK,
}
\]
in which $\cE \ra \cK$ is an exponentiable fibration, then $\cE'\ra \cK'$ is an exponentiable fibration.

\end{cor}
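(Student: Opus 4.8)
The plan is to verify condition~(\ref{used}) of Lemma~\ref{exp-char} for $\cE'\to\cK'$ by reducing it to the same condition for $\cE\to\cK$.

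First I would fix an arbitrary functor $[2]\to\cK'$ and let $[2]\to\cK$ denote its composite with the given functor $\cK'\to\cK$. For each standard linearly ordered subset $\sigma\subseteq[2]$ --- in particular for $\sigma$ equal to $\{1\}$, $\{0<1\}$, $\{1<2\}$, and $[2]$ --- pasting of pullback squares yields a canonical equivalence $\cE'_{|\sigma}:=\cE'\underset{\cK'}\times\sigma\;\simeq\;\bigl(\cE\underset{\cK}\times\cK'\bigr)\underset{\cK'}\times\sigma\;\simeq\;\cE\underset{\cK}\times\sigma=:\cE_{|\sigma}$, where on the right $\sigma$ is regarded over $\cK$ through $\cK'$. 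These equivalences are natural in $\sigma$ with respect to the inclusions among these subsets, so they organize into an equivalence between the square of fibers of $\cE'$ over the faces of $[2]\to\cK'$ and the square of fibers of $\cE$ over the faces of the composite $[2]\to\cK$.

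Next, since $\cE\to\cK$ is an exponentiable fibration, condition~(\ref{used}) of Lemma~\ref{exp-char} applied to the composite $[2]\to\cK$ asserts that the latter square is a pushout among $\infty$-categories. Transporting across the equivalence of squares just constructed, the former square is a pushout as well. Because $[2]\to\cK'$ was arbitrary, $\cE'\to\cK'$ satisfies condition~(\ref{used}) of Lemma~\ref{exp-char}, hence is an exponentiable fibration.

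The proof is essentially formal, so there is no genuine obstacle; the only point deserving care is the \emph{naturality} in $\sigma$ of the identifications $\cE'_{|\sigma}\simeq\cE_{|\sigma}$, which is what upgrades a collection of objectwise equivalences to an equivalence of commutative squares and is needed to transport the pushout property. This naturality is itself a consequence of the pasting lemma, since every square in sight is a pullback over $\cK$ and a composite of pullback squares is again a pullback square. (Alternatively one could argue from characterization~(2) of Lemma~\ref{exp-char}, using that base change along $\cK'\to\cK$ intertwines the base-change functors of $\cE'\to\cK'$ and $\cE\to\cK$; but the fiberwise criterion gives the cleanest route.)
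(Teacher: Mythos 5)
Your proposal is correct and is essentially the paper's own argument: the paper proves this corollary by invoking the fiberwise criterion of Lemma~\ref{exp-char}(\ref{used}), which only refers to base changes along functors $[2]\to\cK$, and these are unchanged (by pasting of pullbacks) when passing from $\cE'\to\cK'$ to $\cE\to\cK$. Your write-up simply makes explicit the naturality of these identifications, which the paper leaves implicit.
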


\begin{proof}
This follows from Lemma~\ref{exp-char}(3).  

\end{proof}

\begin{cor}\label{exp-functor}
Base change defines a functor
\[
\efib\colon \Cat^{\op} \longrightarrow \CAT~,\qquad \cK\mapsto \efib_\cK~.  
\]
\end{cor}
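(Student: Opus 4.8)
The plan is to realize $\efib$ as the straightening of a Cartesian fibration. Begin with the (very large) $\infty$-category $\Fun([1],\Cat)$ of arrows in $\Cat$, equipped with evaluation at the target $\ev_1\colon \Fun([1],\Cat)\to\Cat$. Since $\Cat$ admits pullbacks, $\ev_1$ is a Cartesian fibration: its fiber over an $\infty$-category $\cK$ is $\Cat_{/\cK}$, and a Cartesian lift of a functor $f\colon\cK'\to\cK$ at an object $(\cE\to\cK)$ is the pullback square with corners $\cE'$, $\cE$, $\cK'$, $\cK$. Straightening $\ev_1$ yields the functor $\Cat_{/(-)}\colon\Cat^{\op}\to\CAT$ carrying $f$ to the base-change functor $f^\ast$.

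Next I would let $\EFib\subset\Fun([1],\Cat)$ be the full $\infty$-subcategory spanned by those arrows that are exponentiable fibrations, and restrict $\ev_1$ to it. By Corollary~\ref{exps-pullback}, the pullback of an exponentiable fibration is again an exponentiable fibration; hence the Cartesian lift in $\Fun([1],\Cat)$ of a functor $f\colon\cK'\to\cK$ at an exponentiable fibration $\cE\to\cK$ has its source lying in $\EFib$. Since $\EFib\hookrightarrow\Fun([1],\Cat)$ is fully faithful, this lift remains $\ev_1$-Cartesian after restriction, so $\ev_1\colon\EFib\to\Cat$ is itself a Cartesian fibration, and its fiber over $\cK$ is the full $\infty$-subcategory $\EFib_{\cK}\subset\Cat_{/\cK}$. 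Straightening $\ev_1\colon\EFib\to\Cat$ then produces the desired functor $\efib\colon\Cat^{\op}\to\CAT$, $\cK\mapsto\EFib_{\cK}$; by construction it is a subfunctor of $\Cat_{/(-)}$ whose value on a morphism $f$ is the restricted base-change functor $f^\ast\colon\EFib_{\cK}\to\EFib_{\cK'}$.

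The only substantive point is the verification that $\ev_1\colon\EFib\to\Cat$ is a Cartesian fibration, and this is exactly the content of Corollary~\ref{exps-pullback}: stability of exponentiable fibrations under base change is precisely what keeps the Cartesian lifts inside the full subcategory $\EFib$. (There is the evident size bookkeeping: objects of $\Cat_{/\cK}$, hence of $\EFib_{\cK}$, are small $\infty$-categories over $\cK$, so each $\EFib_{\cK}$ is a large $\infty$-category and the target of $\efib$ is $\CAT$, as stated.) Alternatively, one can bypass straightening altogether: granting that $\Cat_{/(-)}\colon\Cat^{\op}\to\CAT$ is a functor, Corollary~\ref{exps-pullback} says exactly that the objectwise collection $\{\EFib_{\cK}\}$ is closed under every transition functor $f^\ast$, which is precisely the condition needed for these full subcategories to assemble into a subfunctor $\efib$ of $\Cat_{/(-)}$, with the same description on morphisms.
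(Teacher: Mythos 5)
Your proposal is correct and is essentially the paper's (implicit) argument: the paper states this corollary with no written proof, treating it as immediate from Corollary~\ref{exps-pullback}, which is exactly the input your construction hinges on. Your route via the Cartesian fibration $\ev_1$ on the arrow category of $\Cat$, restricted to the full subcategory of exponentiable fibrations and then straightened, is just a careful implementation of the standard ``closed under base change, hence a subfunctor of $\Cat_{/-}$'' reasoning that the paper leaves to the reader, and your closing remark records precisely that shortcut.
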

\qed

\begin{definition}\label{def.EFib}
The functor $\EFib^\sim$ is the composite
\[
\EFib^\sim\colon  \cat^{\op} \xra{~\efib~}\CAT\xra{~(-)^\sim~}\SPACES~,\qquad \cK\mapsto \EFib_\cK^\sim~,
\]
whose value on an $\infty$-category $\cK$ is the $\infty$-groupoid of exponentiable fibrations over $\cK$.
\end{definition}

\begin{definition}\label{ov.efib}
For an $\oo$-category $\cK$, the $\oo$-category
\[
\ov{\efib}_\cK~ \subset ~(\CAT_{/\cK})^{\cK/}
\]
is the full $\oo$-subcategory of exponentiable fibrations $\cE\to  \cK$ equipped with a section. 

\end{definition}

We have a further corollary:

\begin{cor}\label{ov.efib.functor}
Base change defines functors
\[
\ov{\efib}\colon \Cat^{\op} \longrightarrow \CAT~,\qquad \cK\mapsto \ov{\efib}_\cK~.  
\]

\end{cor}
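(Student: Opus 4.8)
The plan is to obtain $\ov{\efib}$ from the functor $\efib\colon \Cat^{\op}\to\CAT$ of Corollary~\ref{exp-functor} by a coslice construction that records the datum of a section.

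First observe that for every $\infty$-category $\cK$ the identity functor $\id_\cK$ is an exponentiable fibration --- base change along $\id_\cK$ is the identity of $\Cat_{/\cK}$, visibly a left adjoint --- and it is the terminal object of $\Cat_{/\cK}$, hence also of the full $\infty$-subcategory $\efib_\cK\subset\Cat_{/\cK}$. For a functor $f\colon \cK'\to\cK$ the transition functor $f^{\ast}\colon\efib_\cK\to\efib_{\cK'}$ of Corollary~\ref{exp-functor} is the restriction of pullback along $f$, so it carries limits --- in particular the terminal object --- to limits: $f^{\ast}(\id_\cK)\simeq\id_{\cK'}$, coherently in $f$. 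Thus $\efib$ refines to a diagram of $\infty$-categories each \emph{equipped with a terminal object preserved by all transition functors}, and applying the (functorial) coslice construction under this family of terminal objects yields a functor
\[
\Cat^{\op}\longrightarrow\CAT~,\qquad \cK\longmapsto \bigl(\efib_\cK\bigr)^{\id_\cK/}~.
\]

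To conclude it suffices to identify the value at $\cK$ with $\ov{\efib}_\cK$. Since $\id_\cK$ is terminal in $\Cat_{/\cK}$, a morphism $\id_\cK\to(\cE\xra{\pi}\cK)$ in $\Cat_{/\cK}$ is exactly a functor $s\colon\cK\to\cE$ with $\pi s=\id_\cK$, i.e.\ a section of $\pi$; hence an object of $(\efib_\cK)^{\id_\cK/}$ is precisely an exponentiable fibration over $\cK$ equipped with a section. As $\efib_\cK\subset\Cat_{/\cK}\subset\CAT_{/\cK}$ are full subcategories sharing the terminal object $\id_\cK$, passing to coslices gives a full inclusion $(\efib_\cK)^{\id_\cK/}\subset(\CAT_{/\cK})^{\cK/}$ (where, as in Definition~\ref{ov.efib}, $\cK/$ abbreviates the coslice under $\id_\cK$) whose essential image is $\ov{\efib}_\cK$, naturally in $\cK$. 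On morphisms this reproduces the expected pointwise recipe: given $f\colon\cK'\to\cK$ and a section $s$ of an exponentiable fibration $\cE\to\cK$, the section of the pullback $\cE\underset{\cK}\times\cK'\to\cK'$ is the one induced by the pair $(\id_{\cK'},\, s\circ f)$ via the universal property of the pullback, which makes sense since $\pi\circ(s\circ f)=f$.

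The only ingredient that must be invoked rather than verified by hand is the functoriality of the coslice (undercategory) construction applied to a family of (large) $\infty$-categories with preserved terminal object --- equivalently, that $(\cC,c)\mapsto\cC^{c/}$ is functorial in the pointed $\infty$-category $(\cC,c)$ --- together with the coherence of the identification of maps out of a terminal object with sections. Both are standard facts about coslice $\infty$-categories, so no genuine obstacle arises; the corollary is otherwise a formal consequence of Corollaries~\ref{exp-functor} and~\ref{exps-pullback}.
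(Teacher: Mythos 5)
Your argument is correct, and it is essentially the content the paper intends: the corollary is stated without proof as an immediate consequence of base-change stability (Corollaries~\ref{exps-pullback} and~\ref{exp-functor}), and your construction — lifting $\efib$ to pointed $\infty$-categories at the terminal objects $\id_\cK$ (which are exponentiable and preserved by base change) and applying the functorial coslice $(\cC,c)\mapsto\cC^{c/}$, then identifying $(\efib_\cK)^{\id_\cK/}\simeq\ov{\efib}_\cK$ inside $(\CAT_{/\cK})^{\cK/}$ — is a clean formalization of exactly that: base change carries sections to sections, coherently in $\cK$.
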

\qed

\begin{definition}\label{def.ov.efib}
The functor $\ov{\EFib}^\sim$ is the composite
\[
\ov{\EFib}\colon \Cat^{\op} \xra{~\ov{\EFib}~} \Cat \xra{~(-)^\sim~} \SPACES~,\qquad \cK\mapsto \ov{\efib}_\cK^\sim~, 
\]
whose value on an $\infty$-category $\cK$ is the $\infty$-groupoid of exponentiable fibrations over $\cK$ equipped with a section.
\end{definition}

Note the canonical morphisms among functors from $\Cat^{\op}$: 
\[
\ov{\efib}\ra \efib
\qquad \text{ and } \qquad
\ov{\EFib}^\sim   \ra \EFib^\sim~.
\]

\begin{observation}\label{sections.fiber}
For $\cE\to \cK$ an exponentiable fibration, the canonical square among $\infty$-categories
\[
\xymatrix{
\Fun_{/\cK}(\cK,\cE)   \ar[r]\ar[d]
&
\ov{\EFib}_{\cK}\ar[d]
\\
\ast\ar[r]^{\lag \cE\ra\cK\rag }
&
\EFib_{\cK}
}
\]
is a pullback.  
\end{observation}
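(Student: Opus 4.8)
The square is cartesian precisely when the evident comparison functor from $\Fun_{/\cK}(\cK,\cE)$ to $\ast\underset{\EFib_\cK}\times\ov{\EFib}_\cK$ is an equivalence, and since the target is the fiber of the forgetful functor $\ov{\EFib}_\cK\to\EFib_\cK$ over the object $\langle\cE\to\cK\rangle$, the plan is to compute that fiber. First I would observe that $\ov{\EFib}_\cK$ is itself a pullback. The functor $u\colon(\CAT_{/\cK})^{\cK/}\to\CAT_{/\cK}$ that remembers only the underlying object of $\CAT_{/\cK}$, forgetting the section, sends $(\cK\xra{s}\cX\xra{p}\cK)$ to $(\cX\xra{p}\cK)$; by Definition~\ref{ov.efib}, $\ov{\EFib}_\cK\subset(\CAT_{/\cK})^{\cK/}$ is exactly the full $\infty$-subcategory on those objects whose image under $u$ is an exponentiable fibration. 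Since $\EFib_\cK\subset\CAT_{/\cK}$ is full, this exhibits $\ov{\EFib}_\cK\simeq\EFib_\cK\underset{\CAT_{/\cK}}\times(\CAT_{/\cK})^{\cK/}$.

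Combining this with the pasting law for pullbacks, the pullback in the statement becomes the iterated pullback $\ast\underset{\EFib_\cK}\times\ov{\EFib}_\cK\simeq\ast\underset{\CAT_{/\cK}}\times(\CAT_{/\cK})^{\cK/}$, where the map $\ast\to\CAT_{/\cK}$ now names $(\cE\xra{p}\cK)$; that is, it is the fiber of $u$ over $(\cE\xra{p}\cK)$. Now I would invoke the identification of the fibers of a coslice projection: for an $\infty$-category $\cD$ and an object $c$, the projection $\cD^{c/}\to\cD$ has, over an object $d$, fiber the mapping object $\Map_\cD(c,d)$. Taking $\cD=\CAT_{/\cK}$, $c=(\cK\xra{\id}\cK)$, and $d=(\cE\xra{p}\cK)$ identifies the fiber of $u$ with the object of functors $\cK\to\cE$ over $\cK$, namely $\Fun_{/\cK}(\cK,\cE)$; and one checks that under this identification the comparison functor of the square is the tautological one, sending a section $s$ to the pointed exponentiable fibration $(\cK\xra{s}\cE\xra{p}\cK)$.

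The one step deserving care is this last identification of the coslice fiber: one must pin down precisely which coslice construction $(\CAT_{/\cK})^{\cK/}$ denotes — in particular whether it is the strict or an oplax coslice — since this is what dictates whether the fiber is $\Fun_{/\cK}(\cK,\cE)$ itself or merely its maximal $\infty$-subgroupoid of sections, and one must track compatibility of the structure maps throughout; everything else is formal. As an alternative to citing coslice fibers, one can argue directly: $s\mapsto(\cK\xra{s}\cE\xra{p}\cK)$ defines a functor $\Fun_{/\cK}(\cK,\cE)\to\ov{\EFib}_\cK$ landing in the fiber over $\langle\cE\to\cK\rangle$; it is essentially surjective onto that fiber, since any pointed exponentiable fibration over $\cK$ whose underlying object is equivalent over $\cK$ to $\cE\to\cK$ acquires, after transporting along that equivalence, such a description; and it is fully faithful, since a morphism in $\ov{\EFib}_\cK$ lying over the identity of $\cE\to\cK$ is, by the construction of the coslice, the datum of a morphism of the corresponding sections — hence an equivalence.
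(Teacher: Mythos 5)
Your reduction is the right skeleton, and it is surely what the paper leaves implicit (no proof is given): since $\EFib_\cK\subset\CAT_{/\cK}$ is full, one has $\ov{\EFib}_\cK\simeq \EFib_\cK\underset{\CAT_{/\cK}}\times(\CAT_{/\cK})^{\cK/}$, and by pasting the square in question is a pullback precisely if the fiber of the coslice projection $(\CAT_{/\cK})^{\cK/}\to\CAT_{/\cK}$ over $(\cE\to\cK)$ is $\Fun_{/\cK}(\cK,\cE)$. But the worry you flag at the end is not a side issue to be "pinned down"; it is the entire content, and your proposal does not close it. With the standard coslice, the projection $\cD^{c/}\to\cD$ is a left fibration, so its fibers are $\infty$-groupoids: the fiber over $d$ is the mapping \emph{space} $\Map_\cD(c,d)$, and for $\cD=\CAT_{/\cK}$, $c=\id_\cK$, $d=(\cE\to\cK)$ this is the space of sections $\Cat_{/\cK}(\cK,\cE)\simeq \Fun_{/\cK}(\cK,\cE)^\sim$, not the $\infty$-category $\Fun_{/\cK}(\cK,\cE)$. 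Concretely, for $\cK=\ast$ and $\cE=c_1$, the fiber of $\ov{\EFib}_{\ast}\to\EFib_{\ast}$ over $c_1$ is $\Map_{\Cat}(\ast,c_1)\simeq\{s\}\amalg\{t\}$, whereas $\Fun_{/\ast}(\ast,c_1)\simeq c_1$. So, read with the standard coslice of Definition~\ref{ov.efib}, the square is a pullback only after replacing the upper-left corner by the space of sections, equivalently after applying $(-)^\sim$ throughout; and that groupoid-level form is in fact the only one the paper ever uses (in Corollary~\ref{corr.segal} and Theorem~\ref{representablecorr}, where fibers are identified with $\Cat_{/\cK}(\cK,\cE)$). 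If one insists on the $\infty$-category of sections in the corner, the coslice must be interpreted in a lax sense, which is not what the definition literally says.

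Your ``alternative direct argument'' does not repair this, because its full-faithfulness step lands exactly on the contested point. In the standard coslice, a morphism of $\ov{\EFib}_\cK$ lying over $\id_{(\cE\to\cK)}$ is the datum of an \emph{equivalence} of sections (a path in $\Map_{\CAT_{/\cK}}(\id_\cK,\cE\to\cK)$), so the tautological functor out of the full $\infty$-category $\Fun_{/\cK}(\cK,\cE)$ is essentially surjective onto the fiber but not fully faithful unless that $\infty$-category is a groupoid; while in a lax coslice the filler is an arbitrary natural transformation over $\cK$, so your parenthetical ``hence an equivalence'' is unjustified there as well. To complete the argument you should either prove the statement at the level of maximal $\infty$-subgroupoids, identifying the fiber of $\ov{\EFib}^\sim_\cK\to\EFib^\sim_\cK$ with $\Cat_{/\cK}(\cK,\cE)$ via the mapping-space description of coslice fibers (this is all that the subsequent proofs require), or else fix a lax model for $(\CAT_{/\cK})^{\cK/}$ and verify that its fiber is the full $\Fun_{/\cK}(\cK,\cE)$; as written, the proposal establishes neither.
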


In the statement of the following proposition, we use the same notation for an object in $\Cat$ and its representable presheaf on $\Cat$.

\begin{prop}\label{pre.universal}
For every exponentiable fibration $\cE \ra \cK$, there is a canonical pullback diagram in $\PShv(\Cat)$
\[
\xymatrix{
\cE   \ar[d] \ar[rr]^-{\lag \cE\underset{\cK}\times \cE \rightleftarrows \cE\rag} 
&&
\ov{\EFib}^\sim   \ar[d]
\\
\cK    \ar[rr]^-{\lag \cE\to \cK\rag}
&&
\EFib^\sim
}~.
\]
\end{prop}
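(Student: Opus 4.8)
The plan is to exhibit the square as a pullback of presheaves on $\Cat$ by checking it objectwise: for each $\infty$-category $\cJ$, we must show that the square of spaces
\[
\xymatrix{
\Map_{\PShv}(\cJ,\cE)   \ar[d] \ar[r]
&
\ov{\EFib}^\sim(\cJ)   \ar[d]
\\
\Map_{\PShv}(\cJ,\cK)    \ar[r]
&
\EFib^\sim(\cJ)
}
\]
is a pullback. The bottom horizontal map sends a functor $\cJ \to \cK$ to the pulled-back exponentiable fibration $\cE_{|\cJ} \to \cJ$ (well-defined by Corollary~\ref{exps-pullback}), and the right vertical map forgets the section. So the claim amounts to this: a $\cJ$-point of the fiber product is the datum of a functor $g\colon \cJ \to \cK$ together with a section of $\cE_{|\cJ}\to \cJ$, and we must identify this with a functor $\cJ \to \cE$.

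First I would unwind the right-hand fiber product using Observation~\ref{sections.fiber}: for a fixed $\infty$-category $\cJ$ over $\cK$ via $g$, the pullback of $\ov{\EFib}_\cJ \to \EFig_\cJ$ along the point $\lag \cE_{|\cJ}\to \cJ\rag$ is $\Fun_{/\cJ}(\cJ, \cE_{|\cJ})$, the $\infty$-category of sections. Passing to maximal $\infty$-subgroupoids and letting $g$ vary, the fiber of $\ov{\EFib}^\sim(\cJ) \to \EFig^\sim(\cJ)$ over the image of $g$ is $\bigl(\Fun_{/\cJ}(\cJ,\cE_{|\cJ})\bigr)^\sim$. Now $\cE_{|\cJ} = \cJ \underset{\cK}\times \cE$, so a section of $\cE_{|\cJ}\to \cJ$ is precisely a functor $\cJ \to \cE$ lying over $g$ along $\pi$; that is, $\Fun_{/\cJ}(\cJ,\cE_{|\cJ}) \simeq \Fun_{/\cK}(\cJ,\cE)$, the fiber of $\Fun(\cJ,\cE) \xra{\pi\circ -} \Fun(\cJ,\cK)$ over $g$. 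Taking maximal subgroupoids and then the union over all $g \in \Map_{\PShv}(\cJ,\cK) = \Fun(\cJ,\cK)^\sim$ yields exactly $\Fun(\cJ,\cE)^\sim = \Map_{\PShv}(\cJ,\cE)$, compatibly with the projection to $\Map_{\PShv}(\cJ,\cK)$. This is the required pullback identification. The naturality in $\cJ$ — which promotes the objectwise pullbacks to a pullback square in $\PShv(\Cat)$ — follows from the functoriality of base change (Corollary~\ref{exps-pullback}, Corollary~\ref{ov.efib.functor}) together with Observation~\ref{sections.fiber}, whose square is itself natural in $\cK$ and hence pulls back along $g$ naturally in $\cJ$.

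The only genuinely delicate point is the commutation of the two operations ``take the fiber over $g$'' and ``take the maximal $\infty$-subgroupoid'': we need $\bigl(\Fun_{/\cK}(\cJ,\cE)\bigr)^\sim$ to be the fiber of $\Fun(\cJ,\cE)^\sim \to \Fun(\cJ,\cK)^\sim$ over the point $g$, i.e.\ that $(-)^\sim$ preserves this particular pullback of $\infty$-categories. This holds because $(-)^\sim$ is a right adjoint (Terminology~\ref{mxml.gpd}) and in particular preserves limits, so it carries the pullback $\Fun_{/\cK}(\cJ,\cE) \simeq \{g\} \underset{\Fun(\cJ,\cK)}\times \Fun(\cJ,\cE)$ to the corresponding pullback of $\infty$-groupoids; since $\{g\}$ and $\Fun(\cJ,\cK)^\sim$ sit inside $\Fun(\cJ,\cK)$ through $\Spaces \hookrightarrow \Cat$, which is also fully faithful, no information is lost. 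With this observation in hand the remaining bookkeeping is routine, and I expect the identification of $\ov{\EFib}^\sim$'s fibers via Observation~\ref{sections.fiber} to be the conceptual heart of the argument.
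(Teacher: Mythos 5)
Your proof is correct and follows essentially the same route as the paper's: the paper checks that the induced map from $\cE$ to the pullback presheaf $\cK\underset{\EFib^\sim}\times\ov{\EFib}^\sim$ is an equivalence by evaluating on a $\cJ$-point of $\cK$, identifying the fiber as the space of sections $\Cat_{/\cJ}(\cJ,\cE_{|\cJ})$ and invoking the equivalence $\Cat_{/\cK}(\cJ,\cE)\simeq\Cat_{/\cJ}(\cJ,\cE_{|\cJ})$, which is exactly your objectwise-then-fiberwise argument via Observation~\ref{sections.fiber}. Your added remark that $(-)^\sim$ preserves the relevant pullbacks is a point the paper leaves implicit, and is a fine thing to spell out.
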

\begin{proof}
It suffices to show that the canonical map $\cE\to \ov{\EFib}_{|\cK}$ to the pullback presheaf on $\Cat_{/\cK}$ is an equivalence.
Let $\cJ \to \cK$ be an $\infty$-category over $\cK$.
By definition, the space of $\cJ$-points of $\ov{\EFib}_{|\cK}$ over this $\cJ$-point of $\cK$ is the space of sections $\Cat_{/\cJ}(\cJ,\cE_{|\cJ})$.
The map in question evaluates on this $\cJ$-point of $\cK$ as the map between spaces
\[
\Cat_{/\cK}(\cJ,\cE)\longrightarrow \Cat_{/\cJ}(\cJ,\cE_{|\cJ})~,
\]
which is an equivalence.

\end{proof}

\begin{cor}\label{corr.segal}
Both of the restrictions
\[
\EFib^\sim_{|\bdelta^{\op}}:\bdelta^{\op}\longrightarrow \SPACES
\qquad \text{ and } \qquad
\ov{\EFib}^\sim_{|\bDelta^{\op}}\colon \bdelta^{\op}\longrightarrow \SPACES
\]
are Segal spaces.
\end{cor}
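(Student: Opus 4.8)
The plan is to verify the Segal condition in the form of Theorem~\ref{flagged.thm}: a simplicial space $X\colon\bdelta^{\op}\to\SPACES$ is a Segal space exactly when it carries each spine diagram, whose colimit in $\Cat$ is $[p]$, to a limit diagram in $\SPACES$. I would prove this by induction on $p$, the cases $p\leq 1$ being vacuous. For the inductive step I would use the coarser pushout expression $[p]\simeq\{0<1\}\underset{\{1\}}\amalg\{1<\dots<p\}$ in $\Cat$, together with the isomorphism $\{1<\dots<p\}\cong[p-1]$, which carries the spine of the former onto the spine of the latter. So it suffices to prove that $\EFib^\sim$ and $\ov{\EFib}^\sim$ each carry this two-term pushout to a pullback of spaces; gluing that equivalence onto the inductive hypothesis for $[p-1]$ then produces the full spine limit, and the resulting equivalence is the Segal map because every map involved is a restriction along an inert monomorphism, hence a structure map of the simplicial space $\EFib^\sim_{|\bdelta^{\op}}$ (resp.\ $\ov{\EFib}^\sim_{|\bdelta^{\op}}$).

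For $\EFib^\sim$: restriction along the standard inclusions $\{0<1\},\{1\},\{1<\dots<p\}\hookrightarrow[p]$ preserves exponentiable fibrations by Corollary~\ref{exps-pullback}, giving a map of spaces
\[
\Psi\colon \EFib^\sim_{[p]}\longrightarrow \EFib^\sim_{\{0<1\}}\underset{\EFib^\sim_{\{1\}}}\times\EFib^\sim_{\{1<\dots<p\}}~.
\]
I would exhibit an inverse $\Phi$ via the pushout construction of Lemma~\ref{pushout}: a point of the codomain is a diagram of pullbacks $\cE_{01}\la\cE_1\to\cE_{1p}$ over $\{0<1\}\la\{1\}\to\{1<\dots<p\}$ with $\cE_{1p}\to\{1<\dots<p\}$ exponentiable, and Lemma~\ref{pushout}(2) asserts the pushout $\cE:=\cE_{01}\underset{\cE_1}\amalg\cE_{1p}\to[p]$ is an exponentiable fibration. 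That $\Psi\Phi\simeq\id$ is precisely Lemma~\ref{pushout}(1)(a),(b), which identify the restrictions of $\cE$ over $\{0<1\}$ and over $\{1<\dots<p\}$ with $\cE_{01}$ and $\cE_{1p}$. That $\Phi\Psi\simeq\id$ is the content of exponentiability: for $\pi\colon\cE\to[p]$ an exponentiable fibration, applying the colimit-preserving functor $\pi^\ast$ of Lemma~\ref{exp-char}(2) to the pushout $[p]\simeq\{0<1\}\amalg_{\{1\}}\{1<\dots<p\}$ in $\Cat_{/[p]}$ exhibits the canonical comparison $\cE_{|\{0<1\}}\amalg_{\cE_{|\{1\}}}\cE_{|\{1<\dots<p\}}\to\cE$ as an equivalence over $[p]$.

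For $\ov{\EFib}^\sim$ I would run the same induction carrying along the section. Using that $[p]\simeq\{0<1\}\amalg_{\{1\}}\{1<\dots<p\}$ is also a pushout in $\Cat_{/[p]}$, the functor $\Fun_{/[p]}(-,\cE)$ carries it to a pullback of $\infty$-categories, and the composition--restriction adjunction identifies $\Fun_{/[p]}(S,\cE)\simeq\Fun_{/S}(S,\cE_{|S})$ for $S\in\{\{0<1\},\{1\},\{1<\dots<p\}\}$; hence the sections of $\cE\to[p]$ decompose compatibly with the decomposition of $\cE$ itself. Passing to maximal subgroupoids (which preserves these pullbacks) and using Observation~\ref{sections.fiber} to organize sections over the base $\EFib^\sim$, the two-term pullback statement for $\ov{\EFib}^\sim_{[p]}$ follows, and the induction proceeds as before. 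The single load-bearing input is Lemma~\ref{pushout} --- the fact that the pushout of exponentiable fibrations along a common fiber is again exponentiable with the expected mapping spaces --- so the real work is entirely there; the only remaining obstacle here is the routine bookkeeping identifying the assembled equivalences with the simplicial structure maps of $\EFib^\sim_{|\bdelta^{\op}}$ and $\ov{\EFib}^\sim_{|\bdelta^{\op}}$.
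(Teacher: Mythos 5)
Your proposal is correct and matches the paper's argument in all essentials: the two-term decomposition $[p]\simeq\{0<1\}\amalg_{\{1\}}\{1<\dots<p\}$, Lemma~\ref{pushout} to build the candidate inverse and identify its restrictions, colimit-preservation of base change (Lemma~\ref{exp-char}) for the other composite, and the fiberwise section argument via Observation~\ref{sections.fiber} for $\ov{\EFib}^\sim$. The only cosmetic difference is that the paper packages the inverse as the left adjoint (pushout) of the comparison functor and checks unit and counit, whereas you exhibit a two-sided homotopy inverse directly.
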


\begin{proof}
We first establish the statement concerning $\EFib$.
Let $p>0$ be a positive integer. 
Consider the canonical square among $\infty$-categories:
\[
\xymatrix{
\EFib_{[p]}  \ar[rr]  \ar[d]
&&
\EFib_{\{1<\dots<p\}}  \ar[d]
\\
\EFib_{\{0<1\}}  \ar[rr]
&&
\EFib_{\{1\}}.
}
\]
We must show that the resulting square among spaces is a pullback.
This follows once we show that this square among $\infty$-categories is a pullback.  
Consider the canonical functor to the pullback:
\begin{equation}\label{efib.segal.map}
\EFib_{[p]} \longrightarrow \EFib_{\{0<1\}}\underset{\EFib_{\{1\}}}\times \EFib_{\{1<\dots<p\}}~.
\end{equation}
Lemma~\ref{pushout}(2) gives that this functor is a right adjoint, with left adjoint given by taking pushouts,
\[
(\cE_{01}\underset{\cE_0}\amalg \cE_{1p})\mapsfrom (\cE_{01}\mapsto \cE_0\mapsfrom \cE_{1p})
\qquad,~\EFib_{[p]} \longleftarrow \EFib_{\{0<1\}}\underset{\EFib_{\{1\}}}\times \EFib_{\{1<\dots<p\}}~.
\]

We now show that both the counit and the unit for this adjunction are equivalences.
Consider the value of the counit for this adjunction on an exponentiable fibration $\cE\to [p]$:
\[
 \cE_{|\{0<1\}}\underset{\cE_{|\{1\}}}\amalg \cE_{|\{1<\dots<p\}}       \xra{~{\rm over}~[p]~}    \cE ~.
\]
This functor over $[p]$ is an equivalence precisely because $\cE\to [p]$ is an exponentiable fibration, using the fact that the canonical functor from the colimit $\{0<1\}\underset{\{1\}}\amalg \{1<\dots<p\} \xra{\simeq} [p]$ is an equivalence between $\infty$-categories.
We now prove that the unit for this adjunction is an equivalence.
Let $(\cE_{01}\mapsto \cE_1 \mapsfrom \cE_{1p})$ be an object in the codomain of the functor~(\ref{efib.segal.map}).  
Denote the value of the left adjoint on this object as the exponentiable fibration $\cE \to [p]$.
The value of the unit on this object is the morphism
\[
(\cE_{01}\mapsto \cE_1 \mapsfrom \cE_{1p}) \longrightarrow \bigl(  \cE_{|\{0<1\}}\mapsto \cE_{|\{1\}} \mapsfrom \cE_{|\{1<\dots<p\}}\bigr)~.
\]
This morphism is an equivalence if and only if the canonical functors $\cE_{01} \to \cE_{|\{0<1\}}$ and $\cE_1 \to \cE_{|\{1\}}$ and $\cE_{1p}\to \cE_{|\{1<\dots<p\}}$ are each equivalences between $\infty$-categories.  
This is so via Lemma~\ref{pushout}(1).

We now establish the statement concerning $\ov{\EFib}$.
Consider the square among spaces:
\[
\xymatrix{
\ov{\EFib}^\sim_{[p]}    \ar[d]  \ar[rr]
&&
\ov{\EFib}^\sim_{\{0<1\}}\underset{\ov{\EFib}^\sim_{\{1\}}}\times \ov{\EFib}^\sim_{\{1<\dots<p\}}  \ar[d]
\\
\EFib^\sim_{[p]} \ar[rr]
&&
\EFib^\sim_{\{0<1\}}\underset{\EFib^\sim_{\{1\}}}\times \EFib^\sim_{\{1<\dots<p\}}.
}
\]
Let $\cE \to [p]$ be an exponentiable fibration.  
Through Observation~\ref{sections.fiber}, the map from the fiber of the left vertical map over $\cE\to \cK$ to the fiber of the right vertical map of its image is
\[
\Cat_{/[p]}([p],\cE)  \longrightarrow \Cat_{/\{0<1\}}\bigl(\{0<1\},\cE\bigr) \underset{\Cat_{/\{1\}}\bigl(\{1\},\cE\bigr)}\times
\Cat_{/\{1<\dots<p\}}\bigl(\{1<\dots<p\},\cE\bigr)~.
\]
This map is an equivalence precisely because the canonical functor from the pushout $\{0<1\}\underset{\{1\}}\amalg \{1<\dots<p\}\xra{\simeq} [p]$ is an equivalence between $\infty$-categories.  
Thus, the above square of spaces is a pullback. 
Above, we established that the bottom horizontal map is an equivalence.  
We conclude that the top horizontal map in the above square is an equivalence, as desired.

\end{proof}

The following is the main result of this section.

\begin{theorem}\label{representablecorr}
Both of the presheaves $\EFib^\sim$ and $\ov{\EFib}^\sim$ on $\Cat$ are representable by flagged $\infty$-categories; that is, both presheaves lie in the image of the restricted Yoneda functor $\fCAT \hookrightarrow \psh(\CAT)$ of Theorem~\ref{flagged.thm}.
\end{theorem}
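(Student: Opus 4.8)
The plan is to invoke the characterization of flagged $\infty$-categories from Theorem~\ref{flagged.thm}: a (large) presheaf on $\Cat$ lies in the image of $\fCAT \hookrightarrow \psh(\CAT)$ precisely when its restriction along $\bDelta \hookrightarrow \Cat$ is a Segal space \emph{and} the original presheaf is the right Kan extension of this restriction along $\bDelta^{\op}\hookrightarrow \Cat^{\op}$ — equivalently, when the presheaf on $\Cat$ satisfies descent with respect to the colimit diagrams among $\infty$-categories that also induce colimit diagrams among maximal $\infty$-subgroupoids. So first I would record that Corollary~\ref{corr.segal} has already supplied the Segal condition for both $\EFib^\sim_{|\bdelta^{\op}}$ and $\ov{\EFib}^\sim_{|\bDelta^{\op}}$. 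What remains is to check that each of $\EFib^\sim$ and $\ov{\EFib}^\sim$, as a presheaf on all of $\Cat$, is determined by (i.e.\ is the right Kan extension of) its restriction to $\bDelta$.

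The key step is thus to verify the relevant descent property. Concretely: for any $\infty$-category $\cK$, write $\cK \simeq \colim_{[p]\to \cK}[p]$ as the canonical colimit over its simplex category $\bDelta_{/\cK}$; I want to show
\[
\EFib^\sim_{\cK} \xra{~\simeq~} \lim_{([p]\to \cK)\in (\bDelta_{/\cK})^{\op}} \EFib^\sim_{[p]}~,
\]
and likewise for $\ov{\EFib}^\sim$. The content here is that an exponentiable fibration $\cE\to\cK$ can be reconstructed from the compatible system of its base changes $\cE_{|[p]}\to[p]$. The essential input is Lemma~\ref{exp-char}, especially condition~(4): an exponentiable fibration is, by definition via the quoted characterization, exactly an $\infty$-category over $\cK$ whose behaviour is "local" in a way controlled by restriction to $2$-simplices (and the Segal gluing of Lemma~\ref{pushout}). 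In fact the cleanest route is to exploit that $\Cat\hookrightarrow\Psh(\bdelta)$ presents $\infty$-categories as complete Segal spaces — so both the source presheaf and the target limit can be computed levelwise over $\bDelta$, and the descent statement reduces to a statement already contained in the proof of Lemma~\ref{exp-char}, item~(4)$\Rightarrow$(1), together with Corollary~\ref{corr.segal} for the Segal gluing and the univalence check carried out there. For $\ov{\EFib}^\sim$ one additionally uses Observation~\ref{sections.fiber} and Proposition~\ref{pre.universal}, which already express $\ov{\EFib}^\sim$ as a pullback of $\EFib^\sim$ along a diagram built from sections — sections of $\cE\to\cK$ are themselves a limit over $\bDelta_{/\cK}$ of sections of the $\cE_{|[p]}\to[p]$, since $\cK = \colim_{[p]}[p]$.

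I would organize the proof as follows. \textbf{Step 1:} Recall from Theorem~\ref{flagged.thm} the precise criterion — a large presheaf $\cF$ on $\Cat$ is a flagged $\infty$-category iff $\cF_{|\bdelta^{\op}}$ is Segal and $\cF\simeq \rkan(\cF_{|\bdelta^{\op}})$. \textbf{Step 2:} Cite Corollary~\ref{corr.segal} for the Segal half. \textbf{Step 3:} For the right Kan extension half, fix $\cK$ and show the canonical comparison $\EFib^\sim_\cK \to \lim_{(\bDelta_{/\cK})^{\op}}\EFib^\sim_{[-]}$ is an equivalence; unwind both sides using the complete-Segal-space presentation $\Cat\hookrightarrow\Psh(\bdelta)$, so that an object of the right-hand side is a compatible simplicial-space datum over $\cK$, and show (via Lemma~\ref{exp-char}(3)(4) and Lemma~\ref{pushout}) that such a datum is exactly an exponentiable fibration over $\cK$, naturally. \textbf{Step 4:} Deduce the statement for $\ov{\EFib}^\sim$ from that for $\EFib^\sim$ by combining Observation~\ref{sections.fiber}, Proposition~\ref{pre.universal}, and the fact that the simplicial-space comparison for sections of the universal fibration is an equivalence because $\cK=\colim_{[p]}[p]$ — i.e.\ limits and the pullback defining $\ov{\EFib}^\sim$ commute, and each piece satisfies descent.

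The main obstacle I expect is \textbf{Step 3}: making precise that a compatible family of exponentiable fibrations $\{\cE_{|[p]}\to[p]\}$ over the simplices of $\cK$ glues to a genuine exponentiable fibration $\cE\to\cK$, and that this gluing is inverse to restriction. The subtlety is not the Segal (gluing over $[p]=\{0<1\}\amalg_{\{1\}}\{1<\dots<p\}$) part — that is Lemma~\ref{pushout} — but rather verifying that the glued object is \emph{univalent} over $\cK$ and that exponentiability is not lost, for a general $\cK$ rather than a single simplex. This is precisely the kind of check carried out internally in the proof of Lemma~\ref{exp-char} (the passage through $\Psh(\bdelta)$, the Segal-condition verification, and the univalence verification using completeness of the presentation of $\cK$), so the work is to transcribe that argument into the language of descent along $\bDelta_{/\cK}\to\Cat$. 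Once that is in hand, the naturality and the $\ov{\EFib}^\sim$ case are formal.
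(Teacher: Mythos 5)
Your proposal follows the paper's own proof essentially verbatim: the Segal condition is quoted from Corollary~\ref{corr.segal}, the right Kan extension condition is verified by embedding $\EFib_\cK$ into $\PShv(\bDelta)_{/\cK}$ and checking that the Segal, univalence, and exponentiability conditions (the latter via Lemma~\ref{exp-char}(3)) are detected after restriction to the simplices $[p]\to\cK$, and the $\ov{\EFib}^\sim$ case is reduced to the $\EFib^\sim$ case through Observation~\ref{sections.fiber} and the equivalence $\colim\bigl(\bDelta_{/\cK}\to\Cat_{/\cK}\bigr)\simeq\cK$. The one ingredient you leave implicit — the identification of the limit of the slice presheaf categories over $\bDelta_{/\cK}$ with $\PShv(\bDelta)_{/\cK}$, rather than an appeal to the proof of Lemma~\ref{exp-char}(4)$\Rightarrow$(1) — is supplied in the paper by descent in the $\infty$-topos $\PShv(\bDelta)$ (Theorem~6.1.0.6 of \cite{HTT}); with that cited, your outline is the paper's argument.
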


\begin{proof}
From Theorem \ref{flagged.thm} (or by definition, see Remark \ref{doesntdepend}), the restricted Yoneda functor $\fCAT \xra{\simeq} \PShv^{\sf Segal}(\bdelta)$ is an equivalence from the $\infty$-category of flagged $\oo$-categories to that of Segal spaces.
Consequently, to establish that a presheaf $\cF$ on $\Cat$ is a flagged $\infty$-category it suffices to show these two assertions.
\begin{enumerate}
\item Its restriction $\cF_{|\bdelta^{\op}}\colon \bDelta^{\op}\to \SPACES$ is a Segal space.
\item The morphism
$
\cF \to {\sf RKan}(\cF_{|\bDelta^{\op}})
$
is an equivalence, where this morphism is the unit
of the restriction right Kan extension adjunction $\PShv(\Cat) \rightleftarrows \PShv(\bDelta)$ on $\cF$.
\end{enumerate}
Assertion~(1), as it concerns both $\EFib$ and $\ov{\EFib}$, is Corollary~\ref{corr.segal}.

Verifying assertion~(2) for $\EFib^\sim$ is to verify, for each $\infty$-category $\cK$, that the map between spaces
\begin{equation}\label{corr.from.cat}
\EFib^\sim_{\cK} \longrightarrow \limit \bigl((\bDelta_{/\cK})^{\op} \to \bDelta^{\op} \hookrightarrow \Cat^{\op} \xra{\EFib^\sim} \SPACES\bigr)
\end{equation}
is an equivalence.
Consider the canonical diagram of $\infty$-categories
\begin{equation}\label{cat.efib.sq}
\xymatrix{
\EFib_\cK \ar[rr]  \ar[d]
&&
\limit\bigl((\bDelta_{/\cK})^{\op} \to \bDelta^{\op} \hookrightarrow \Cat^{\op} \xra{\EFib} \CAT  \bigr)  \ar[d]
\\
\PShv(\bDelta)_{/\cK}  \ar[rr]
&&
\limit\Bigl((\bDelta_{/\cK})^{\op} \to \bDelta^{\op} \hookrightarrow \PShv(\bDelta)^{\op} \xra{\PShv(\bDelta)_{/-}} \PShv(\bDelta)  \Bigr),
}
\end{equation}
The vertical arrows are determined by the composite functors $\EFib_- \to \Cat_{/-} \to \PShv(\bDelta)_{/-}$, the first of which is fully-faithful by definition and the second of which is fully-faithfulness of the restricted Yoneda functor $\Cat \ra \PShv(\bdelta)$ (by \cite{joyaltierney}); so the vertical arrows are fully-faithful.
The $\infty$-category $\PShv(\bDelta)$ is an $\infty$-topos, since it is a presheaf $\oo$-category.
As a direct consequence of Theorem~6.1.0.6 of~\cite{HTT}, the bottom horizontal functor above is an equivalence between $\infty$-categories.
The left adjoint equivalence is given by taking colimits:
\begin{equation}\label{actually.equiv}
\Small
\PShv(\bDelta)_{/\cK}
\xla{~\colim~}
\Fun\bigl(\bDelta_{/\cK}, \PShv(\bDelta)_{/\cK}\bigr)
\xla{~\rm forget~}
\limit\bigl((\bDelta_{/\cK})^{\op} \to \bDelta^{\op} \hookrightarrow \Cat^{\op} \xra{\PShv(\bDelta)_{/-}} \PShv(\bDelta)  \bigr) \colon \colim~.
\end{equation}
It remains to show that the top horizontal functor is essentially surjective.
In light of the lower equivalence, we must show the following assertion:
\begin{itemize}
\item[~]
Let $\cE\to \cK$ be a map from a presheaf on $\bDelta$ to that represented by an $\infty$-category $\cK$.
Suppose, for each functor $[p]\to \cK$ from an object in $\bDelta$, that the pullback presheaf on $\bDelta$
\[
\cE_{|[p]}~:=~[p]\underset{\cK}\times \cE
\]
is represented by an $\infty$-category for which the projection $\cE_{|[p]} \to [p]$ is an exponentiable fibration.
Then $\cE$ is represented by an $\infty$-category, and the functor $\cE\to \cK$ is an exponentiable fibration.

\end{itemize}
We first show that $\cE$ is an $\infty$-category over $\cK$, then we show that the functor $\cE\to \cK$ is an exponentiable fibration.
We first show $\cE$ satisfies the Segal condition over $\cK$.
Let $p>0$ be a positive integer.
Let $[p]\to \cK$ be a functor.
We must show that the canonical square among spaces of lifts
\[
\xymatrix{
\Cat_{/\cK}\bigl([p],\cE) \ar[rr]  \ar[d]
&&
\Cat_{/\cK}\bigl(\{1<\dots<p\},\cE\bigr)  \ar[d]
\\
\Cat_{/\cK}\bigl(\{0<1\},\cE\bigr) \ar[rr]
&&
\Cat_{/\cK}\bigl(\{1\},\cE\bigr)
}
\]
is a pullback.
Through the composition-restriction adjunction $\Cat_{/[p]} \rightleftarrows \Cat_{/\cK}$, this square among spaces is identified as the square among spaces
\[
\xymatrix{
\Cat_{/[p]}\bigl([p],\cE_{|[p]}) \ar[rr]  \ar[d]
&&
\Cat_{/[p]}\bigl(\{1<\dots<p\},\cE_{|[p]}\bigr)  \ar[d]
\\
\Cat_{/[p]}\bigl(\{0<1\},\cE_{|[p]}\bigr) \ar[rr]
&&
\Cat_{/[p]}\bigl(\{1\},\cE_{|[p]}\bigr) .
}
\]
This square is a pullback precisely because, by assumption, the pullback presheaf $\cE_{|[p]} \to [p]$ is an $\infty$-category.

We now establish that $\cE$ is univalent over $\cK$.
Consider a diagram of $\infty$-categories:
\[
\xymatrix{
&
\{0<2\}  \ar[dl]  \ar[dr]
&&
\{1<3\}  \ar[dl]  \ar[dr]
&
\\
\{-\}  \ar[drr]
&&
\{0<1<2<3\}  \ar[d]
&&
\{+\}  \ar[dll]
\\
&&
\cK
&&.
}
\]
The colimit of the upper 5-term diagram over $\cK$ is a functor $\ast \xra{\lag x \rag} \cK$ selecting an object in $\cK$.
We must show that the canonical diagram involving spaces of lifts
\[
\Small
\xymatrix{
&&
\Cat_{/\cK}\bigl(\{x\},\cE\bigr)  \ar[d]  \ar[drr]  \ar[dll]
&&
\\
\Cat_{/\cK}\bigl(\{-\} , \cE\bigr)  \ar[dr]
&&
\Cat_{/\cK}\bigl(\{0<1<2<3\}, \cE\bigr)   \ar[dr]  \ar[dl]
&&
\Cat_{/\cK}\bigl(\{+\}, \cE\bigr)   \ar[dl]
\\
&
\Cat_{/\cK}\bigl(\{0<2\}, \cE\bigr)
&&
\Cat_{/\cK}\bigl(\{1<3\}, \cE\bigr)
&
}
\]
is a limit diagram.
Through the composition-restriction adjunction $\Cat_{/\ast} \rightleftarrows \Cat_{/\cK}\colon (-)_{|x}$, this diagram of spaces is identified as the diagram of spaces
\[
\Small
\xymatrix{
&&
\Cat(\ast , \cE_{|x})   \ar[d]  \ar[drr]  \ar[dll]
&&
\\
\Cat\bigl(\{-\},\cE_{|x}\bigr)  \ar[dr]
&&
\Cat\bigl(\{0<1<2<3\}, \cE_{|x}\bigr)   \ar[dr]  \ar[dl]
&&
\Cat\bigl(\{+\},\cE_{|x}\bigr) \ar[dl]
\\
&
\Cat\bigl(\{0<2\}, \cE_{|x}\bigr)
&&
\Cat\bigl(\{1<3\}, \cE_{|x}\bigr)
&.
}
\]
This square is a limit diagram precisely because, by assumption, the pullback presheaf $\cE_{|x}$ is an $\infty$-category.
We conclude that $\cE \to \cK$ is indeed an $\infty$-category over $\cK$.

We now show that this functor $\cE\to \cK$ is exponentiable.
We employ Lemma~\ref{exp-char}(3).
So let $[2]\to \cK$ be a functor.
We must show that the canonical square among $\infty$-categories
\[
\xymatrix{
\cE_{|\{1\}}  \ar[r]  \ar[d]
&
\cE_{|\{1<2\}}   \ar[d]
\\
\cE_{|\{0<1\}}   \ar[r]
&
\cE_{|[2]}
}
\]
is a pushout.
This follows because, by assumption $\cE_{|[2]} \to [2]$ is an exponentiable fibration.
This finishes the proof that the map~(\ref{corr.from.cat}) is an equivalence between spaces, as desired.

We now verify assertion~(2) for $\ov{\EFib}^\sim$.
Consider the canonical square among spaces
\begin{equation}\label{cat.ovefib.sq}
\xymatrix{
\ov{\EFib}^\sim_\cK \ar[rr]  \ar[d]
&&
\limit\bigl((\bDelta_{/\cK})^{\op} \to \bDelta^{\op} \hookrightarrow \Cat^{\op} \xra{\ov{\EFib}^\sim} \CAT  \bigr)  \ar[d]
\\
\EFib^\sim_\cK \ar[rr]
&&
\limit\bigl((\bDelta_{/\cK})^{\op} \to \bDelta^{\op} \hookrightarrow \Cat^{\op} \xra{\EFib^\sim} \CAT  \bigr) .
}
\end{equation}
We wish to show that the top horizontal map is an equivalence between spaces.
Above, we established that the bottom horizontal map is an equivalence between spaces.
Therefore, it is enough to show that this map restricts as an equivalence between fibers.
So let $(\cE\to \cK)\in \EFib^\sim_\cK$ be a point in the bottom left space of this square.
Through Observation~\ref{sections.fiber}, this map of fibers is identified as the map between spaces
\[
\Cat_{/\cK}(\cK,\cE) \longrightarrow
\limit\bigl((\bDelta_{/\cK})^{\op} \to  (\Cat_{/\cK})^{\op} \xra{\Cat_{/\cK}(-,\cE)} \CAT  \bigr)~.
\]
This map is an equivalence precisely because the canonical functor from the colimit $\colim\bigl(\bDelta_{/\cK}\to \Cat_{/\cK}\bigr) \xra{\simeq} (\cK\xra{=}\cK)$ is an equivalence in $\Cat_{/\cK}$.
We conclude that the diagram~(\ref{cat.ovefib.sq}) among spaces is a pullback, which completes this proof.

\end{proof}

\begin{definition}\label{def.Corr}
The flagged $\infty$-category $\Corr$ represents the functor $\EFib^\sim$, in the sense of Theorem~\ref{representablecorr}.
The flagged $\infty$-category $\ov{\Corr}$ represents the functor $\ov{\EFib}^\sim$, in the sense of Theorem~\ref{representablecorr}.
The \emph{universal exponentiable fibration} is the resulting canonical functor between flagged $\infty$-categories
\[
\ov{\Corr} \longrightarrow \Corr~.
\]

\end{definition}

\begin{remark}\label{yep.universal}
Proposition~\ref{pre.universal} justifies calling the canonical functor $\ov{\Corr}\to \Corr$ the \emph{universal} exponentiable fibration. 

\end{remark}

\begin{example}\label{not.univalent}
We demonstrate that $\Corr$ is \emph{not} an $\infty$-category; more precisely, that the functor $\EFib^\sim\colon \Cat^{\op}\to \SPACES$ is \emph{not} representable.  
We do this by demonstrating a colimit diagram in $\Cat$ that $\EFib^\sim$ does not carry to a limit diagram in $\Spaces$.
Specifically, consider the identification of the colimit $\ast \underset{\{0<2\}}\amalg \{0<1<2<3\}\underset{\{1<3\}}\amalg \ast \xra{\simeq} \ast$ in $\Cat$; note the differing identification of the colimit of this same diagram in $\fCat$ as  $(\{-,+\}\to \ast)$. 
There is a canonical map between spaces
\begin{equation}\label{from.E}
\Cat^\sim \simeq \EFib^\sim_\ast  \longrightarrow \EFib^\sim_\ast \underset{\EFib^\sim_{\{0<2\}}}\times \EFib^\sim_{\{0<1<2<3\}}  \underset{\EFib^\sim_{\{1<3\}}}\times \EFib^\sim_\ast~.
\end{equation}
We will demonstrate a point in the righthand space that is not in the image of this map.  

Consider the $\infty$-category ${\sf Ret}$ corepresenting a retraction, and the full $\infty$-subcategory ${\sf Idem}\subset {\sf Ret}$ corepresenting an idempotent.  
The functor ${\sf Idem}\to {\sf Ret}$ determines the pair of bimodules:
\begin{equation}\label{actually.not}
{\sf Idem}^{\op} \times {\sf Ret} \xra{{\sf Ret}(-,-)}\Spaces
\qquad  \text{ and }\qquad
{\sf Ret}^{\op} \times {\sf Idem} \xra{{\sf Ret}(-,-)} \Spaces~.
\end{equation}
Consider the two composite bimodules:
\[
{\sf Idem}^{\op} \times {\sf Idem} \xra{{\sf Ret}(-,-)\underset{{\sf Ret}}\otimes {\sf Ret}(-,-)}\Spaces
\qquad  \text{ and }\qquad
{\sf Ret}^{\op} \times {\sf Ret} \xra{{\sf Ret}(-,-)\underset{{\sf Idem}}\otimes {\sf Ret}(-,-)}\Spaces~.
\]
Because the canonical functor ${\sf Idem} \to {\sf Ret}$ witnesses an idempotent completion, the left composite bimodule is identified as the identity bimodule.
Also, because ${\sf Idem} \to {\sf Ret}$ is fully-faithful, the restriction of the right composite bimodule is canonically identified as the left composite bimodule.
Now, both ${\sf Idem} \to {\sf Ret}$ and ${\sf Idem}^{\op} \to {\sf Ret}^{\op}$ are idempotent completions.
Because $\Spaces$ is idempotent complete, the right composite bimodule is the unique extension of the left composite bimodule.
Therefore, the right composite bimodule is also the identity bimodule.  
We have demonstrated how the pair~(\ref{actually.not}) determines a point in righthand term of~(\ref{from.E}).
Since ${\sf Idem} \to {\sf Ret}$ is not an equivalence between $\infty$-categories, for it is not essentially surjective, then this point is not in the image of the map~(\ref{from.E}).

\end{example}

\begin{remark}
The defining equivalence of spaces $\Map(\cK, \corr) \simeq \efib_\cK^\sim$ does \emph{not} extend to an equivalence of $\oo$-categories between $\Fun(\cK,\corr)$ and $\efib_\cK$. 
They differ even in the case $\cK= \ast$.
Presumably, this discrepancy could be explained through the structure of $\Corr$ as a certain flagged $(\infty,2)$-category; namely, that represented by the very functor $\EFib\colon \Cat^{\op} \to \Cat$ itself.
See Question~\ref{q.2.Corr}.
\end{remark}

\subsection{Symmetric monoidal structure}\label{sym.stctr}
We endow the flagged $\infty$-category $\Corr$ with a natural symmetric monoidal structure.

Note that the full $\infty$-category $\fCAT\subset \Ar(\CAT)$ is closed under finite products.  
Consequently, the Cartesian symmetric monoidal structure on $\fCAT$ makes it a symmetric monoidal $\infty$-category. 
The $\infty$-category of \emph{symmetric monoidal flagged $\infty$-categories}
\[
\symcat ~:= ~\CAlg(\fCAT)
\]
is that of commutative algebras in the Cartesian symmetric monoidal $\infty$-category $\fCAT$.  
Because restricted Yoneda functors preserve finite products, Theorem~\ref{flagged.thm} gives a pullback diagram of $\infty$-categories:
\[
\xymatrix{
\symcat    \ar[rr]  \ar[d]
&&
\CAlg\bigl( \PShv(\bDelta)\bigr)     \ar[rr]^-{\simeq}
&& 
\Fun\bigl(\bDelta^{\op}, \CAlg(\SPACES)\bigr)  \ar[d]
\\
\fCAT  \ar[rrrr]
&&&&
\PShv(\bDelta).
}
\]
Consequently, to construct a symmetric monoidal structure on $\corr$, it suffices to give a natural lift to $\CAlg(\SPACES)$ of the space-valued functor $\EFib^\sim$ it represents. 
To do so, we observe the following.

\begin{observation}\label{fiber-products}
\begin{enumerate}
\item[~]

\item
For each $\infty$-category $\cK$, the $\infty$-category $\Cat_{/\cK}$ of $\infty$-categories over $\cK$ admits finite products, which are given by fiber products over $\cK$.

\item
For each functor $f\colon \cK\to \cK'$ between $\infty$-categories, the base change functor
$
f^\ast \colon \Cat_{/\cK'} \to \Cat_{/\cK}
$
preserves finite products.  

\item
Fiber products among $\infty$-categories over a common base defines a lift
\[
\xymatrix{
&&
\CAlg(\CAT)  \ar[d]
\\
\Cat^{\op}  \ar[rr]_-{\Cat_{/-}}  \ar@{-->}[urr]^-{\Cat_{/-}}
&&
\CAT.
}
\]

\end{enumerate}

\end{observation}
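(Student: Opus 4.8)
The plan is to dispatch the three assertions in order; (1) and (2) are standard facts about slice $\infty$-categories and base change, and the only real content is assembling them, in (3), via the functoriality of the Cartesian symmetric monoidal structure. \emph{For assertion (1):} I would invoke that $\Cat$ admits all small limits, in particular pullbacks, together with the standard fact that for any $\infty$-category $\cC$ admitting pullbacks and any object $x\in\cC$, the slice $\cC_{/x}$ admits finite products, with terminal object $\id_x$ and binary product $(a\to x)\times(b\to x)\simeq(a\times_x b\to x)$ given by the fiber product over $x$ --- this is immediate from the universal properties, since mapping $(d\to x)$ into $(a\times_x b\to x)$ over $x$ is the same datum as a pair of maps $d\to a$ and $d\to b$ over $x$. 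Taking $\cC=\Cat$ and $x=\cK$ gives (1).

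\emph{For assertion (2):} For $f\colon\cK\to\cK'$, the base change functor $f^\ast$ is right adjoint to postcomposition $f_!\colon\Cat_{/\cK}\to\Cat_{/\cK'}$ with $f$; being a right adjoint, $f^\ast$ preserves limits, hence finite products. Alternatively one checks directly that $f^\ast(\id_{\cK'})\simeq\id_\cK$ and that pasting pullback squares yields $f^\ast(\cE\times_{\cK'}\cD)\simeq f^\ast\cE\times_\cK f^\ast\cD$.

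\emph{For assertion (3):} By (1) and (2), the functor $\Cat_{/-}\colon\Cat^{\op}\to\CAT$ factors through the non-full subcategory $\CAT^{\mathrm{fp}}\subset\CAT$ whose objects are the $\infty$-categories admitting finite products and whose morphisms are the finite-product-preserving functors (a genuine subcategory, since identities and composites of finite-product-preserving functors again preserve finite products). I would then invoke Lurie's construction of Cartesian symmetric monoidal structures (\cite{HA}, \S2.4.1), which assembles into a functor $\CAT^{\mathrm{fp}}\to\CAlg(\CAT)$ sending each $\cC$ to itself equipped with its Cartesian symmetric monoidal structure and each finite-product-preserving functor to its canonical symmetric monoidal refinement; this functor lifts the inclusion $\CAT^{\mathrm{fp}}\hookrightarrow\CAT$ along the forgetful functor $\CAlg(\CAT)\to\CAT$. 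Precomposing with $\Cat_{/-}$ produces the asserted lift. The one point meriting care --- which I would handle by citation rather than reproof --- is the coherent functoriality of $\cC\mapsto\cC^\times$: that the pointwise Cartesian structures on the various $\Cat_{/\cK}$, together with the symmetric monoidal refinements of the base change functors $f^\ast$, genuinely organize into a single functor valued in $\CAlg(\CAT)$. Everything else is formal.
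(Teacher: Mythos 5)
Your argument is correct. Note that the paper offers no proof of this statement at all: it is recorded as an Observation and treated as evident, serving only as input to the construction of the symmetric monoidal structure on $\Corr$ in \S\ref{sym.stctr}. Your items (1) and (2) are the standard formal facts (products in a slice of an $\infty$-category with pullbacks are fiber products; $f^\ast$ is right adjoint to postcomposition $f_!$, hence preserves limits), and your treatment of (3) isolates the only point with genuine content: the coherent functoriality of $\cC\mapsto\cC^\times$, i.e.\ the existence of a functor from the (non-full) subcategory of $\CAT$ on $\infty$-categories with finite products and product-preserving functors to $\CAlg(\CAT)$ lifting the inclusion, which you correctly delegate to \cite{HA}, \S 2.4.1. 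One small point worth making explicit: factoring $\Cat_{/-}$ through that non-full subcategory is a property checkable on objects and morphisms, since the subcategory is determined by a subcategory of the homotopy category of $\CAT$; your (1) and (2) are exactly those checks, so the factorization is legitimate and the composite with Lurie's functor gives the asserted lift.
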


In the next result we use that the maximal $\infty$-subgroupoid functor $(-)^\sim \colon \Cat \to \Spaces$ preserves finite products.  
\begin{lemma}\label{exp-products}
\begin{enumerate}
\item[~]

\item
For each $\infty$-category $\cK$, the full $\infty$-subcategory $\EFib_\cK \subset \Cat_{/\cK}$ is closed under the formation of finite products.  

\item 
The subfunctor $\EFib\subset \Cat_{/-}$ is closed under the symmetric monoidal structure of Observation~\ref{fiber-products}.  
In particular, there is a lift
\[
\xymatrix{
&&
\CAlg(\CAT)  \ar[d]
\\
\Cat^{\op}  \ar[rr]_-{\EFib}  \ar@{-->}[urr]^-{\EFib}
&&
\CAT.
}
\]

\item 
The composition $\Cat^{\op} \xra{\EFib} \CAlg(\CAT) \xra{(-)^\sim} \CAlg(\SPACES)$ is represented by a symmetric monoidal flagged $\infty$-category.

\end{enumerate}

\end{lemma}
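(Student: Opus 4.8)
The plan is to establish the three assertions in turn; (1) carries the only real content, while (2) and (3) are formal assemblies of results already in place. For (1), fix an $\infty$-category $\cK$. The terminal object of $\Cat_{/\cK}$ is $(\cK\xra{=}\cK)$, and the identity functor is an exponentiable fibration — it is the base change of $\ast\to\ast$, which is exponentiable by Example~\ref{over-point}, along $\cK\to\ast$, so Corollary~\ref{exps-pullback} applies. For binary products, I would take exponentiable fibrations $\cE\xra{\pi}\cK$ and $\cE'\xra{\pi'}\cK$; their product in $\Cat_{/\cK}$ is $\cE\underset{\cK}\times\cE'$, whose structure functor factors as
\[
\cE\underset{\cK}\times\cE'\xra{~q~}\cE\xra{~\pi~}\cK~,
\]
where $q$ is the base change of $\pi'$ along $\pi$, hence an exponentiable fibration by Corollary~\ref{exps-pullback}. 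By Proposition~\ref{exps.compose}, the composite $\pi\circ q$ is again an exponentiable fibration. Thus $\EFib_\cK\subset\Cat_{/\cK}$ contains the terminal object and is closed under binary products, so it is closed under all finite products.

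For (2), recall from Corollary~\ref{exp-functor} that $\cK\mapsto\EFib_\cK$ underlies a functor $\EFib\colon\Cat^{\op}\to\CAT$ which is a subfunctor of $\Cat_{/-}$, and from Observation~\ref{fiber-products}(3) that $\Cat_{/-}$ refines to a functor $\Cat^{\op}\to\CAlg(\CAT)$ equipping each $\Cat_{/\cK}$ with its Cartesian (fiber product over $\cK$) symmetric monoidal structure and each base change $f^\ast$ with its canonical strong symmetric monoidal structure. By part~(1) each $\EFib_\cK\subset\Cat_{/\cK}$ is a full subcategory closed under the monoidal product, and by Corollary~\ref{exps-pullback} each $f^\ast$ carries $\EFib_{\cK'}$ into $\EFib_\cK$. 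Since a full subcategory of a symmetric monoidal $\infty$-category that is closed under the tensor product inherits a canonical symmetric monoidal structure for which the inclusion is symmetric monoidal — functorially in the ambient category along symmetric monoidal functors preserving the subcategories (cf.\ \S 2.4.1 of \cite{HA}) — applying this fiberwise yields the sought lift $\EFib\colon\Cat^{\op}\to\CAlg(\CAT)$, realized as a subfunctor of $\Cat_{/-}\colon\Cat^{\op}\to\CAlg(\CAT)$.

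For (3), I would observe that $(-)^\sim\colon\CAT\to\SPACES$, being right adjoint to $\SPACES\hookrightarrow\CAT$, preserves finite products and is therefore strong symmetric monoidal for the Cartesian structures, hence induces $(-)^\sim\colon\CAlg(\CAT)\to\CAlg(\SPACES)$. Composing with the functor from (2) gives a lift
\[
\EFib^\sim\colon\Cat^{\op}\xra{~\EFib~}\CAlg(\CAT)\xra{~(-)^\sim~}\CAlg(\SPACES)
\]
of the space-valued functor $\EFib^\sim$ of Definition~\ref{def.EFib}. Restricting along $\bDelta^{\op}\hookrightarrow\Cat^{\op}$ produces a functor $\bDelta^{\op}\to\CAlg(\SPACES)$ whose underlying presheaf $\EFib^\sim_{|\bDelta^{\op}}$ is a Segal space by Corollary~\ref{corr.segal}, hence lies in $\fCAT\subset\PShv(\bDelta)$; by the pullback square identifying $\symcat=\CAlg(\fCAT)$ established above, this datum is precisely an object of $\symcat$, i.e.\ a symmetric monoidal flagged $\infty$-category, whose underlying flagged $\infty$-category represents $\EFib^\sim$ and is therefore $\Corr$ (Definition~\ref{def.Corr}).

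The only subtle point anywhere is the coherent fiberwise passage to full monoidal subcategories used in part~(2): one must produce the $\CAlg(\CAT)$-valued functor from the objectwise data (each fiber closed under products, each base change product-preserving) in a single coherent package. This is standard and is handled by the Cartesian symmetric monoidal machinery of \S 2.4.1 of \cite{HA} (equivalently, by the equivalence between $\infty$-categories with finite products and product-preserving functors and Cartesian symmetric monoidal $\infty$-categories). Everything else is bookkeeping against Corollaries~\ref{exps-pullback} and~\ref{exp-functor}, Proposition~\ref{exps.compose}, Observation~\ref{fiber-products}, and Corollary~\ref{corr.segal}.
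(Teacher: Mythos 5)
Your proposal is correct, and for the one substantive point it takes a genuinely different route from the paper. For part (1) the paper argues directly from the criterion of Lemma~\ref{exp-char}(3): the fibers of $\cE\underset{\cK}\times\cE'$ over a functor $[2]\to\cK$ are products of the corresponding fibers of $\cE$ and $\cE'$, and the product of the two pushout squares is again a pushout because colimits distribute over finite products in $\SPACES$. You instead factor the structure functor as $\cE\underset{\cK}\times\cE' \xra{~q~} \cE \xra{~\pi~} \cK$, observe that $q$ is a base change of $\pi'$ (Corollary~\ref{exps-pullback}), and invoke closure under composition (Proposition~\ref{exps.compose}); likewise you handle the empty product via base change of $\ast\to\ast$. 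Both arguments are short; the paper's is more self-contained at the level of the fibration criterion (and makes visible exactly where distributivity of colimits enters), while yours is arguably slicker, reusing the already-established stability properties and avoiding any new colimit manipulation. For parts (2) and (3) you follow essentially the paper's intended argument — the paper simply says they follow from (1) and the existence of $\Corr$ — and you correctly identify the real content: the coherent fiberwise restriction of the Cartesian symmetric monoidal structure (the \S 2.4.1 machinery of \cite{HA}), and, for (3), the fact that $(-)^\sim$ preserves finite products together with the pullback square defining $\symcat$. One small caution in (3): the Segal condition of Corollary~\ref{corr.segal} alone does not give representability of $\EFib^\sim$ over all of $\Cat$; that is Theorem~\ref{representablecorr}, which you are implicitly using when you identify the underlying flagged $\infty$-category with $\Corr$ via Definition~\ref{def.Corr} — the same dependence the paper records as ``the existence of $\Corr$ as a flagged $\infty$-category,'' so this is a matter of citation rather than a gap.
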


\begin{proof}
Point~(2) follows from point~(1); point~(3) follows from point~(1) and the existence of $\corr$ as a flagged $\oo$-category.
We now establish point~(1).
Let $I \xra{i\mapsto (\cE_i\to \cK)} \EFib_\cK$ be a functor from a finite set.
The limit of the composite functor $I\to \EFib_\cK \to \Cat_{/\cK}$ is the $\infty$-category over $\cK$ which is the $I$-fold fiber product over $\cK$:
\[
\Bigl(\prod_\cK\Bigr)_{i\in I} \cE_i
\longrightarrow
\cK~.
\]
We must show that this functor is an exponentiable fibration.  
In the case that the cardinality of $I$ is less than $2$, this is tautologically true.
So assume that the cardinality of $I$ is at least $2$.  
Let $i_0\in I$ be an element.  
The above functor, factors as a composition
\[
\Bigl(\prod_\cK\Bigr)_{i\in I} \cE_i
~\cong~
\cE_{i_0}\underset{\cK}\times \Bigl(\prod_\cK\Bigr)_{i\in I\smallsetminus \{i_0\}} \cE_i
\xra{~\pr~}
\Bigl(\prod_\cK\Bigr)_{i\in I\smallsetminus \{i_0\}} \cE_i
\longrightarrow
\cK~.
\]
By induction on the cardinality of the finite set $I$, the last of these functors is an exponentiable fibration. 
By Corollary~\ref{exps-pullback}, which states that exponentiable fibrations are closed under the formation of base change, the functor $\pr$ is an exponentiable fibration.  
We conclude from Proposition~\ref{exps.compose} that the composite is an exponentiable fibration, as desired.  
\end{proof}

\begin{cor}\label{corr-mon}
Finite products among $\infty$-categories defines a symmetric monoidal structure on the flagged $\infty$-category $\Corr$.

\end{cor}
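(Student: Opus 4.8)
The plan is to deduce this immediately from Lemma~\ref{exp-products}(3), together with the reduction to space-valued functors carried out at the beginning of \S\ref{sym.stctr}. First I would recall that, because restricted Yoneda functors preserve finite products, the $\infty$-category $\symcat = \CAlg(\fCAT)$ of symmetric monoidal flagged $\infty$-categories sits in the displayed pullback square over $\fCAT$ and $\Fun(\bDelta^{\op},\CAlg(\SPACES))$. Consequently, to upgrade the flagged $\infty$-category $\Corr$ to a symmetric monoidal flagged $\infty$-category is the same datum as lifting the space-valued functor $\EFib^\sim\colon \Cat^{\op}\to \SPACES$ that it represents (Definition~\ref{def.Corr}) along the forgetful functor $\CAlg(\SPACES)\to \SPACES$.

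Next I would apply Lemma~\ref{exp-products}(3): the composite $\Cat^{\op}\xra{\EFib}\CAlg(\CAT)\xra{(-)^\sim}\CAlg(\SPACES)$ provides exactly such a lift. Here one uses that $\EFib\subset \Cat_{/-}$ is closed under the fiberwise-product symmetric monoidal structure of Observation~\ref{fiber-products} (Lemma~\ref{exp-products}(1)--(2)), and that $(-)^\sim\colon \Cat\to \Spaces$ preserves finite products, so that the composite is a well-defined $\CAlg(\SPACES)$-valued functor whose underlying $\SPACES$-valued functor is $\EFib^\sim$. By the pullback square above, this endows $\Corr$ with a symmetric monoidal structure.

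Finally I would identify the tensor on objects. Evaluating at $\cK = \ast$ and using $\EFib_\ast = \Cat_{/\ast} = \Cat$ (Example~\ref{over-point}), the symmetric monoidal structure restricts on objects of $\Corr$—which are precisely the objects of $\Cat^\sim$—to fiber product over the terminal $\infty$-category, i.e.\ to $(\cC,\cD)\mapsto \cC\times\cD$, as asserted in Main Theorem~1(3). There is essentially no obstacle beyond Lemma~\ref{exp-products}: the only point meriting a moment's care is that the monoidal structure so produced is genuinely compatible with the Segal-space presentation of $\Corr$, but this is exactly what the pullback square defining $\symcat$ records, since the lift constructed is a lift of $\EFib^\sim$ as a presheaf on $\bDelta$.
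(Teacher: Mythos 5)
Your proposal is correct and follows essentially the paper's own route: the paper deduces the corollary immediately (with no written proof) from Lemma~\ref{exp-products}(3) together with the reduction, set up at the start of \S\ref{sym.stctr} via the pullback square defining $\symcat$, of symmetric monoidal structures on flagged $\infty$-categories to $\CAlg(\SPACES)$-valued lifts of the represented functor $\EFib^\sim$. Your additional identification of the tensor on objects via evaluation at $\cK=\ast$ is a harmless elaboration consistent with Main Theorem~1(3).
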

\qed

\begin{notation}\label{def.corr-mon}
The symmetric monoidal flagged $\infty$-category of Corollary~\ref{corr-mon} is again denoted as $\Corr$; this symmetric monoidal structure will be implicitly understood.

\end{notation}

\begin{remark}\label{not-cartesian}
The monoidal structure on $\Corr$ is not Cartesian.
Namely, consider two $\infty$-categories $\cC$ and $\cD$, which we regard as objects in the flagged $\infty$-category $\Corr$.  
While projections define a diagram
\[
\cC \xla{~\pr~} \cC\times \cD  \xra{~\pr~} \cD
\]
in $\Corr$, it is generally not a limit diagram.  

\end{remark}

\subsection{Conservative exponentiable correspondences}
We explain that conservative exponentiable fibrations are classified by the full $\infty$-subcategory $\Corr[\Spaces]\subset\Corr$ consisting of the $\infty$-groupoids.

The following definitions and observations lie in parallel with the development in~\S\ref{sec.efibs}.
\begin{definition}\label{def.efib.cons}
A \emph{conservative exponentiable fibration} is an exponentiable fibration $\cE\to \cK$ that is \emph{conservative}, i.e., for which the fiber product $\cE_{|\cK^\sim}$ is an $\infty$-grouopid.  
The $\infty$-category of \emph{conservative exponentiable fibrations over $\cK$} is the full $\infty$-subcategory
\[
\EFib^{\sf cons}_{\cK}~\subset~\Cat_{/\cK}
\]
consisting of the conservative exponentiable fibrations; its $\infty$-subgroupoid is $\EFib^{\sf cons,\sim}_\cK$.

\end{definition}

\begin{example}
Both left fibrations and right fibrations are conservative exponentiable fibrations: see Lemma~\ref{exp-examples}.

\end{example}

\begin{example}
For $X$ a space, the canonical functor from the parametrized join,
\[
\ast \underset{X}\bigstar \ast:= \ast \underset{X\times \{s\}} \amalg X\times c_1\underset{X\times\{t\}}\amalg  \ast \longrightarrow c_1~,
\]
is a conservative exponentiable fibration, by Corollary~\ref{over-cell}.  This conservative exponentiable fibration is neither a left fibration nor a right fibration, so long as $X\neq\ast$ is not contractible.

\end{example}

\begin{lemma}\label{efib.cons.basechange}
Conservative exponentiable fibrations have the following closure properties.
\begin{enumerate}
\item 
For each pullback square among $\infty$-categories
\[
\xymatrix{
\cE'  \ar[r] \ar[d]
&
\cE  \ar[d]
\\
\cK' \ar[r]
&
\cK,
}
\]
the left vertical functor is a conservative exponentiable fibration whenever the right vertical functor is a conservative exponentiable fibration.  

\item 
For $\cE\to \cK$ and $\cK \to \cB$ conservative exponentiable fibrations, the composite functor $\cE\to \cB$ is a conservative exponentiable fibration.

\item
An exponentiable fibration $\cE \to c_1$ over the 1-cell is a conservative exponentiable fibration if and only if each base change $\cE_{|s} \to \ast$ and $\cE_{|t}\to \ast$ is a functor from an $\infty$-groupoid.

\item 
For $\cE \to [2]$ an exponentiable fibration for which each base change $\cE_{|\{0,1\}} \to \{0<1\}$ and $\cE_{|\{1<2\}} \to \{1<2\}$ is conservative, then the functor $\cE\to [2]$ is conservative.  

\end{enumerate}

\end{lemma}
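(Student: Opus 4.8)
The plan is to separate, in each of the four clauses, the exponentiability assertion from the conservativity assertion: exponentiable fibrations are closed under base change by Corollary~\ref{exps-pullback} and under composition by Proposition~\ref{exps.compose}, so it will remain only to check, per Definition~\ref{def.efib.cons}, that the pullback of $\cE$ along the maximal $\infty$-subgroupoid of the relevant base is an $\infty$-groupoid. The one general fact I would isolate at the outset is the following: since $\Spaces\hookrightarrow\Cat$ is reflective (Terminology~\ref{mxml.gpd}), a pullback of $\infty$-groupoids formed in $\Cat$ is again an $\infty$-groupoid, and any functor $\cG\to\cC$ out of an $\infty$-groupoid factors canonically through $\cC^\sim$. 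Consequently, for $\cE\to\cK$ a conservative exponentiable fibration and any $\infty$-groupoid $\cG\to\cK$, the base change $\cE\underset{\cK}\times\cG$ is equivalent to $\cE_{|\cK^\sim}\underset{\cK^\sim}\times\cG$, a pullback of $\infty$-groupoids, hence an $\infty$-groupoid.

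Granting this, (1) and (2) become bookkeeping with pasted pullback squares. For (1) I would rewrite $\cE'_{|(\cK')^\sim}\simeq(\cE\underset{\cK}\times\cK')\underset{\cK'}\times(\cK')^\sim\simeq\cE\underset{\cK}\times(\cK')^\sim$ and apply the isolated fact with $\cG=(\cK')^\sim$. For (2) I would rewrite $\cE_{|\cB^\sim}\simeq\cE\underset{\cK}\times\bigl(\cK\underset{\cB}\times\cB^\sim\bigr)=\cE\underset{\cK}\times\cK_{|\cB^\sim}$ and apply the isolated fact with $\cG=\cK_{|\cB^\sim}$, which is an $\infty$-groupoid precisely because $\cK\to\cB$ is conservative.

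For (3) and (4) I would just unwind Definition~\ref{def.efib.cons}, using that $c_1^\sim\simeq\{s\}\amalg\{t\}$ and $[2]^\sim\simeq\{0\}\amalg\{1\}\amalg\{2\}$ are discrete and that base change in $\Cat$ distributes over coproducts in the base, so that $\cE_{|c_1^\sim}\simeq\cE_{|s}\amalg\cE_{|t}$ and $\cE_{|[2]^\sim}\simeq\cE_{|0}\amalg\cE_{|1}\amalg\cE_{|2}$. A coproduct of $\infty$-categories is an $\infty$-groupoid exactly when every summand is, which proves (3) at once. For (4), the base changes $\cE_{|\{0<1\}}\to\{0<1\}$ and $\cE_{|\{1<2\}}\to\{1<2\}$ are again exponentiable fibrations by Corollary~\ref{exps-pullback}, so applying (3) to each shows $\cE_{|0}$, $\cE_{|1}$, $\cE_{|2}$ are $\infty$-groupoids; hence $\cE_{|[2]^\sim}$ is an $\infty$-groupoid, i.e., $\cE\to[2]$ is conservative.

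I do not expect a genuine obstacle here. The only place wanting a little care is the iterated fiber-product rearrangement in (1)--(2), and in particular the two compatibilities it rests on---that $\cK'\to\cK$ restricts to a functor $(\cK')^\sim\to\cK^\sim$, and that a pullback of $\infty$-groupoids is an $\infty$-groupoid---both of which are immediate from the reflectivity of $\Spaces\subset\Cat$.
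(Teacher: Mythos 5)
Your proposal is correct, and it is essentially the paper's (very terse) argument with the details written out: exponentiability via Corollary~\ref{exps-pullback} and Proposition~\ref{exps.compose}, and conservativity by unwinding Definition~\ref{def.efib.cons} through pasted pullbacks and the coproduct decompositions $c_1^\sim\simeq\{s\}\amalg\{t\}$, $[2]^\sim\simeq\{0\}\amalg\{1\}\amalg\{2\}$. The one divergence is clause (4): the paper cites Lemma~\ref{exp-char}(4), whereas your argument never touches the coend criterion (nor, in fact, exponentiability) --- since conservativity over $[2]$ only concerns morphisms lying over the discrete $[2]^\sim$, the fiberwise reduction via clause (3) suffices, which is arguably the cleaner route. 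One small terminological point: the factorization of $\cG\to\cC$ through $\cC^\sim$ uses the \emph{right} adjoint $(-)^\sim$ of $\Spaces\hookrightarrow\Cat$ (Terminology~\ref{mxml.gpd}), i.e.\ the coreflection, while the stability of $\infty$-groupoids under pullback uses that the inclusion, having the left adjoint $\sB$, preserves limits; both facts are available in the paper, just not both consequences of ``reflectivity.''
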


\begin{proof}
The first two statements are immediate from the Definition~\ref{def.efib.cons}, knowing that the statements are true for exponentiable fibrations.
The third statement is immediate from the Definition~\ref{def.efib.cons}.
The fourth statement is an immediate consequence of Lemma~\ref{exp-char}(4).

\end{proof}

\begin{cor}\label{efib.cons.fctr}
Base change defines functors
\[
\EFib^{\sf cons}\colon \Cat^{\op} \longrightarrow \CAT
\qquad\text{ and }\qquad
\EFib^{\sf cons,\sim}\colon \Cat^{\op} \longrightarrow \SPACES~.
\]
Fiber products over a common base defines lifts of these functors
\[
\EFib^{\sf cons}\colon \Cat^{\op} \longrightarrow \CAlg(\CAT)
\qquad\text{ and }\qquad
\EFib^{\sf cons,\sim}\colon \Cat^{\op} \longrightarrow \CAlg(\SPACES)~.
\]
The functor $\EFib^{\sf cons,\sim}\colon \Cat^{\op} \to \CAlg(\SPACES)$ is representable, in the sense of Theorem~\ref{flagged.thm}, by a full symmetric monoidal $\infty$-subcategory of the flagged $\infty$-category $\Corr$ of Definition~\ref{def.Corr}.

\end{cor}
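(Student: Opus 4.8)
The plan is to obtain every assertion by restriction from the corresponding facts about $\EFib$ established above. For the functoriality, Lemma~\ref{efib.cons.basechange}(1) says base change preserves conservative exponentiable fibrations, so the full $\infty$-subcategories $\EFib^{\sf cons}_\cK\subset\EFib_\cK$ form a subfunctor of $\efib\colon\Cat^{\op}\to\CAT$ from Corollary~\ref{exp-functor}; postcomposing with $(-)^\sim$ gives $\EFib^{\sf cons,\sim}$. For the lifts to $\CAlg$, in view of Observation~\ref{fiber-products} and Lemma~\ref{exp-products} it is enough to check that $\EFib^{\sf cons}_\cK\subset\Cat_{/\cK}$ is closed under fiber products over $\cK$ and contains $\id_\cK$. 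This holds because $\Spaces\hookrightarrow\Cat$ preserves limits (Terminology~\ref{mxml.gpd}): for conservative $\cE,\cE'\to\cK$ we have $(\cE\underset{\cK}\times\cE')_{|\cK^\sim}\simeq\cE_{|\cK^\sim}\underset{\cK^\sim}\times\cE'_{|\cK^\sim}$, a fiber product of $\infty$-groupoids over an $\infty$-groupoid, hence an $\infty$-groupoid; and $\cK_{|\cK^\sim}=\cK^\sim$ is an $\infty$-groupoid.

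The crux of the representability statement is a fiberwise reformulation of conservativeness that I would isolate first: an exponentiable fibration $\cE\to\cK$ is conservative if and only if, for every object $x$ of $\cK$, the fiber $\cE_{|x}$ is an $\infty$-groupoid. To see this, note that since $\cE\to\cK$ is exponentiable the functor $\cE\underset{\cK}\times(-)\colon\Cat_{/\cK}\to\Cat$ preserves colimits (Lemma~\ref{exp-char}(2)); applying it to the presentation $\cK^\sim\simeq\colim_{x\in\cK^\sim}\{x\}$ in $\Cat_{/\cK}$ yields $\cE_{|\cK^\sim}\simeq\colim_{x\in\cK^\sim}\cE_{|x}$. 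If each $\cE_{|x}$ is an $\infty$-groupoid, so is this colimit, because $\Spaces\hookrightarrow\Cat$ preserves colimits (Terminology~\ref{mxml.gpd}); conversely, each $\cE_{|x}$ is a fiber of the map of $\infty$-groupoids $\cE_{|\cK^\sim}\to\cK^\sim$.

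With this in hand I would verify the two conditions used in the proof of Theorem~\ref{representablecorr}, via Theorem~\ref{flagged.thm}. For the Segal condition, fix $p>0$; by Corollary~\ref{corr.segal} the map $\EFib^\sim_{[p]}\to\EFib^\sim_{\{0<1\}}\underset{\EFib^\sim_{\{1\}}}\times\EFib^\sim_{\{1<\dots<p\}}$ is an equivalence, sending $\cE\to[p]$ to the pair of its restrictions. By Lemma~\ref{pushout}(1), $\cE_{|i}\simeq(\cE_{|\{0<1\}})_{|i}$ for $i\leq 1$ and $\cE_{|i}\simeq(\cE_{|\{1<\dots<p\}})_{|i}$ for $i\geq 1$, so the fiberwise criterion shows $\cE$ is conservative exactly when both restrictions are; hence the equivalence restricts to one between the conservative loci, which is the Segal condition for $\EFib^{\sf cons,\sim}_{|\bDelta^{\op}}$. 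For the right Kan extension condition I would examine the square of spaces
\[
\xymatrix{
\EFib^{\sf cons,\sim}_\cK \ar[rr]\ar[d]
&&
\limit\bigl((\bDelta_{/\cK})^{\op}\to\bDelta^{\op}\hookrightarrow\Cat^{\op}\xra{\EFib^{\sf cons,\sim}}\SPACES\bigr)\ar[d]
\\
\EFib^\sim_\cK \ar[rr]^-{\simeq}
&&
\limit\bigl((\bDelta_{/\cK})^{\op}\to\bDelta^{\op}\hookrightarrow\Cat^{\op}\xra{\EFib^\sim}\SPACES\bigr),
}
\]
whose bottom map is an equivalence by Theorem~\ref{representablecorr} and whose vertical maps are monomorphisms; the top map is then an equivalence once the square is a pullback, i.e. once $\cE\to\cK$ is conservative if and only if $\cE_{|[p]}\to[p]$ is conservative for every $[p]\to\cK$. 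The forward direction is again Lemma~\ref{efib.cons.basechange}(1); the reverse follows by taking $[p]=[0]$ and applying the fiberwise criterion. This exhibits $\EFib^{\sf cons,\sim}$ as representable by a flagged $\infty$-category; the monoidal refinement above is compatible with the monomorphism $\EFib^{\sf cons,\sim}\hookrightarrow\EFib^\sim$, and fullness of the resulting inclusion into $\Corr$ reduces, via the fiberwise criterion (and, on morphisms, to Lemma~\ref{efib.cons.basechange}(3)), to the fact that a correspondence between $\infty$-groupoids is automatically conservative.

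The step I expect to be the main obstacle is the fiberwise criterion: the definition of conservativeness is phrased in terms of $\cE_{|\cK^\sim}$ rather than individual fibers, so one must genuinely move between the two, and this is where exponentiability of $\cE\to\cK$ (to commute base change past the colimit $\cK^\sim\simeq\colim_x\{x\}$) and closure of $\Spaces$ under colimits in $\Cat$ are needed. Everything else then reduces mechanically to the Segal and right Kan extension statements already proved for $\EFib^\sim$.
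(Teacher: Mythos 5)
Your proposal is correct and follows the route the paper intends (the paper leaves this corollary as an immediate consequence of Lemma~\ref{efib.cons.basechange} together with Corollary~\ref{exp-functor}, Lemma~\ref{exp-products}, and Theorem~\ref{representablecorr}): you restrict the functoriality, monoidal closure, Segal, and right-Kan-extension statements already proved for $\EFib^\sim$ to the conservative locus, exactly as the paper's citations suggest, and your fiberwise reformulation of conservativity is the same fact the paper invokes without proof in Lemma~\ref{indeed.not.flagged}. Your colimit argument for that criterion is valid (though exponentiability is not strictly needed there, since any functor restricted over $\cK^\sim$ is a functor over an $\infty$-groupoid and hence already the colimit of its fibers), so the proposal amounts to a correct, fully detailed version of the paper's implicit proof.
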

\qed

\begin{definition}\label{def.corr.spaces}
The symmetric monoidal $\infty$-category of \emph{correspondences of spaces} is the flagged $\infty$-subcategory 
\[
\Corr[\Spaces]~\subset ~ \Corr
\]
representing the functor $\EFib^{\sf cons,\sim}$ of Corollary~\ref{efib.cons.fctr}. 

\end{definition}

\begin{lemma}\label{indeed.not.flagged}
The monomorphism $\Corr[\Spaces]\hookrightarrow \Corr$ is fully-faithful, with image consisting of the $\infty$-groupoids.  

\end{lemma}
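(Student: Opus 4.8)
The plan is to compute the objects and mapping spaces of $\Corr[\Spaces]\hookrightarrow\Corr$ directly from the representing Segal spaces of Theorem~\ref{representablecorr}, the essential input being the characterization of conservativity over the $1$-cell recorded in Lemma~\ref{efib.cons.basechange}(3); no genuine obstacle is expected, and all the content sits in that characterization.

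I would first identify the objects. Under the equivalence $\fCAT\xra{\simeq}\PShv^{\sf Segal}(\bdelta)$ of Theorem~\ref{flagged.thm}, the underlying $\infty$-groupoid of a flagged $\infty$-category is the value of its representing Segal space on $[0]=\ast$. For $\Corr$ this value is $\EFib^\sim_\ast$, which by Example~\ref{over-point} is the space $\Cat^\sim$ of $\infty$-categories; for $\Corr[\Spaces]$ it is $\EFib^{\sf cons,\sim}_\ast$, consisting of those $\infty$-categories $\cE$ for which $\cE\to\ast$ is conservative, i.e.\ the $\infty$-groupoids (Definition~\ref{def.efib.cons}). So on underlying $\infty$-groupoids the map is the standard monomorphism $\Spaces^\sim\hookrightarrow\Cat^\sim$, whose image is exactly the $\infty$-groupoids; this is the asserted image.

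Next I would compare mapping spaces. For a Segal space $X$ with $a,b\in X_0$, the mapping space $\Map_X(a,b)$ is the fiber of $(d_1,d_0)\colon X_1\to X_0\times X_0$ over $(a,b)$, and for the Segal spaces at hand $(d_1,d_0)$ is $(\ev_s,\ev_t)$, sending an exponentiable fibration $\cE\to c_1$ over the $1$-cell to the pair $(\cE_{|s},\cE_{|t})$ of its fibers. Full faithfulness of $\Corr[\Spaces]\hookrightarrow\Corr$ then amounts to showing that the square
\[
\xymatrix{
\EFib^{\sf cons,\sim}_{c_1} \ar[rr] \ar[d]_-{(\ev_s,\ev_t)}
&&
\EFib^\sim_{c_1} \ar[d]^-{(\ev_s,\ev_t)}
\\
\Spaces^\sim\times\Spaces^\sim \ar[rr]
&&
\Cat^\sim\times\Cat^\sim
}
\]
is a pullback, the lower map being the product of standard inclusions. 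By Corollary~\ref{over-cell} every functor $\cE\to c_1$ is an exponentiable fibration, so the right-hand vertical map is $(\ev_s,\ev_t)\colon(\Cat_{/c_1})^\sim\to\Cat^\sim\times\Cat^\sim$; and Lemma~\ref{efib.cons.basechange}(3) says precisely that such an $\cE\to c_1$ is conservative if and only if both $\cE_{|s}$ and $\cE_{|t}$ are $\infty$-groupoids, equivalently, if and only if $(\ev_s,\ev_t)(\cE)$ lies in $\Spaces^\sim\times\Spaces^\sim$. This is exactly the claim that the displayed square is a pullback. Passing to fibers over any pair $(\cA,\cB)$ of $\infty$-groupoids yields the equivalence $\Corr[\Spaces](\cA,\cB)\xra{\ \simeq\ }\Corr(\cA,\cB)$.

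Finally, to see that this makes $\Corr[\Spaces]\hookrightarrow\Corr$ the full flagged $\infty$-subcategory spanned by the $\infty$-groupoids, I would invoke the Segal condition: it expresses $X_n$ as an iterated fiber product of copies of $X_1$ over $X_0$, so the pullback square above propagates to all simplicial degrees, identifying $\EFib^{\sf cons,\sim}$ with the subfunctor of $\EFib^\sim$ of exponentiable fibrations whose restriction to every vertex is an $\infty$-groupoid. The one point requiring care in writing this up is the routine bookkeeping that the identifications $\EFib^\sim_{\{s\}}=\EFib^\sim_{\{t\}}=\Cat^\sim$ and $\EFib^{\sf cons,\sim}_{\{s\}}=\EFib^{\sf cons,\sim}_{\{t\}}=\Spaces^\sim$ are compatible with the relevant face and degeneracy maps, which is immediate from Example~\ref{over-point} and the definitions.
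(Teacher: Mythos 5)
Your proposal is correct and follows the paper's own route: the whole content is Lemma~\ref{efib.cons.basechange}, i.e.\ that conservativity of an exponentiable fibration is detected by its fibers over objects being $\infty$-groupoids, which is exactly what the paper cites. Your write-up simply makes explicit the Segal-space bookkeeping (objects from the value on $[0]$, mapping spaces as fibers of $(\ev_s,\ev_t)$ on the value at $c_1$, propagation via the Segal condition) that the paper's one-line proof leaves implicit.
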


\begin{proof}
This follows from Lemma~\ref{efib.cons.basechange}, because an exponentiable fibration $\cE\to \cK$ is conservative if and only if, for each $\ast \to \cK$, the fiber $\cE_{|\ast}$ is an $\infty$-groupoid.

\end{proof}

\section{Cartesian and coCartesian fibrations}\label{sec.handed}
We discuss (co)Cartesian fibrations through exponentiable fibrations starting in \S\ref{cart.exp}, after reviewing the theory as due to Lurie in \S\ref{cart.review}.

\subsection{Basics about (co)Cartesian fibrations}\label{cart.review}
In this subsection, we recall definitions and some basic assertions concerning (co)Cartesian fibration of $\oo$-categories from \cite{HTT}.
We recall the straightening-unstraightening equivalence of~\cite{HTT}.

The following definition is very close to Definition~2.4.1.1 of~\cite{HTT}. 
An exact comparison between that definition and the next definition appears as Corollary~3.4 in~\cite{MG1}, where a friendly discussion of (co)Cartesian fibrations among $\infty$-categories is offered.
\begin{definition}\label{def.coCart}
Let $\pi\colon \cE\to \cK$ be a functor between $\infty$-categories.
\begin{enumerate}
\item 
\begin{enumerate}
\item 
A morphism $c_1\xra{\lag e_s\to e_t\rag} \cE$ is \emph{$\pi$-coCartesian} if the diagram of $\infty$-undercategories
\[
\xymatrix{
\cE^{e_t/}  \ar[r]  \ar[d]
&
\cE^{e_s/}  \ar[d]
\\
\cK^{\pi e_t/}  \ar[r]
&
\cK^{\pi e_s/}
}
\]
is a pullback.

\item 
The functor $\cE\xra{\pi}\cK$ is a \emph{coCartesian fibration} if each solid diagram of $\infty$-categories
\[
\xymatrix{
\ast   \ar[d]_-{\lag s \rag}  \ar[r]
&
\cE  \ar[d]^-{\pi}  
\\
c_1  \ar[r]  \ar@{-->}[ur]
&
\cK
}
\]
admits a $\pi$-coCartesian filler.  

\item 
The functor $\cE\xra{\pi}\cK$ is \emph{locally coCartesian} if, for each morphism $c_1\to \cK$, the base change $\cE_{|c_1} \to c_1$ is a coCartesian fibration.

\item 
The $\infty$-category of \emph{coCartesian fibrations (over $\cK$)} is the $\infty$-subcategory
\[
\cCart_{\cK}~\subset~\Cat_{/\cK}
\]
consisting of those objects $(\cE\xra{\pi} \cK)$ that are coCartesian fibrations, and those morphisms, which are diagrams among $\infty$-categories
\[
\xymatrix{
\cE  \ar[rr]^-F  \ar[dr]_-{\pi}
&&
\cE'  \ar[dl]^-{\pi'}
\\
&
\cK
&
}
\]
in which the downward arrows are coCartesian fibrations, for which $F$ carries $\pi$-coCartesian morphisms to $\pi$-coCartesian morphisms.  

\end{enumerate}

\item 
\begin{enumerate}
\item 
A morphism $c_1\xra{\lag e_s\to e_t\rag} \cE$ is \emph{$\pi$-Cartesian} if the diagram of $\infty$-overcategories
\[
\xymatrix{
\cE_{/e_s}  \ar[r]  \ar[d]
&
\cE_{/e_t}  \ar[d]
\\
\cK_{/\pi e_s}  \ar[r]
&
\cK_{/\pi e_s}
}
\]
is a pullback.

\item 
The functor $\cE\xra{\pi}\cK$ is a \emph{Cartesian fibration} if each solid diagram of $\infty$-categories
\[
\xymatrix{
\ast   \ar[d]_-{\lag t \rag}  \ar[r]
&
\cE  \ar[d]^-{\pi}  
\\
c_1  \ar[r]  \ar@{-->}[ur]
&
\cK
}
\]
admits a $\pi$-Cartesian filler.  

\item 
The functor $\cE\xra{\pi}\cK$ is \emph{locally Cartesian} if, for each morphism $c_1\to \cK$, the base change $\cE_{|c_1} \to c_1$ is a Cartesian fibration.

\item 
The $\infty$-category of \emph{Cartesian fibrations (over $\cK$)} is the $\infty$-subcategory
\[
\Cart_{\cK}~\subset~\Cat_{/\cK}
\]
consisting of those objects $(\cE\xra{\pi} \cK)$ that are Cartesian fibrations, and those morphisms, which are diagrams among $\infty$-categories
\[
\xymatrix{
\cE  \ar[rr]^-F  \ar[dr]_-{\pi}
&&
\cE'  \ar[dl]^-{\pi'}
\\
&
\cK
&
}
\]
in which the downward arrows are Cartesian fibrations, for which $F$ carries $\pi$-Cartesian morphisms to $\pi$-Cartesian morphisms.  

\end{enumerate}

\end{enumerate}
\end{definition}

\begin{remark}
The definition of (co)Cartesian fibration from \cite{HTT} is formulated in model-specific terms for quasi-categories and also requires that the functor $p$ be an inner fibration. This is for technical convenience, since then the pullback above can be taken to be the point-set pullback of underlying simplicial sets. Since every morphism between quasi-categories is equivalent to an inner fibration with the same codomain, we omit this condition, and instead make the convention that all pullbacks are in the $\oo$-category of $\oo$-categories (i.e., are homotopy pullbacks in a model category of $\oo$-categories). Modifying the definition in this slight way has the advantage that then being a coCartesian fibration becomes a homotopy-invariant property of a functor, and so it can be equally well formulated in any model for $\oo$-categories.
\end{remark}

\begin{example}
For $\cX$ and $\cK$ $\infty$-categories, the projection $\cK\times \cX\to \cK$ is both a coCartesian fibration as well as a Cartesian fibration.

\end{example}

\begin{example}
Let $\cE_s \xra{f} \cE_t$ be a functor between $\infty$-categories. The \emph{cylinder} is the pushout.
\[
{\sf Cyl}(f):= (\cE_s\times c_1) \underset{\cE_s\times \{t\}} \amalg \cE_t \longrightarrow c_1~.
\]
The fibers over $\{s\}$ and $\{t\}$, namely $\cE_s$ and $\cE_t$, are full $\oo$-subcategories; the mapping space between objects $e_s\in\cE_s$ and $e_t\in \cE_t$ is
\[
{\sf Cyl}(f)(e_s,e_t) \simeq \cE_t(fe_s,e_t)
\]
and there are no morphisms from $e_t$ to $e_s$. (Compare with Lemma~\ref{equiv.correspondences} for a more general expression for mapping spaces in a parametrized join.) The functor ${\sf Cyl}(f)\ra c_1$ is a coCartesian fibration; the coCartesian morphisms with respect to this projection are those sections $c_1 \to {\sf Cyl}(f)$ of the form $c_1  \simeq {\sf Cyl}(\{e_s\}\xra{=}\{e_s\}) \to {\sf Cyl}(f)$, which are determined by selecting an object $e_s\in \cE_s$.  
To see this, consider the defining diagram from Definition~\ref{def.coCart}, which becomes
\[
\xymatrix{
\cE^{fe_s/}  \ar[r]  \ar[d]
&
\cE^{e_s/}  \ar[d]
\\
\{1\}  \ar[r]
&
c_1~.}
\]
The statement that this is a pullback is equivalent to the statement that the natural functor $\cE_t^{fe_s/}\ra \cE_t^{e_s/}$ is an equivalence, which is exactly given by expression for mapping spaces in ${\sf Cyl}(f)$ above.
Likewise, the projection from the \emph{reversed cylinder} 
\[
{\sf Cylr}(f):=\cE_t \underset{\cE_s\times \{t\}} \amalg (\cE_s \times c_1^{\op}) \longrightarrow c_1^{\op} \simeq c_1
\]
is a Cartesian fibration; the Cartesian morphisms with respect to this projection are those sections $c_1 \to {\sf Cyl}(f)$ of the form $c_1  \simeq {\sf Cylr}(\{e_s\}\xra{=}\{e_s\}) \to {\sf Cylr}(f)$, which are determined by selecting an object $e_s\in \cE_s$.

\end{example}

\begin{example}\label{ar.cCart}
Let $\cK$ be an $\infty$-category.
Consider its $\infty$-category of arrows, $\Ar(\cK):=\Fun(c_1,\cK)$.
Evaluation at the target,
\[
\ev_t\colon \Ar(\cK)\longrightarrow \cK
\]
is a coCartesian fibration.
A morphism $c_1\to \Ar(\cK)$ is $\ev_t$-coCartesian if and only if its adjoint $c_1\times c_1 \to \cK$ factors through the epimorphism $c_1\times c_1 \to (c_1\times c_1)\underset{\{t\}\times c_1 } \amalg \ast \simeq  [2]$.
Alternatively, a morphism $c_1\to \Ar(\cK)$ is $\ev_t$-coCartesian if and only if the composite functor $c_1\to \Ar(\cK)\xra{\ev_s} \cK$ selects an equivalence in $\cK$.  
Evaluation at the source,
\[
\ev_t\colon \Ar(\cK)\longrightarrow \cK
\]
is a Cartesian fibration.
A morphism $c_1\to \Ar(\cK)$ is $\ev_s$-Cartesian if and only if its adjoint $c_1\times c_1 \to \cK$ factors through the epimorphism $c_1\times c_1 \to (c_1\times c_1)\underset{\{s\}\times c_1} \amalg \ast \simeq [2]$.  
Alternatively, a morphism $c_1\to \Ar(\cK)$ is $\ev_s$-Cartesian if and only if the composite functor $c_1\to \Ar(\cK)\xra{\ev_t} \cK$ selects an equivalence in $\cK$.  
\end{example}

\begin{observation}\label{cCart.ops}
A functor $\cE\to \cK$ is a coCartesian fibration if and only if its opposite $\cE^{\op} \to \cK^{\op}$ is a Cartesian fibration.  

\end{observation}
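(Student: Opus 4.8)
The plan is to argue directly from Definition~\ref{def.coCart}, exploiting that $(-)^{\op}\colon \CAT \to \CAT$ is an involutive autoequivalence. First I would record the two compatibilities that drive the duality: for any $\infty$-category $\cE$ and object $e$ there is a natural equivalence $(\cE^{e/})^{\op}\simeq (\cE^{\op})_{/e}$, identifying the opposite of the $\infty$-undercategory of $e$ with the $\infty$-overcategory of $e$ viewed as an object of $\cE^{\op}$; and for a functor $\pi\colon \cE \to \cK$ these equivalences are compatible with the projections, fitting into a commutative square relating $\cE^{e/}\to \cK^{\pi e/}$ with $(\cE^{\op})_{/e}\to (\cK^{\op})_{/\pi e}$. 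Because the equivalence $(-)^{\op}$ preserves and reflects pullbacks, it follows that a morphism $c_1 \xra{\lag e_s\to e_t\rag}\cE$ makes the square of $\infty$-undercategories in Definition~\ref{def.coCart}(1)(a) a pullback if and only if the same morphism, regarded as a morphism $c_1 \simeq c_1^{\op}\xra{\lag e_t\to e_s\rag}\cE^{\op}$, makes the square of $\infty$-overcategories in Definition~\ref{def.coCart}(2)(a) a pullback; that is, a morphism of $\cE$ is $\pi$-coCartesian exactly when, viewed in $\cE^{\op}$, it is $\pi^{\op}$-Cartesian.

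Next I would dualize the lifting condition. Applying $(-)^{\op}$ to a solid square as in Definition~\ref{def.coCart}(1)(b), and using $\ast^{\op}\simeq \ast$ together with the isomorphism $c_1^{\op}\simeq c_1$ that interchanges $s$ and $t$, produces a solid square of the shape appearing in Definition~\ref{def.coCart}(2)(b) for $\pi^{\op}$. Since $(-)^{\op}$ is an equivalence of $\infty$-categories it induces an equivalence on spaces of fillers, and by the previous paragraph it matches $\pi$-coCartesian fillers with $\pi^{\op}$-Cartesian fillers. Therefore every solid square for $\pi$ admits a $\pi$-coCartesian filler if and only if every solid square for $\pi^{\op}$ admits a $\pi^{\op}$-Cartesian filler, i.e.\ $\pi$ is a coCartesian fibration if and only if $\pi^{\op}$ is a Cartesian fibration. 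The reverse implication of the biconditional in the statement is then immediate: apply this to $\pi^{\op}$ in place of $\pi$ and invoke the natural equivalence $(\cE^{\op})^{\op}\simeq\cE$ over $(\cK^{\op})^{\op}\simeq\cK$.

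The only genuine content is the bookkeeping behind the equivalence $(\cE^{e/})^{\op}\simeq (\cE^{\op})_{/e}$, its naturality in $e$, and its compatibility with $\pi$. In the quasi-category model this is a point-set identity coming from $(S\star T)^{\op}\cong T^{\op}\star S^{\op}$ for joins of simplicial sets, so there is no real obstacle; model-independently it follows from the universal properties defining slice $\infty$-categories. The main thing to be careful about is the variance: a coCartesian fibration over $\cK$ dualizes to a Cartesian fibration over $\cK^{\op}$, not over $\cK$, and this is precisely what the swap $s\leftrightarrow t$ under $c_1^{\op}\simeq c_1$ is tracking.
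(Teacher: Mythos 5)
Your argument is correct, and it is precisely the definitional duality the paper leaves implicit: the statement appears only as an Observation with no proof, since Definition~\ref{def.coCart} is manifestly self-dual under $(-)^{\op}$ (which exchanges $\cE^{e/}$ with $(\cE^{\op})_{/e}$, preserves pullbacks, and swaps $s\leftrightarrow t$ under $c_1^{\op}\simeq c_1$). Your careful unpacking of that duality, including the remark that $\pi^{\op}$ is a fibration over $\cK^{\op}$ rather than $\cK$, is exactly the intended reasoning.
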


\begin{lemma}\label{cCart.compose}
Let $\cE \xra{\pi} \cK \xra{\pi'} \cU$ be a composable sequence of functors between $\infty$-categories.
\begin{enumerate}
\item If $\pi$ and $\pi'$ are coCartesian, then the composition $\pi'\circ\pi$ is coCartesian.

\item If $\pi$ and $\pi'$ are Cartesian, then the composition $\pi'\circ\pi$ is Cartesian.

\end{enumerate}

\end{lemma}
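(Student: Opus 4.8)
The plan is to deduce statement (2) from statement (1), and to prove (1) directly. For the reduction: by Observation~\ref{cCart.ops}, a functor is a Cartesian fibration if and only if its opposite is a coCartesian fibration, and $(\pi'\circ\pi)^{\op} = \pi'^{\op}\circ\pi^{\op}$; so statement (2) for the pair $\pi,\pi'$ is exactly statement (1) for the pair $\pi^{\op},\pi'^{\op}$. Hence it suffices to treat coCartesian fibrations.

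For statement (1), I would first isolate a candidate class of coCartesian morphisms for the composite: the claim is that a morphism $c_1\xra{\lag e_0\to e_1\rag}\cE$ is $(\pi'\circ\pi)$-coCartesian whenever it is $\pi$-coCartesian and its image $\pi e_0 \to \pi e_1$ in $\cK$ is $\pi'$-coCartesian. Granting this claim, the lifting property is checked by a two-step lift: given $\ast\xra{\lag s\rag}c_1 \xra{\lag u_0 \to u_1\rag} \cU$ together with a lift $e_0\in\cE$ of $u_0$ along $\pi'\circ\pi$, first use that $\pi'$ is a coCartesian fibration to choose a $\pi'$-coCartesian lift $\lag \pi e_0 \to k_1\rag$ of $u_0\to u_1$ starting at $\pi e_0$, and then use that $\pi$ is a coCartesian fibration to choose a $\pi$-coCartesian lift $\lag e_0 \to e_1\rag$ of $\pi e_0 \to k_1$ starting at $e_0$. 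By construction this composite morphism of $\cE$ lies over $u_0 \to u_1$, and by the claim it is $(\pi'\circ\pi)$-coCartesian, so the required filler exists.

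It remains to prove the claim, which is the technical heart of the argument and amounts to a pasting-of-pullbacks computation among undercategories. By Definition~\ref{def.coCart}, the hypothesis that $e_0\to e_1$ is $\pi$-coCartesian says the square with corners $\cE^{e_1/}, \cE^{e_0/}, \cK^{\pi e_1/}, \cK^{\pi e_0/}$ is a pullback, while the hypothesis that $\pi e_0\to \pi e_1$ is $\pi'$-coCartesian says the square with corners $\cK^{\pi e_1/}, \cK^{\pi e_0/}, \cU^{\pi'\pi e_1/}, \cU^{\pi'\pi e_0/}$ is a pullback. Stacking the second square below the first -- which is legitimate because the undercategory functors $\cE^{e_i/}\to \cK^{\pi e_i/}\to \cU^{\pi'\pi e_i/}$ compose compatibly with the horizontal maps -- the pasting lemma for pullbacks exhibits the square with corners $\cE^{e_1/},\cE^{e_0/},\cU^{\pi'\pi e_1/},\cU^{\pi'\pi e_0/}$ as a pullback, which is precisely the condition that $e_0\to e_1$ be $(\pi'\circ\pi)$-coCartesian.

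The main obstacle is really just the bookkeeping in the last step: tracking the three layers of undercategories, confirming the two squares are composable in the correct orientation, and confirming at the lifting step that the morphism produced in $\cE$ genuinely lies over the prescribed morphism in $\cU$. All of this is routine once the candidate coCartesian morphisms are guessed. (Alternatively, one could appeal to Proposition~2.4.2.3(3) of~\cite{HTT}, but the direct argument above is short and manifestly model-independent.)
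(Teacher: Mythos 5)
Your proposal is correct and is essentially the paper's own argument: reduce (2) to (1) by taking opposites, produce the lift by composing a $\pi'$-coCartesian lift with a $\pi$-coCartesian lift, and verify $(\pi'\circ\pi)$-coCartesianness by pasting the two pullback squares of undercategories. The only cosmetic difference is that you isolate the pasting step as a separate claim before the lifting step, whereas the paper does the lifting first and then the pasting.
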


\begin{proof}
The two assertions imply one another, as implemented by taking opposites.  
We are therefore reduced to proving assertion~(1).
That is, we show that every morphism $c_1 \xra{u_s\ra u_t} \cU$ with specified lift $e_s \in \cE_{|u_s}$ can be lifted to a $(\pi'\circ\pi)$-coCartesian morphism in $\cE$.
Using, in sequence, that $\pi'$ and $\pi$ are coCartesian fibrations, we can first lift $u_s\ra u_t$ to a $\pi'$-coCartesian morphism $\pi e_s \ra k_t$ for some $k_t$, and then lift the morphism $\pi e_s\ra k_t$ to a $\pi$-coCartesian morphism $e_s \ra e_t$ for some $e_t$.
This is represented in the following diagram:
\[
\xymatrix{
\ast\ar[dd]_{\langle s\rangle}\ar[rrr]^{\langle e_s\rangle}&&&\cE\ar[d]^{\pi}\\
&&&\cK\ar[d]^{\pi'}\\
c_1\ar@{-->}[urrr]_{\langle\pi e_s \ra k_t\rangle}\ar@{-->}[uurrr]^{\langle e_s \ra e_t\rangle}\ar[rrr]_{\langle u_s\ra u_t\rangle}&&&\cU}
\]
It remains to show that the lift $c_1\xra{e_s\ra e_t}\cE$ is a $(\pi'\circ\pi)$-coCartesian morphism. Consider the pair of commutative squares
\[
\xymatrix{
\cE^{e_t/}\ar[r]\ar[d]&\cE^{e_s/}\ar[d]\\
\cK^{k_t/}\ar[r]\ar[d]&\cK^{\pi e_s/}\ar[d]\\
\cU^{u_t/}\ar[r]&\cU^{u_s/}
}
\]
where the top square is a pullback since $e_s\ra e_t$ is a $\pi$-coCartesian morphism and the bottom square is a pullback since $\pi e_s \ra k_t$ is a $\pi'$-coCartesian morphism. Consequently, the outer rectangle is a pullback diagram, which is exactly the condition of $e_s \ra e_t$ being a $(\pi'\circ\pi)$-coCartesian morphism.

\end{proof}

(Co)Cartesian fibrations are closed under base change, as the next result shows.
\begin{lemma}\label{Cart.base.change}
Let 
\[
\xymatrix{
\cE'  \ar[d]_-{\pi'}  \ar[r]^-{\w{F}}
&
\cE  \ar[d]^-{\pi}  
\\
\cK'  \ar[r]^-{F}
&
\cK
}
\]
be a pullback diagram of $\infty$-categories.
\begin{enumerate}
\item  
If $\pi$ is a coCartesian fibration, then $\pi'$ is a coCartesian fibration.

\item  
If $\pi$ is a Cartesian fibration, then $\pi'$ is a Cartesian fibration.

\end{enumerate}

\end{lemma}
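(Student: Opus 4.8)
The plan is to reduce at once to statement~(1) and then to verify the two defining properties of a coCartesian fibration for $\pi'$ by transporting coCartesian morphisms across $\w F$. The reduction is by opposites: applying $(-)^{\op}$ to the given pullback square produces a pullback square $(\cE')^{\op}\simeq (\cK')^{\op}\underset{\cK^{\op}}\times \cE^{\op}$, and by Observation~\ref{cCart.ops} a functor is a Cartesian fibration exactly when its opposite is a coCartesian fibration; so~(2) follows from~(1) applied to $\pi^{\op}$, and it suffices to prove~(1).

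For the existence of coCartesian lifts, fix a morphism $\sigma\colon c_1\to \cK'$ together with a lift $e'_s\in \cE'$ of its source, and set $e_s:=\w F(e'_s)$. Since $\pi$ is a coCartesian fibration, choose a $\pi$-coCartesian morphism $\tau\colon c_1\to \cE$ with source $e_s$ satisfying $\pi\circ\tau=F\circ\sigma$. Because mapping spaces preserve the pullback $\cE'\simeq \cK'\underset{\cK}\times \cE$, the pair $(\sigma,\tau)$ — which agree in $\Map(c_1,\cK)$ — determines a morphism $\tau'\colon c_1\to \cE'$ with $\pi'\circ\tau'=\sigma$ and $\w F\circ\tau'=\tau$, and its source is $e'_s$. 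Thus $\tau'$ is the candidate coCartesian lift, and it remains to recognize it as such.

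To show $\tau'$ is $\pi'$-coCartesian, write $e'_t$ for its target, $e_t:=\w F(e'_t)$, and abbreviate $k'_s:=\pi'(e'_s)$, $k'_t:=\pi'(e'_t)$, $k_s:=\pi(e_s)$, $k_t:=\pi(e_t)$. We must show the square
\[
\xymatrix{
{\cE'}^{e'_t/}  \ar[r]  \ar[d]
&
{\cE'}^{e'_s/}  \ar[d]
\\
{\cK'}^{k'_t/}  \ar[r]
&
{\cK'}^{k'_s/}
}
\]
is a pullback. The crucial input is that formation of undercategories is stable under base change: for any pullback square $\cE'\simeq \cK'\underset{\cK}\times\cE$ and any $e'\in\cE'$ lying over $k'\in\cK'$, $e\in\cE$, $k\in\cK$, the evident square relating ${\cE'}^{e'/}$, $\cE^{e/}$, ${\cK'}^{k'/}$, and $\cK^{k/}$ is a pullback — this holds because $\cC^{c/}\simeq \{c\}\underset{\cC}\times\Ar(\cC)$ with respect to evaluation at the source, because $\Ar(-)=\Fun(c_1,-)$ preserves pullbacks, and because limits commute with limits. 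Applying this at $e'_s$ and at $e'_t$ shows that the two end-faces of the evident cube relating the square above to the corresponding square for $\tau$ are pullbacks, while the face built from $\tau$ is a pullback because $\tau$ was chosen $\pi$-coCartesian. Two applications of the pasting law for pullbacks then conclude: the rectangle ${\cE'}^{e'_t/}\to \cE^{e_t/}\to \cE^{e_s/}$ over ${\cK'}^{k'_t/}\to \cK^{k_t/}\to \cK^{k_s/}$ is a pullback (end-face at $t$ pasted with the $\tau$-face), and re-decomposing the same rectangle as ${\cE'}^{e'_t/}\to {\cE'}^{e'_s/}\to \cE^{e_s/}$ over ${\cK'}^{k'_t/}\to {\cK'}^{k'_s/}\to \cK^{k_s/}$ and cancelling the (pullback) end-face at $s$ forces the remaining left-hand square, which is the displayed one, to be a pullback.

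I expect the only genuine obstacle to be bookkeeping: orienting the cube correctly and confirming that its end-faces are pullbacks, i.e.\ that the formation of under/over-categories is stable under base change. Everything else is the universal property of the pullback together with the pasting law, and nothing above relies on the quasi-category model.
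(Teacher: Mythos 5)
Your proposal is correct and follows essentially the same route as the paper: reduce to the coCartesian case by taking opposites, produce the lift $\tau'$ from a $\pi$-coCartesian lift $\tau$ via the universal property of the pullback, and then verify $\pi'$-coCartesianness by pasting pullback squares of undercategories, using that formation of undercategories is stable under base change (the paper's "left and right squares") together with the pullback square expressing that $\tau$ is $\pi$-coCartesian (the paper's "inner square"). Your extra justification that $\cC^{c/}\simeq\{c\}\underset{\cC}\times\Ar(\cC)$ and the explicit double application of the pasting law only spell out steps the paper leaves implicit.
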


\begin{proof}
Assertion~(1) and assertion~(2) imply one another by taking opposites.
We are therefore reduced to proving assertion~(1).  

Suppose $\pi$ is a coCartesian fibration.
Consider a solid diagram of $\infty$-categories:
\[
\xymatrix{
\ast   \ar[d]_-{\lag s \rag}  \ar[rr]^-{\lag e_s'\rag}
&&
\cE'  \ar[d]    \ar[rr]
&&
\cE  \ar[d]
\\
c_1  \ar[rr]_-{\lag x' \xra{f'} y'\rag}   \ar@{-->}[urrrr]    \ar@{-->}[urr]
&&
\cK'  \ar[rr]
&&
\cK.
}
\]
Choose a $\pi$-coCartesian morphism as in the lower lift.
Denote the target of this lower filler as $e_t$.
Because the given square among $\infty$-categories is a pullback, this filler is equivalent to a higher filler, as indicated.
We must show that this higher lift is a $\pi$-coCartesian morphism.
Denote the target of this higher filler as $e'_t$.
Consider the canonical diagram of $\infty$-categories:
\[
\xymatrix{
{\cE'}^{e'_t/}  \ar[dr]  \ar[rrr]  \ar[ddd]
&
&
&
{\cE'}^{e'_s/}  \ar[dl]  \ar[ddd]
\\
&
\cE^{e_t/}  \ar[r]  \ar[d]
&
\cE^{e_s/}  \ar[d]
&
\\
&
\cK^{Fx'/}  \ar[r]
&
\cK^{Fy'/}  
&
\\
{\cK'}^{x'/}  \ar[ur]   \ar[rrr]
&
&
&
{\cK'}^{y'/}.  \ar[ul]
}
\]
By definition of a $\pi$-coCartesian morphism, the inner square is a pullback.
Because the given square is a pullback, then so too are the left and right squares in the above diagram.  
It follows that the outer square is a pullback.

\end{proof}

The next auxiliary result states the equivalences between (co)Cartesian fibrations are detected on fibers.  
\begin{lemma}\label{l.fib.fibers}
Consider a commutative diagram 
\[
\xymatrix{
\cE  \ar[rr]^-F  \ar[dr]_-{\pi}
&&
\cE'  \ar[dl]^-{\pi'}
\\
&
\cK
&
}
\]
among $\infty$-categories.
The functor $F\colon \cE\to \cE'$ is an equivalence between $\infty$-categories provided either of the following conditions.  
\begin{itemize}
\item
Both $\pi$ and $\pi'$ are coCartesian fibrations, and $F$ carries $\pi$-coCartesian morphisms to $\pi'$-coCartesian morphisms.

\item
Both $\pi$ and $\pi'$ are Cartesian fibrations, and $F$ carries $\pi$-Cartesian morphisms to $\pi'$-Cartesian morphisms.

\end{itemize}
\end{lemma}

\begin{proof}
The assertion concerning coCartesian fibrations implies that for Cartesian fibrations, as implemented by taking opposites.
We are therefore reduced to proving the assertion concerning coCartesian fibrations.

The condition that the functor between each fiber is an equivalence guarantees, in particular, that $F$ is surjective.
It remains to show that $f$ is fully-faithful.
Let $a,b\in \cE$.  
We intend to show that the top horizontal map in the diagram of spaces of morphisms,
\[
\xymatrix{
\cE(a,b) \ar[rr]^-F  \ar[dr]_-{\pi}
&&
\cE'(Fa,Fb)  \ar[dl]^-{\pi'}
\\
&
\cK(\pi a, \pi b)
&
,
}
\]
is an equivalence.
For this it is enough to show that, for each morphism $\pi a \xra{f}\pi b$ in $\cK$, the map between fibers
\[
\cE(a,b)_{|f}
\longrightarrow
\cE'(Fa,Fb)_{|f}
\]
is an equivalence between spaces.
Using the assumption that both $\pi$ and $\pi'$ are coCartesian fibrations and that $F$ preserves coCartesian morphisms, we identify this map between fibers as the map
\[
\cE_{|\pi b}(f_! a , b) \longrightarrow \cE_{|\pi' Fb}(f_! Fa , F b)
\]
between spaces of morphisms in the fibers of $\pi$ and $\pi'$ over $\pi b \simeq \pi' Fb \in \cK$; here, $(a \to f_! a)$ and $(Fa \to f_! Fa)$ are respective coCartesian morphisms in $\cE$ and $\cE'$.
The assumption that $F$ restricts as an equivalence between $\infty$-categories of fibers for $\pi$ and $\pi'$ implies this map is an equivalence.
This concludes this proof.

\end{proof}

Lemma~\ref{Cart.base.change} has this immediate result.
In the statement of this result we reference the Cartesian symmetric monoidal structures on $\CAT$ and on $\SPACES$.
\begin{cor}\label{cCart.functor}
Base change defines functors
\[
\cCart\colon \Cat^{\op} \longrightarrow \CAT~,\qquad \cK\mapsto \cCart_{\cK}~,
\qquad\text{ and }\qquad
\Cart\colon \Cat^{\op} \longrightarrow \CAT~,\qquad \cK\mapsto \Cart_{\cK}~,
\]
as well as 
\[
\cCart^\sim \colon \Cat^{\op} \xra{~\cCart~} \CAT \xra{~(-)^\sim~}\SPACES
\qquad\text{ and }\qquad
\Cart^\sim \colon \Cat^{\op} \xra{~\Cart~} \CAT \xra{~(-)^\sim~}\SPACES~.
\]
Fiber products over a common base defines lifts of these functors
\[
\cCart\colon \Cat^{\op} \longrightarrow \CAlg(\CAT)
\qquad\text{ and }\qquad
\Cart \colon \Cat^{\op} \longrightarrow \CAlg(\CAT)~,
\]
as well as 
\[
\cCart^\sim \colon \Cat^{\op} \longrightarrow  \CAlg(\SPACES)
\qquad\text{ and }\qquad
\Cart^\sim \colon \Cat^{\op} \longrightarrow \CAlg(\SPACES)~.
\]

\end{cor}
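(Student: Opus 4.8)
The plan is to deduce everything from Lemma~\ref{Cart.base.change} together with Observation~\ref{fiber-products}, by exhibiting $\cCart_-$ and $\Cart_-$ as subfunctors of $\Cat_{/-}$. The assertions concerning coCartesian fibrations and concerning Cartesian fibrations imply one another by applying $(-)^{\op}$, using Observation~\ref{cCart.ops} together with the fact that $(-)^{\op}\colon \Cat\to\Cat$ is an equivalence compatible with fiber products; so it suffices to treat the coCartesian case throughout.

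First I would check that base change restricts to $\cCart$. For a functor $f\colon \cK'\to\cK$, Lemma~\ref{Cart.base.change}(1) shows that $f^\ast$ carries objects of $\cCart_\cK$ to objects of $\cCart_{\cK'}$. For morphisms I would use the standard identification ${\cE'}^{e'/}\simeq \cE^{e/}\underset{\cK^{\pi e/}}\times{\cK'}^{x'/}$, valid for $\cE'=\cE\underset{\cK}\times\cK'$ and $e'$ lying over $x'\in\cK'$ with image $e\in\cE$; pasting pullback squares exactly as in the final paragraph of the proof of Lemma~\ref{Cart.base.change} shows that a $\pi$-coCartesian morphism base changes to a $\pi'$-coCartesian morphism, and hence that a functor over $\cK$ which preserves coCartesian morphisms base changes to one over $\cK'$ with the same property. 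Thus $f^\ast$ carries the (non-full) subcategory $\cCart_\cK\subset\Cat_{/\cK}$ into $\cCart_{\cK'}\subset\Cat_{/\cK'}$, so that $\cCart_-$ is a subfunctor of $\Cat_{/-}\colon\Cat^{\op}\to\CAT$ and thereby inherits the structure of a functor, with all of its coherences. Post-composing with the maximal $\infty$-subgroupoid functor $(-)^\sim\colon\CAT\to\SPACES$ produces $\cCart^\sim$, and then $\Cart$ and $\Cart^\sim$ by taking opposites.

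For the symmetric monoidal lifts, the one substantive point is that $\cCart_\cK\subset\Cat_{/\cK}$ is closed under the formation of finite fiber products over $\cK$. On objects: the terminal object $\cK\xra{=}\cK$ is a coCartesian fibration, and given coCartesian fibrations $\cE_1,\cE_2\to\cK$, a morphism of $\cK$ with a specified lift to $\cE_1\underset{\cK}\times\cE_2$ admits a coCartesian lift formed componentwise, since the identification $(\cE_1\underset{\cK}\times\cE_2)^{(e_1,e_2)/}\simeq \cE_1^{e_1/}\underset{\cK^{\pi e/}}\times\cE_2^{e_2/}$ together with pullback pasting shows this componentwise lift is coCartesian. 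On morphisms: a pair of functors each preserving coCartesian morphisms induces such a functor on the fiber product, by the same identification. Combined with Observation~\ref{fiber-products}(3), which lifts $\Cat_{/-}$ to a functor valued in $\CAlg(\CAT)$, this gives the lifts $\cCart_-,\Cart_-\colon \Cat^{\op}\to\CAlg(\CAT)$; post-composing with $(-)^\sim$, which preserves finite products, gives the $\CAlg(\SPACES)$-valued versions.

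The main obstacle is purely organizational: assembling the pullback-pasting arguments for undercategories so that the object-level closure (under base change and under fiber products) and the morphism-level closure (preservation of coCartesian morphisms, and of the functors preserving them) hold at the level of $\infty$-categories rather than merely homotopy categories. Given Lemma~\ref{Cart.base.change} and Observation~\ref{fiber-products}, this is routine, which is why the result is stated as an immediate consequence.
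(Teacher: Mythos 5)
Your proposal is correct and takes essentially the approach the paper intends: the corollary is stated as an immediate consequence of Lemma~\ref{Cart.base.change} together with the fiber-product discussion of Observation~\ref{fiber-products}, exactly as you organize it (restrict $\Cat_{/-}$ to a subfunctor, check object- and morphism-level closure under base change and under fiber products over the base, postcompose with $(-)^\sim$, and obtain the Cartesian case by taking opposites). The only step you pass over quickly -- that a $\pi'$-coCartesian morphism in the pullback has $\pi$-coCartesian image in $\cE$, which is what the morphism-level closure actually needs -- follows from the essential uniqueness of coCartesian lifts combined with the pasting argument you cite, so it is a routine completion rather than a gap.
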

\qed

The following construction of~\cite{HTT} is an $\infty$-categorical version of the Grothendieck construction.
We give the description of this construction from Theorem 1.1, after Definition 2.8, of~\cite{gepner-haugseng-nikolaus}.

\begin{construction}\label{def.un}
Let $\cK$ be an $\infty$-category.
The \emph{unstraightening} construction (for coCartesian fibrations) is the functor
\[
{\sf Un}\colon \Fun(\cK,\Cat)\longrightarrow \Cat_{/\cK}~,\qquad
(\cK\xra{F}\Cat)\mapsto   \bigl(\cK^{\bullet/} \underset{\cK} \otimes F \to \cK\bigr)~,
\]
whose values are given by coends, with respect to the standard tensor structure $\otimes \colon \Cat_{/\cK}\times \Cat \xra{\times} \Cat_{/\cK}$.  
The \emph{unstraightening} construction (for Cartesian fibrations) is the functor
\[
{\sf Un}\colon \Fun(\cK^{\op},\Cat)\longrightarrow \Cat_{/\cK}~,\qquad
(\cK^{\op}\xra{G}\Cat)\mapsto   \bigl(G \underset{\cK} \otimes \cK_{/\bullet} \to \cK\bigr)~,
\]
whose values are given by coends, with respect to the standard tensor structure $\otimes \colon \Cat\times \Cat_{/\cK}\xra{\times} \Cat_{/\cK}$.  

\end{construction}

\begin{example}
For $c_1\xra{\lag \cE_s\xra{f}\cE_t\rag} \Cat$ a functor, its unstraightening (as a coCartesian fibration) is the cylinder construction: ${\sf Cyl}(f) \to c_1$.
For $c_1 \xra{\lag \cE_t \xla{f} \cE_s\rag} \Cat^{\op}$ a functor, its unstraightening (as a Cartesian fibration) is the reverse cylinder construction: ${\sf Cylr}(f) \to c_1$.  

\end{example}

\begin{observation}\label{right.un}
For each $\infty$-category $\cK$, the unstraightening constructions
\[
\Fun(\cK,\Cat)\xra{~\sf Un~} \Cat_{/\cK}
\qquad \text{ and }\qquad
\Fun(\cK^{\op},\Cat)\xra{~\sf Un~} \Cat_{/\cK}
\]
are each left adjoints; their respective right adjoints are given by taking ends:
\[
\Cat_{/\cK} \longrightarrow  \Fun(\cK,\Cat)
~,\qquad
(\cE\to \cK)\mapsto \Fun_{/\cK}(\cK^{\bullet/},\cE)
\]
and
\[
\Cat_{/\cK} \longrightarrow  \Fun(\cK^{\op},\Cat)
~,\qquad
(\cE\to \cK)\mapsto \Fun_{/\cK}(\cK_{/\bullet},\cE)~.
\]

\end{observation}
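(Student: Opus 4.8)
The plan is to deduce both statements formally from the coend description in Construction~\ref{def.un}, after recording the relevant tensor/cotensor structure on $\Cat_{/\cK}$. I would treat the coCartesian unstraightening in detail; the Cartesian one then follows by applying the coCartesian case to $\cK^{\op}$ and transporting along the equivalences $(-)^{\op}\colon \Cat\xra{\simeq}\Cat$ and $(-)^{\op}\colon \Cat_{/\cK^{\op}}\xra{\simeq}\Cat_{/\cK}$ (cf.\ Observation~\ref{cCart.ops}), under which the functor $k\mapsto \cK^{k/}$ for $\cK^{\op}$ corresponds to $k\mapsto \cK_{/k}$ for $\cK$.

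First I would record the ambient structure. Both $\Fun(\cK,\Cat)$ and $\Cat_{/\cK}$ are presentable $\infty$-categories. The tensoring $\otimes\colon \Cat_{/\cK}\times\Cat\to\Cat_{/\cK}$ of Construction~\ref{def.un} is $\bigl((\cX\to\cK),\cC\bigr)\mapsto (\cX\times\cC\to\cK)$; since $\Cat$ is Cartesian closed, this preserves colimits separately in each variable, and it is adjoint in the $\Cat$-variable to the $\Cat$-valued hom $\Fun_{/\cK}(-,-)$, i.e.\ there is a natural equivalence $\Fun_{/\cK}(\cX\times\cC,\cE)\simeq\Fun\bigl(\cC,\Fun_{/\cK}(\cX,\cE)\bigr)$ --- this is the exponential adjunction in $\Cat$ applied over $\cK$. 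I also fix notation: $\cK^{\bullet/}\colon \cK^{\op}\to\Cat_{/\cK}$ is the functor $k\mapsto (\cK^{k/}\to\cK)$, contravariant since a morphism $k\to k'$ induces $\cK^{k'/}\to\cK^{k/}$ by precomposition.

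Next, the unstraightening ${\sf Un}(F)=\cK^{\bullet/}\underset{\cK}\otimes F$ is by definition a coend, hence a colimit; since $\otimes$ preserves colimits in the $\Cat$-variable and colimits commute with colimits, $F\mapsto{\sf Un}(F)$ preserves all colimits. As source and target are presentable, the adjoint functor theorem (\cite{HTT}, Cor.~5.5.2.9) yields a right adjoint $R\colon \Cat_{/\cK}\to\Fun(\cK,\Cat)$, so ${\sf Un}$ is a left adjoint. To identify $R$ I would compute it pointwise. Evaluation $\ev_k\colon \Fun(\cK,\Cat)\to\Cat$ has left adjoint $\cC\mapsto \cK(k,-)\times\cC$ (left Kan extension along $\{k\}\hookrightarrow\cK$, the enriched Yoneda lemma giving $\Map_{\Fun(\cK,\Cat)}(\cK(k,-)\times\cC,G)\simeq\Map_{\Cat}(\cC,G(k))$). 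The co-Yoneda/density identity gives ${\sf Un}(\cK(k,-))=\int^{j}\cK^{j/}\times\cK(k,j)\simeq \cK^{k/}$, and since ${\sf Un}$ commutes with $-\times\cC$ we get ${\sf Un}(\cK(k,-)\times\cC)\simeq \cK^{k/}\times\cC$ over $\cK$. Combining these with ${\sf Un}\dashv R$ and the tensor--hom equivalence yields, naturally in $\cC$,
\[
\Map_{\Cat}\bigl(\cC,R(\cE)(k)\bigr)\ \simeq\ \Map_{\Cat_{/\cK}}\bigl(\cK^{k/}\times\cC,\cE\bigr)\ \simeq\ \Map_{\Cat}\bigl(\cC,\Fun_{/\cK}(\cK^{k/},\cE)\bigr)~,
\]
so by the Yoneda lemma $R(\cE)(k)\simeq \Fun_{/\cK}(\cK^{k/},\cE)$, naturally in $k$; that is, $R(\cE)\simeq\Fun_{/\cK}(\cK^{\bullet/},\cE)$, as asserted. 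Alternatively one can bypass the adjoint functor theorem and exhibit the adjunction directly from the universal property of the coend, using that the $\Cat$-valued mapping object $\Fun(\cK,\Cat)(F,G)$ is the end $\limit_{\TwAr(\cK)}\Fun\bigl(F(-),G(-)\bigr)$, so that $\Fun_{/\cK}\bigl(\cK^{\bullet/}\underset{\cK}\otimes F,\cE\bigr)\simeq\limit_{\TwAr(\cK)}\Fun\bigl(F(-),\Fun_{/\cK}(\cK^{-/},\cE)\bigr)\simeq\Fun(\cK,\Cat)\bigl(F,\Fun_{/\cK}(\cK^{\bullet/},\cE)\bigr)$.

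The only real work --- and it is bookkeeping rather than a genuine obstacle --- is the co-Yoneda identification ${\sf Un}(\cK(k,-))\simeq \cK^{k/}$ together with its naturality in $k$ at the level of $\infty$-categories, and checking that the displayed chain of equivalences is natural in both $\cC$ and $k$ so that it assembles into an equivalence of functors $\cK\to\Cat$ (rather than a pointwise equivalence only). No model-specific input is needed: everything rests on presentability of the two $\infty$-categories, the Cartesian closed structure of $\Cat$, and standard (co)end calculus.
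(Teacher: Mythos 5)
Your argument is correct; note that the paper itself offers no proof of this statement — it is recorded as an unproved Observation — so there is no official argument to compare against, and your route (presentability of both sides, colimit-preservation of ${\sf Un}$ from the coend description, the adjoint functor theorem, and then the pointwise identification of the right adjoint via the left adjoint $\cC\mapsto \cK(k,-)\times\cC$ to evaluation together with co-Yoneda) is exactly the kind of justification the ambient toolkit of the paper supports, cf.\ the free use of (co)end calculus in Construction~\ref{def.un}, Lemma~\ref{composition} and Remark~\ref{corr-explicit}. The only point of substance is the one you flag: the Yoneda-style identification $R(\cE)(k)\simeq \Fun_{/\cK}(\cK^{k/},\cE)$ must be assembled naturally in $k$, and for that your third-paragraph variant is really the preferable primary argument rather than an "alternative": since $\Fun_{/\cK}(-,\cE)$ carries the coend (a colimit over the twisted arrow category) to a limit, the tensor--cotensor equivalence $\Fun_{/\cK}(\cX\times\cC,\cE)\simeq\Fun\bigl(\cC,\Fun_{/\cK}(\cX,\cE)\bigr)$ plus the end formula for natural transformations exhibits the presheaf $F\mapsto \Map_{\Cat_{/\cK}}\bigl({\sf Un}(F),\cE\bigr)$ as represented by $\Fun_{/\cK}(\cK^{\bullet/},\cE)$, naturally in $F$, from which the right adjoint and its asserted values follow at once without any pointwise-to-natural upgrade. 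The deduction of the Cartesian case by passing to opposites, with $\cK^{k/}$ for $\cK^{\op}$ corresponding to $\cK_{/k}$ for $\cK$, is fine.
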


The following principal result of Lurie explains how the unstraightening construction implements representatives of the functors of Corollary~\ref{cCart.functor}.
\begin{theorem}[\cite{HTT}]\label{st.un.cCart}
The functor $\cCart^\sim \colon \Cat^{\op} \to \CAlg(\SPACES)$ is represented by the Cartesian symmetric monoidal $\infty$-category $\Cat$; specifically, for each $\infty$-category $\cK$, the unstraightening construction implements a canonical equivalence between $\infty$-groupoids
\[
{\sf Un}\colon \CAT(\cK,\Cat)~\simeq~ \cCart_{\cK}^\sim~.
\]
The functor $\Cart^\sim \colon \Cat^{\op} \to \CAlg(\SPACES)$ is represented by the coCartesian symmetric monoidal $\infty$-category $\Cat^{\op}$; specifically, for each $\infty$-category $\cK$, the unstraightening construction implements a canonical equivalence between $\infty$-groupoids
\[
{\sf Un}\colon \CAT(\cK,\Cat^{\op})~\simeq~ \Cart_{\cK}^\sim~.
\]

\end{theorem}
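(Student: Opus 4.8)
The assertion is Lurie's straightening--unstraightening theorem (Theorem~3.2.0.1 of \cite{HTT}), and the plan is to recover it within the present framework. I would first reduce the Cartesian statement to the coCartesian one: applying $(-)^{\op}$ everywhere, Observation~\ref{cCart.ops} identifies the functor $\Cart^\sim$ with $\cK\mapsto\cCart^\sim_{\cK^{\op}}$ and, on maximal $\infty$-subgroupoids, $\CAT(\cK,\Cat^{\op})$ with $\CAT(\cK^{\op},\Cat)$, while $(-)^{\op}$ interchanges the Cartesian symmetric monoidal structure on $\Cat$ with the coCartesian one on $\Cat^{\op}$. So it suffices to show that the natural transformation
\[
{\sf Un}\colon\CAT(-,\Cat)~=~\Fun(-,\Cat)^\sim\longrightarrow\cCart^\sim
\]
of functors $\Cat^{\op}\to\CAlg(\SPACES)$ --- given by Construction~\ref{def.un}, and natural because unstraightening is compatible with base change --- is an equivalence.

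\textbf{Step 1: ${\sf Un}$ takes values in coCartesian fibrations.} For $F\colon\cK\to\Cat$ I would verify that $\cK^{\bullet/}\underset{\cK}\otimes F\to\cK$ is a coCartesian fibration by producing, for each morphism $x\to y$ of $\cK$ and object $e$ of the fiber $F(x)$, a $\pi$-coCartesian lift with source $e$, and checking the pullback condition of Definition~\ref{def.coCart}(1)(a). Since unstraightening commutes with base change, this reduces along the simplices of $\cK$: over each $c_1\to\cK$ the unstraightening is a cylinder ${\sf Cyl}(f)$, which is coCartesian (the cited Example), and over each $[2]\to\cK$ the composability of coCartesian edges holds because $F$ is a functor; and a functor to a base is a coCartesian fibration once it is locally one and its locally-coCartesian edges compose. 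This produces the displayed transformation ${\sf Un}$.

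\textbf{Step 2: reduction to the $1$-cell.} Following the pattern of Corollary~\ref{corr.segal} and Theorem~\ref{representablecorr} --- using that coCartesian fibrations are exponentiable, the gluing Lemma~\ref{pushout}, Lemma~\ref{cCart.compose}, and that being a coCartesian fibration is a condition detected on the simplices of the base --- the functor $\cCart^\sim$ is the right Kan extension along $\bDelta^{\op}\hookrightarrow\Cat^{\op}$ of its restriction $\cCart^\sim_{|\bDelta^{\op}}$, and that restriction is a Segal space. The functor $\CAT(-,\Cat)$ is also the right Kan extension along $\bDelta^{\op}\hookrightarrow\Cat^{\op}$ of its restriction, which is the complete Segal space presenting $\Cat$ --- simply because each $\infty$-category is the colimit of its simplices and $\CAT(-,\Cat)$ carries colimits to limits. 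The transformation ${\sf Un}$ is compatible with both presentations, so it is an equivalence on all of $\Cat^{\op}$ as soon as it is an equivalence on $\bDelta^{\op}$; and since both restrictions are Segal spaces, it suffices to treat $[0]$ and $[1]$. Over $[0]=\ast$ it is the identity of $\Cat^\sim$.

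\textbf{Step 3: the case of the $1$-cell, monoidality, and conclusion.} Over $c_1$ the transformation sends $(f\colon\cE_s\to\cE_t)$ to ${\sf Cyl}(f)\to c_1$, and I must see that it induces an equivalence $\CAT(c_1,\Cat)\simeq\cCart^\sim_{c_1}$. For essential surjectivity, from a coCartesian fibration $p\colon\cE\to c_1$ form the coCartesian transport functor $f\colon\cE_{|s}\to\cE_{|t}$ as the composite $\cE_{|s}\xra{\ \simeq\ }\{\pi\text{-coCartesian sections of }p\}\xra{\ \ev_t\ }\cE_{|t}$, the first arrow an equivalence by the existence and essential uniqueness of coCartesian lifts; then the comparison ${\sf Cyl}(f)\to\cE$ over $c_1$ is an equivalence on fibers, hence an equivalence by Lemma~\ref{l.fib.fibers}, and the same computation run backwards gives injectivity. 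Monoidality of ${\sf Un}$ then follows from the identification ${\sf Cyl}(f\times g)\simeq{\sf Cyl}(f)\underset{c_1}\times{\sf Cyl}(g)$ together with the fact that $(-)^\sim$ and the right Kan extension of Step~2 preserve finite products; this upgrades ${\sf Un}$ to an equivalence of $\CAlg(\SPACES)$-valued functors and exhibits $\cCart^\sim$ as represented by the Cartesian symmetric monoidal $\infty$-category $\Cat$, whence the Cartesian statement --- with $\Cart^\sim$ represented by the coCartesian symmetric monoidal $\Cat^{\op}$ --- by taking opposites. The main obstacle is the coCartesian-transport step, namely manufacturing $f\colon\cE_{|s}\to\cE_{|t}$ from a coCartesian fibration over $c_1$; the higher coherences that make the general case of \cite{HTT} delicate are here supplied for free by the Segal conditions of Step~2, so that the entire difficulty is concentrated on the single arrow. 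One may of course also appeal to Lurie's theorem directly.
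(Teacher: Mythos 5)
The first thing to say is that the paper has no internal proof of this statement: Theorem~\ref{st.un.cCart} is imported wholesale from \cite{HTT}, so the last sentence of your proposal ("appeal to Lurie's theorem directly") is in fact the paper's entire argument. Your attempted internal derivation has a genuine gap, and it sits exactly where the real content of straightening--unstraightening lives: the naturality of ${\sf Un}$ in the base. In Step 1 you reduce coCartesianness of the coend $\cK^{\bullet/}\underset{\cK}\otimes F\to\cK$ to the simplices of $\cK$ "since unstraightening commutes with base change," and the whole reduction of Step 2 (right Kan extension from $\bDelta^{\op}$, Segal spaces, checking on $[0]$ and $[1]$) only ever verifies that an \emph{already given} morphism of presheaves $\Cat^{\op}\to\SPACES$ is an equivalence; so it presupposes that Construction~\ref{def.un} has been made coherently natural in $\cK$. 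Neither point is formal. The coend is a colimit in $\Cat_{/\cK}$, and colimits of $\infty$-categories are not universal: pullback along an arbitrary functor $\cK'\to\cK$ is only a right adjoint, and it preserves colimits precisely when $\cK'\to\cK$ is exponentiable -- the paper's own Example~\ref{non-example} exhibits pullback destroying exactly the kind of pushout you would need preserved. The natural way to check base-change invariance of the coend is fiberwise, which already requires knowing that the coend is a coCartesian fibration with fibers $F(x)$ -- the very thing Step 1 sets out to prove -- so the argument as written is circular. This base-change/naturality statement is a theorem (Lurie; see also Gepner--Haugseng--Nikolaus), not an observation, and assuming it amounts to assuming the hard part of the result being proved.

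Modulo that input, your skeleton is consistent with the paper's machinery: descent of $\cCart^\sim$ from $\bDelta^{\op}$ does follow from Theorem~\ref{representablecorr} together with Proposition~\ref{just.over.2} (coCartesian $=$ locally coCartesian $+$ exponentiable, detected over $[2]$-points), and the transport construction over the $1$-cell is correct, with the $\ev_s$-equivalence onto coCartesian sections available via Corollary~\ref{pre.adj}. Two further points deserve care, though. Your appeal to Lemma~\ref{l.fib.fibers} is legitimate only because your comparison ${\sf Cyl}(f)\to\cE$ carries the cylinder's coCartesian edges to $\pi$-coCartesian edges (a fiberwise criterion without that hypothesis is too strong; the paper only ever applies the lemma to such morphisms), so you should record that your functor preserves them by construction. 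And "the same computation run backwards gives injectivity" elides the actual content over $c_1$: one needs that equivalences over $c_1$ automatically preserve coCartesian edges and that coCartesian-edge-preserving functors ${\sf Cyl}(f)\to{\sf Cyl}(g)$ over $c_1$ are the same data as morphisms $f\to g$ in $\Ar(\Cat)$, identifying full mapping spaces and not merely $\pi_0$. As it stands, the honest conclusion is the paper's: cite \cite{HTT}, or else supply the coherent naturality of ${\sf Un}$, which is where all the difficulty is concentrated.
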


\subsection{Characterizing (co)Cartesian fibrations}\label{cart.exp}
We establish a useful characterization for (co)Cartesian fibrations, in the context of exponentiable fibrations.
We do this as two steps; we first characterize locally (co)Cartesian fibrations, we then characterize (co)Cartesian fibrations in terms of locally (co)Cartesian fibrations.

We observe that (co)Cartesian fibrations are examples of exponentiable fibrations.  
Later, in Theorem~\ref{theorem-cocart}, we characterize which exponentiable fibrations are (co)Cartesian fibrations.  
\begin{lemma}\label{exp-examples}
Cartesian fibrations and coCartesian fibrations are exponentiable fibrations.

\end{lemma}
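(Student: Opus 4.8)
The plan is to reduce to coCartesian fibrations and then verify condition~(4) of Lemma~\ref{exp-char} by a density (co-Yoneda) argument. By Observation~\ref{exp-op}, a functor is an exponentiable fibration if and only if its opposite is; by Observation~\ref{cCart.ops}, a functor is a coCartesian fibration if and only if its opposite is a Cartesian fibration. So it suffices to show that every coCartesian fibration $\pi\colon\cE\to\cK$ is an exponentiable fibration, and for this I would check condition~(4) of Lemma~\ref{exp-char}.

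Fix a functor $[2]\to\cK$ together with a lift $\{0\}\amalg\{2\}\to\cE$ along $\pi$ selecting $e_0$ over $0$ and $e_2$ over $2$. By Lemma~\ref{Cart.base.change}(1) the base change $\cE_{|[2]}\to[2]$ is again a coCartesian fibration, so by Theorem~\ref{st.un.cCart} it is the unstraightening of a functor $F\colon[2]\to\Cat$, i.e.\ of a diagram $\cE_0\xra{f}\cE_1\xra{g}\cE_2$ with $\cE_i\simeq\cE_{|\{i\}}$ and, since $F$ is a functor, with $F(0\to 2)\simeq g\circ f$. Restricting along $\{0<1\}\hookrightarrow[2]$ and $\{1<2\}\hookrightarrow[2]$ and using the standard description of mapping spaces in a coCartesian fibration (the one already used in the proof of Lemma~\ref{l.fib.fibers}), the three mapping spaces entering condition~(4) are identified, naturally in $e_1\in\cE_1$, as $\cE_{|\{0<1\}}(e_0,e_1)\simeq\cE_1(f e_0,e_1)$, $\cE_{|\{1<2\}}(e_1,e_2)\simeq\cE_2(g e_1,e_2)$, and $\cE_{|[2]}(e_0,e_2)\simeq\cE_2(g f e_0,e_2)$; under these identifications the composition map of Lemma~\ref{exp-char}(4) becomes the canonical map
\[
\int^{e_1\in\cE_1}\cE_1(f e_0,e_1)\times\cE_2(g e_1,e_2)~\longrightarrow~\cE_2(g f e_0,e_2)~,\qquad(\alpha,\beta)\longmapsto\beta\circ g(\alpha)~.
\]
This is an equivalence by the density formula for $\infty$-categories: for any $H\colon\cE_1^{\op}\to\Spaces$ one has $\int^{e_1}\cE_1(f e_0,e_1)\times H(e_1)\simeq H(f e_0)$, which we apply with $H=\cE_2(g(-),e_2)$. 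Hence $\pi$ satisfies Lemma~\ref{exp-char}(4), and is therefore an exponentiable fibration.

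The substantive part is the bookkeeping of the middle step — combining base change, unstraightening, and the mapping-space formula into the precise statement that the composition map of Lemma~\ref{exp-char}(4) is the co-Yoneda structure map; once that is pinned down, the density formula finishes it immediately. I expect that identification, rather than any genuinely hard step, to be where the care is needed. (An alternative, essentially equivalent route would be to verify condition~(3) directly, recognizing $\cE_{|\{0<1\}}$ and $\cE_{|\{1<2\}}$ as the cylinders on $f$ and $g$ and checking that ${\sf Cyl}(f)\amalg_{\cE_1}{\sf Cyl}(g)\xra{\ \sim\ }\cE_{|[2]}$; but this again comes down to the same mapping-space computation.)
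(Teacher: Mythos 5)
Your proposal is correct, but it takes a genuinely different route from the paper's. The paper makes the same reduction to the coCartesian case via Observations~\ref{exp-op} and~\ref{cCart.ops}, but then verifies criterion~(6) of Lemma~\ref{exp-char} rather than criterion~(4): given a lift $\{0<2\}\xra{\lag e_0\to e_2\rag}\cE$ of a functor $[2]\to\cK$, a choice of $\pi$-coCartesian lift $e_0\to e_1$ over $\{0<1\}$ determines, by the defining universal property of a coCartesian morphism, an initial object of the factorization $\infty$-category $({\cE_{|1}}^{e_0/})_{/(e_0\to e_2)}$, whose classifying space is therefore contractible; that argument uses nothing beyond Definition~\ref{def.coCart}. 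You instead verify the coend criterion~(4), at the cost of invoking the straightening--unstraightening equivalence (Theorem~\ref{st.un.cCart}), the mapping-space formula for coCartesian fibrations, and the co-Yoneda/density formula --- all legitimately available here, with no circularity, since none of those inputs depends on this lemma. What your route buys is a transparent identification of the correspondence underlying a coCartesian fibration with its associated bimodule and of the map in Lemma~\ref{exp-char}(4) with bimodule composition, which fits well with Remark~\ref{corr-explicit}; what it costs is heavier machinery than the paper needs. The one step you flag --- that under your identifications the composition map of Lemma~\ref{exp-char}(4) is the co-Yoneda structure map $(\alpha,\beta)\mapsto\beta\circ g(\alpha)$ --- is a real obligation but a standard one: since $\pi$ is coCartesian (not merely locally so), the composite of coCartesian lifts over $0\to1$ and $1\to2$ is a coCartesian lift over $0\to2$, so $(0\to2)_!\simeq g\circ f$, and the identification then follows from uniqueness of factorizations through coCartesian morphisms. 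So the proposal is sound; the paper's proof is simply shorter and more self-contained.
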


\begin{proof}

Using Observation~\ref{exp-op} and Observation~\ref{cCart.ops} the coCartesian case implies the Cartesian case.
So let $\pi\colon \cE \to \cK$ be a coCartesian fibration.  
We invoke the criterion of Lemma~\ref{exp-char}(6). 
So fix a functor $[2]\to \cK$.
Extend this functor as a solid diagram of $\infty$-categories:
\begin{equation}\label{22}
\xymatrix{
\{0<2\} \ar[rr]^-{\lag e_0\to e_2\rag}     \ar[d]
&&
\cE  \ar[d]^-{\pi}
\\
[2]  \ar[rr]  \ar@{-->}[urr]
&&
\cK.
}
\end{equation}
An object in the $\infty$-category $({\cE_{|1}}^{e_0/})_{/(e_0\to e_2)}$ is an indicated filler in this diagram.  
Choose a $\pi$-coCartesian lift 
\begin{equation}\label{33}
\xymatrix{
\{0\} \ar[rr]^-{\lag e_0\rag}     \ar[d]
&&
\cE  \ar[d]^-{\pi}
\\
\{0<1\}  \ar[rr]  \ar@{-->}[urr]^-{\lag e_0\to e_1\rag}
&&
\cK.
}
\end{equation}
By definition of a $\pi$-coCartesian morphism, there is a unique filler of the diagram~(\ref{22}) extending the diagram~(\ref{33}), thereby determining an object in the $\infty$-category $({\cE_{|1}}^{e_0/})_{/(e_0\to e_2)}$.
Precisely because the lift in~(\ref{33}) is a $\pi$-coCartesian morphism, this object in $({\cE_{|1}}^{e_0/})_{/(e_0\to e_2)}$ is initial.
We conclude that its classifying space $\sB({\cE_{|1}}^{e_0/})_{/(e_0\to e_2)}\simeq \ast$ is terminal, as desired.

\end{proof}

Here is a simpler criterion for assessing if a functor is a (co)Cartesian fibration.
\begin{lemma}\label{lemma.reformulation}
Let $\pi\colon \cE\to \cK$ be a functor between $\infty$-categories.
\begin{enumerate}
\item
\begin{enumerate}
\item
A morphism $c_1\xra{\lag e_s \to e_t\rag} \cE$ is $\pi$-coCartesian if and only if it is initial as an object in the fiber product $\infty$-category $\cE^{e_s/} \underset{\cK^{x/}} \times \cK^{y/}$.

\item 
The functor $\pi$ is a coCartesian fibration if and only if each solid diagram of $\infty$-categories
\[
\xymatrix{
\ast   \ar[d]_-{\lag s \rag}  \ar[rrr]^-{\lag e_s\rag}
&&&
\cE  \ar[d]^-{\pi}  
\\
c_1  \ar[rrr]_-{\lag x \xra{f} y\rag}   \ar@{-->}[urrr]^-{\lag e_s \to f_!e_s\rag}
&&&
\cK,
}
\]
admits a filler that is initial in the fiber product $\infty$-category $\cE^{e_s/} \underset{\cK^{x/}} \times \cK^{y/}$.

\end{enumerate}

\item
\begin{enumerate}
\item
A morphism $c_1\xra{\lag e_s \to e_t\rag} \cE$ is $\pi$-Cartesian if and only if it is final as an object in the fiber product $\infty$-category $\cE_{/e_t} \underset{\cK_{/y}} \times \cK_{/x}$.

\item 
The functor $\pi$ is a Cartesian fibration if and only if each solid diagram of $\infty$-categories
\[
\xymatrix{
\ast   \ar[d]_-{\lag t \rag}  \ar[rrr]^-{\lag e_t\rag}
&&&
\cE  \ar[d]^-{\pi}  
\\
c_1  \ar[rrr]_-{\lag x \xra{f} y\rag}   \ar@{-->}[urrr]^-{\lag f^\ast e_t \to e_t\rag}
&&&
\cK,
}
\]
admits a filler that is initial in the fiber product $\infty$-category  $\cE_{/e_t} \underset{\cK_{/y}} \times \cK_{/x}$.
\end{enumerate}

\end{enumerate}

\end{lemma}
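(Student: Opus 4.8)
The assertion is a reformulation of Definition~\ref{def.coCart}, so the plan is to unwind both the original definition and the proposed criterion and show they express the same condition. As always, (1) and (2) are dual under $(-)^{\op}$ via Observation~\ref{cCart.ops}, so I will only treat the coCartesian case. The key observation, which I would isolate first, is an identification of $\infty$-undercategories: for a morphism $c_1 \xra{\lag e_s \to e_t\rag}\cE$ lying over $x \xra{f} y$ in $\cK$, the $\infty$-category $\cE^{e_t/}$ fits into a pullback square
\[
\xymatrix{
\cE^{e_t/} \ar[r]\ar[d] & \bigl(\cE^{e_s/}\bigr)^{(e_s\to e_t)/} \ar[d] \\
\ast \ar[r] & \text{(a point)}
}
\]
no — more precisely, the correct statement is that $\cE^{e_t/}$ is canonically the $\infty$-undercategory $\bigl(\cE^{e_s/}\bigr)^{(e_s\to e_t)/}$, and likewise $\cK^{\pi e_t/} \simeq \bigl(\cK^{x/}\bigr)^{(x\to y)/}$. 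Under these identifications, the square in Definition~\ref{def.coCart}(1)(a) becomes the square of $\infty$-undercategories of the object $(e_s\to e_t)$ and its image in the respective $\infty$-undercategories, and the statement that this square is a pullback is precisely the statement that $(e_s\to e_t)$, viewed as an object of $\cE^{e_s/}\underset{\cK^{x/}}\times \cK^{y/}$, has the property that its $\infty$-undercategory maps by an equivalence to the product of the two $\infty$-undercategories in the factors — which for a fiber product $\infty$-category is automatic. I must instead recognize the pullback condition as saying that the object $(e_s\to e_t)$ is \emph{initial} in $\cE^{e_s/}\underset{\cK^{x/}}\times \cK^{y/}$: initiality of an object $z$ in an $\infty$-category $\cD$ is equivalent to $\cD^{z/}\to \cD$ being an equivalence (a trivial Kan fibration), and here the relevant $\cD$ is the fiber product and $\cD^{z/}\simeq \cE^{e_t/}\underset{\cK^{\pi e_t/}}{\times}\cK^{y/}$, which collapses.

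\textbf{Step 1: the undercategory identification.} I would first establish, for any object $e_s\in\cE$ and any morphism $e_s\to e_t$ in $\cE$, a canonical equivalence $\cE^{e_t/}\xra{\ \simeq\ }\bigl(\cE^{e_s/}\bigr)^{(e_s\to e_t)/}$, and similarly downstairs. This is a standard fact about slice $\infty$-categories (e.g.\ iterated slices, HTT~\S4.2), and is where I expect to lean on model-independence: the two sides have the same universal property, classifying (for a test $\infty$-category $\cJ$) cones $\cJ^{\tr}\to\cE$ that carry the cone point to $e_t$, equivalently cones under the extended diagram sending an additional initial vertex to $e_s$. I would then assemble the square of Definition~\ref{def.coCart}(1)(a) as the square obtained by applying $(-)^{(e_s\to e_t)/}$-type constructions, so that it becomes the naturality square witnessing that $(e_s\to e_t)$ is an object of $\cE^{e_s/}\underset{\cK^{x/}}\times\cK^{y/}$.

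\textbf{Step 2: pullback $\iff$ initiality.} With Step 1 in hand, the square in Definition~\ref{def.coCart}(1)(a) being a pullback translates, via the identifications, into: the canonical functor $\bigl(\cE^{e_s/}\underset{\cK^{x/}}\times\cK^{y/}\bigr)^{(e_s\to e_t)/}\to \cE^{e_s/}\underset{\cK^{x/}}\times\cK^{y/}$ is an equivalence. (Here I use that slices commute with the pullback defining the fiber product, up to the usual $\infty$-categorical care, and that $\bigl(\cK^{y/}\bigr)^{(\id_y)/}\simeq\cK^{y/}$ since $\id_y$ is initial in $\cK^{y/}$.) An object whose undercategory maps by an equivalence back to the ambient $\infty$-category is exactly an initial object; this is a standard characterization (e.g.\ HTT~\S1.2.12). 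This proves statement (1)(a) in both directions: $\pi$-coCartesian $\Leftrightarrow$ initial in the fiber product. Statement (1)(b) is then a formal consequence: Definition~\ref{def.coCart}(1)(b) asks for a $\pi$-coCartesian filler of the lifting square; by (1)(a), such a filler is the same datum as a filler that is initial in $\cE^{e_s/}\underset{\cK^{x/}}\times\cK^{y/}$; and the notation $f_!e_s$ for the target is justified precisely because an initial object is unique up to contractible choice.

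\textbf{Expected main obstacle.} The genuinely delicate point is Step 2: being careful that forming $\infty$-undercategories is compatible with the fiber product $\cE^{e_s/}\underset{\cK^{x/}}\times\cK^{y/}$, and that the relevant slice of $\cK^{y/}$ at $\id_y$ (which appears as the image of $e_s\to e_t$ in the $\cK^{y/}$-factor) is itself $\cK^{y/}$. Both facts are true but require knowing that slice constructions preserve limits in the appropriate variable and that $\id_y$ is initial in $\cK^{y/}$; I would cite HTT for these rather than reprove them. Everything else — the duality, the passage from (a) to (b) — is formal.
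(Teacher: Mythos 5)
Your proposal is correct and follows essentially the same route as the paper: reduce to the coCartesian case by taking opposites, recognize the pullback square of Definition~\ref{def.coCart} as the assertion that the canonical functor $\cE^{e_t/}\to \cE^{e_s/}\underset{\cK^{x/}}\times\cK^{y/}$ is an equivalence, identify $\cE^{e_t/}$ with the $\infty$-undercategory of the object $\gamma$ determined by $(e_s\to e_t)$ in that fiber product, and conclude via the standard characterization of initial objects, with (b) then formal. Your Steps 1--2 merely unpack, via iterated slices and compatibility of undercategories with fiber products, the single identification $\cE^{e_t/}\simeq\bigl(\cE^{e_s/}\underset{\cK^{x/}}\times\cK^{y/}\bigr)^{\gamma/}$ that the paper asserts outright.
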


\begin{proof}
Assertion~(1) and assertion~(2) imply one another, as implemented by replacing $(\cE \xra{\pi}\cK)$ by its opposite, $(\cE^{\op}\xra{\pi^{\op}}\cK^{\op})$.
We are therefore reduced to proving assertion~(1).

Inspecting the Definition~\ref{def.coCart} of a coCartesian fibration, assertion~(a) implies assertion~(b).  
We are therefore reduced to proving assertion~(a).
Let $c_1\xra{\lag e_s\to e_t\rag}\cE$ be a morphism.
We show that the condition in assertion~(a) is equivalent to the condition that this morphism is $\pi$-coCartesian.
The given morphism $(e_s\to e_t)$ determines the diagram, $\gamma$,
\[
\xymatrix{
\ast  \ar[d]_-{\lag s\rag}  \ar[rr]^-{\lag e_s \rag}
&&
\cE  \ar[d]^-{\pi}  
\\
c_1  \ar[rr]_-{\lag \pi e_s \to \pi e_t \rag}     \ar[urr]^-{\lag e_s\to e_t\rag}
&&
\cK,
}
\]
which we regard as an object in the fiber product $\infty$-category $\cE^{e_s/}\underset{\cK^{x/}}\times\cK^{y/}$.
Observe the canonical identification between $\infty$-undercategories:
\[
\cE^{e_t/}\xra{~\simeq~} \bigl(\cE^{e_s/}\underset{\cK^{x/}}\times\cK^{y/}\bigr)^{\gamma/}~.
\]
Through this identification we see that $\gamma$ is an initial object in this fiber product $\infty$-category if and only if the canonical functor
\[
\cE^{e_t/}\longrightarrow  \cE^{e_s/}\underset{\cK^{x/}}\times\cK^{y/}
\]
is an equivalence between $\infty$-categories.  
After inspecting Definition~\ref{def.coCart} of a $\pi$-coCartesian morphism, this establishes assertion~(a).

\end{proof}

We will make repeated, and implicit, use of the following characterization of left/right adjoints.
The following proof, which simpler than our original proof, was suggested by a referee.
\begin{lemma}\label{adjointcriterion}
Let $\cC\xra{F} \cD$ be a functor between $\infty$-categories.
\begin{enumerate}

\item
The following conditions on the functor $F$ are equivalent.
\begin{enumerate}
\item $F$ is a right adjoint.

\item For each object $d\in \cD$, the $\infty$-undercategory $\cC^{d/}$ has an initial object.

\end{enumerate}
Furthermore, if $F$ is a right adjoint, the value of its left adjoint on $d\in \cD$ is the value of the forgetful functor $\cC^{d/}\to \cC$ on its initial object.  

\item
The following conditions on the functor $F$ are equivalent.
\begin{enumerate}
\item $F$ is a left adjoint.

\item For each object $d\in \cD$, the $\infty$-overcategory $\cC_{/d}$ has a final object.

\end{enumerate}
Furthermore, if $F$ is a left adjoint, the value of its right adjoint on $d\in \cD$ is the value of the forgetful functor $\cC_{/d} \to \cC$ on its final object.

\end{enumerate}

\end{lemma}

\begin{proof}
The two assertions are equivalent by taking opposites.
We therefore reduce to proving~(1).

Let $d\in \cD$ be an object, and let $(c,d\to Fc)\in \cC^{d/}$ be an object in the $\infty$-undercategory.
For each object $(c',d\to Fc')\in \cC^{d/}$, consider the canonical diagram of mapping spaces in the $\oo$-categories $\cC^{d/}$, $\cC$, $\cD^{d/}$, and $\cD$:
\[
\xymatrix{
\cC^{d/} \bigl( (c,d\to Fc) , (c',d\to Fc') \bigr) )
\ar[rr] \ar[d]
&&
\cC(c,c') \ar[d]
\\
\cD^{d/} \bigl( (d\to Fc) , d\to Fc') \bigr) 
\ar[rr] \ar[d]
&&
\cD(Fc,Fc') \ar[d]
\\
\ast
\ar[rr]^-{\lag d\xra{=} d\rag }
&&
\cD(d,Fc')
.
}
\]
Both of the two inside squares are pullbacks, by definition of the $\infty$-overcategories $\cD^{d/}$ and $\cC^{d/} := \cC\underset{\cD} \times \cD^{d/}$.
Therefore the outer square is a pullback.
Consequently, the top left term in this diagram is terminal if and only if the composite right vertical map is an equivalence:
\[
\cC^{d/}\bigl( (c,d\to Fc) , (c', d\to Fc') \bigr)
\xra{~\simeq~}
\ast
\qquad 
\iff
\qquad
\cC(c,c') \xra{~\simeq~} \cD(d,Fc')
~.
\]
In this logical equivalence, the right canonical map between spaces is the value of the natural transformation:
\[
\xymatrix{
\cC \ar[dr]_-F \ar@(u,u)[rr]^-{\cC(c,-)}
&&
\Spaces
\\
&
\cD \ar[ur]_-{\cD(d,F-)}
&
.
}
\]
In this way, we see that the object $(c,d\to Fc)\in \cC^{d/}$ is initial if and only if the object $c\in \cC$ corepresents the functor
\[
\cC
\xra{~F~}
\cD
\xra{~\cD(d,-)~}
\Spaces
~.
\]
We conclude that $F$ is a right adjoint if and only if the $\infty$-undercategory $\cC^{d/}$ has an initial object for each object $d\in \cD$.
Furthermore, the value of the left adjoint to $F$ on $d\in \cD$ is the value of the forgetful functor $\cC^{d/}\to \cC$ on its initial object.  \end{proof}

\begin{lemma}\label{firsthalf-locallycocart}
Let $\cE \xra{\pi} c_1$ be an $\infty$-category over the 1-cell.
\begin{enumerate}
\item 
The following two conditions on this functor $\pi$ are equivalent.
\begin{enumerate}
\item It is a coCartesian fibration.
\item The canonical functor from the fiber $\cE_{|t} \hookrightarrow \cE$ is a right adjoint.

\end{enumerate}

\item 
The following two conditions on this functor $\pi$ are equivalent.
\begin{enumerate}
\item It is a Cartesian fibration.
\item The canonical functor from the fiber $\cE_{|s} \hookrightarrow \cE$ is a left adjoint.

\end{enumerate}

\end{enumerate}
\end{lemma}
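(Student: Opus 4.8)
The plan is to prove the two equivalences in parallel, noting first that (1) and (2) are interchanged by passing to opposites: if $\cE\to c_1$ is such that $\cE_{|t}\hookrightarrow\cE$ is a right adjoint, then $\cE^{\op}\to c_1^{\op}\simeq c_1$ has $(\cE^{\op})_{|s}=(\cE_{|s})^{\op}\hookrightarrow\cE^{\op}$ as a left adjoint, and a coCartesian fibration over $c_1$ becomes a Cartesian fibration over $c_1$ under this operation (Observation~\ref{cCart.ops}). So it suffices to prove part~(1).

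For part~(1), I would use the reformulation of coCartesian fibrations from Lemma~\ref{lemma.reformulation}(1) together with the adjoint-existence criterion of Lemma~\ref{adjointcriterion}(1). The key observation is that, over the 1-cell $c_1=\{s\to t\}$, the only nontrivial lifting problem to solve is the one over the morphism $s\to t$ itself (lifts over the identity morphisms $s\to s$ and $t\to t$ are automatic). So $\pi$ is a coCartesian fibration if and only if, for each object $e_s\in\cE_{|s}$, the fiber product $\infty$-category $\cE^{e_s/}\underset{c_1^{s/}}\times c_1^{t/}\simeq\cE^{e_s/}\underset{c_1}\times\{t\}\simeq (\cE^{e_s/})_{|t}$ has an initial object. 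Now I claim this fiber product is canonically identified with the $\infty$-undercategory $(\cE_{|t})^{e_s/}$ of the inclusion functor $\cE_{|t}\hookrightarrow\cE$ under the object $e_s\in\cE$: indeed, an object of the latter is an object $e_t\in\cE_{|t}$ equipped with a morphism $e_s\to e_t$ in $\cE$, which (since $e_t$ lies over $t$ and $e_s$ over $s$) necessarily lies over $s\to t$, and this is exactly the data of an object of $(\cE^{e_s/})_{|t}$. By Lemma~\ref{adjointcriterion}(1), the inclusion $\cE_{|t}\hookrightarrow\cE$ is a right adjoint if and only if $(\cE_{|t})^{e/}$ has an initial object for \emph{every} $e\in\cE$; but for $e=e_t\in\cE_{|t}$ the category $(\cE_{|t})^{e_t/}$ already has the tautological initial object $e_t\xra{=}e_t$, so the only content is the condition at objects $e_s\in\cE_{|s}$, which is precisely the coCartesian condition just reformulated.

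Thus the chain of equivalences is: $\pi$ is a coCartesian fibration $\iff$ for each $e_s\in\cE_{|s}$ the $\infty$-category $(\cE^{e_s/})_{|t}\simeq(\cE_{|t})^{e_s/}$ has an initial object (Lemma~\ref{lemma.reformulation}(1)) $\iff$ for each $e\in\cE$ the $\infty$-category $(\cE_{|t})^{e/}$ has an initial object $\iff$ $\cE_{|t}\hookrightarrow\cE$ is a right adjoint (Lemma~\ref{adjointcriterion}(1)). The step I expect to require the most care is the canonical identification $\cE^{e_s/}\underset{c_1^{s/}}\times c_1^{t/}\simeq(\cE_{|t})^{e_s/}$ of $\infty$-categories, which rests on two facts about $c_1$: that $c_1^{t/}\simeq\{t\}$ (the undercategory of a terminal object is terminal, as $t$ is terminal in $c_1$), and that $c_1^{s/}\simeq c_1$, so that the fiber product computes the full subcategory of $\cE^{e_s/}$ lying over $t\in c_1$, which is $(\cE_{|t})^{e_s/}$ since forming undercategories commutes with the relevant pullback. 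I would spell this out using that $\cE^{e_s/}\underset{\cE}\times\cE_{|t}\simeq(\cE_{|t})^{e_s/}$ and then match it against the fiber-product description; the rest is formal.
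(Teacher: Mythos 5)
Your proof is correct and follows essentially the same route as the paper's: reduce (2) to (1) by taking opposites, identify $\cE^{e_s/} \underset{c_1^{s/}}\times c_1^{t/}~\simeq~(\cE_{|t})^{e_s/}$, and then combine the reformulation of Lemma~\ref{lemma.reformulation} with the adjoint criterion of Lemma~\ref{adjointcriterion}, noting that the condition is vacuous at objects of $\cE_{|t}$. One parenthetical quibble: the general principle ``the undercategory of a terminal object is terminal'' is false (e.g., for a singleton set in $\Set$ the undercategory is pointed sets), but the fact you need, $c_1^{t/}\simeq \ast$, holds simply because $t$ is the maximum of the poset $c_1$, so no harm is done.
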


\begin{proof}
Assertion~(1) and assertion~(2) imply one another, as implemented by replacing $(\cE \xra{\pi}c_1)$ by its opposite, $(\cE^{\op}\xra{\pi^{\op}}c_1^{\op}\simeq c_1)$.
We are therefore reduced to proving assertion~(1).  

The canonical identification $(c_1^{t/} \to c_1^{s/})\simeq (\ast \xra{\lag t \rag} c_1)$ determines the first of these identifications among $\infty$-categories 
\[
\cE^{e_s/} \underset{c_1^{s/}} \times c_1^{t/}
~\simeq~ 
\cE^{e_s/}\underset{c_1}\times \{t\} 
~\simeq~ 
(\cE_{|t})^{e_s/} 
.
\]

A consequence of Lemma~\ref{lemma.reformulation} is that $\pi$ is a coCartesian fibration if and only if, for each object $e_s\in \cE_{|s}$ over $s\in c_1$, the fiber product $\infty$-category $\cE^{e_s/} \underset{c_1^{s/}} \times c_1^{t/}$ has an initial object.
The equivalence between~(a) and~(b) then follows from the above identifications, using the criterion of Lemma~\ref{adjointcriterion}.

\end{proof}

\begin{lemma}\label{secondhalf-locallycocart}
Let $\cE\xra{\pi} \cK$ be a functor between $\infty$-categories.
\begin{enumerate}
\item 
Let $y\in \cK$ be an object.  
The following conditions on this data are equivalent.  
\begin{enumerate}
\item The canonical functor $\cE_{|y}\hookrightarrow\cE_{/y}$ is a right adjoint.
\item For each morphism $c_1\xra{\lag x \to y\rag} \cK$, the canonical functor $\cE_{|y} \hookrightarrow \cE_{|c_1}$ is a right adjoint.

\end{enumerate}

\item 
Let $x\in \cK$ be an object.  
The following conditions on this data are equivalent.  
\begin{enumerate}
\item The canonical functor $\cE_{|x}\hookrightarrow\cE^{x/}$ is a left adjoint.
\item For each morphism $c_1\xra{\lag x \to y\rag} \cK$, the canonical functor $\cE_{|x} \hookrightarrow \cE_{|c_1}$ is a left adjoint.

\end{enumerate}

\end{enumerate}

\end{lemma}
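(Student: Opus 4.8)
The two assertions are related by passing to opposite $\infty$-categories (using Observation~\ref{exp-op} or rather just the elementary fact that $(\cE^{x/})^{\op} \simeq (\cE^{\op})_{/x}$), so I would reduce to proving assertion~(1). Within assertion~(1), the direction (b)$\Rightarrow$(a) is immediate by taking the morphism $c_1 \to \cK$ to be the constant morphism $\{y \xra{=} y\}$, noting $\cE_{|c_1} \simeq \cE_{|y}$ in that case — wait, that gives nothing; instead I would take (b)$\Rightarrow$(a) to follow by a colimit/final-functor argument: $\cK_{/y}$ is the colimit $\colim_{(c_1 \xra{x\to y}) \in \cK_{/y}}$ of the slices, or more directly, $\cE_{/y} = \colim$ over the appropriate indexing category of the $\cE_{|c_1}$'s, and a right adjoint on each term assembles to a right adjoint on the colimit provided the transition functors are compatible. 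So the genuine content is (a)$\Rightarrow$(b): assuming $\cE_{|y} \hookrightarrow \cE_{/y}$ is a right adjoint, deduce that $\cE_{|y} \hookrightarrow \cE_{|c_1}$ is a right adjoint for every $c_1 \xra{\lag x \to y\rag} \cK$.

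The key tool is Lemma~\ref{adjointcriterion}(1): a fully faithful functor $\cC \hookrightarrow \cD$ is a right adjoint if and only if for every $d \in \cD$ the undercategory $\cC^{d/}$ has an initial object. So for (a)$\Rightarrow$(b) I fix a morphism $c_1 \xra{\lag x \to y\rag} \cK$ and an object $e \in \cE_{|c_1}$, and I must exhibit an initial object of $(\cE_{|y})^{e/}$. The morphism $c_1 \to \cK$ induces a functor $\cE_{|c_1} \to \cE_{/y}$ — indeed, a $c_1$-point of $\cE$ lying over $x \to y$ together with its target data gives a point of $\cE_{/y}$; more precisely $c_1 \simeq (c_1)_{/y}$... actually the cleanest statement: the morphism $x \to y$ in $\cK$ induces $\cK_{|c_1}=c_1 \to \cK_{/y}$, hence by base change a functor $\cE_{|c_1} \to \cE \times_\cK \cK_{/y} = \cE_{/y}$. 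Under this functor, $\cE_{|y} \subset \cE_{|c_1}$ maps to $\cE_{|y} \subset \cE_{/y}$ compatibly. The plan is then to compare the two undercategories: I would argue that for $e \in \cE_{|c_1}$ with image $\ov{e} \in \cE_{/y}$, the canonical functor $(\cE_{|y})^{e/} \to (\cE_{|y})^{\ov{e}/}$ is an equivalence of $\infty$-categories, because both are computed as fibers of evaluation-at-$t$ type functors and the relevant slices of $c_1$ over $y$ versus slices of $\cK_{/y}$ match up. Granting this equivalence, assumption~(a) supplies an initial object of $(\cE_{|y})^{\ov{e}/}$, which transports to an initial object of $(\cE_{|y})^{e/}$, giving (b) via Lemma~\ref{adjointcriterion}.

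I expect the main obstacle to be establishing that comparison equivalence $(\cE_{|y})^{e/} \xra{\simeq} (\cE_{|y})^{\ov e/}$ cleanly — that is, keeping careful track of what the undercategory of a fiber inside a slice actually is. The subtlety is that $\cE_{|c_1} \to \cE_{/y}$ is not itself fully faithful, only its restriction to the fiber $\cE_{|y}$ over the terminal object; so one cannot just cite a general "undercategories are preserved by fully faithful functors" principle. The safe route is to use the identifications already appearing in the proof of Lemma~\ref{firsthalf-locallycocart}: there, for $\pi \colon \cE \to c_1$, one has $(\cE_{|t})^{e_s/} \simeq \cE^{e_s/} \times_{c_1^{s/}} c_1^{t/}$, and the analogous manipulation here expresses $(\cE_{|y})^{e/}$, for $e \in \cE_{|c_1}$, as $\cE_{|c_1}^{e/} \times_{(c_1)^{x/}} (c_1)^{y/} \simeq \cE_{|c_1}^{e/} \times_{c_1} \{y\}$, while $(\cE_{|y})^{\ov e/} \simeq \cE_{/y}^{\ov e/} \times_{(\cK_{/y})^{\ov{x\to y}/}} (\cK_{/y})^{\id_y/}$; the point $\id_y$ is terminal in $\cK_{/y}$, so $(\cK_{/y})^{\id_y/} \simeq \ast$, and similarly $y$ is terminal in $c_1$ viewed over $y$. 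Matching these two descriptions and using that base change commutes with forming undercategories over a fixed base object is then a routine, if slightly fiddly, diagram chase, which I would carry out but not belabor here.
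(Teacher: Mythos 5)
Your overall strategy is the paper's: reduce to assertion~(1) by taking opposites, apply Lemma~\ref{adjointcriterion}, and prove that the comparison functor $(\cE_{|y})^{e/}\to(\cE_{|y})^{\ov e/}$ is an equivalence, for $e\in\cE_{|c_1}$ with image $\ov e\in\cE_{/y}$. But the step you defer as ``routine'' is exactly where your sketch goes wrong, in two places. First, $(\cK_{/y})^{\id_y/}$ is \emph{not} terminal in general: its objects are morphisms $y\to z$ in $\cK$ equipped with a retraction back to $y$, so for $\cK=\mathsf{Ret}$ (the category corepresenting a retraction) and $y$ the retract object it has two inequivalent objects. What is true, and what the argument actually needs, is only the weaker statement that mapping spaces \emph{into} the terminal object $\id_y\in\cK_{/y}$ are contractible. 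Second, the proposed identification $(\cE_{|y})^{\ov e/}\simeq(\cE_{/y})^{\ov e/}\times_{(\cK_{/y})^{(x\to y)/}}(\cK_{/y})^{\id_y/}$ is not the definition of the left-hand side and is false in general: testing with $\pi=\id_{\cK}$, the left side is the contractible space $\cK_{/y}\bigl((x\to y),\id_y\bigr)$ while the right side is $(\cK_{/y})^{\id_y/}$, nontrivial in the same example. So, as written, the crux of the proof is not established.

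The paper closes this gap more cheaply: both $(\cE_{|y})^{e/}:=\cE_{|y}\times_{\cE_{|c_1}}(\cE_{|c_1})^{e/}$ and $(\cE_{|y})^{\ov e/}:=\cE_{|y}\times_{\cE_{/y}}(\cE_{/y})^{\ov e/}$ are left fibrations over $\cE_{|y}$, and the canonical functor between them induces, on fibers over each $e_t\in\cE_{|y}$, the identity of $\cE_{|c_1}(e,e_t)$ (for the second fiber one uses precisely the contractibility of $\cK_{/y}\bigl((x\to y),\id_y\bigr)$); hence it is an equivalence. Both directions of~(1) then follow at once from Lemma~\ref{adjointcriterion}, since every object of $\cE_{/y}$ is of the form $\ov e$ for some morphism $c_1\xra{\lag x\to y\rag}\cK$ and some $e\in\cE_{|c_1}$, and objects of $\cE_{|c_1}$ lying over $t$ are trivial cases. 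In particular your separate colimit-assembly argument for (b)$\Rightarrow$(a) is both unnecessary and, as stated, unjustified: gluing right adjoints along a purported colimit presentation of $\cE_{/y}$ by the $\cE_{|c_1}$ would require compatibility data you have not produced.
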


\begin{proof}
Assertion~(1) and assertion~(2) imply one another, as implemented by replacing $(\cE \xra{\pi}\cK)$ by its opposite, $(\cE^{\op}\xra{\pi^{\op}}\cK^{\op})$.
We are therefore reduced to proving assertion~(1).

We use the criterion of Lemma~\ref{adjointcriterion}.
Let $\gamma\in \cE_{/y}$ be an object, which is the datum of a diagram of $\infty$-categories:
\[
\xymatrix{
\ast   \ar[d]_-{\lag s \rag}  \ar[rr]^-{\lag e_s\rag}
&&
\cE  \ar[d]^-{\pi}  
\\
c_1  \ar[rr]_-{\lag x \to y\rag}    
&&
\cK.
}
\]
Consider the canonical diagram of $\infty$-categories:
\[
\xymatrix{
(\cE_{|y})^{e_s/}:=\cE_{|y}\underset{\cE_{|c_1}} \times (\cE_{|c_1})^{e_s/} \ar[rr]   \ar[dr]
&&
\cE_{|y}\underset{\cE_{/y}} \times (\cE_{/y})^{\gamma/} =: (\cE_{|y})^{\gamma/}  \ar[dl]
\\
&
\cE_{|y}
&
.
}
\]
The downward functors in this diagram are left fibrations.
For each $e_t\in \cE_{|y}$, the resulting map between fiber spaces is identifiable as the identity map between spaces of morphisms
\[
\cE_{|c_1}(e_s,e_t) \xra{~=~} \cE_{|c_1}(e_s,e_t)~.
\]
We conclude that the top horizontal functor in the above diagram is an equivalence between $\infty$-categories.
The equivalence between conditions~(a) and~(b) follows immediately.

\end{proof}

\begin{lemma}\label{locallycocart}
Let $\pi\colon \cE\to \cK$ be a functor between $\infty$-categories.  
\begin{enumerate}
\item
The following conditions on a functor $\pi$ are equivalent.
\begin{enumerate}
\item It is locally coCartesian.
\item For every object $y\in \cK$, the canonical functor from the fiber to the $\infty$-overcategory, $\cE_{|y}\hookrightarrow \cE_{/y}$, is a right adjoint.
\item 
For each morphism $c_1\to \cK$, the restriction functor between $\infty$-categories of sections
\[
\ev_s\colon \Fun_{/\cK}(c_1,\cE) \longrightarrow \cE_{|s}
\]
admits a fully-faithful left adjoint.  
\end{enumerate}

\item
The following conditions on a functor $\pi$ are equivalent.
\begin{enumerate}
\item It is locally Cartesian.
\item 
For every object $x\in \cK$, the canonical functor from the fiber to the $\infty$-undercategory, $\cE_{|x}\hookrightarrow \cE^{x/}$, is a left adjoint.
\item 
For each morphism $c_1\to \cK$, the restriction functor between $\infty$-categories of sections
\[
\ev_t\colon \Fun_{/\cK}(c_1,\cE) \longrightarrow \cE_{|t}
\]
admits a fully-faithful right adjoint.  
\end{enumerate}

\end{enumerate}

\end{lemma}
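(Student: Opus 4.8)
The plan is to reduce everything, by way of Lemmas~\ref{firsthalf-locallycocart} and~\ref{secondhalf-locallycocart}, to a per-$1$-cell assertion. First, replacing $(\cE\xra{\pi}\cK)$ by its opposite $(\cE^{\op}\xra{\pi^{\op}}\cK^{\op})$ interchanges assertions~(1) and~(2), so I will only treat~(1). Next, for a morphism $c_1\xra{\lag x\to y\rag}\cK$ the defining pullback $\cE_{|c_1}=\cE\underset{\cK}\times c_1$ yields $\Fun_{/c_1}(c_1,\cE)\simeq\Fun_{/c_1}(c_1,\cE_{|c_1})$ and $\cE_{|y}\simeq(\cE_{|c_1})_{|t}$. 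Since $\pi$ is locally coCartesian exactly when each $\cE_{|c_1}\to c_1$ is a coCartesian fibration, all three conditions in~(1) are of the form ``for every $c_1\to\cK$, [a property of $\cE_{|c_1}\to c_1$]'', so it suffices to compare those properties for a single functor $\cE'\to c_1$.

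For (a)$\iff$(b) I would chain the two cited lemmas: by Lemma~\ref{firsthalf-locallycocart}(1), $\cE_{|c_1}\to c_1$ is a coCartesian fibration if and only if $\cE_{|y}=(\cE_{|c_1})_{|t}\hookrightarrow\cE_{|c_1}$ is a right adjoint; by Lemma~\ref{secondhalf-locallycocart}(1), this holding over all $c_1\xra{\lag x\to y\rag}\cK$ with fixed $y$ is equivalent to $\cE_{|y}\hookrightarrow\cE_{/y}$ being a right adjoint; and one then quantifies over $y$. For (a)$\Rightarrow$(c), given a coCartesian fibration $\cE'\to c_1$, Lemma~\ref{firsthalf-locallycocart}(1) provides a left adjoint $\lambda$ to $\cE'_{|t}\hookrightarrow\cE'$ with unit $\eta$; I would then verify, using the universal property of $\lambda$, that $e_s\mapsto(\eta_{e_s}\colon e_s\to\lambda e_s)$, regarded as a section of $\cE'\to c_1$, is a left adjoint to $\ev_s\colon\Fun_{/c_1}(c_1,\cE')\to\cE'_{|s}$, the counit being read off from the comparison of sections furnished by $\lambda\dashv(\cE'_{|t}\hookrightarrow\cE')$.

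The implication (c)$\Rightarrow$(a) is the substantive one. By Lemma~\ref{adjointcriterion}(1), $\ev_s\colon\Fun_{/c_1}(c_1,\cE')\to\cE'_{|s}$ is a right adjoint precisely when, for each $e_s\in\cE'_{|s}$, the $\infty$-undercategory $\Fun_{/c_1}(c_1,\cE')^{e_s/}$ admits an initial object. I would observe that an object of $\Fun_{/c_1}(c_1,\cE')^{e_s/}$ is a section $(a\to b)$ of $\cE'\to c_1$ together with a morphism $e_s\to a$ in $\cE'_{|s}$, equivalently a $2$-chain $e_s\to a\to b$ lying over $s\to s\to t$, equivalently a section of the canonical exponentiable fibration ${\cE'}^{e_s/}\to c_1^{s/}\simeq c_1$ (Corollary~\ref{over-cell}). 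Under this identification the ``constant section'' functor
\[
G\colon(\cE'_{|t})^{e_s/}\longrightarrow\Fun_{/c_1}(c_1,\cE')^{e_s/}~,\qquad (e_s\to b)\longmapsto\bigl(e_s\xra{\id}e_s\to b\bigr)~,
\]
is fully faithful and is a left adjoint to the ``composition'' functor $F$ carrying $(e_s\to a\to b)$ to $(e_s\to b)$. The key point is that an initial object $\sigma_0$ of $\Fun_{/c_1}(c_1,\cE')^{e_s/}$ lies in the essential image of $G$: by initiality the unique morphism $\sigma_0\to GF\sigma_0$ composed with the counit $GF\sigma_0\to\sigma_0$ is the identity of $\sigma_0$, and unwinding these morphisms of sections shows that its ``source component'' is simultaneously a left and a right inverse of the structure morphism $e_s\to\ev_s\sigma_0$, which is therefore an equivalence. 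It follows, using fully faithfulness of $G$, that $F\sigma_0\in(\cE'_{|t})^{e_s/}$ is an initial object. Finally, Lemma~\ref{lemma.reformulation}(1) identifies an initial object of $(\cE'_{|t})^{e_s/}\simeq{\cE'}^{e_s/}\underset{c_1^{s/}}\times c_1^{t/}$ with a $\pi$-coCartesian lift of $e_s$; letting $e_s$ vary over $\cE'_{|s}$ shows $\cE'\to c_1$ is a coCartesian fibration.

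The main obstacle is the implication (c)$\Rightarrow$(a), and within it the two unwindings: identifying $\Fun_{/c_1}(c_1,\cE')^{e_s/}$ with the section $\infty$-category $\Fun_{/c_1}\bigl(c_1,{\cE'}^{e_s/}\bigr)$ and establishing the adjunction $G\dashv F$ with $G$ fully faithful, which demands keeping careful track of morphisms of sections and their compatibility with the chosen maps out of $e_s$; and the verification that an initial object of that ambient section $\infty$-category is automatically a constant section. Once these are in place, what remains is bookkeeping with Lemmas~\ref{firsthalf-locallycocart}, \ref{secondhalf-locallycocart}, \ref{adjointcriterion}, and~\ref{lemma.reformulation}.
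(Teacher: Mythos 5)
Your proposal is correct, and its skeleton matches the paper's: the opposite-category reduction to assertion~(1), and the equivalence (a)$\iff$(b) obtained by concatenating Lemmas~\ref{firsthalf-locallycocart} and~\ref{secondhalf-locallycocart}, are exactly what the paper does. Where you genuinely diverge is in the two implications involving condition~(c). For (a)$\Rightarrow$(c) you build the left adjoint to $\ev_s$ explicitly as the coCartesian-section functor coming from the unit of the adjunction of Lemma~\ref{firsthalf-locallycocart}, whereas the paper proves (b)$\Rightarrow$(c) by composing left adjoints and checking initial objects via Lemma~\ref{adjointcriterion}, using that $\ev_s$ is a Cartesian fibration. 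For the substantive direction (c)$\Rightarrow$(a), the paper also routes through the fact that $\ev_s$ is a Cartesian fibration: it cites the already-established dual implication to see that the fiber-to-undercategory inclusion $(\cE_{|t})^{e_s/}\simeq \Fun_{/c_1}(c_1,\cE)_{|e_s}\to \Fun_{/c_1}(c_1,\cE)^{e_s/}$ is a left adjoint, and then transfers the initial object back along it using that this inclusion is moreover a fully faithful right fibration. Your argument replaces that structural transfer with a hands-on computation: your $G$ is precisely this same fiber-to-undercategory inclusion, you verify $G\dashv F$ directly, and the retract trick (unique map $\sigma_0\to GF\sigma_0$ followed by the counit, forced to be $\id_{\sigma_0}$ by initiality) shows the structure map $e_s\to\ev_s\sigma_0$ is invertible, so the initial section is constant and descends to an initial object of $(\cE_{|t})^{e_s/}$, after which Lemma~\ref{lemma.reformulation} (or equivalently Lemma~\ref{firsthalf-locallycocart} again) closes the loop. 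The trade-off: the paper's version is shorter given its infrastructure, while yours is more self-contained, avoiding both the unproved-in-the-lemma claim that $\ev_s$ is Cartesian and the tersely justified right-fibration descent step; if you wanted to shorten your write-up, you could outsource the $G\dashv F$ verification by observing that $G$ is the fiber inclusion for the Cartesian fibration $\ev_s$ and citing the dual established implication, exactly as the paper does. The only items to flesh out in a full write-up are the ones you flag yourself: making the constant-section functor and the adjunction $G\dashv F$ precise, and checking that equivalences of objects of $\Fun_{/c_1}(c_1,\cE)^{e_s/}$ are detected on components so that invertibility of $\epsilon_0$ really puts $\sigma_0$ in the essential image of $G$; none of these is a gap in the argument.
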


\begin{proof}
Assertion~(1) and assertion~(2) imply one another, as implemented by taking opposites.
We are therefore reduced to proving assertion~(1).

The equivalence between condition~(a) and condition~(b) is a direct logical concatenation of Lemmas~\ref{firsthalf-locallycocart} and~\ref{secondhalf-locallycocart}.

We now establish that condition~(b) implies condition~(c).
Let $c_1\to \cK$ be a functor from the 1-cell.
We must show that, for each object $e_s\in \cE_{|s}$, the $\infty$-undercategory $\Fun_{/\cK}(c_1,\cE)^{e_s/}$ has an initial object.  
The restriction functor $\ev_s$ is a Cartesian fibration.
The established implication (a)$\implies$(b), as it concerns (locally) Cartesian fibrations, gives that the canonical functor from the fiber $\infty$-category
\begin{equation}\label{e120}
(\cE_{|t})^{e_s/}:=(\cE_{|c_1})^{e_s/}\underset{\cE_{|c_1}}\times \cE_{|t}\simeq \Fun_{/\cK}(c_1,\cE)_{|e_s} \longrightarrow \Fun_{/\cK}(c_1,\cE)^{e_s/}
\end{equation}
is a left adjoint.  
Because left adjoints compose, we are therefore reduced to showing that the $\infty$-category $(\cE_{|t})^{e_s/}$ has an initial object.
This $\infty$-category has an initial object precisely because the functor $\cE_{|t} \to (\cE_{|c_1})_{/t}\xra{\simeq} \cE_{|c_1}$ is assumed to be a right adjoint.

We now address fully-faithfulness of the left adjoint, the existence of which was just established.
The condition that this left adjoint be fully-faithful is the condition that the functor 
\[
(\ev_s)^{e_s/}\colon \Fun_{/\cK}(c_1,\cE)^{e_s/} 
\longrightarrow
(\cE_{|s})^{e_s/}
\]
carries the initial object in the domain, the existence of which was just established, to the initial object in the codomain.
This latter condition is indeed the case precisely because the initial object in the codomain of $(\ev_s)^{e_s/}$ 
factors through the fully-faithful functor~(\ref{e120}).

We now establish that condition~(c) implies condition~(a).
We must show that, for each functor $c_1\to \cK$, the base change $\cE_{|c_1}\to c_1$ is a coCartesian fibration.
So fix such a functor $c_1\to \cK$.
Through the equivalence (a)$\iff$(b), we are to show that the canonical functor $\cE_{|t} \to (\cE_{|c_1})_{/t} \xra{\simeq} \cE_{|c_1}$ is a right adjoint.
Let $e\in \cE_{|c_1}$ be an object.
We must show that the $\infty$-undercategory $(\cE_{|t})^{e/}$ has an initial object.  
If this object $e$ lies over $t$, this $\infty$-category has $(e\xra{=}e)$ as an initial object.
So suppose $e$ lies over $s$, which is to say $e\in \cE_{|s}$.  
Because $\ev_s$ is a Cartesian fibration, the canonical functor
\[
(\cE_{|t})^{e/}:=(\cE_{|c_1})^{e/}\underset{\cE_{|c_1}}\times \cE_{|t}\simeq \Fun_{/\cK}(c_1,\cE)_{|e} \longrightarrow \Fun_{/\cK}(c_1,\cE)^{e/}
~,
\]
which is fully-faithful,
is a left adjoint.  
The assumed condition~(c) gives the existence of an initial object in the codomain of this functor, which in fact lies in the image of this fully-faithful functor left adjoint.
This completes the desired implication.  
\end{proof}

\begin{remark}\label{st.loc.coCart}
Let $\cE\xra{\pi} \cK$ be a functor between $\infty$-categories.
For each morphism $c_1\xra{\lag x\xra{f} y\rag} \cK$, consider the span among $\infty$-categories
\[
\cE_{|x}    
\xla{~\ev_s~} 
\Fun_{/\cK}(c_1,\cE)
\xra{~\ev_t~}
\cE_{|y}~.
\]
Through Lemma~\ref{locallycocart}, if $\pi$ is locally coCartesian the functor $\ev_s$ has a left adjoint, thereby resulting in a composite functor
\[
f_!\colon \cE_{|x}    
\xra{~(\ev_s)^{\vee}~} 
\Fun_{/\cK}(c_1,\cE)
\xra{~\ev_t~}
\cE_{|y}~;
\]
if $\pi$ is locally Cartesian the functor $\ev_t$ has a right adjoint, thereby resulting in a composite functor
\[
\cE_{|x}    
\xla{~(\ev_s)^{\vee}~} 
\Fun_{/\cK}(c_1,\cE)
\xla{~(\ev_t)^{\vee}~}
\cE_{|y}\colon f^\ast~.
\]

\end{remark}

\begin{lemma}\label{coCart.morphi.via.adjoints}
Let $\cE\xra{\pi}\cK$ be a functor between $\infty$-categories.
\begin{enumerate}
\item
Provided the functor $\pi$ is locally coCartesian, the following conditions on a morphism $c_1\xra{\lag e_s\to e_t\rag}\cE$ are equivalent. 
\begin{enumerate}
\item This morphism is locally $\pi$-coCartesian.

\item The left adjoint $\cE_{/\pi e_t} \to \cE_{|\pi e_t}$ carries this morphism to an equivalence.

\end{enumerate}

\item
Provided the functor $\pi$ is locally Cartesian, the following conditions on a morphism $c_1\xra{\lag e_s\to e_t\rag}\cE$ are equivalent. 
\begin{enumerate}
\item This morphism is locally $\pi$-Cartesian.

\item The right adjoint $\cE^{\pi e_s/} \to \cE_{|\pi e_s}$ carries this morphism to an equivalence.

\end{enumerate}

\end{enumerate}

\end{lemma}
\begin{proof}
Statements~(1) and~(2) are dual to one another, so it suffices to prove~(1).

Denote the given morphism as $e_s\xra{\w{f}}e_t$.
Since $\pi$ is assumed locally coCartesian, by base changing along $c_1 \xra{\lag \pi e_s \xra{\pi\w{f}} \pi e_t\rag} \cK$, we can reduce to the case that $\pi$ is a coCartesian fibration $\cE\to c_1$ over the 1-cell. 
After Lemma~\ref{lemma.reformulation}, this morphism is $\pi$-coCartesian if and only if, when regarded as an object in the $\infty$-category
\[
\cE^{e_s/}\underset{c_1^{s/}}\times c_1^{t/}
~\simeq~
(\cE_{|t})^{e_s/}
~,
\]
it is initial.
By Lemma~\ref{locallycocart}, an object in this $\infty$-category is initial if and only if it is the value on $(e_s\xra{\w{f}}e_t)$ of the left adjoint $\cE_{/t} \to \cE_{|t}$.
The result follows.
\end{proof}

The next result shows that, like exponentiable fibrations (Lemma~\ref{exp-char}(3)), (co)Cartesian fibrations can be detected over $[2]$-points at a time.  
The equivalences of conditions (a) and (c) are equivalent to Proposition 2.4.2.8 of \cite{HTT}; we provide a proof for the reader's convenience.

\begin{prop}\label{just.over.2}
Let $\cE\xra{\pi}\cK$ be a functor between $\infty$-categories.
\begin{enumerate}
\item 
The following conditions on $\pi$ are equivalent.  
\begin{enumerate}
\item $\pi$ is a coCartesian fibration.

\item $\pi$ is a locally coCartesian exponentiable fibration.

\item For each functor $[2]\to \cK$, the base change $\cE_{|[2]}\to [2]$ is a coCartesian fibration.

\item For each functor $[2]\to \cK$, the base change $\cE_{|[2]} \to [2]$ is a locally coCartesian exponentiable fibration.

\item $\pi$ is a locally coCartesian fibration and for each functor $[2]\to \cK$, and each lift $\{0<2\}\xra{\lag e_0\to \ov{e}_2\rag} \cE_{|\{0<2\}}$ along $\pi$, the $\infty$-category $\bigl((\cE^{e_0/})_{|1}\bigr)_{/(e_0\to \ov{e}_2)}$ is nonempty.

\item $\pi$ is locally coCartesian and the following condition is satisfied.
\begin{itemize}
\item[~]

Let $[2]\xra{\lag e_0\xra{f}e_1\xra{g}e_2\rag} \cE$ be a functor, selecting the indicated diagram in $\cE$, for which $(e_0\xra{f} e_2)$ is coCartesian with respect to the base change $\cE_{|\{0<1\}}\to \{0<1\}$ and $(e_1\xra{g} e_2)$ is coCartesian with respect to the base change $\cE_{|\{1<2\}}\to \{1<2\}$.  
Let $(e_0\to \ov{e}_2)$ be a morphism in $\cE$, over the morphism $\{0<2\}\xra{\lag \pi e_0\to \pi e_2\rag} \cK$, that is coCartesian with respect to the base change $\cE_{|\{0<2\}} \to \{0<2\}$.  
The canonical morphism in the fiber $\infty$-category $\cE_{|2}$, 
\[
\ov{e}_2 \longrightarrow e_2~,
\]
is an equivalence.
\end{itemize} 

\end{enumerate}

\item 
The following conditions on $\pi$ are equivalent.  
\begin{enumerate}
\item $\pi$ is a Cartesian fibration.

\item $\pi$ is a locally Cartesian  exponentiable fibration.

\item For each functor $[2]\to \cK$, the base change $\cE_{|[2]}\to [2]$ is a Cartesian fibration.

\item For each functor $[2]\to \cK$, the base change $\cE_{|[2]} \to [2]$ is a locally Cartesian exponentiable fibration.

\item $\pi$ is a locally Cartesian fibration and for each functor $[2]\to \cK$, and each lift $\{0<2\}\xra{\lag e_0\to \ov{e}_2\rag} \cE_{|\{0<2\}}$ along $\pi$, the $\infty$-category $\bigl((\cE^{e_0/})_{|1}\bigr)_{/(e_0\to \ov{e}_2)}$ is nonempty. 

\item $\pi$ is locally Cartesian and the following condition is satisfied.
\begin{itemize}
\item[~]

Let $[2]\xra{\lag e_0\xra{f}e_1\xra{g}e_2\rag} \cE$ be a functor, selecting the indicated diagram in $\cE$, for which $(e_0\xra{f} e_2)$ is Cartesian with respect to the base change $\cE_{|\{0<1\}}\to \{0<1\}$ and $(e_1\xra{g} e_2)$ is Cartesian with respect to the base change $\cE_{|\{1<2\}}\to \{1<2\}$.  
Let $(\ov{e}_0\to e_2)$ be a morphism in $\cE$, over the morphism $\{0<2\}\xra{\lag \pi e_0\to \pi e_2\rag} \cK$, that is Cartesian with respect to the base change $\cE_{|\{0<2\}} \to \{0<2\}$.  
The canonical morphism in the fiber $\infty$-category $\cE_{|0}$, 
\[
e_0 \longrightarrow \ov{e}_0~,
\]
is an equivalence.
\end{itemize}

\end{enumerate}

\end{enumerate}

\end{prop}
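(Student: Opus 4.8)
By taking opposites (using Observation~\ref{exp-op}, Observation~\ref{cCart.ops}, and the fact that opposites exchange initial/terminal), assertion~(2) follows from assertion~(1), so I will only treat~(1). The plan is to prove a cycle of implications, using Lemma~\ref{locallycocart} to relate local coCartesianness to adjointness of $\ev_s$, Lemma~\ref{exp-examples} to get exponentiability for free from coCartesianness, Lemma~\ref{exp-char}(3) to identify exponentiable fibrations over $[2]$-points at a time, and Observation~\ref{coCart.morphi.via.adjoints} to translate the coCartesian-morphism condition into the language of the left adjoint $\ev_s^\vee$. Concretely: (a)$\Rightarrow$(b) is immediate since every coCartesian fibration is locally coCartesian and, by Lemma~\ref{exp-examples}, exponentiable; (b)$\Rightarrow$(d) is immediate since local coCartesianness and exponentiability (Corollary~\ref{exps-pullback}) are stable under base change; (d)$\Rightarrow$(c) will follow once we show (b)$\Rightarrow$(a) applied over each $[2]$-point; so the substance is in the equivalence of (b), (c), (e), (f) with (a).

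The key geometric input is this: since $\pi$ is locally coCartesian, for each morphism $c_1\xra{\lag x\xra{f}y\rag}\cK$ we have the adjunction $\ev_s\colon \Fun_{/c_1}(c_1,\cE)\rightleftarrows \cE_{|s}\colon \ev_s^\vee$ of Lemma~\ref{locallycocart}, hence the pushforward $f_!=\ev_t\circ \ev_s^\vee\colon \cE_{|x}\to\cE_{|y}$ of Remark~\ref{st.loc.coCart}. A locally coCartesian $\pi$ is a coCartesian fibration exactly when for every composable pair $[2]\xra{\lag x\xra{f}y\xra{g}z\rag}\cK$ the canonical comparison $(gf)_!\to g_!f_!$ is an equivalence of functors $\cE_{|x}\to\cE_{|z}$ — this is the classical criterion (Proposition 2.4.2.8 of~\cite{HTT}), and it is phrased in the present framework by conditions~(e) and~(f). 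For the implication "(b) over $[2]$-points $\Rightarrow$ (a)": given a locally coCartesian exponentiable $\cE_{|[2]}\to[2]$, Lemma~\ref{exp-char}(3) gives the pushout $\cE_{|\{0<1\}}\amalg_{\cE_{|\{1\}}}\cE_{|\{1<2\}}\xra{\simeq}\cE_{|[2]}$; one then builds, for $e_0\in\cE_{|0}$, an initial object of $(\cE^{e_0/})_{|2}$-over-$(\cE_{|[2]})^{e_0/}$ by composing the local coCartesian lifts over $\{0<1\}$ and over $\{1<2\}$ and using the pushout to see that no further data is needed; nonemptiness of $\bigl((\cE^{e_0/})_{|1}\bigr)_{/(e_0\to\ov e_2)}$ (condition~(e)) together with the factorization-space contractibility of Lemma~\ref{exp-char}(6) promotes this to the required universal property, yielding a $\pi$-coCartesian lift and hence condition~(a) via Lemma~\ref{lemma.reformulation}. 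The equivalence (e)$\iff$(f) is a repackaging: an object of $\bigl((\cE^{e_0/})_{|1}\bigr)_{/(e_0\to\ov e_2)}$ is precisely a factorization $e_0\to e_1\to\ov e_2$ through the fiber over $1$, and Observation~\ref{coCart.morphi.via.adjoints} identifies the coCartesian morphisms appearing in~(f) with the ones making $e_0\to e_1$ and $e_1\to e_2$ into $\ev_s^\vee$-type lifts, so nonemptiness of the relevant category is exactly the statement that $\ov e_2\to e_2$ admits a section, which forces it to be an equivalence by the universality coming from Lemma~\ref{exp-char}(6).

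I would organize the write-up as: (i) reduce to~(1) by opposites; (ii) prove (a)$\Rightarrow$(b)$\Rightarrow$(d) and (a)$\Rightarrow$(c) cheaply; (iii) prove (d)$\Rightarrow$(a) and (c)$\Rightarrow$(a) by reducing to the single-$[2]$ statement (d restricted to a point is exactly the hypothesis of that statement), so it suffices to prove "locally coCartesian $+$ exponentiable over $[2]$ $\Rightarrow$ coCartesian over $[2]$"; (iv) prove that statement using Lemma~\ref{exp-char}(3),(6), Lemma~\ref{locallycocart}, and Lemma~\ref{lemma.reformulation}, along the way extracting conditions~(e) and~(f) as the precise obstruction, and verifying (e)$\iff$(f) via Observation~\ref{coCart.morphi.via.adjoints}; (v) close the loop by noting (a)$\Rightarrow$(e),(f) trivially since a genuine coCartesian lift supplies the required object and makes $\ov e_2\to e_2$ an equivalence. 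The main obstacle I anticipate is step~(iv): carefully checking that the candidate lift produced by composing local coCartesian arrows over $\{0<1\}$ and $\{1<2\}$ is \emph{initial} in the fiber product $\infty$-category $\cE^{e_0/}\underset{\cK^{x/}}\times\cK^{z/}$ — this is where the exponentiability pushout and the contractibility of the factorization space in Lemma~\ref{exp-char}(6) must be combined with condition~(e), and it is the only place where anything beyond formal manipulation happens. Everything else is assembling previously established lemmas in the right order.
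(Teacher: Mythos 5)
Your skeleton for the equivalence of (a)--(d) is essentially the paper's: reduce to assertion (1) by taking opposites, dispatch (a)$\Rightarrow$(b)$\Rightarrow$(d) and (a)$\Rightarrow$(c) cheaply, and concentrate the work in the single-$[2]$ statement, proved with Lemma~\ref{exp-char}(3),(6), Lemma~\ref{locallycocart}, Lemma~\ref{lemma.reformulation} and a Quillen's Theorem~A finality argument. But two steps of your plan have genuine gaps. The more serious one: the loop through (e) and (f) is never closed. You prove (a)$\Rightarrow$(e),(f) and assert (e)$\iff$(f), but nowhere in your outline is an implication from (e) or (f) back to (a)--(d). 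The paper's corresponding step is (f)$\Rightarrow$(b), in which exponentiability must be \emph{derived} by verifying the contractibility criterion of Lemma~\ref{exp-char}(6): the composite of the local coCartesian lifts gives an initial object of $\bigl((\cE^{e_0/})_{|1}\bigr)_{/(e_0\to e'_2)}$ \emph{provided} that category is nonempty, and nonemptiness is extracted from condition (f) by transporting along the coCartesian lift over $\{0<2\}$. Moreover, your sketch of (e)$\iff$(f) appeals to ``the universality coming from Lemma~\ref{exp-char}(6)'', i.e.\ to contractibility of the factorization category; under hypothesis (e) or (f) alone you do not yet know $\pi$ is exponentiable, so that contractibility is precisely what you are not entitled to use, and invoking it is circular. (Also, a section of $\ov{e}_2\to e_2$ does not by itself force an equivalence; the paper gets the two-sided inverse from an initiality argument using all three coCartesian lifts, with no exponentiability input.)

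Second, you claim (c)$\Rightarrow$(a) ``reduces to the single-$[2]$ statement'', but that reduction is itself a nontrivial step: a lift that is coCartesian for each base change $\cE_{|[2]}\to[2]$ must be shown $\pi$-coCartesian, and the defining condition of a $\pi$-coCartesian morphism quantifies over all of $\cE$. The correct mechanism, which the paper's (c)$\Rightarrow$(a) paragraph carries out, is that every object of $\cE^{e_s/}\underset{\cK^{x/}}\times \cK^{y/}$ lies over some $[2]$-point of $\cK$, and initiality can be checked on the fibers of the left fibration $\cE^{e_t/}\to \cE^{e_s/}\underset{\cK^{x/}}\times \cK^{y/}$, which are computed after base change to that $[2]$; this also requires the preliminary check, via uniqueness of initial objects, that the chosen lift over $c_1$ remains coCartesian for each $\cE_{|[2]}\to[2]$. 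As written, your plan simply asserts the reduction. Finally, in the core (d)$\Rightarrow$(c) step that you rightly flag as the main obstacle, the paper's argument takes the local coCartesian lift over $\{0<1\}$ and proves initiality of $(\cE^{e_0/})_{|1}\to (\cE^{e_0/})_{|\{1<2\}}$ by Quillen's Theorem~A, splitting targets over $1$ (an initial object exists) from targets over $2$ (the classifying space is contractible by Lemma~\ref{exp-char}(6)); your gesture at ``composing lifts and using the pushout'' is in the right spirit but would need to be recast in this form to go through.
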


\begin{proof}
The assertion concerning coCartesian fibrations implies the assertion concerning the Cartesian fibrations, as implemented by replacing a Cartesian fibration by its opposite.  
We are therefore reduced to proving the assertion concerning coCartesian fibrations.

We establish these implications
\[
\xymatrix{
&&
{\rm (e)}     \ar@(-,u)@{=>}[rr]
&&
{\rm (f)}  \ar@(d,-)@{=>}[lld]
\\
{\rm (a)}  \ar@{=>}[rr]  \ar@{=>}[drr]
&&
{\rm (b)}  \ar@{=>}[rr]    \ar@{=>}[u]
&&
{\rm (d)}  \ar@(d,r)@{=>}[dll]  
\\
&&
{\rm (c)}  \ar@{=>}[urr]  \ar@(l,d)@{=>}[ull]
&&
,
}
\]
in which the straight ones are quick, as we explain first.

Suppose~(a), that $\pi$ is a coCartesian fibration.
Then, by definition, $\pi$ is a locally coCartesian fibration.
Lemma~\ref{exp-examples} gives that $\pi$ is an exponentiable fibration.
So (a) implies (b).
For the same reason, (c) implies (d).
Also, Lemma~\ref{Cart.base.change} gives that each base change $\cE_{|[2]}\to [2]$ is also a coCartesian fibration.
So (a) implies (c).
Corollary~\ref{exps-pullback} gives that exponentiable fibrations are closed under base change;
locally coCartesian fibrations are manifestly closed under base change.
Therefore (b) implies (d). 
The criterion of Lemma~\ref{exp-char}(6) for being an exponentiable fibration immediately gives that (b) implies (e).

We now establish that (d) implies (c); so suppose (d) is true.
The problem immediately reduces to showing that a locally coCartesian exponentiable fibration $\cE\xra{\pi} [2]$ is a coCartesian fibration.
Through Lemma~\ref{lemma.reformulation}, this is the problem of showing each solid diagram of $\infty$-categories
\[
\xymatrix{
\ast   \ar[d]_-{\lag s \rag}  \ar[rr]^-{\lag e_i\rag}
&&
\cE  \ar[d]^-{\pi}  
\\
c_1  \ar[rr]_-{\lag i\leq j\rag}   \ar@{-->}[urr]^-{\lag e_i\to e_j\rag}
&&
[2]
}
\]
admits a filler that is initial in the pullback $\infty$-category $\cE^{e_i/}\underset{[2]^{i/}}\times [2]^{j/}$.  
Through Lemma~\ref{lemma.reformulation}, the assumption that $\cE\to [2]$ is assumed locally coCartesian directly solves this problem in the cases that $(i,j)\neq (0,1)$.  
So assume $(i,j)=(0,1)$.  
Using that $\cE\to [2]$ is locally coCartesian, choose, through Lemma~\ref{lemma.reformulation}, such a lift $(e_0\to e_1)$, which is initial in the fiber product from the base change, 
\[
(\cE_{|\{0<1\}})^{e_0/}\underset{\{0<1\}^{0/}}\times \{0<1\}^{1/}~.
\]
Since initial functors compose (Lemma~\ref{2.o.3}), initiality of this lift, as an object in $\cE^{e_0/}\underset{[2]^{0/}}\times [2]^{1/}$, is therefore implied by initiality of the canonical functor
\[
(\cE^{e_0/})_{|1}~\simeq~(\cE_{|\{0<1\}})^{e_0/}\underset{\{0<1\}^{0/}}\times \{0<1\}^{1/} \longrightarrow \cE^{e_0/}\underset{[2]^{0/}}\times [2]^{1/}~\simeq~ (\cE^{e_0/})_{|\{1<2\}}~.
\]
We establish initiality of this functor using Quillen's Theorem A.
Let $(e_0\to e')$ be an object in $(\cE^{e_0/})_{|\{1<2\}}$.  
We must show the classifying space of the $\infty$-overcategory
\begin{equation}\label{99}
\bigl((\cE^{e_0/})_{|1}\bigr)_{/(e_0\to e')}
\end{equation}
is contractible.
In the case that $e'\in \cE$ lies over $1\in [2]$, this $\infty$-category~(\ref{99}) has $(e_0\to e_1)$ as an initial object.  The desired contractibility follows.
Now suppose $e'\in \cE$ lies over $2\in [2]$.  
In this case, the $\infty$-category~(\ref{99}) has contractible classifying space precisely because $\cE\to [2]$ is assumed an exponentiable fibration, using Lemma~\ref{exp-char}(6).
This concludes the implication (d)$\implies$(c).

We now establish that (c) implies (a); so suppose (c) is true, that each base change $\cE_{|[2]}\to [2]$ is a coCartesian fibration.
Consider a solid diagram 
\begin{equation}\label{yet.another}
\xymatrix{
\ast   \ar[d]_-{\lag s \rag}  \ar[rr]^-{\lag e_s\rag}
&&
\cE  \ar[d]^-{\pi}  
\\
c_1  \ar[rr]_-{\lag x \to y\rag}   \ar@{-->}[urr]^-{\lag e_s\to e_t\rag}
&&
\cK
}
\end{equation}
among $\infty$-categories.
By assumption, there is a coCartesian lift, as indicated, with respect to the base change $\cE_{|c_1} \to c_1$.  
Denote this lift as $\alpha$.
We show that $\alpha$ is an initial object in the fiber product $\infty$-category $\cE^{e_s/} \underset{\cK^{x/}}\times \cK^{y/}$.

An object in this fiber product is a diagram, $\delta$, 
\[
\xymatrix{
\{s\to +\}  \ar[d]  \ar[rr]^-{\lag e_s \to e_+\rag}
&&
\cE  \ar[d]^-{\pi}  
\\
[2]=\{s\to t\to +\}  \ar[rr]_-{\lag x \to  y\to z\rag}   
&&
\cK
}
\]
extending the solid diagram~(\ref{yet.another}).
By assumption that the base change $\cE_{|[2]}\to [2]$ is a coCartesian fibration, there is a lift
\[
\xymatrix{
\ast   \ar[d]_-{\lag s \rag}  \ar[rr]^-{\lag e_s\rag}
&&
\cE_{|[2]}  \ar[d]    \ar[rr]
&&
\cE  \ar[d]^-{\pi}
\\
c_1  \ar[rr]_-{\lag s \to t\rag}   \ar@{-->}[urr]^-{\lag e_s\to e'_t\rag}
&&
[2]=\{s\to t\to +\}  \ar[rr]_-{\lag x \to  y\to z\rag}  
&&
\cK
}
\]
for which the canonical functor
\[
(\cE_{|[2]})^{e'_t/}    \longrightarrow   (\cE_{|[2]})^{e_s/} \underset{[2]^{s/}}\times [2]^{t/} 
\]
is an equivalence between $\infty$-categories. 
Denote the above lift as $\beta$.
By choice of $\alpha$, it is an initial object in the fiber product $\infty$-category $(\cE_{|c_1})^{e_s/} \underset{c_1^{s/}}\times c_1^{t/}$.
Therefore there is a unique morphism $\alpha \to \beta$ in $(\cE_{|c_1})^{e_s/} \underset{c_1^{s/}}\times c_1^{t/}$.  
Likewise, because $\beta$ is an initial object in the fiber product $(\cE_{|[2]})^{e_s/} \underset{[2]^{s/}}\times [2]^{t/}$, there is a unique morphism $\beta \to \alpha$ in this fiber product.
We conclude an equivalence $\alpha \simeq \beta$ because the canonical functor between fiber products $(\cE_{|c_1})^{e_s/} \underset{c_1^{s/}}\times c_1^{t/} \to (\cE_{|[2]})^{e_s/} \underset{[2]^{s/}}\times [2]^{t/}$ is fully-faithful.
This establishes that $\alpha$ is coCartesian with respect to each base change $\cE_{|[2]}\to [2]$.

We now show that $\alpha$ is $\pi$-coCartesian.  
Notice that the canonical pullback square among $\infty$-categories
\[
\xymatrix{
(\cE_{|[2]})^{e_t/}   \ar[d]   \ar[rr]
&&
\cE^{e_t/}  \ar[d]
\\
(\cE_{|[2]})^{e_s/} \underset{[2]^{s/}}\times [2]^{t/}    \ar[rr]
&&
\cE^{e_s/} \underset{\cK^{x/}}\times \cK^{y/},
}
\]
in which the vertical functors are left fibrations.
Notice, also, that the object $\ast \xra{\delta} \cE^{e_s/} \underset{\cK^{x/}}\times \cK^{y/}$ canonically factors through the bottom horizontal functor in this diagram.
We have established that the fiber over this lift of $\delta$ of the left vertical left fibration is a contractible $\infty$-groupoid.  
Because this square is a pullback, the fiber over $\delta$ of the right vertical left fibration is also a contractible $\infty$-groupoid.
We conclude that $\alpha$ is an initial object in the fiber product $\cE^{e_s/} \underset{\cK^{x/}}\times \cK^{y/}$, as desired.

We now establish that (f) implies (b).
Through the criterion of Lemma~\ref{exp-char}(6), we must show that, for each functor $[2]\to \cK$ and each lift $\{0<2\}\xra{\lag e_0\to e'_2\rag} \cE$ along $\pi$, the $\infty$-category
\[
\bigl((\cE^{e_0/})_{|1}\bigr)_{/(e_0\to e'_2)}
\]
has contractible classifying space.  
Choose a lift $[2]\xra{\lag e_0\to e_1\to e_2\rag} \cE$ with the same value on $0$ as in the assumptions of condition~(f).  
The assumed coCartesian properties of the morphisms $(e_0\to e_1)$ and $(e_1\to e_2)$ imply this lift defines an initial object in the $\infty$-category $\bigl((\cE^{e_0/})_{|1}\bigr)_{/(e_0\to e'_2)}$ provided it is nonempty.  
We are thus reduced to showing this $\infty$-category is nonempty.  
Choose a lift $\{0<2\}\xra{\lag e_0\to \ov{e}_2\rag} \cE$ with the same value on $0$, as in the assumptions of condition~(f).  
The assumed coCartesian property of this morphism $(e_0\to \ov{e}_2)$ determines a natural transformation $\beta\colon (e_0\to \ov{e}_2)\to (e_0\to e'_2)$ together with an identification of the restriction $\beta_{|0} \colon e_0\xra{=}e_0$ as the identity.
This $\beta$ determines a functor between $\infty$-categories:
\[
\bigl((\cE^{e_0/})_{|1}\bigr)_{/(e_0\to \ov{e}_2)} 
\longrightarrow
\bigl((\cE^{e_0/})_{|1}\bigr)_{/(e_0\to e'_2)}~.
\]
We are therefore reduced to showing this domain $\infty$-category is nonempty.  
This is exactly implied by the condition~(f).

We now establish that (e) implies (f).
Consider the assumptions given in condition~(f).  
The assumed condition~(e) states that the $\infty$-category $\bigl((\cE^{e_0/})_{|1}\bigr)_{/(e_0\to \ov{e}_2)}$ is nonempty.  
The assumed coCartesian properties of each of the morphisms $(e_0\to e_1)$ and $(e_1\to e_2)$ give a unique natural transformation $\alpha \colon (e_0\to e_1\to e_2)\to (e_0\to \ov{e}_1\to \ov{e}_2)$ between functors $[2]\to \cE$ together with an identification of the restriction $\alpha_{|0}\colon e_0\xra{=} e_0$.   
In this way, we see that the object $(e_0\to e_1\to e_2)$ determines an initial object in the $\infty$-category $\bigl((\cE^{e_0/})_{|1}\bigr)_{/(e_0\to \ov{e}_2)}$.  
The assumed coCartesian property of the morphism $(e_0\to \ov{e}_2)$ gives a unique natural transformation $\beta\colon (e_0\to \ov{e}_2)\to (e_0\to e_2)$ between functors $\{0<2\}\to \cE$ together with an identification of the restriction $\beta_{|0}\colon e_0\xra{=} e_0$.   
We conclude that $\alpha_{|\{0<2\}}$ is a retraction: $\alpha_{|\{0<2\}}\beta \simeq \id_{(e_0\to \ov{e}_2)}$.
The above initiality of the object in $\bigl((\cE^{e_0/})_{|1}\bigr)_{/(e_0\to \ov{e}_2)}$ determined by $(e_0\to e_1\to e_2)$ gives that $\beta$ is in fact an inverse to $\alpha$: $\beta \alpha_{|\{0<2\}} \simeq \id_{(e_0\to e_2)}$.  
Restricting to $2\in [2]$ reveals that the canonical morphism $\ov{e}_2\to e_2$ is an equivalence, as desired.

\end{proof}

\begin{remark}
We follow up on Remark~\ref{st.loc.coCart}.  
The property of a functor $\cE \ra \cK$ being a coCartesian fibration exactly ensures that the assignment $x \mapsto\cE_{|x}$ can be assembled as a functor $\cK\to \Cat$. 
The criterion of Theorem~\ref{theorem-cocart} breaks this into two parts. 
The first condition ensures that each morphism $c_1\xra{\lag x\xra{f} y\rag}\cK$ defines a functor between fibers $f_!:\cE_{|x} \ra\cE_{|y}$, associated to a lax functor from $\cK$ to $\Cat$. 
Secondly, being an exponentiable fibration then guarantees associativity: for each functor $[2]\xra{\lag x\xra{f}y \xra{g} z\rag} \cK$, the canonical natural transformation $(gf)_! \to g_! f_!$ between functors $\cE_{|x} \ra \cE_{|z}$ is an equivalence; equivalently, the lax functor defined by being locally coCartesian is, in fact, a functor.
\end{remark}

The preceding results can now be assembled to establish our characterization of (co)Cartesian fibrations in terms of exponentiable fibrations -- these are the assertions in Theorem~\ref{main.thm'} concerning (co)Cartesian fibrations. 
We reference the Cartesian symmetric monoidal $\infty$-category $\Cat$, as well as its opposite $\Cat^{\op}$, with the coCartesian symmetric monoidal structure.  
\begin{theorem}\label{theorem-cocart}
\begin{enumerate}
\item[~]

\item 
There is a symmetric monoidal monomorphism between flagged $\infty$-categories:
\[
\Cat~\hookrightarrow~\Corr~.
\]
For each $\infty$-category $\cK$, a functor $\cK\xra{\lag \cE\xra{\sf e.fib} \cK\rag} \Corr$ classifying the indicated exponentiable fibration, factors through $\Cat\hookrightarrow \Corr$ if and only if the exponentiable fibration $\cE\to \cK$ is also a locally coCartesian fibration.

\item 
There is a symmetric monoidal monomorphism between flagged $\infty$-categories:
\[
\Cat^{\op}~\hookrightarrow~\Corr~.
\]
For each $\infty$-category $\cK$, a functor $\cK\xra{\lag \cE\xra{\sf e.fib} \cK\rag} \Corr$ classifying the indicated exponentiable fibration, factors through $\Cat^{\op}\hookrightarrow \Corr$ if and only if the exponentiable fibration $\cE\to \cK$ is also a locally Cartesian fibration.  

\end{enumerate}

\end{theorem}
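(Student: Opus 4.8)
The plan is to construct the monomorphism $\Cat \hookrightarrow \Corr$ by exhibiting the functor $\cCart^{\sim} \to \EFib^{\sim}$ it should induce, using Lurie's straightening equivalence (Theorem~\ref{st.un.cCart}) to identify the source with the presheaf $\CAT(-,\Cat)$, and then verifying that $\cK$-points landing in the image are exactly the locally coCartesian exponentiable fibrations. Concretely, I would proceed as follows. First, Lemma~\ref{exp-examples} gives that coCartesian fibrations are exponentiable, so there is a canonical natural transformation of presheaves on $\Cat$,
\[
\cCart^{\sim} ~\longrightarrow~ \EFib^{\sim}~,
\]
which is a monomorphism because the inclusion $\cCart_{\cK}\subset \Cat_{/\cK}$ is fully faithful and factors through $\EFib_{\cK}$. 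Passing through the representability results (Theorem~\ref{st.un.cCart}, Theorem~\ref{representablecorr}, Definition~\ref{def.Corr}), this is represented by a monomorphism of flagged $\infty$-categories $\Cat \hookrightarrow \Corr$. The opposite case $\Cat^{\op}\hookrightarrow \Corr$ follows by applying Observation~\ref{exp-op} and Observation~\ref{cCart.ops}, i.e.\ $\cE\to\cK$ is a Cartesian fibration iff $\cE^{\op}\to\cK^{\op}$ is coCartesian, and both $\Corr$ and $\EFib^{\sim}$ are visibly invariant under $(-)^{\op}$; so it suffices to treat the coCartesian assertion.

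Next, for the symmetric monoidal enhancement, I would observe that coCartesian fibrations are closed under fiberwise products over a common base (this is immediate from the morphism-by-morphism lifting criterion, since a product of coCartesian lifts is a coCartesian lift for the product fibration), so that $\cCart^{\sim}$ lifts to $\CAlg(\SPACES)$ compatibly with the lift of $\EFib^{\sim}$ recorded in Lemma~\ref{exp-products}; moreover under straightening this symmetric monoidal structure on $\cCart^{\sim}$ is the Cartesian one on $\Cat$ (and the coCartesian one on $\Cat^{\op}$), as in Theorem~\ref{st.un.cCart}. Hence the monomorphisms are symmetric monoidal, as claimed.

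The remaining, and main, content is the characterization of the image: a functor $F\colon \cK \xra{\lag \cE\to\cK\rag} \Corr$ factors through $\Cat$ iff $\cE\to\cK$ is a locally coCartesian fibration. For this I would argue pointwise in a way that avoids asserting $F$ is a coCartesian fibration globally. The key input is Proposition~\ref{just.over.2}: a locally coCartesian exponentiable fibration over $[2]$ is automatically a coCartesian fibration, and (co)Cartesian fibrations are detected over $[2]$-points. Since $\Corr$ is presented as a Segal space whose $p$-simplices are exponentiable fibrations over $[p]$, and $\Cat$ sits inside it as the sub-Segal-space whose $p$-simplices are coCartesian fibrations over $[p]$ (via unstraightening, Construction~\ref{def.un}), the statement ``$F$ factors through $\Cat$'' translates to ``for every $[p]\to\cK$, the base change $\cE_{|[p]}\to[p]$ is a coCartesian fibration.'' By Lemma~\ref{cCart.compose} and Corollary~\ref{cons.exp} it suffices to check this for $p\le 2$, and there it is exactly Proposition~\ref{just.over.2}(1), equivalence of (a) and (c)--(d): $\cE_{|[2]}\to[2]$ is coCartesian iff it is a locally coCartesian exponentiable fibration. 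Being an exponentiable fibration is automatic since $F$ lands in $\Corr$; so the condition reduces to ``$\cE_{|c_1}\to c_1$ is a coCartesian fibration for every $c_1\to\cK$,'' which is precisely the definition of $\cE\to\cK$ being locally coCartesian. Conversely, if $\cE\to\cK$ is locally coCartesian then every $\cE_{|[p]}\to[p]$ is a locally coCartesian exponentiable fibration, hence coCartesian by Proposition~\ref{just.over.2}, so $F$ factors through $\Cat$.

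\textbf{The main obstacle.} The delicate point is identifying the sub-Segal-space of $\Corr$ represented by $\Cat$ with ``$p$-simplices = coCartesian fibrations over $[p]$,'' i.e.\ checking that the composite $\cCart^{\sim}\hookrightarrow \Cat_{/-} \hookrightarrow \Cat_{/-}^{\sim}$ and the straightening equivalence $\CAT(-,\Cat)\simeq \cCart^{\sim}_{-}$ are compatible with the Segal presentations of $\Cat$ and $\Corr$. This amounts to knowing that unstraightening a functor $[p]\to\Cat$ yields a coCartesian fibration over $[p]$ whose formation is natural in $[p]$ and compatible with the pushout decompositions $\{0<1\}\amalg_{\{1\}}\{1<\dots<p\}\xra{\simeq}[p]$ used in Corollary~\ref{corr.segal} and Lemma~\ref{pushout} — which is exactly the assertion that coCartesian fibrations over $[p]$ are reconstructed from their restrictions to the spine, a consequence of Lemma~\ref{pushout}(2) together with Proposition~\ref{just.over.2}. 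Once this compatibility is in place, the characterization of the image is a formal consequence of Proposition~\ref{just.over.2}, as sketched.
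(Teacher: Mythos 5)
Your proposal is correct and follows essentially the same route the paper takes (the paper merely says the preceding results "can now be assembled"): coCartesian fibrations are exponentiable (Lemma~\ref{exp-examples}), straightening identifies $\cCart^\sim$ with the presheaf represented by $\Cat$ (Theorem~\ref{st.un.cCart}), Proposition~\ref{just.over.2} identifies the image with the locally coCartesian exponentiable fibrations, opposites handle the Cartesian case, and fiber products give the symmetric monoidal enhancement. One small remark: since $\cCart^\sim \hookrightarrow \EFib^\sim$ is a monomorphism of presheaves on all of $\Cat$, the image characterization at a $\cK$-point is immediate, so your simplex-wise reduction is a harmless detour, and in it the citation of Lemma~\ref{cCart.compose} and Corollary~\ref{cons.exp} should really just be Proposition~\ref{just.over.2}(1), the equivalence of (a) and (c), applied over $[p]$.
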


\subsection{(co)Cartesian-replacement}
We describe, for each $\infty$-category $\cK$, left adjoints to the monomorphisms $\cCart_{\cK} \hookrightarrow \Cat_{/\cK}$ and $\Cart_{\cK} \hookrightarrow \Cat_{/\cK}$. 
This material is a synopsis of the work~\cite{gepner-haugseng-nikolaus}.

For each functor $\cX \to \cY$ between $\infty$-categories, we denote the pullbacks
\[
\xymatrix{
\Ar(\cY)^{|\cX}  \ar[rr]  \ar[d]
&&
\Ar(\cY)  \ar[d]^-{(\ev_s,\ev_t)}
&&
\Ar(\cY)_{|\cX}  \ar[d]   \ar[ll]
\\
\cX\times \cY  \ar[rr]
&&
\cY\times \cY 
&&
\cY\times \cX   .    \ar[ll]
}
\]

\begin{lemma}\label{candidate}
Each functor $\cX\to \cY$ between $\infty$-categories canonically factors as in the diagram
\[
\xymatrix{
\Ar(\cY)^{|\cX}   \ar[drr]_-{\ev_t}
&&
\cX  \ar[rr]^-{\rm right~adjoint}  \ar[ll]_-{\rm left~adjoint}  \ar[d]
&&
\Ar(\cY)_{|\cX} \ar[dll]^-{\ev_s}
\\
&&
\cY  
&&
;
}
\]
in this diagram, $\ev_t$ is a coCartesian fibration and $\ev_s$ is a Cartesian fibration, and the horizontal functors are fully-faithful adjoints as indicated.  

\end{lemma}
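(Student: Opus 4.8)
The statement bundles together several claims about the two pullback $\infty$-categories $\Ar(\cY)^{|\cX}$ and $\Ar(\cY)_{|\cX}$ attached to a functor $f\colon \cX \to \cY$. By the opposite-symmetry $\Ar(\cY)^{|\cX}\simeq \bigl(\Ar(\cY^{\op})_{|\cX^{\op}}\bigr)^{\op}$ and Observation~\ref{cCart.ops}, the two halves of the statement are interchanged by passing to opposites, so it suffices to treat one side, say $\ev_s\colon \Ar(\cY)_{|\cX}\to \cY$, proving that it is a Cartesian fibration and that the canonical functor $\cX\to \Ar(\cY)_{|\cX}$ is a fully faithful right adjoint over $\cY$.

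First I would identify the functor $\cX\to \Ar(\cY)_{|\cX}$ concretely. An object of $\Ar(\cY)_{|\cX}$ is a triple $(y\in \cY,\, x\in\cX,\, y\to f(x))$, and the functor from $\cX$ sends $x$ to the identity-on-$f(x)$ arrow $(f(x), x, \id_{f(x)})$; equivalently, $\Ar(\cY)_{|\cX}\simeq \cY\underset{\cY}\times_{\ev_t}\Ar(\cY)\underset{\ev_s,\cY}\times \cX$, and the functor is induced by the unit section $\cY \to \Ar(\cY)$ of $\ev_s$. For full faithfulness: a mapping space in $\Ar(\cY)_{|\cX}$ from $(f(x),x,\id)$ to $(f(x'),x',\id)$ is computed, using that $\ev_s\colon \Ar(\cY)\to \cY$ is a Cartesian fibration (Example~\ref{ar.cCart}) and that mapping spaces in a pullback are pullbacks, as $\cX(x,x')\underset{\cY(f(x),f(x'))}\times \cY(f(x),f(x'))\simeq \cX(x,x')$, since the arrow $(f(x),x,\id)$ is $\ev_s$-Cartesian in $\Ar(\cY)$. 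So the functor is fully faithful.

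Next, that $\ev_s\colon \Ar(\cY)_{|\cX}\to \cY$ is a Cartesian fibration: this is base change. Indeed $\ev_s\colon \Ar(\cY)\to \cY$ is a Cartesian fibration by Example~\ref{ar.cCart}, and $\Ar(\cY)_{|\cX}=\Ar(\cY)\underset{\ev_t,\cY,f}\times\cX$, a pullback along $\ev_t$; but pulling back a Cartesian fibration $\ev_s$ along a functor into the \emph{other} leg requires care, so I would instead present $\Ar(\cY)_{|\cX}$ as the iterated pullback $(\cX\times \cY)\underset{\cY\times\cY}\times\Ar(\cY)$ with $\ev_s$ the composite down to the second $\cY$ factor, and verify directly via Lemma~\ref{lemma.reformulation} that every morphism in $\cY$ admits an $\ev_s$-Cartesian lift: given $y'\to y$ in $\cY$ and an object $(y,x,y\to f(x))$ over $y$, the lift is $(y'\to y, \id_x, y'\to y\to f(x))$, and one checks it is final in the relevant fiber product using that $\Ar(\cY)\xra{\ev_s}\cY$ is Cartesian with the analogous lifts. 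Finally, that $\cX\to \Ar(\cY)_{|\cX}$ is a right adjoint over $\cY$: by Lemma~\ref{adjointcriterion}(1) it suffices to exhibit, for each object $(y,x,g\colon y\to f(x))$ of $\Ar(\cY)_{|\cX}$, an initial object of the undercategory. The candidate is the morphism $(y,x,g)\to (f(x),x,\id_{f(x)})$ given by $(g\colon y\to f(x), \id_x)$; checking initiality amounts again to the universal property of $\ev_s\colon \Ar(\cY)\to\cY$ being a Cartesian fibration — the morphism in question is the $\ev_t$-coCartesian-type factorization through the identity, and the undercategory it generates is equivalent to $\cX^{x/}$, which has $\id_x$ as initial object. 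That this left adjoint, followed by $\ev_t$ (resp. $\ev_s$), recovers the factorization of $f$ is immediate from the descriptions above.

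\textbf{Main obstacle.} The bookkeeping in the two pullback descriptions of $\Ar(\cY)_{|\cX}$ and $\Ar(\cY)^{|\cX}$ — keeping straight which evaluation map is a fibration, which leg one is pulling back along, and hence which of $\ev_s,\ev_t$ survives as a (co)Cartesian fibration on the pullback — is the step most prone to sign errors, and is where I would be most careful. The adjunction and full-faithfulness claims, once the right model for the functor $\cX\to \Ar(\cY)_{|\cX}$ is fixed, reduce cleanly to the already-established behavior of $\Ar(\cY)\to \cY$ in Example~\ref{ar.cCart} together with Lemmas~\ref{lemma.reformulation} and~\ref{adjointcriterion}.
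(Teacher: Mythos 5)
Your proposal is correct in substance and establishes the lemma, but by a somewhat different route than the paper. The paper disposes of the fully-faithful-adjoint claims in one stroke: the functors $\cX\to \Ar(\cY)^{|\cX}$ and $\cX\to \Ar(\cY)_{|\cX}$ are pulled back from the adjunctions $\id_\bullet\colon \cY\rightleftarrows \Ar(\cY)\colon \ev$ induced by the fully faithful left/right adjoints $\ast\xra{\lag s\rag} c_1$ and $\ast\xra{\lag t\rag}c_1$; and it proves the coCartesian-fibration claim by identifying the coCartesian morphisms and producing unique fillers via the pushout $(c_1\times c_1)\underset{c_1\times\{s\}}\amalg\ast\simeq[2]$. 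You instead compute mapping spaces in the pullback to get full faithfulness, invoke the pointwise criterion of Lemma~\ref{adjointcriterion} together with the identification $\cX^{\gamma/}\simeq \cX^{x/}$ (for $\gamma=(y,x,g\colon y\to f(x))$) to get the adjunction, and use Lemma~\ref{lemma.reformulation} with the explicit candidate lifts $(y'\to y,\id_x)$ for the fibration claim; the reduction by opposites at the outset is the same move the paper makes for its fibration half. Your version is more explicit and self-contained; the paper's is shorter on the adjunction side.

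One step is under-argued: Cartesianness of your candidate lift does not follow formally from the statement that its image in $\Ar(\cY)$ is $\ev_s$-Cartesian, because the projection $\Ar(\cY)_{|\cX}\to\Ar(\cY)$ is a base change of $\cX\to\cY$ along $\ev_t$, not a base change over the base of $\ev_s$; you must also use that the $\cX$-component of the lift is an equivalence. The cleanest repair is the mapping-space formula $\Map_{\Ar(\cY)_{|\cX}}\bigl((w,x'',h),(y,x,g)\bigr)\simeq \cY(w,y)\underset{\cY(w,f(x))}\times\cX(x'',x)$, from which the pullback condition of Definition~\ref{def.coCart} (equivalently, the finality demanded by Lemma~\ref{lemma.reformulation}) for your lift is immediate. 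The bookkeeping slip you flagged yourself (writing $(\cX\times\cY)\underset{\cY\times\cY}\times\Ar(\cY)$ where the paper's convention for $\Ar(\cY)_{|\cX}$ is the pullback along $\cY\times\cX$) is harmless and does not affect the argument.
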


\begin{proof}
The functor $\cX\to \cY$ determines a solid diagram of $\infty$-categories:
\[
\xymatrix{
&
\cX  \ar[dl]_-{\id_\bullet}  \ar[dr]^-{\id_\bullet}  \ar[d]^-{=}
&
\\
\Ar(\cX)  \ar[r]_-{\ev_s}  \ar[d]
&
\cX  \ar[d]
&
\Ar(\cX)  \ar[l]^-{\ev_t}  \ar[d]
\\
\Ar(\cY)_{|\cX}  \ar[r]^-{\ev_s}
&
\cY 
&
\Ar(\cY)^{|\cX}         \ar[l]_-{\ev_t}  
}
\]
in which the functor $\id_\bullet\colon \cX = \Fun(\ast,\cX) \to \Fun(c_1,\cX) = \Ar(\cX)$ is pullback along the epimorphism $c_1\to \ast$.  
The asserted canonical factorizations follow.

Consider the objects $(\ast \xra{=} \ast)$ and $(\ast\xra{\lag s \rag} c_1)$ in the $\infty$-category $\Ar(\Cat)$ of arrows in $\Cat$.  
The self-enrichment of the $\infty$-category $\Cat$, induced from the fact that $\Cat$ is Cartesian closed, determines a $\Cat$-enrichment of the $\infty$-category $\Ar(\Cat)$.  
In particular, it makes sense to consider an adjunction in $\Ar(\Cat)$.  
Note that the functor $\ast \xra{\lag s\rag} c_1$ is a left adjoint in an adjunction (in $\Cat$) whose unit transformation is an equivalence.
It follows that $(\ast\xra{=}\ast) \xra{ ( = , \lag s \rag ) } (\ast \xra{\lag s \rag} c_1)$ is a left adjoint in an adjunction in $\Ar(\Cat)$ whose unit 2-cell is an equivalence.  
Therefore, for the object $(\cX\to \cY)$ in $\Ar(\Cat)$, the functor between ${\sf hom}$-$\infty$-categories implemented by precomposition by the right adjoint,
\[
{\sf hom}_{\Ar(\Cat)}\Bigl( (\ast \xra{=}\ast) , (\cX \to \cY) \Bigr)
\longrightarrow
{\sf hom}_{\Ar(\Cat)}\Bigl( (\ast \xra{ \lag s \rag } c_1) , (\cX \to \cY) \Bigr)
~,
\]
is a left adjoint in an adjunction whose counit transformation is an equivalence.  
Now, identify the domain of this functor as $\cX$, the codomain of this functor as $\Ar(\cY)^{|\cX}$, and the functor itself as the of the previous paragraph.  
We conclude that the functor $\cX\to \Ar(\cY)^{|\cX}$ is a fully-faithful left adjoint, as desired.
A dual argument verifies that the functor $\cX \to \Ar(\cY)_{|\cX}$ is a fully-faithful right adjoint.

We wish to show the functor $\ev_t\colon \Ar(\cY)^{|\cX}\to \cY$ is a coCartesian fibration; and that the functor $\ev_s\colon \Ar(\cY)_{|\cX} \to \cY$ is a Cartesian fibration.  
These two problems are logically equivalent, as implemented by taking opposites.
We are therefore reduced to establishing the first.  
Let $\cJ \xra{F} \cY$ be a functor.
The datum of a lift $\gamma\colon \cJ \to \cX$ along the given functor $\cX\to \cY$ is the datum of a diagram of $\infty$-categories:
\[
\xymatrix{
\cJ  \ar[rr]^-{\w{F}_s}  \ar[d]_-{s}
&&
\cX  \ar[d]
\\
\cJ \times c_1  \ar[rr]^-{\ov{F}}  
&&
\cY
\\
\cJ  \ar[u]^-{t}  \ar[urr]_-{F}
&&
.
}
\]
In the case $\cJ = c_1$ is a 1-cell, such a lift is a $\ev_t$-coCartesian morphism if and only if the functor $\w{F}_s$ in the above diagram factors through the epimorphism $c_1\to \ast$, in which case, $\ov{F}$ factors through the epimorphism $(c_1\times c_1)\underset{c_1\times \{s\}}\amalg \ast \xra{\simeq} [2]$.  
To show that $\ev_t$ is a coCartesian fibration, we must then find a filler in each diagram
\[
\xymatrix{
\{0\}  \ar[rr]  \ar[d]
&&
\cX  \ar[d]
\\
\{0<1\} \ar@(u,r)[rr]  \ar[r]
&
[2]  \ar@{-->}[r]
&
\cY
\\
\{1\} \ar[u]  \ar[r]
&
\{1<2\}  \ar[u]  \ar[ur]
&
.
}
\]
There is a unique such filler because the lower square is a pushout.  
This concludes the verification that $\ev_t$ is a coCartesian fibration.  

\end{proof}

\begin{lemma}\label{if.already}
Let $\cE\xra{\pi} \cK$ be a functor between $\infty$-categories.
\begin{enumerate}
\item 
The functor $\pi$ is a coCartesian fibration if and only if the functor $\cE\to \Ar(\cK)^{|\cE}$ has the following properties:
\begin{enumerate}

\item It is a right adjoint.

\item The unit of the resulting adjunction is carried by $\pi$ to an equivalence in $\cK$.

\end{enumerate}
Should the latter clause be true, the left adjoint in this adjunction carries $\pi$-coCartesian morphisms to $\pi$-coCartesian morphisms.

\item
The functor $\pi$ is a Cartesian fibration if and only if the functor $\cE\to \Ar(\cK)_{|\cE}$ has the following properties.
\begin{enumerate}

\item It is a left adjoint.

\item The counit of the resulting adjunction is carried by $\pi$ to an equivalence in $\cK$.

\end{enumerate}
Should the latter clause be true, the right adjoint in this adjunction carries $\pi$-Cartesian morphisms to $\pi$-Cartesian morphisms.  

\end{enumerate}

\end{lemma}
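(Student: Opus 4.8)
The plan is to deduce the proposition from the local criteria for (co)Cartesian and for exponentiable fibrations already established in this section. First I would discard assertion~(2): taking opposites and invoking Observation~\ref{cCart.ops}, together with the canonical identification $\Ar(\cK^{\op})\simeq \Ar(\cK)^{\op}$ --- under which $\ev_s$ and $\ev_t$ are interchanged, so that $\Ar(\cK^{\op})^{|\cE^{\op}}$ is identified with $(\Ar(\cK)_{|\cE})^{\op}$ --- and the fact that $(-)^{\op}$ exchanges left with right adjoints and units with counits, assertion~(2) for $\pi\colon \cE\to \cK$ becomes exactly assertion~(1) for $\pi^{\op}\colon \cE^{\op}\to \cK^{\op}$. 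So I would prove only~(1). Write $\cD:=\Ar(\cK)^{|\cE}=\cE\underset{\cK}\times\Ar(\cK)$ (pullback along $\ev_s$) and let $\iota\colon \cE\to \cD$ be the functor $e\mapsto (e,\id_{\pi e})$ of Lemma~\ref{candidate}, which that lemma shows is fully faithful and whose target functor $\ev_t\colon \cD\to \cK$ it shows is a coCartesian fibration, hence --- by Lemma~\ref{exp-examples} --- an exponentiable fibration. Two bookkeeping facts will drive the argument: (i) for each object $d=(e,f\colon \pi e\to y)$ of $\cD$, unwinding morphisms in $\cD$ yields a canonical equivalence of under-$\infty$-categories $\cE\underset{\cD}\times \cD^{d/}\simeq \cE^{e/}\underset{\cK^{\pi e/}}\times \cK^{y/}$ (with $\cK^{y/}\to \cK^{\pi e/}$ precomposition by $f$), under which the $\ev_t$-component of the structure morphism corresponds to the $\cK^{y/}$-coordinate; and (ii) the restriction of $\iota$ to the fiber $\cD_{|y}$ of $\ev_t$ over an object $y$ is the canonical functor $\cE_{|y}\hookrightarrow \cE_{/y}=\cE\underset{\cK}\times \cK_{/y}$.

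For the forward implication, suppose $\pi$ is a coCartesian fibration. By Lemma~\ref{lemma.reformulation}(1) each lifting problem $(e,f\colon \pi e\to y)$ has a $\pi$-coCartesian filler that is an initial object of $\cE^{e/}\underset{\cK^{\pi e/}}\times \cK^{y/}\simeq \cE\underset{\cD}\times \cD^{d/}$, so this under-$\infty$-category has an initial object for every $d\in \cD$; Lemma~\ref{adjointcriterion}(1) then shows $\iota$ is a right adjoint, say $L\dashv \iota$. Transporting the initial object across the equivalence~(i), the unit $\eta_d$ is the morphism of $\cD$ whose $\cE$-component is the $\pi$-coCartesian $e\to f_!e$ and whose $\ev_t$-component is $\id_y$; in particular $\ev_t(\eta_d)$ is an equivalence. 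For the last clause I would identify, as in the proof of Lemma~\ref{candidate} and in parallel with Example~\ref{ar.cCart}, the $\ev_t$-coCartesian morphisms of $\cD$ as those of the form $(e,f\colon \pi e\to y)\to (e,hf\colon \pi e\to y')$ with $\cE$-component $\id_e$, and observe that $L$ sends such a morphism to one of the form $f_!e\to (hf)_!e\simeq h_!(f_!e)$, which is $\pi$-coCartesian.

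For the converse, suppose $\iota$ is a right adjoint with left adjoint $L$ and the unit $\eta\colon \id_\cD\to \iota L$ is carried by $\ev_t$ to equivalences; I would check the two hypotheses of Lemma~\ref{just.over.2}(1)(b). The hypothesis on $\eta$ says $\ev_t\simeq \ev_t\iota L=\pi L$ as functors $\cD\to \cK$, so $L$ carries the fiber $\cD_{|y}=\cE_{/y}$ into $\cE_{|y}$, giving $L_y\colon \cE_{/y}\to \cE_{|y}$; since by~(ii) $\iota$ restricts on this fiber to $\cE_{|y}\hookrightarrow \cE_{/y}$, and since (using the hypothesis on $\eta$, and fully-faithfulness of $\iota$ for the counit) both the unit and counit of $L\dashv \iota$ restrict along these fibers, one gets an adjunction exhibiting $\cE_{|y}\hookrightarrow \cE_{/y}$ as a right adjoint for every $y$; by Lemma~\ref{locallycocart}(1) $\pi$ is locally coCartesian. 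For exponentiability, the equivalence $\pi L\simeq \ev_t$ makes $L$ a functor over $\cK$ up to equivalence, $\iota$ is one on the nose, and the counit $L\iota\to \id_\cE$ is an equivalence (as $\iota$ is fully faithful); hence $\cE\to \cK$ is a retract of $\cD=\Ar(\cK)^{|\cE}\to \cK$ in $\Cat_{/\cK}$. Exponentiable fibrations are closed under such retracts: by Lemma~\ref{exp-char}(3) exponentiability is the property that for every $[2]\to \cK$ a certain iterated-pullback square is a pushout, base change along $[2]\to \cK$ carries the retract datum to a retract of this square, and a retract of a pushout square is a pushout square. So $\pi$ is exponentiable, and Lemma~\ref{just.over.2}(1) then gives that $\pi$ is a coCartesian fibration.

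The step I expect to be the main obstacle is the bookkeeping fact~(i) and its compatibilities: setting up the equivalence $\cE\underset{\cD}\times \cD^{d/}\simeq \cE^{e/}\underset{\cK^{\pi e/}}\times \cK^{y/}$ cleanly, checking it matches the $\ev_t$-component with the $\cK^{y/}$-coordinate so that the unit hypothesis carries its intended meaning, and verifying that the unit and counit of $L\dashv \iota$ genuinely restrict along the fibers of $\ev_t$. These are routine in spirit but need care to carry out in the chosen model for $\infty$-categories.
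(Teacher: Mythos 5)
Your proposal is correct, and your forward direction is essentially the paper's argument; where you genuinely diverge is the converse. The paper obtains both directions at once: it observes that for $\gamma=(e,\,f\colon \pi e\to y)$ the undercategory $\cE\underset{\Ar(\cK)^{|\cE}}\times\bigl(\Ar(\cK)^{|\cE}\bigr)^{\gamma/}$ is canonically identified with $\cE^{e/}\underset{\cK^{\pi e/}}\times\cK^{y/}$ (your bookkeeping fact (i)), so that, via Lemma~\ref{adjointcriterion}, the condition ``$\cE\to\Ar(\cK)^{|\cE}$ is a right adjoint whose unit is carried to equivalences'' is literally a restatement of the coCartesian criterion of Lemma~\ref{lemma.reformulation}(1b); neither exponentiability nor Proposition~\ref{just.over.2} enters. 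Your converse instead factors through Proposition~\ref{just.over.2}(1): you extract fiberwise adjunctions $\cE_{|y}\rightleftarrows\cE_{/y}$ from the unit condition (hence locally coCartesian by Lemma~\ref{locallycocart}), and you get exponentiability by exhibiting $\cE\to\cK$ as a retract over $\cK$ of the coCartesian, hence exponentiable, fibration $\Ar(\cK)^{|\cE}\xra{\ev_t}\cK$ (Lemmas~\ref{candidate} and~\ref{exp-examples}) and noting that the pushout criterion of Lemma~\ref{exp-char}(3) is closed under retracts because equivalences are. That route is valid --- the triangle identity is what makes your retract live over $\cK$, and the fiberwise restriction of the adjunction is the routine relative-adjunction point you flag --- but it is longer and uses heavier inputs; what it buys is an explicit closure-under-retracts observation for exponentiable fibrations, and it sidesteps the small homotopy-invariance point (an initial object of the fiber product lying over an equivalence is equivalent to an honest filler) that is implicit in running the paper's chain of equivalences backwards. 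Your verification of the final clause, that the left adjoint sends $\ev_t$-coCartesian morphisms (those with invertible $\cE$-component) to $\pi$-coCartesian morphisms, is also spelled out more fully than in the paper, which leaves it implicit in the description of the unit.
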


\begin{proof}
The two assertions imply one another, as implemented by taking opposites.
We are therefore reduced to proving the first assertion.

From Lemma~\ref{lemma.reformulation}(1b), $\pi$ is a coCartesian fibration if and only if, for each object $\gamma\in \Ar(\cK)^{|\cE}$ defining a diagram
\[
\xymatrix{
\{s\}  \ar[rr]^-{\lag e_s\rag}  \ar[d]
&&
\cE  \ar[d]^-{\pi}
\\
c_1 \ar[rr]_-{\lag x_s \xra{f} x_t\rag}   \ar@{-->}[urr]^-{\lag e_s\to f_!e_s\rag}
&&
\cK,
}
\]
there exists a filler that is initial when regarded as an object in the fiber product $\infty$-category $\cE^{e_s/}\underset{\cK^{x_s/}}\times \cK^{x_t/}$.  
Such a filler is, in particular, the datum of an object in the $\infty$-undercategory $\cE^{\gamma/}:=\cE\underset{\Ar(\cK)^{|\cE}}\times (\Ar(\cK)^{|\cE})^{\gamma/}$. 
In this way, we see that $\pi$ is a coCartesian fibration if and only if the canonical fully-faithful functor $\cE\to \Ar(\cK)^{|\cE}$ is a right adjoint and there is an identification of the value of $\pi$ on the unit 2-cell as a degenerate 2-cell.

\end{proof}

With Lemma~\ref{if.already}, Lemma~\ref{secondhalf-locallycocart}(1a) has the following generalization.
\begin{cor}\label{free.fib.adj}
Let $\cE \xra{\pi} \cK$ be a functor between $\infty$-categories.
\begin{enumerate}

\item
The following conditions on the functor $\pi$ are equivalent.
\begin{enumerate}
\item 
The functor $\pi$ is a coCartesian fibration.

\item 
For each $\infty$-category $\cJ \to \cK$ over $\cK$, the canonical functor between $\infty$-categories over $\cJ$
\[
\cE_{|\cJ}\longrightarrow  \Ar(\cK)^{|\cE}_{|\cJ}
\]
is a fully-faithful right adjoint.  
Furthermore, for each functor $\cJ\to \cJ'\to \cK$ between $\infty$-categories over $\cK$, the a priori lax commutative diagram involving left adjoints to the above
\[
\xymatrix{
\cE_{|\cJ} \ar[d]
&&
\Ar(\cK)^{|\cE}_{|\cJ} \ar[d] \ar[ll]_-{\sf l.adj}
\\
\cE_{|\cJ'} 
&&
\Ar(\cK)^{|\cE}_{|\cJ'} \ar[ll]_-{\sf l.adj}
}
\]
in fact commutes.

\item 
For each $\infty$-category $\cJ \to \cK$ over $\cK$, the canonical functor between $\infty$-categories of sections
\[
\Fun_{/\cK}(\cJ, \cE)\longrightarrow  \Fun_{/\cK}\bigl(\cJ, \Ar(\cK)^{|\cE}\bigr)
\]
is a fully-faithful right adjoint.  
Furthermore, for each functor $\cJ\to \cJ'\to \cK$ between $\infty$-categories over $\cK$, the a priori lax commutative diagram involving left adjoints
\[
\xymatrix{
\Fun_{/\cK}(\cJ', \cE) \ar[d]
&&
\Fun_{/\cK}\bigl(\cJ', \Ar(\cK)^{|\cE}\bigr) \ar[d] \ar[ll]_-{\sf l.adj}
\\
\Fun_{/\cK}(\cJ, \cE)
&&
\Fun_{/\cK}\bigl(\cJ, \Ar(\cK)^{|\cE}\bigr) \ar[ll]_-{\sf l.adj}
}
\]
in fact commutes.

\end{enumerate}

\item
The following conditions on the functor $\pi$ are equivalent.
\begin{enumerate}
\item 
The functor $\pi$ is a Cartesian fibration.

\item 
For each $\infty$-category $\cJ \to \cK$ over $\cK$, the canonical functor between $\infty$-categories over $\cJ$
\[
\cE_{|\cJ}\longrightarrow  \Ar(\cK)_{|\cE}^{|\cJ}
\]
is a fully-faithful left adjoint.  
Furthermore, for each functor $\cJ\to \cJ'\to \cK$ between $\infty$-categories over $\cK$, the a priori lax commutative diagram involving right adjoints to the above,
\[
\xymatrix{
\cE_{|\cJ} \ar[d]
&&
\Ar(\cK)^{|\cE}_{|\cJ} \ar[d] \ar[ll]_-{\sf r.adj}
\\
\cE_{|\cJ'} 
&&
\Ar(\cK)^{|\cE}_{|\cJ'} \ar[ll]_-{\sf r.adj}
}
\]
in fact commutes.

\item 
For each $\infty$-category $\cJ \to \cK$ over $\cK$, the canonical functor between $\infty$-categories of sections
\[
\Fun_{/\cK}(\cJ, \cE)\longrightarrow  \Fun_{/\cK}\bigl(\cJ, \Ar(\cK)_{|\cE}\bigr)
\]
is a fully-faithful left adjoint.  
Furthermore, for each functor $\cJ\to \cJ'\to \cK$ between $\infty$-categories over $\cK$, the a priori lax commutative diagram involving right adjoints
\[
\xymatrix{
\Fun_{/\cK}(\cJ', \cE) \ar[d]
&&
\Fun_{/\cK}\bigl(\cJ', \Ar(\cK)^{|\cE}\bigr) \ar[d] \ar[ll]_-{\sf r.adj}
\\
\Fun_{/\cK}(\cJ, \cE)
&&
\Fun_{/\cK}\bigl(\cJ, \Ar(\cK)^{|\cE}\bigr) \ar[ll]_-{\sf r.adj}
}
\]
in fact commutes.

\end{enumerate}

\end{enumerate}

\end{cor}

\begin{proof}
Assertion~(1) implies assertion~(2), as implemented by taking opposites.
We are therefore reduced to proving assertion~(1).

The implications (a)$\iff$(b) are directly implied by the equivalence of Lemma~\ref{if.already}(1).  
For each $\infty$-category $\cJ$, an adjunction $\cC\leftrightarrows \cD$ determines an adjunction $\Fun(\cJ,\cC)\rightleftarrows \Fun(\cJ,\cD)$ between $\infty$-categories of functors.
The implication (b)$\implies$(c) follows.
We now establish the implication (c)$\implies$(a).
Through Lemma~\ref{secondhalf-locallycocart}(1a), the case $\cJ \simeq \ast$ gives that (c) implies $\pi$ is a locally coCartesian fibration.
The case that $\cJ\to \cJ'$ is $\{s\}\to c_1$ implies the composition of two $\pi$-locally coCartesian morphisms is a $\pi$-locally coCartesian morphism.
Proposition~\ref{just.over.2}(f) then gives that $\pi$ is in fact a coCartesian fibration.  
\end{proof}

\begin{cor}\label{pre.adj}
Let $\cE\xra{\pi} \cK$ be a functor between $\infty$-categories.
\begin{enumerate}
\item 
For each coCartesian fibration $\cZ\to \cK$, the functor
\[
\Fun_{/\cK}\bigl( \Ar(\cK)^{|\cE},\cZ\bigr) \longrightarrow \Fun_{/\cK}(\cE,\cZ) ~,
\]
which is restriction along the functor $\cE\to \Ar(\cK)^{|\cE}$ over $\cK$, restricts as an equivalence 
\[
\Fun_{/\cK}^{\sf cCart}\bigl( \Ar(\cK)^{|\cE},\cZ\bigr)\xra{~\simeq~} \Fun_{/\cK}(\cE,\cZ)
\]
from the full $\infty$-subcategory consisting of those functors over $\cK$ that carry coCartesian morphisms to coCartesian morphisms.

\item 
For each Cartesian fibration $\cZ\to \cK$, the functor
\[
\Fun_{/\cK}\bigl( \Ar(\cK)_{|\cE},\cZ\bigr) \longrightarrow \Fun_{/\cK}(\cE,\cZ) ~,
\]
which is restriction along the functor $\cE\to \Ar(\cK)_{|\cE}$ over $\cK$, restricts as an equivalence 
\[
\Fun_{/\cK}^{\sf Cart}\bigl( \Ar(\cK)_{|\cE},\cZ\bigr)\xra{~\simeq~} \Fun_{/\cK}(\cE,\cZ)
\]
from the full $\infty$-subcategory consisting of those functors over $\cK$ that carry Cartesian morphisms to Cartesian morphisms.

\end{enumerate}

\end{cor}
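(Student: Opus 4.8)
The plan is to identify this with the universal property of the free coCartesian fibration $\Ar(\cK)^{|\cE}\to\cK$ on $\cE\to\cK$, and to produce an explicit inverse built from coCartesian transport. First I would reduce assertion~(2) to assertion~(1): replacing $(\cE\to\cK,\cZ\to\cK)$ by $(\cE^{\op}\to\cK^{\op},\cZ^{\op}\to\cK^{\op})$, using Observation~\ref{cCart.ops}, the canonical identification $(\Ar(\cK)_{|\cE})^{\op}\simeq\Ar(\cK^{\op})^{|\cE^{\op}}$ of $\infty$-categories over $\cK^{\op}$, and the identity $\Fun(-,-)^{\op}\simeq\Fun((-)^{\op},(-)^{\op})$ — which exchanges ``carries Cartesian morphisms to Cartesian morphisms'' with ``carries coCartesian morphisms to coCartesian morphisms'' — turns~(2) into~(1). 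So fix $\cE\to\cK$ and a coCartesian fibration $\cZ\to\cK$, and write $\cF:=\Ar(\cK)^{|\cE}$. By Lemma~\ref{candidate}, $\ev_t\colon\cF\to\cK$ is a coCartesian fibration, the canonical functor $j_\cE\colon\cE\hookrightarrow\cF$ is fully faithful and over $\cK$, and its right adjoint is the projection $q_\cE\colon\cF\to\cE$, $(e,\alpha)\mapsto e$; moreover a morphism of $\cF$ is $\ev_t$-coCartesian if and only if $q_\cE$ sends it to an equivalence of $\cE$ (this is the identification of coCartesian morphisms in the proof of Lemma~\ref{candidate}). Restriction along $j_\cE$ carries $\Fun_{/\cK}(\cF,\cZ)$ into $\Fun_{/\cK}(\cE,\cZ)$, and we must show that $j_\cE^{\ast}\colon\Fun_{/\cK}^{\sf cCart}(\cF,\cZ)\to\Fun_{/\cK}(\cE,\cZ)$ is an equivalence.

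Next I would build the candidate inverse. Since $\cZ\to\cK$ is a coCartesian fibration, Lemma~\ref{if.already}(1) applied to $\cZ\to\cK$ gives that $j_\cZ\colon\cZ\hookrightarrow\Ar(\cK)^{|\cZ}$ is a fully faithful right adjoint whose left adjoint $L_\cZ\colon\Ar(\cK)^{|\cZ}\to\cZ$ is again a functor over $\cK$ (clause (b) of that lemma), carries coCartesian morphisms to coCartesian morphisms, satisfies $L_\cZ j_\cZ\simeq\id_\cZ$, and is computed by $L_\cZ(z,\beta\colon\pi z\to k)\simeq\beta_{!}z$. The construction $\cE\mapsto\Ar(\cK)^{|\cE}$ is functorial, so a functor $\psi\colon\cE\to\cZ$ over $\cK$ induces $\Ar(\cK)^{|\psi}\colon\cF\to\Ar(\cK)^{|\cZ}$ over $\cK$, with $\Ar(\cK)^{|\psi}\circ j_\cE\simeq j_\cZ\circ\psi$; and $\Ar(\cK)^{|\psi}$ carries coCartesian morphisms to coCartesian morphisms, because by the previous paragraph a coCartesian morphism of $\cF$ has underlying morphism in $\cE$ an equivalence, which $\psi$ preserves. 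Define $\Psi\colon\Fun_{/\cK}(\cE,\cZ)\to\Fun_{/\cK}^{\sf cCart}(\cF,\cZ)$ by $\Psi(\psi):=L_\cZ\circ\Ar(\cK)^{|\psi}$; on an object $(e,\alpha\colon\pi e\to k)$ it evaluates to the coCartesian pushforward $\alpha_{!}\psi(e)$. Then $j_\cE^{\ast}\Psi(\psi)=L_\cZ\circ\Ar(\cK)^{|\psi}\circ j_\cE\simeq L_\cZ\circ j_\cZ\circ\psi\simeq\psi$, naturally in $\psi$, so $j_\cE^{\ast}\circ\Psi\simeq\id$.

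It then remains to identify $\Psi\circ j_\cE^{\ast}$ with the identity on $\Fun_{/\cK}^{\sf cCart}(\cF,\cZ)$. Here I would use the counit $\epsilon\colon j_\cE\circ q_\cE\Rightarrow\id_\cF$ of the adjunction $j_\cE\dashv q_\cE$; its component at $(e,\alpha)$ is precisely the $\ev_t$-coCartesian morphism $j_\cE(e)=(e,\id_{\pi e})\to(e,\alpha)$ lying over $\alpha$. For $\Phi\in\Fun_{/\cK}^{\sf cCart}(\cF,\cZ)$, whiskering $\Phi$ with $\epsilon$ yields, at each object $(e,\alpha)$, a $\cZ$-coCartesian morphism $\Phi j_\cE(e)\to\Phi(e,\alpha)$ over $\alpha$; comparing it with the coCartesian lift $\Phi j_\cE(e)\to\alpha_{!}\Phi j_\cE(e)=\Psi(j_\cE^{\ast}\Phi)(e,\alpha)$ produces a canonical equivalence $\Psi(j_\cE^{\ast}\Phi)\xra{\ \simeq\ }\Phi$; assembling these over all of $\cF$ gives the required natural equivalence (a natural transformation being an equivalence exactly when it is one at each object). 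Hence $j_\cE^{\ast}$ and $\Psi$ are mutually inverse equivalences, which proves~(1) and, by the opposite reduction, also~(2).

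The step I expect to be the main obstacle is the last one: turning the pointwise coCartesian-transport comparisons into a coherent natural equivalence of functors — equivalently, proving that $j_\cE^{\ast}$ is fully faithful, i.e., that a natural transformation between coCartesian-morphism-preserving functors out of $\cF$ over $\cK$ is determined by its restriction along $j_\cE$ (again via the coCartesian morphisms $j_\cE(e)\to(e,\alpha)$). This requires care with the coherences packaged in Lemma~\ref{if.already} and with the functoriality of $\Ar(\cK)^{|-}$; it is where any genuine work of the proof lies.
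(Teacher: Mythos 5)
Your construction coincides with the paper's proof of this corollary: the candidate inverse $\Psi(\psi)=L_\cZ\circ\Ar(\cK)^{|\psi}$ (functoriality of $\Ar(\cK)^{|-}$ followed by the left adjoint of Lemma~\ref{if.already}), the observation that it lands in coCartesian-morphism-preserving functors, and the verification $j_\cE^{\ast}\circ\Psi\simeq\id$ via $\Ar(\cK)^{|\psi}\circ j_\cE\simeq j_\cZ\circ\psi$ and $L_\cZ\circ j_\cZ\simeq\id_\cZ$ are exactly the first half of the paper's argument, with the same inputs (Lemma~\ref{candidate}, Lemma~\ref{if.already}), and the reduction of (2) to (1) by opposites is also the paper's.

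The step you flag as the main obstacle is, as written, a genuine gap: you produce, for each $(e,\alpha)$, a comparison equivalence $\alpha_{!}\Phi j_\cE(e)\to\Phi(e,\alpha)$, but your parenthetical ``a natural transformation is an equivalence exactly when it is one at each object'' is only available once a natural transformation has been constructed, and an unorganized family of objectwise equivalences $\Psi(j_\cE^{\ast}\Phi)\simeq\Phi$ would not by itself give full faithfulness of $j_\cE^{\ast}$ (you need $\Psi\circ j_\cE^{\ast}\simeq\id$ as functors, i.e.\ naturally in $\Phi$ as well). The paper closes this by reversing your order of operations: for \emph{every} $G\in\Fun_{/\cK}\bigl(\Ar(\cK)^{|\cE},\cZ\bigr)$, it constructs a canonical 2-cell $L_\cZ\circ\Ar(\cK)^{|G j_\cE}\Rightarrow G$ from the universal property of coCartesian morphisms — concretely, apply $G$ to the counit of $j_\cE\dashv q_\cE$, regard the result as a natural transformation $\Ar(\cK)^{|G j_\cE}\Rightarrow j_\cZ\circ G$ of functors to $\Ar(\cK)^{|\cZ}$ over $\cK$, and postcompose with $L_\cZ$, using $L_\cZ j_\cZ\simeq\id_\cZ$ — and only then checks invertibility: the component at $(e,\alpha)$ is precisely your comparison map, so the 2-cell is invertible exactly when $G$ carries coCartesian morphisms (in particular the counit components, which are coCartesian in $\Ar(\cK)^{|\cE}$) to coCartesian morphisms. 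Since this 2-cell is built by whiskering and postcomposition, it is functorial in $G$, which yields the required equivalence of functors $\Psi\circ j_\cE^{\ast}\simeq\id$ on $\Fun^{\sf cCart}_{/\cK}\bigl(\Ar(\cK)^{|\cE},\cZ\bigr)$. With that construction supplied, your argument is complete and is the paper's.
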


\begin{proof}
The two assertions imply one another, as implemented by taking opposites.
We are therefore reduced to proving assertion~(1).

Note the evident functoriality,  $\Ar(\cK)^{|-}\colon \Cat_{/\cK} \to \Cat_{/\cK}$.  
Pulling from the proof of Lemma~\ref{candidate} where, for each $\cU\to \cK$, the coCartesian morphisms of $\ev_t\colon \Ar(\cK)^{|\cU} \to \cK$ are identified, this functor evidently factors
\[
\Ar(\cK)^{|-}\colon \Cat_{/\cK} \longrightarrow \cCart_{\cK}~.  
\]
This functor determines a functor
\[
\Ar(\cK)^{|-}\colon \Fun_{/\cK}(\cE,\cZ) 
\longrightarrow 
\Fun_{/\cK}^{\sf cCart}\bigl(\Ar(\cK)^{|\cE},\Ar(\cK)^{|\cZ}\bigr)~.  
\]
Now, fix a coCartesian fibration $\cZ\xra{\pi'}\cK$.
From the Definition~\ref{def.coCart} of a coCartesian morphism, a $\pi'$-coCartesian morphism is an equivalence whenever $\pi'$ carries it to an equivalence in $\cK$.
From the description of the left adjoint in Lemma~\ref{if.already}, for each functor $\cE\to \cZ$ over $\cK$, there is a canonically commutative diagram of $\infty$-categories over $\cK$:
\[
\xymatrix{
\cE  \ar[r]^-{F}  \ar[d]
&
\cZ  
\\
\Ar(\cK)^{|\cE}  \ar[r]^-{\Ar(\cK)^{|F}}
&
\Ar(\cK)^{|\cZ} .     \ar[u]_-{\rm left \ adjoint}
}
\]
It follows that the diagram of $\infty$-categories
\[
\xymatrix{
\Fun_{/\cK}(\cE,\cZ) \ar[rr]^-{=}  \ar[d]_-{\Ar(\cK)^{|-}}
&&
\Fun_{/\cK}(\cE,\cZ) 
\\
\Fun_{/\cK}^{\sf cCart}\bigl(\Ar(\cK)^{|\cE},\Ar(\cK)^{|\cZ}\bigr)   \ar[rr]^-{\rm left~adjoint}
&&
\Fun_{/\cK}^{\sf cCart}\bigl(\Ar(\cK)^{|\cE},\cZ\bigr)  \ar[u]_-{\rm restriction}
}
\]
commutes, 
in which the bottom horizontal functor is postcomposition with the left adjoint of Lemma~\ref{if.already}.  
We conclude that the functor $\Fun_{/\cK}^{\sf cCart}\bigl( \Ar(\cK)^{|\cE},\cZ\bigr)\to  \Fun_{/\cK}(\cE,\cZ)$ under scrutiny is a retraction.

On the other hand, from the universal property of $\pi'$-coCartesian morphisms, for each functor $\Ar(\cK)^{|\cE} \xra{G} \cZ$ over $\cK$, there is a canonical 2-cell witnessing the lax commutative diagram of $\infty$-categories over $\cK$:
\[
\xymatrix{
\Ar(\cK)^{|\cE}  \ar@(u,u)[rr]^-{G}  \ar[dr]_-{\Ar(\cK)^{|G_{|\cE}}}
&
\Uparrow
&
\cZ
\\
&
\Ar(\cK)^{|\cZ}  \ar[ur]_-{\rm left~adjoint}
&
.
}
\]
For each object $(e,\pi e\xra{f} k)\in \Ar(\cK)^{|\cE}$, this 2-cell specializes as the canonical morphism from the coCartesian pushforward
\[
f_!\bigl( 
G(e,\id_{\pi e})
\bigr)
\longrightarrow
G(e,f)
\]
in the fiber $\infty$-category $(\pi')^{-1}(\pi e)$.
So this 2-cell is invertible if and only if $G$ carries $\ev_t$-coCartesian morphisms to $\pi'$-coCartesian morphisms.
It follows that the diagram of $\infty$-categories
\[
\xymatrix{
\Fun_{/\cK}(\cE,\cZ)   \ar[d]_-{\Ar(\cK)^{|-}}
&&
\Fun_{/\cK}^{\sf cCart}\bigl(\Ar(\cK)^{|\cE},\cZ\bigr)  \ar[ll]^-{\rm restriction}
\\
\Fun_{/\cK}^{\sf cCart}\bigl(\Ar(\cK)^{|\cE},\Ar(\cK)^{|\cZ}\bigr)   \ar[rr]^-{\rm left~adjoint}
&&
\Fun_{/\cK}^{\sf cCart}\bigl(\Ar(\cK)^{|\cE},\cZ\bigr)  \ar[u]_-{=}
}
\]
commutes.
We conclude that the section of the functor $\Fun_{/\cK}^{\sf cCart}\bigl( \Ar(\cK)^{|\cE},\cZ\bigr)\to  \Fun_{/\cK}(\cE,\cZ)$ constructed in the previous paragraph is an inverse.  
This establishes the desired result.

\end{proof}

Corollary~\ref{pre.adj} has the following immediate consequence.  
\begin{theorem}\label{cCart.adjoint}
For each $\infty$-category $\cK$, the monomorphisms
\[
\cCart_{\cK}~ \hookrightarrow~\Cat_{/\cK}
\qquad\text{ and }\qquad
\Cart_{\cK}~\hookrightarrow~\Cat_{/\cK}
\]
are each right adjoints; their left adjoints respectively evaluate as
\[
(-)^{\widehat{~}}_{\sf cCart}\colon \Cat_{/\cK}\longrightarrow \cCart_{\cK}~,\qquad 
(\cE\to \cK)\mapsto \bigl(\Ar(\cK)^{|\cE} \xra{\ev_t}\cK\bigr)
\]
and
\[
(-)^{\widehat{~}}_{\sf Cart}\colon\Cat_{/\cK}\longrightarrow \Cart_{\cK}~,\qquad 
(\cE\to \cK)\mapsto \bigl(\Ar(\cK)_{|\cE} \xra{\ev_s}\cK\bigr)~.
\]

\end{theorem}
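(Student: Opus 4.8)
The plan is to read the statement off from Corollary~\ref{pre.adj}, combined with the pointwise criterion for the existence of an adjoint functor, treating the coCartesian case first and then passing to opposites for the Cartesian case.

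First I would promote the assignment $\cE\mapsto\Ar(\cK)^{|\cE}$ to an honest functor $\Cat_{/\cK}\to\cCart_\cK$. Composition of arrows makes $\Ar(\cK)^{|-}$ a functor $\Cat_{/\cK}\to\Cat_{/\cK}$; its values are coCartesian fibrations over $\cK$ by Lemma~\ref{candidate}, and, by the explicit description of the $\ev_t$-coCartesian morphisms of $\Ar(\cK)^{|\cU}\to\cK$ from the proof of Lemma~\ref{candidate} (already invoked in the proof of Corollary~\ref{pre.adj}), it carries each morphism over $\cK$ to a functor preserving coCartesian morphisms. Hence it factors as $(-)^{\widehat{~}}_{\sf cCart}\colon\Cat_{/\cK}\to\cCart_\cK$. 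The canonical functor $\cE\to\Ar(\cK)^{|\cE}$ over $\cK$ of Lemma~\ref{candidate} is natural in $\cE$, so it assembles into a natural transformation $\eta\colon\id_{\Cat_{/\cK}}\to R\circ(-)^{\widehat{~}}_{\sf cCart}$, where $R\colon\cCart_\cK\hookrightarrow\Cat_{/\cK}$ denotes the inclusion; this is the prospective unit.

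Next I would verify the pointwise condition: for every $\cE\in\Cat_{/\cK}$ and every coCartesian fibration $\cZ\to\cK$, the composite
\[
\cCart_\cK\bigl(\Ar(\cK)^{|\cE},\cZ\bigr)\hookrightarrow\Cat_{/\cK}\bigl(\Ar(\cK)^{|\cE},\cZ\bigr)\xra{~\eta_\cE^\ast~}\Cat_{/\cK}(\cE,\cZ)
\]
is an equivalence of spaces. Since $\cCart_\cK$ is the non-full subcategory of $\Cat_{/\cK}$ on coCartesian fibrations and coCartesian-preserving functors, for objects $X,\cZ$ of $\cCart_\cK$ the mapping space $\cCart_\cK(X,\cZ)$ is the maximal $\infty$-subgroupoid of $\Fun^{\sf cCart}_{/\cK}(X,\cZ)$, and the displayed composite is obtained by applying $(-)^\sim$ to the restriction functor $\Fun^{\sf cCart}_{/\cK}(\Ar(\cK)^{|\cE},\cZ)\to\Fun_{/\cK}(\cE,\cZ)$, which is an equivalence of $\infty$-categories by Corollary~\ref{pre.adj}(1). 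Naturality in $\cZ$ is immediate and naturality of $\eta$ supplies naturality in $\cE$; by the pointwise criterion for adjoint functors this exhibits $R$ as a right adjoint with left adjoint $(-)^{\widehat{~}}_{\sf cCart}$, which evaluates on $\cE\to\cK$ as $\Ar(\cK)^{|\cE}\xra{\ev_t}\cK$ by construction. The Cartesian case then follows by passing to opposite $\infty$-categories: by Observation~\ref{cCart.ops}, $\cE\to\cK$ is a Cartesian fibration if and only if $\cE^{\op}\to\cK^{\op}$ is a coCartesian fibration, and under the equivalences $\Cat_{/\cK}\simeq\Cat_{/\cK^{\op}}$ and $\Cart_\cK\simeq\cCart_{\cK^{\op}}$ induced by $(-)^{\op}$ the construction $\Ar(\cK)_{|\cE}$ together with its map $\cE\to\Ar(\cK)_{|\cE}$ corresponds to $\Ar(\cK^{\op})^{|\cE^{\op}}$ together with its map $\cE^{\op}\to\Ar(\cK^{\op})^{|\cE^{\op}}$, compatibly with $\ev_s$ and $\ev_t$; so the coCartesian statement over $\cK^{\op}$ transports to the asserted one over $\cK$.

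There are no new ideas needed beyond Corollary~\ref{pre.adj}; the only genuine work is bookkeeping: confirming that $(-)^{\widehat{~}}_{\sf cCart}$ is a bona fide functor $\Cat_{/\cK}\to\cCart_\cK$ and not merely an object-level assignment, and that the equivalence of Corollary~\ref{pre.adj}(1) is natural in $\cE$ in the way needed for the unit $\eta$ — both of which rest on the identification of the coCartesian morphisms of $\Ar(\cK)^{|\cU}\to\cK$ recalled above.
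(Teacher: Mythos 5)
Your proposal is correct and is essentially the paper's own argument: the paper derives Theorem~\ref{cCart.adjoint} as an immediate consequence of Corollary~\ref{pre.adj}, and your write-up simply makes explicit the implicit bookkeeping — functoriality of $\Ar(\cK)^{|-}$, naturality of the unit $\cE\to\Ar(\cK)^{|\cE}$, the identification $\cCart_\cK(X,\cZ)\simeq\bigl(\Fun^{\sf cCart}_{/\cK}(X,\cZ)\bigr)^\sim$, the pointwise adjoint criterion (Lemma~\ref{adjointcriterion}), and the passage to opposites for the Cartesian case. No gaps.
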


\begin{terminology}\label{def.cartesioning}
Let $\cE\xra{\pi}\cK$ be a functor between $\infty$-categories.
We refer to the values of the left adjoint $(\cE\to \cK)^{\widehat{~}}_{\sf cCart}$ as the \emph{coCartesian-replacement (of $\pi$)}.
We refer to the values of the left adjoint $(\cE\to \cK)^{\widehat{~}}_{\sf Cart}$ as the \emph{Cartesian-replacement (of $\pi$)}.

\end{terminology}

\subsection{Left fibrations and right fibrations}\label{sec.l.r.fib}
We show that left fibrations are coCartesian fibrations, and that right fibrations are Cartesian fibrations.
We characterize left/right fibrations in terms of exponentiable fibrations.

We first recall the notion of a left fibration and of a right fibration.
\begin{definition}\label{def.left.fib}
Let $\cE\xra{\pi} \cK$ be a functor between $\infty$-categories.
\begin{enumerate}
\item 
This functor $\pi$ is a \emph{left fibration} if, for each $\infty$-category $\cJ^{\tl} \to \cK$ over $\cK$, the restriction functor between $\infty$-categories of sections
\[
\Fun_{/\cK}(\cJ^{\tl},\cE) \longrightarrow  \Fun_{/\cK}(\ast,\cE)
\]
is an equivalence. 
The $\infty$-category of \emph{left fibrations (over $\cK$)} is the full $\infty$-subcategory
\[
\LFib_{\cK}~\subset~\Cat_{/\cK}
\]
consisting of the left fibrations.

\item 
This functor $\pi$ is a \emph{right fibration} if, for each $\infty$-category $\cJ^{\tr} \to \cK$ over $\cK$, the restriction functor between $\infty$-categories of sections
\[
\Fun_{/\cK}(\cJ^{\tr},\cE) \longrightarrow  \Fun_{/\cK}(\ast,\cE)
\]
is an equivalence.  
The $\infty$-category of \emph{left fibrations (over $\cK$)} is the full $\infty$-subcategory
\[
\RFib_{\cK}~\subset~\Cat_{/\cK}
\]
consisting of the right fibrations.

\end{enumerate}

\end{definition}

\begin{prop}\label{left.is.coCart}
Let $\cE\xra{\pi}\cK$ be a functor between $\infty$-categories.
\begin{enumerate}
\item 
The following conditions on $\pi$ are equivalent.  
\begin{enumerate}

\item $\pi$ is a left fibration.

\item $\pi$ is a conservative coCartesian fibration (in the sense of Definition~\ref{def.efib.cons}).

\item $\pi$ is a conservative locally coCartesian fibration.

\item For each morphism $c_1\to \cK$, the restriction functor $\Fun_{/\cK}(c_1,\cE)\to \cE_{|s}$ is an equivalence between $\infty$-categories.  

\item Each lift $c_1\to \cE$ of a morphism $c_1\to \cK$ is coCartesian with respect to the base change $\ev_s\colon \cE_{|c_1}\to c_1$.  

\item Every morphism $c_1\to \cE$ is $\pi$-coCartesian.

\end{enumerate}

\item 
The following conditions on $\pi$ are equivalent.  
\begin{enumerate}

\item $\pi$ is a right fibration.

\item $\pi$ is a conservative Cartesian fibration.

\item $\pi$ is a conservative locally Cartesian fibration.

\item For each morphism $c_1\to \cK$, the restriction functor $\ev_t\colon \Fun_{/\cK}(c_1,\cE)\to \cE_{|t}$ is an equivalence between $\infty$-categories.  

\item Each lift $c_1\to \cE$ of a morphism $c_1\to \cK$ is Cartesian with respect to the base change $\cE_{|c_1}\to c_1$.  

\item Every morphism $c_1\to \cE$ is $\pi$-Cartesian.

\end{enumerate}

\end{enumerate}

\end{prop}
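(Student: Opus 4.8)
The plan is to reduce part~(2) to part~(1) by passing to opposite $\infty$-categories. From Definition~\ref{def.left.fib}, using $(\cJ^{\tl})^{\op}\simeq(\cJ^{\op})^{\tr}$, a functor is a left fibration precisely when its opposite is a right fibration; Observation~\ref{cCart.ops} does the same for (locally) coCartesian versus (locally) Cartesian fibrations; conservativity is self-dual; and Definition~\ref{def.coCart} exchanges $\pi$-coCartesian morphisms with $\pi^{\op}$-Cartesian morphisms and the roles of $s$ and $t$. So it suffices to prove the six conditions in part~(1) are equivalent, which I would organize around the spine (a) $\Rightarrow$ (d) $\Rightarrow$ (c) $\Rightarrow$ (b) $\Rightarrow$ (a), attaching (e) and (f) to~(b).

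The implication (a) $\Rightarrow$ (d) is immediate: take $\cJ=\ast$ in Definition~\ref{def.left.fib}, so that $\cJ^{\tl}=c_1$ with cone point $s$, and the defining equivalence is verbatim~(d). For (d) $\Rightarrow$ (c), note $\Fun_{/\cK}(c_1,\cE)\simeq\Fun_{/c_1}(c_1,\cE_{|c_1})$, so~(d) says the restriction $\ev_s$ on sections is an equivalence, hence a right adjoint, whence $\pi$ is locally coCartesian by Lemma~\ref{locallycocart}(1); and specializing~(d) to a constant functor $c_1\to\ast\xrightarrow{\langle x\rangle}\cK$ identifies the map with $\ev_s\colon\Ar(\cE_{|x})\to\cE_{|x}$, which being an equivalence forces $\cE_{|x}$ to be an $\infty$-groupoid, so $\pi$ is conservative. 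For (c) $\Rightarrow$ (b), a conservative locally coCartesian fibration is coCartesian: apply Proposition~\ref{just.over.2}(1), for which it is enough to verify its condition~(f), namely that a canonical morphism $\ov{e}_2\to e_2$ in a fiber $\cE_{|2}$ is an equivalence — which is automatic because $\cE_{|2}$ is an $\infty$-groupoid.

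For (b) $\Leftrightarrow$ (f): if $\pi$ is conservative coCartesian, any morphism $e_s\to e_t$ over $f\colon x\to y$ factors as a $\pi$-coCartesian lift $e_s\to f_!e_s$ followed by a map $f_!e_s\to e_t$ in the $\infty$-groupoid $\cE_{|y}$, necessarily an equivalence, and a $\pi$-coCartesian morphism post-composed with a fiberwise equivalence is again $\pi$-coCartesian (clear from Definition~\ref{def.coCart}); conversely, given~(f), testing on constant morphisms shows each fiber $\cE_{|x}$ is an $\infty$-groupoid, and Proposition~\ref{just.over.2}(1) then upgrades $\pi$ to a coCartesian fibration. Condition~(e) is seen equivalent to~(f) by unwinding Definition~\ref{def.coCart} together with the cylinder description of coCartesian fibrations over $c_1$ (Theorem~\ref{st.un.cCart} and the cylinder example). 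Finally (b) $\Rightarrow$ (a): base change along a given $\cJ^{\tl}\to\cK$ keeps $\pi$ conservative coCartesian (Lemma~\ref{Cart.base.change}, conservativity being evidently preserved), and $\Fun_{/\cK}(\cJ^{\tl},\cE)\simeq\Fun_{/\cJ^{\tl}}(\cJ^{\tl},\cE_{|\cJ^{\tl}})$, so we may assume $\cK=\cJ^{\tl}$; by (b) $\Rightarrow$ (f) every section is a coCartesian section, and via the straightening equivalence (Theorem~\ref{st.un.cCart}) the fibration is the unstraightening of a functor $\cJ^{\tl}\to\Spaces$, whose coCartesian sections compute its limit; since $v$ is initial in $\cJ^{\tl}$, that limit is computed by evaluation at $v$, giving the desired equivalence.

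I expect the main obstacle to be (b) $\Rightarrow$ (a): passing from the $c_1$-local conditions to the lifting condition of Definition~\ref{def.left.fib} against \emph{all} left cones $\cJ^{\tl}$. That is the step where the straightening equivalence does essential work, rather than the adjunction-theoretic bookkeeping (Lemma~\ref{locallycocart}, Proposition~\ref{just.over.2}) that handles (c), (d), (e), (f).
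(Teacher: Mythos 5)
Your cycle (a)$\Rightarrow$(d)$\Rightarrow$(c)$\Rightarrow$(b)$\Rightarrow$(a) is sound, and for (d)$\Rightarrow$(c) and (c)$\Rightarrow$(b) you make exactly the paper's moves (Lemma~\ref{locallycocart}, the $\Ar(\cE_{|x})\to\cE_{|x}$ trick for conservativity, Proposition~\ref{just.over.2}(f) with groupoid fibers). Where you genuinely diverge is (b)$\Rightarrow$(a): you base change to $\cJ^{\tl}$, observe via (b)$\Rightarrow$(f) that every section is a coCartesian section, and then invoke straightening plus the fact that coCartesian sections of an unstraightening compute the limit of the classified functor, finishing by initiality of the cone point. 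Be aware that this last ingredient is \emph{not} contained in Theorem~\ref{st.un.cCart} as stated in the paper (which is only an equivalence of maximal $\infty$-groupoids); it is external input of HTT~3.3.3.2 type. The paper instead argues internally: it reduces along $\colim(\bDelta_{/\cJ}\to\Cat)\simeq\cJ$ and the colimit-preservation of $(-)^{\tl}$ to the case $\cJ\in\bDelta$, then inducts using the pushout $\star^{\tl}\underset{\star}\amalg\,\cI^{\tl}\simeq\cJ^{\tl}$ down to the case $\cJ^{\tl}=c_1$, where local coCartesian-ness plus conservativity finish. Your route is shorter and more conceptual; the paper's buys self-containedness, using straightening nowhere in this argument.

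The one step that would not survive as written is ``(e)$\Leftrightarrow$(f) by unwinding Definition~\ref{def.coCart} together with the cylinder description.'' The direction (f)$\Rightarrow$(e) is indeed formal (a $\pi$-coCartesian morphism stays coCartesian after base change, by pasting pullback squares as in Lemma~\ref{Cart.base.change}). But (e)$\Rightarrow$(f) is not definitional: being coCartesian for the base change $\cE_{|c_1}\to c_1$ is a local condition, and promoting it to the global $\pi$-coCartesian condition is precisely the locally-coCartesian-versus-coCartesian issue that Proposition~\ref{just.over.2} exists to settle --- over $\cK=[2]$, a morphism coCartesian for $\cE_{|\{0<1\}}\to\{0<1\}$ need not be $\pi$-coCartesian for a general functor. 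The paper never needs this direction: its chain is (a)$\Rightarrow$(f)$\Rightarrow$(e)$\Rightarrow$(c), and the step (e)$\Rightarrow$(c) extracts only conservativity and local coCartesian-ness from (e). The repair inside your architecture is cheap: keep (f)$\Rightarrow$(e) as the formal direction, and prove (e)$\Rightarrow$(c) directly (a morphism in a fiber is a lift over a constant morphism, and a base-change-coCartesian morphism over a constant morphism is an equivalence, so fibers are $\infty$-groupoids; local coCartesian-ness is then what (e) asserts), after which your established implications (c)$\Rightarrow$(b)$\Rightarrow$(f)$\Rightarrow$(e) close the loop. (As in the paper's own treatment of (e)$\Rightarrow$(c), one must read (e)/(f) as including existence of the relevant lifts; otherwise neither your appeal to Proposition~\ref{just.over.2} in (f)$\Rightarrow$(b) nor the paper's claim of local coCartesian-ness gets off the ground.)
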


\begin{proof}
The two assertions imply one another, as implemented by taking opposites.
We are therefore reduced to proving assertion~(1).

We use the logic: (c)$\implies$(d)$\implies$(c) and (a)$\implies$(f)$\implies$(e)$\implies$(c)$\implies$(b)$\implies$(a).

We now establish the implication (c)$\implies$(d).
Using Lemma~\ref{locallycocart}(c), the restriction functor $\ev_s\colon \Fun_{/\cK}(c_1,\cE) \to \cE_{|s}$ is a right adjoint.
Conservativity of the functor $\cE\xra{\pi}\cK$ implies both the domain and the codomain of $\ev_s$ are $\infty$-groupoids.
We conclude that this functor $\ev_s$ is an equivalence, as desired.

We now establish the implication (d)$\implies$(c).
Because equivalences are right adjoints, Lemma~\ref{locallycocart} gives that the functor $\cE\xra{\pi}\cK$ is locally coCartesian.  
Now let $c_1\to \ast \to \cK$ be a morphism that factors through the localization $c_1\to \ast$.
Identify the restriction functor $\ev_s$ as the functor $\Fun_{/\cK}(c_1,\cE) \simeq \Ar(\cE_{|\ast}) \xra{\ev_s}\cE_{|\ast}$.
In general, the functor $\ev_s\colon \Ar(\cE_{|\ast}) \to \cE_{|\ast}$ is a right adjoint, with left adjoint given selecting the equivalences in $\cE_{|\ast}$.  
The assumption that $\ev_s$ is an equivalence then implies the $\infty$-category $\cE_{|\ast}$ is an $\infty$-groupoid.
We conclude the desired conservativity of the functor $\cE\xra{\pi}\cK$.

We now establish the implication (a)$\implies$(f).
So suppose $\pi$ is a left fibration.
Let $c_1\xra{\lag e_s\to e_t\rag} \cE$ be a morphism.
Consider a solid diagram of $\infty$-categories
\[
\xymatrix{
\cJ  \ar[drrr]  \ar[ddr]   \ar@{-->}[dr]
&&&
\\
&
\cE^{e_t/}  \ar[rr]  \ar[d]
&&
\cE^{e_s/}  \ar[d]
\\
&
\cK^{\pi e_t/}  \ar[rr]
&&
\cK^{\pi e_s/}
}
\]
in which the inner square is the canonical one.  
We must show there is a unique filler.  
Denote the left cone $\ov{\cJ}:=\cJ^{\tl}$.
The above unique lifting property is equivalent to the existence of a unique filler in the diagram
\[
\xymatrix{
&&
\ast  \ar[dll]  \ar[d]^-{\lag e_s\rag}
\\
\cJ^{\tl} \ar[rr]  \ar[d]
&&
\cE  \ar[d]^-{\pi}
\\
\ov{\cJ}^{\tl}   \ar[rr]  \ar@{-->}[urr]
&&
\cK
\\
&&
\ast^{\tl}  \ar@(r,r)[uu]_-{\lag e_s \to e_t\rag}  \ar@(l,-)[ull]  \ar[u]^-{\lag \pi e_s \to \pi e_t\rag}
}
\]
Such a unique filler is implied by showing the top horizontal functor among $\infty$-categories of sections
\[
\xymatrix{
\Fun_{/\cK}(\ov{\cJ}^{\tl} , \cE)  \ar[rr]  \ar[dr]
&&
\Fun_{/\cK}(\cJ^{\tl} , \cE)   \ar[dl]
\\
&
\Fun_{/\cK}(\ast,\cE)
&
}
\]
is an equivalence.  
The assumption that $\pi$ is a left fibration gives that the two downward functors are equivalences.  
We conclude that the top horizontal functor is an equivalence, as desired.

The implication (f)$\implies$(e) is immediate from definitions.

We now establish the implication (e)$\implies$(c).
First, it is immediate that $\pi$ is a locally coCartesian fibration. 
From the Definition~\ref{def.coCart} of a $\pi$-coCartesian morphism, $\pi$-coCartesian morphisms that $\pi$ carries to equivalences are themselves equivalences.
Condition~(c) follows.

The implication (c)$\implies$(b) follows directly from Proposition~\ref{just.over.2}(f).  

We now establish the implication (b)$\implies$(a).
Let $\cJ^{\tl}\to \cK$ be a functor.
We must show that the restriction functor 
\begin{equation}\label{10}
\Fun_{/\cK}(\cJ^{\tl},\cE) \longrightarrow  \Fun_{/\cK}(\ast,\cE)
\end{equation}
is an equivalence between $\infty$-categories.  
The fully-faithfulness of the restricted Yoneda functor $\Cat\ra \psh(\bdelta)$ implies that the canonical functor $\colim\bigl(\bDelta_{/\cJ} \to \bDelta \to \Cat) \xra{\simeq} \cJ$ is an equivalence between $\infty$-categories (by, for instance, Lemma 3.5.9 of \cite{pkd}).  
Using that the functor $(-)^{\tl}\colon \Cat \to \Cat$ preserves colimit diagrams, we identify the functor~(\ref{10}) as the functor
\[
\limit \bigl((\bDelta_{/\cJ})^{\op} \to (\bDelta_{/\cK})^{\op} \to (\Cat_{/\cK})^{\op}\xra{\Fun_{/\cK}((-)^{\tl},\cE)} \Cat\bigr) \longrightarrow  \Fun_{/\cK}(\ast,\cE)~.
\]
Using that the $\infty$-groupoid completion $\sB (\bDelta_{/\cJ})\simeq \ast$ is terminal, this map is therefore an equivalence provided it is in the case that $\cJ \in \bDelta$ is an object in the simplex category.

So suppose $\cJ\in \bDelta$.  
Write $\cJ = \cI^{\tl}$ for $\cI$ a finite linearly ordered set; denote the minimal element of $\cJ$ as $\star$.
The functor $\cJ^{\tl}\to \cK$ determines the canonical square among $\infty$-categories of sections
\[
\xymatrix{
\Fun_{/\cK}(\cI^{\tl},\cE) \ar[d]_-{(\ref{10})_{\cI}}
&&
\Fun_{/\cK}(\cJ^{\tl},\cE)  \ar[d]  \ar[ll]  \ar[rr]^-{(\ref{10})_{\cJ}}
&&
\Fun_{/\cK}(\ast,\cE)
\\
\Fun_{/\cK}(\star,\cE)  
&&
\Fun_{/\cK}(\star^{\tl},\cE)  \ar[ll]  \ar[urr]_-{(\ref{10})_{\ast}}
&&
.
}
\]
The square is a pullback because the canonical functor from the pushout $\star^{\tl}\underset{\star} \amalg \cI^{\tl} \xra{\simeq} \cJ^{\tl}$.  
Consequently, we obtain that the functor~(\ref{10}) is an equivalence in the case of $\cJ$ provided~(\ref{10}) is an equivalence in the case of $[0]$ and the case of $\cI$, should $\cI$ not be empty.  
By induction on the number of elements in $\cJ$, we are therefore reduced to the case that $\cJ=[0]$.

So suppose $\cJ=\ast=[0]$.  
Using Lemma~\ref{locallycocart}(c), the assumed locally coCartesian condition on $\pi$ gives that the restriction~(\ref{10}), in this case that $\cJ=\ast$, is a right adjoint.
The assumed conservativity of the functor $\pi$ gives that, in fact, both the domain and the codomain of this functor are $\infty$-groupoids.  
We conclude that this functor is an equivalence, as desired.

\end{proof}

Lemmas~\ref{Cart.base.change} and~\ref{efib.cons.basechange} have this immediate result.
In the statement of this result we make implicit reference to the Cartesian symmetric monoidal structures on the $\infty$-categories $\CAT$ and $\SPACES$.
\begin{cor}\label{LFib.fctr}
Base change defines functors
\[
\LFib\colon \Cat^{\op} \longrightarrow \CAT
\qquad\text{ and }\qquad
\RFib\colon \Cat^{\op} \longrightarrow \CAT~,
\]
as well as 
\[
\LFib^\sim \colon \Cat^{\op} \xra{~\LFib~}\CAT \xra{~(-)^\sim~} \SPACES
\qquad\text{ and }\qquad
\RFib^\sim \colon \Cat^{\op} \xra{~\RFib~}\CAT \xra{~(-)^\sim~} \SPACES~.
\]
Fiber products over a common base defines lifts of these functors
\[
\LFib\colon \Cat^{\op} \longrightarrow \CAlg(\CAT)
\qquad\text{ and }\qquad
\RFib\colon \Cat^{\op} \longrightarrow \CAlg(\CAT)~,
\]
as well as 
\[
\LFib^\sim \colon \Cat^{\op}  \longrightarrow \CAlg(\SPACES)
\qquad\text{ and }\qquad
\RFib^\sim \colon \Cat^{\op} \longrightarrow \CAlg(\SPACES)~.
\]
The functor $\EFib^{\sf cons,\sim}\colon \Cat^{\op} \to \CAlg(\SPACES)$ is representable, in the sense of Theorem~\ref{flagged.thm}, by a full symmetric monoidal $\infty$-subcategory of the flagged $\infty$-category $\Corr$ of Definition~\ref{def.Corr}.

\end{cor}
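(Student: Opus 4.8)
The plan is to deduce the entire statement from Proposition~\ref{left.is.coCart}, which identifies left fibrations with conservative coCartesian fibrations and right fibrations with conservative Cartesian fibrations, together with the closure properties of (co)Cartesian and of conservative exponentiable fibrations already in hand. First I would record that, by Proposition~\ref{left.is.coCart}(1b), $\LFib_\cK$ is the full $\infty$-subcategory of $\Cat_{/\cK}$ on those objects that are both coCartesian fibrations and conservative exponentiable fibrations (using Lemma~\ref{exp-examples} to know coCartesian fibrations are exponentiable); it is also a \emph{full} $\infty$-subcategory of $\cCart_\cK$, since in a left fibration every morphism is $\pi$-coCartesian by Proposition~\ref{left.is.coCart}(1f), so the coCartesian-morphism-preservation condition defining morphisms of $\cCart_\cK$ is vacuous between left fibrations. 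Since base change preserves coCartesian fibrations and coCartesian morphisms (Lemma~\ref{Cart.base.change}(1)) and preserves conservative exponentiable fibrations (Lemma~\ref{efib.cons.basechange}(1)), it restricts along $\cCart_{\cK'} \to \cCart_\cK$ (Corollary~\ref{cCart.functor}) to a functor $\LFib_{\cK'} \to \LFib_\cK$, functorially in $\cK$; the $\RFib$ statements follow by passing to opposites via Observation~\ref{cCart.ops} (together with the obvious op-invariance of conservativity and Observation~\ref{exp-op}). Post-composing with the limit- and product-preserving functor $(-)^\sim\colon \CAT \to \SPACES$ yields $\LFib^\sim$ and $\RFib^\sim$.

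Next I would handle the symmetric monoidal refinements. The key point is that the fiber product over $\cK$ of two coCartesian fibrations, formed in $\Cat_{/\cK}$, is again a coCartesian fibration and is the product in $\cCart_\cK$ — this is precisely the content of the $\CAlg(\CAT)$-lift of $\cCart$ in Corollary~\ref{cCart.functor} — and likewise the fiber product over $\cK$ of conservative exponentiable fibrations is a conservative exponentiable fibration by the $\CAlg(\CAT)$-lift of $\EFib^{\sf cons}$ in Corollary~\ref{efib.cons.fctr} (concretely: the fibers of the fiber product are the products of the fibers, and a fiber product of $\infty$-groupoids over an $\infty$-groupoid is an $\infty$-groupoid). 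Therefore $\LFib_\cK \subset \Cat_{/\cK}$ is closed under finite products, these products are computed in $\Cat_{/\cK}$, and so they are visibly compatible with base change (which commutes with fiber products and preserves both defining properties); this gives the lift $\LFib\colon \Cat^{\op}\to\CAlg(\CAT)$, and then, applying the product-preserving functor $(-)^\sim$, the lift $\LFib^\sim\colon \Cat^{\op}\to\CAlg(\SPACES)$. The $\RFib$ versions follow by passing to opposites.

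Finally, the representability assertion is exactly the last clause of Corollary~\ref{efib.cons.fctr}: $\EFib^{\sf cons,\sim}$ is represented, in the sense of Theorem~\ref{flagged.thm}, by the flagged $\infty$-subcategory $\Corr[\Spaces]\subset\Corr$ of Definition~\ref{def.corr.spaces}, which by Lemma~\ref{indeed.not.flagged} is full on the $\infty$-groupoid objects, with its symmetric monoidal structure as in Corollary~\ref{efib.cons.fctr}. (The related, and logically independent, fact that $\LFib^\sim$ and $\RFib^\sim$ are represented by the flagged $\infty$-subcategories $\Spaces$ and $\Spaces^{\op}$ of $\Corr$ — whose monomorphisms into $\Corr$ are \emph{not} full — is part of Theorem~\ref{main.thm'} and is established in \S\ref{sec.handed}.)

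I do not expect a genuine obstacle here: the only non-formal input is Proposition~\ref{left.is.coCart}, which is already proved, and the rest is bookkeeping. The single point that warrants care is verifying that the fiber products exhibiting the monoidal structures on $\LFib_\cK$ and $\RFib_\cK$ coincide with those taken in $\Cat_{/\cK}$, since that compatibility is what makes the $\CAlg(\CAT)$- and $\CAlg(\SPACES)$-valued lifts automatically base-change-functorial and compatible with $(-)^\sim$.
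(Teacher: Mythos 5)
Your proposal is correct and is essentially the paper's own argument: the paper records this corollary as immediate from Lemma~\ref{Cart.base.change} and Lemma~\ref{efib.cons.basechange}, with the identification of left/right fibrations as conservative (co)Cartesian fibrations from Proposition~\ref{left.is.coCart} doing the work you make explicit, and with the product/base-change bookkeeping handled exactly as in Corollaries~\ref{cCart.functor} and~\ref{efib.cons.fctr}. Your reading of the final sentence as a restatement of the last clause of Corollary~\ref{efib.cons.fctr} (rather than a new claim about $\LFib^\sim$, which is Corollary~\ref{LFib.rep}) is the right way to handle what appears to be a verbatim carry-over in the paper's statement.
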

\qed

The following construction of~\cite{HTT} is an $\infty$-categorical version of the Grothendieck construction.
\begin{construction}\label{def.left.un}
Let $\cK$ be an $\infty$-category.
The \emph{unstraightening} construction (for left fibrations) is the functor
\[
{\sf Un}\colon \Fun(\cK,\Spaces)\longrightarrow \LFib_{\cK}~,\qquad
(\cK\xra{F}\Spaces)\mapsto   \bigl((\Spaces^{\ast/})_{|\cK} \to \cK\bigr)~,
\]
whose values are given by base change of the left fibration $\Spaces^{\ast/}\to \Spaces$ along $F$.
The \emph{unstraightening} construction (for right fibrations) is the functor
\[
{\sf Un}\colon \Fun(\cK^{\op},\Spaces)\longrightarrow \RFib_{\cK}~,\qquad
(\cK^{\op}\xra{G}\Spaces)\mapsto   \bigl(({\Spaces^{\op}}_{/\ast})_{|\cK} \to \cK\bigr)~,
\]
whose values are given by base change of the right fibration ${\Spaces^{\op}}_{/\ast} \to \Spaces^{\op}$ along $F^{\op}$.

\end{construction}

\begin{example}
For $c_1\xra{\lag \cG_s\xra{f}\cG_t\rag} \Spaces$ a functor, its unstraightening (as a left fibration) is the cylinder construction: ${\sf Cyl}(f) \to c_1$.
For $c_1 \xra{\lag \cG_t \xla{f} \cG_s\rag} \Spaces^{\op}$ a functor, its unstraightening (as a right fibration) is the reverse cylinder construction: ${\sf Cylr}(f) \to c_1$.  

\end{example}

The next principal result from~\S2 of~\cite{HTT} states that the unstraightening construction for left/right fibrations is an equivalence.  
Another proof can also be found in~\cite{heuts.moredijk}.
(To state this result we use the Yoneda functor $\cK \xra{\lag \TwAr(\cK)\to \cK\rag} \RFib_\cK$; the proof of this result is tantamount to justifying calling this the Yoneda functor,  which is essentially the content of~\S2 of~\cite{HTT}.)
\begin{theorem}[Straightening-unstraightening for left/right fibrations]\label{st.un}
For each $\infty$-category $\cK$, the unstraightening constructions
\[
\Fun(\cK,\Spaces)\xra{~\sf Un~} \LFib_{\cK}
\qquad \text{ and }\qquad
\Fun(\cK^{\op},\Spaces)\xra{~\sf Un~} \RFib_{\cK}
\]
are each equivalences; their respective inverses are given as
\[
\LFib_{\cK} \longrightarrow  \Fun(\cK,\Spaces)
~,\qquad
(\cE\to \cK)\mapsto \Cat_{/\cK}(\cK^{\bullet/},\cE)
\]
and
\[
\RFib_{\cK} \longrightarrow  \Fun(\cK^{\op},\Spaces)
~,\qquad
(\cE\to \cK)\mapsto \Cat_{/\cK}(\cK_{/\bullet},\cE)~.
\]

\end{theorem}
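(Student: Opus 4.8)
The plan is to reduce to the case of left fibrations and to realize the unstraightening functor as half of an adjoint equivalence. The right-fibration statement over $\cK$ is the left-fibration statement over $\cK^{\op}$: passage to opposites (Observation~\ref{cCart.ops}) carries right fibrations over $\cK$ to left fibrations over $\cK^{\op}$ and $\cK_{/\bullet}$ to $(\cK^{\op})^{\bullet/}$, while leaving $\Fun(\cK^{\op},\Spaces)$ unchanged; so I would treat only left fibrations. By Observation~\ref{right.un}, the unstraightening ${\sf Un}\colon\Fun(\cK,\Cat)\to\Cat_{/\cK}$ is a left adjoint with right adjoint $R\colon(\cE\to\cK)\mapsto\Fun_{/\cK}(\cK^{\bullet/},\cE)$; the goal is to show this adjunction restricts to an \emph{adjoint equivalence} between the full $\infty$-subcategories $\Fun(\cK,\Spaces)\subset\Fun(\cK,\Cat)$ and $\LFib_\cK\subset\Cat_{/\cK}$. (On $\Spaces$-valued functors ${\sf Un}$ agrees with the left-fibration unstraightening of Construction~\ref{def.left.un}, both being coCartesian fibrations over $\cK$ with fiber $F(x)$ over $x$, hence agreeing by the straightening equivalence of Theorem~\ref{st.un.cCart}.)

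First I would check that the adjunction does restrict. For ${\sf Un}$: by Theorem~\ref{st.un.cCart}, ${\sf Un}(F)\to\cK$ is a coCartesian fibration with fiber $F(x)$ over $x$; when $F$ is $\Spaces$-valued these fibers are $\infty$-groupoids, which forces conservativity (a morphism of ${\sf Un}(F)$ over an equivalence factors as a coCartesian morphism, hence an equivalence, followed by a morphism in a fiber, hence an equivalence), so ${\sf Un}(F)\to\cK$ is a left fibration by Proposition~\ref{left.is.coCart}. For $R$: I claim $R(\cE)(x)=\Fun_{/\cK}(\cK^{x/},\cE)$ is an $\infty$-groupoid whenever $\cE\to\cK$ is a left fibration, which is the content of the Key Lemma below, identifying it with $\cE_{|x}$, an $\infty$-groupoid by conservativity.

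\emph{Key Lemma.} For a left fibration $\cE\to\cK$ and an object $x\in\cK$, restriction along the inclusion of the initial object $\{\id_x\}\hookrightarrow\cK^{x/}$ is an equivalence $\Fun_{/\cK}(\cK^{x/},\cE)\xra{\simeq}\cE_{|x}$. I would deduce this straight from Definition~\ref{def.left.fib}. The $\infty$-category $\cK^{x/}$ has initial object $\id_x$, lying over $x$; letting $\cD_0\subset\cK^{x/}$ be the full $\infty$-subcategory spanned by the objects not equivalent to $\id_x$, one has — because an initial object receives no morphisms from objects not equivalent to it — a canonical identification $\cK^{x/}\simeq\cD_0^{\tl}$ taking the cone point to $\id_x$. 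Definition~\ref{def.left.fib}, applied to $\cD_0^{\tl}\to\cK$, then yields precisely that $\Fun_{/\cK}(\cK^{x/},\cE)\to\Fun_{/\cK}(\{x\},\cE)=\cE_{|x}$ is an equivalence. The identification $\cK^{x/}\simeq\cD_0^{\tl}$, together with the coherence bookkeeping that the fiberwise equivalences produced below are genuinely the components of the unit and counit, is where I expect the only real work to lie; the rest is formal.

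Granting the Key Lemma, it remains to see that the unit and counit of the restricted adjunction are equivalences. For $F\colon\cK\to\Spaces$, the component at $x$ of the unit $\eta_F\colon F\to R({\sf Un}F)$ is a map $F(x)\to\Fun_{/\cK}(\cK^{x/},{\sf Un}F)$ which, postcomposed with the Key Lemma equivalence and the tautological identification $\Fun_{/\cK}(\{x\},{\sf Un}F)={\sf Un}(F)_{|x}\simeq F(x)$, is the identity; hence $\eta_F$ is an equivalence. For $\cE\to\cK$ a left fibration, both ${\sf Un}(R\cE)$ and $\cE$ are left fibrations — hence coCartesian fibrations — over $\cK$, and the counit $\epsilon_\cE\colon{\sf Un}(R\cE)\to\cE$ restricts on the fiber over $x$ to the map ${\sf Un}(R\cE)_{|x}=R(\cE)(x)=\Fun_{/\cK}(\cK^{x/},\cE)\to\cE_{|x}$, an equivalence by the Key Lemma; Lemma~\ref{l.fib.fibers} then promotes this fiberwise equivalence to an equivalence ${\sf Un}(R\cE)\xra{\simeq}\cE$. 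Thus the restricted adjunction ${\sf Un}\dashv R$ is an adjoint equivalence, identifying ${\sf Un}$ with an equivalence whose inverse is $R=\bigl(\cE\mapsto\Cat_{/\cK}(\cK^{\bullet/},\cE)\bigr)$, as claimed. (As a cross-check of the underlying equivalence of $\infty$-groupoids, one could alternatively show $\LFib^\sim_{[\bullet]}$ and $\Fun([\bullet],\Spaces)$ are Segal spaces — the former by the argument of Corollary~\ref{corr.segal}, via Lemma~\ref{pushout} and the fact that left fibrations over $[p]$ are conservative and locally coCartesian — agreeing on $0$- and $1$-simplices by Lemma~\ref{firsthalf-locallycocart}, so that ${\sf Un}$ is an equivalence of Segal spaces.)
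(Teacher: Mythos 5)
The paper itself offers no proof of this statement -- it is quoted from \S 2 of \cite{HTT} (see also \cite{heuts.moredijk}) -- so your argument has to stand on its own. Your overall scheme is reasonable: take the coCartesian straightening (Theorem~\ref{st.un.cCart}) as input, restrict the adjunction ${\sf Un}\dashv\Fun_{/\cK}(\cK^{\bullet/},-)$ of Observation~\ref{right.un} to $\Spaces$-valued functors and left fibrations, and verify the unit and counit fiberwise. But the step you yourself flag as ``the only real work'' is where the proof breaks. The asserted identification $\cK^{x/}\simeq\cD_0^{\tl}$, justified by ``an initial object receives no morphisms from objects not equivalent to it,'' is false: initial objects can receive morphisms. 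Concretely, take $\cK={\sf Ret}$, the walking retraction $i\colon x\to y$, $r\colon y\to x$, $ri=\id_x$, and take the object $x$. Then the object $(x\xra{i}y)$ of $\cK^{x/}$ admits a morphism to the initial object $\id_x$ (namely $r$, since $ri=\id_x$), whereas in $\cD_0^{\tl}$ the cone point receives no morphisms from $\cD_0$; so $\cK^{x/}$ is not a left cone on the complement of its initial object, and Definition~\ref{def.left.fib} cannot be applied as you do. Your Key Lemma is true, but what it really needs is that left fibrations have the unique lifting property against \emph{all} initial functors -- the inclusion $\{\id_x\}\hookrightarrow\cK^{x/}$ of an initial object being one -- which is Proposition~\ref{left.initial.lift} of this paper (or the left-anodyne machinery of \cite{HTT}), not a formal consequence of the cone-shaped lifting in Definition~\ref{def.left.fib}. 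Beware also that the paper's proof of Proposition~\ref{left.initial.lift} (and of Quillen's Theorem~A, on which much of the surrounding machinery rests) itself invokes Theorem~\ref{st.un}, so to use it here you would need an independent proof of that lifting property to avoid circularity.

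Two secondary points. First, the parenthetical claim that the coend-formula ${\sf Un}$ of Construction~\ref{def.un} agrees with the left-fibration unstraightening of Construction~\ref{def.left.un} ``both being coCartesian fibrations over $\cK$ with fiber $F(x)$'' is not an argument: two coCartesian fibrations with equivalent fibers need not be equivalent (the fiber identifications must be natural), so this identification requires a genuine check (e.g.\ compatibility of the coend formula with base change along $F$). Second, when you apply Lemma~\ref{l.fib.fibers} to the counit, you should note explicitly that both ${\sf Un}(R\cE)$ and $\cE$ are left fibrations, so every morphism in them is coCartesian and the comparison functor automatically preserves coCartesian morphisms; the fiberwise criterion genuinely requires this (for general coCartesian fibrations a fiberwise equivalence over $\cK$ that does not preserve coCartesian morphisms need not be an equivalence, as cylinders over $c_1$ already show).
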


\begin{cor}\label{LFib.rep}
The functor $\LFib^\sim\colon \Cat^{\op} \to \CAlg(\SPACES)$ is represented by the Cartesian symmetric monoidal $\infty$-category $\Spaces$; specifically, for each $\infty$-category $\cK$, the unstraightening construction implements a canonical equivalence between $\infty$-groupoids
\[
{\sf Un}\colon \Cat(\cK,\Spaces)~\simeq~ \LFib_{\cK}^\sim~.
\]
The functor $\RFib^\sim\colon \Cat^{\op} \to \CAlg(\SPACES)$ is represented by the coCartesian symmetric monoidal $\infty$-category $\Spaces^{\op}$; specifically, for each $\infty$-category $\cK$, the unstraightening construction implements a canonical equivalence between $\infty$-groupoids
\[
{\sf Un}\colon \Cat(\cK^{\op},\Spaces)~\simeq~ \RFib_{\cK}^\sim~.
\]

\end{cor}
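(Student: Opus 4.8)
The plan is to deduce the corollary from the straightening--unstraightening equivalence of Theorem~\ref{st.un} by passing to maximal $\infty$-subgroupoids, and then to upgrade the resulting pointwise equivalence to one of functors $\Cat^{\op}\to\CAlg(\SPACES)$ using the naturality built into the unstraightening construction. I treat the left-fibration assertion in detail; the right-fibration assertion follows by the symmetric argument, using Observation~\ref{cCart.ops} together with the identification $\Fun(\cK^{\op},\Spaces)\simeq\Fun(\cK,\Spaces^{\op})$ and the fact that right fibrations over $\cK$ are exactly the functors whose opposite is a left fibration over $\cK^{\op}$.

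First, by Theorem~\ref{st.un} the unstraightening construction is an equivalence of $\infty$-categories $\mathsf{Un}\colon\Fun(\cK,\Spaces)\xra{\simeq}\LFib_\cK$ for each $\infty$-category $\cK$. Applying the maximal-$\infty$-subgroupoid functor $(-)^\sim\colon\CAT\to\SPACES$ — which, being a functor, preserves equivalences — yields an equivalence of $\infty$-groupoids $\Fun(\cK,\Spaces)^\sim\xra{\simeq}\LFib_\cK^\sim$, where $\LFib^\sim_\cK:=(\LFib_\cK)^\sim$ by the definition in Corollary~\ref{LFib.fctr}. The domain is canonically the space of functors, $\Fun(\cK,\Spaces)^\sim\simeq\Map_{\CAT}(\cK,\Spaces)=\Cat(\cK,\Spaces)$, which gives the stated equivalence $\mathsf{Un}\colon\Cat(\cK,\Spaces)\simeq\LFib_\cK^\sim$. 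It also identifies $\LFib^\sim$, as a presheaf on $\Cat$, with the presheaf $\Cat(-,\Spaces)$ corepresented by $\Spaces$; since $\Spaces$ is an $\infty$-category and the embedding $\Cat\hookrightarrow\fCat$ of Theorem~\ref{maximal.underlying} is fully faithful, $\Cat(\cK,\Spaces)\simeq\fCAT(\cK,\Spaces)$, so this presheaf is exactly the image of the flagged $\infty$-category $\Spaces$ under the restricted Yoneda functor $\fCAT\hookrightarrow\psh(\CAT)$ of Theorem~\ref{flagged.thm}; in particular $\LFib^\sim$ is representable by $\Spaces$ in the sense intended.

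Next, naturality in $\cK$: for a functor $g\colon\cK'\to\cK$ and $F\colon\cK\to\Spaces$, Construction~\ref{def.left.un} computes $\mathsf{Un}(F)$ as the base change of the universal left fibration $\Spaces^{\ast/}\to\Spaces$ along $F$, so $g^\ast\mathsf{Un}(F)=(\Spaces^{\ast/})_{|\cK'}=\mathsf{Un}(F\circ g)=\mathsf{Un}(g^\ast F)$, the middle equality because base change of a base change is a base change. Thus $\mathsf{Un}$ is a natural transformation $\Cat(-,\Spaces)\Rightarrow\LFib^\sim$ of presheaves on $\Cat$, pointwise an equivalence by the previous paragraph, hence an equivalence of functors $\Cat^{\op}\to\SPACES$. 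Finally, for the symmetric monoidal refinement: the structure on $\Spaces$ is Cartesian, so the one on $\Fun(\cK,\Spaces)$ is Cartesian with tensor computed pointwise; on the other side, the structure on $\LFib^\sim$ from Corollary~\ref{LFib.fctr} is by fiber products over the common base, and since $\LFib_\cK\subset\Cat_{/\cK}$ is closed under base change (Proposition~\ref{left.is.coCart} together with Lemma~\ref{Cart.base.change} and Lemma~\ref{efib.cons.basechange}), these fiber products are precisely the finite products of $\LFib_\cK$ — so this structure is also the Cartesian one. An equivalence of $\infty$-categories preserves finite products and the Cartesian symmetric monoidal structure is canonically determined by the underlying $\infty$-category, so $\mathsf{Un}_\cK$ refines canonically to an equivalence in $\CAlg(\SPACES)$; base change being symmetric monoidal for fiber products (Observation~\ref{fiber-products}), the natural transformation above refines to one of functors valued in $\CAlg(\SPACES)$. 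The right-fibration statement, with $\Spaces^{\op}$ and its coCartesian symmetric monoidal structure, follows identically.

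I expect essentially no new difficulty here: the substance is Theorem~\ref{st.un}, imported from~\cite{HTT}. The only point requiring mild care is matching the monoidal structures — the Cartesian one on $\Spaces$ against fiberwise fiber products on $\LFib^\sim$ — which rests on the base-change closure of $\LFib_\cK$ making those fiber products the categorical products in $\LFib_\cK$.
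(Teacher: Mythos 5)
Your proposal is correct and follows essentially the same route the paper intends: the corollary is an immediate consequence of Theorem~\ref{st.un} by passing to maximal $\infty$-subgroupoids, with naturality in $\cK$ coming for free from Construction~\ref{def.left.un} (unstraightening is base change of the universal left fibration, i.e., the Yoneda image of the object $(\Spaces^{\ast/}\to\Spaces)\in\LFib^\sim_{\Spaces}$, which is exactly how Corollary~\ref{LFib.fctr} sets up $\LFib^\sim$ as a presheaf), and with the symmetric monoidal refinement handled, as elsewhere in the paper, by noting that fiber products over $\cK$ are the categorical products in $\LFib_\cK$ so both sides carry Cartesian structures. No substantive difference from the paper's treatment.
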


The above results assemble to prove the assertions in Theorem~\ref{main.thm'} concerning left/right fibrations, 
\begin{theorem}\label{theorem-l.fib}
\begin{enumerate}
\item[~]

\item 
There is a fully-faithful functor between symmetric monoidal flagged $\infty$-categories:
\[
\Spaces~\hookrightarrow~\Corr~.
\]
For each $\infty$-category $\cK$, a functor $\cK\xra{\lag \cE\xra{\sf e.fib} \cK\rag} \Corr$ classifying the indicated exponentiable fibration, factors through $\Spaces\hookrightarrow \Corr$ if and only if the exponentiable fibration $\cE\to \cK$ is also a conservative locally coCartesian fibration.

\item 
There is a fully-faithful functor between symmetric monoidal flagged $\infty$-categories:
\[
\Spaces^{\op}~\hookrightarrow~\Corr~.
\]
For each $\infty$-category $\cK$, a functor $\cK\xra{\lag \cE\xra{\sf e.fib} \cK\rag} \Corr$ classifying the indicated exponentiable fibration, factors through $\Spaces^{\op}\hookrightarrow \Corr$ if and only if the exponentiable fibration $\cE\to \cK$ is also a conservative locally Cartesian fibration.  

\end{enumerate}
Furthermore, there is a canonical diagram of symmetric monoidal flagged $\infty$-categories,
\[
\xymatrix{
\Spaces^\sim  \ar[r]  \ar[d]
&
\Spaces  \ar[dr]  \ar[d]
&
\\
\Spaces^{\op}  \ar[r]  \ar[dr]
&
\Corr[\Spaces]   \ar[dr]
&
\Cat  \ar[d]
\\
&
\Cat^{\op}  \ar[r]
&
\Corr,
}
\]
in which each morphism is a monomorphism, and each square is a pullback.  

\end{theorem}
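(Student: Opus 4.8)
The plan is to deduce the theorem from results already in hand, using the full faithfulness of $\fCAT \hookrightarrow \PShv(\Cat)$ of Theorem~\ref{flagged.thm} to reduce each assertion to a pointwise statement about functors of fibrations on $\Cat^{\op}$. The inputs I would draw on are: $\Corr$ represents $\EFib^\sim$ (Definition~\ref{def.Corr}, Theorem~\ref{representablecorr}); $\Corr[\Spaces]$ represents $\EFib^{\sf cons,\sim}$ and is full on the $\infty$-groupoids (Definition~\ref{def.corr.spaces}, Lemma~\ref{indeed.not.flagged}); $\Spaces$ and $\Spaces^{\op}$ represent $\LFib^\sim$ and $\RFib^\sim$ (Corollary~\ref{LFib.rep}); Theorem~\ref{theorem-cocart} supplies the symmetric monoidal monomorphisms $\Cat\hookrightarrow\Corr\hookleftarrow\Cat^{\op}$, whose images are the (exponentiable, hence) coCartesian and Cartesian fibrations; and the characterizations of (local) (co)Cartesian fibrations among exponentiable fibrations in Propositions~\ref{left.is.coCart} and~\ref{just.over.2}. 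Throughout I would use that being exponentiable, (co)Cartesian, locally (co)Cartesian, or conservative are equivalence-invariant conditions, so that the corresponding subcategories of $\Cat_{/\cK}$ are full and replete and fiber products of their maximal subgroupoids compute intersections.

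For clause~(1), I would first note that by Proposition~\ref{left.is.coCart}(1) a left fibration is a conservative coCartesian fibration, hence by Lemma~\ref{exp-examples} an exponentiable fibration; so for each $\infty$-category $\cK$ the inclusion $\LFib_{\cK}\subset\EFib_{\cK}$ of full replete subcategories is defined and is stable under fiber products over $\cK$ (a product of left fibrations is a left fibration, e.g.\ via straightening). Passing to maximal subgroupoids and varying $\cK$ yields a monomorphism $\LFib^\sim\hookrightarrow\EFib^\sim$ of $\CAlg(\SPACES)$-valued functors, and applying Theorem~\ref{flagged.thm} produces the desired symmetric monoidal monomorphism $\Spaces\hookrightarrow\Corr$. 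A functor $\cK\to\Corr$ classifying an exponentiable fibration $\cE\to\cK$ then factors through $\Spaces$ exactly when the corresponding point of $\EFib^\sim_{\cK}$ lies in $\LFib^\sim_{\cK}$, i.e.\ when $\cE\to\cK$ is a left fibration; by Proposition~\ref{left.is.coCart}(1) this is equivalent to $\cE\to\cK$ being a conservative locally coCartesian fibration. Clause~(2) is the dual assertion, which I would obtain by taking opposites (Observation~\ref{exp-op}) and invoking Proposition~\ref{left.is.coCart}(2) together with Corollary~\ref{LFib.rep} for $\RFib^\sim$ and $\Spaces^{\op}$.

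For the diagram, I would identify each vertex with a subfunctor of $\EFib^\sim$: $\Spaces$ with $\LFib^\sim$, $\Spaces^{\op}$ with $\RFib^\sim$, $\Corr[\Spaces]$ with $\EFib^{\sf cons,\sim}$, and $\Cat$, $\Cat^{\op}$ with $\cCart^\sim$, $\Cart^\sim$ — here using that an exponentiable fibration is locally coCartesian precisely when it is coCartesian, by Proposition~\ref{just.over.2}(1), to reconcile Theorem~\ref{theorem-cocart} with this description. The comparison functors in the diagram are then the evident inclusions of subfunctors ($\LFib^\sim\subset\cCart^\sim$, $\LFib^\sim\subset\EFib^{\sf cons,\sim}$, and their duals), which under straightening coincide with the canonical symmetric monoidal functors $\Spaces\hookrightarrow\Cat$, $\Spaces\hookrightarrow\Corr[\Spaces]$, and so on, and each lifts to $\CAlg(\SPACES)$ because every functor of fibrations here carries its fiber-product-over-the-base symmetric monoidal structure (Corollaries~\ref{LFib.fctr} and~\ref{efib.cons.fctr}, Lemma~\ref{exp-products}). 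Since Yoneda $\PShv(\Cat)(\cX,-)$ preserves limits and restricts to $\fCAT(\cX,-)$ on flagged $\infty$-categories $\cX$, and the forgetful functor $\CAlg(\fCAT)\to\fCAT$ creates limits, a square of symmetric monoidal flagged $\infty$-categories is a pullback as soon as its underlying presheaves form a pointwise pullback (which in our case is then representable); computing pointwise, the middle square is the identity $\LFib^\sim_{\cK}=\cCart^\sim_{\cK}\underset{\EFib^\sim_{\cK}}\times\EFib^{\sf cons,\sim}_{\cK}$ of Proposition~\ref{left.is.coCart}(1), the bottom square is its dual via Proposition~\ref{left.is.coCart}(2), and the top-left square $\LFib^\sim_{\cK}\underset{\EFib^{\sf cons,\sim}_{\cK}}\times\RFib^\sim_{\cK}$ picks out the exponentiable fibrations over $\cK$ that are simultaneously left and right fibrations (the Kan fibrations), which I would take as the definition of the flagged $\infty$-category $\Spaces^\sim$; a short computation shows its associated Segal space is discrete on the space of spaces, so $\Spaces^\sim$ is the maximal $\infty$-subgroupoid of $\Spaces$, matching the notation. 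Monomorphy of every arrow then follows: those into $\Corr$ by clauses~(1)--(2), Theorem~\ref{theorem-cocart}, and Lemma~\ref{indeed.not.flagged}; the arrows $\Spaces\to\Cat$, $\Spaces\to\Corr[\Spaces]$, $\Spaces^{\op}\to\Cat^{\op}$, $\Spaces^{\op}\to\Corr[\Spaces]$ by left cancellation of monomorphisms against their composites to $\Corr$; and $\Spaces^\sim\to\Spaces$, $\Spaces^\sim\to\Spaces^{\op}$ as base changes of monomorphisms.

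The step demanding the most care will be the bookkeeping that reconciles Lurie's straightening equivalence $\CAT(\cK,\Cat)\simeq\cCart^\sim_{\cK}$ with Theorem~\ref{theorem-cocart}'s description of $\Cat\hookrightarrow\Corr$ via exponentiability and local coCartesian-ness, which rests on the equivalence of conditions~(a) and~(b) in Proposition~\ref{just.over.2}(1), and — in parallel — confirming that the fiber products of the representing presheaves genuinely compute intersections of full subcategories of $\Cat_{/\cK}$, which uses the equivalence-invariance of the defining conditions. Once these identifications are secured, the three pullback squares and all the monomorphy claims are formal.
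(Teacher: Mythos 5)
Your proposal is correct and is essentially the paper's own (implicit) argument: Theorem~\ref{theorem-l.fib} is stated as the assembly of Proposition~\ref{left.is.coCart}, Corollaries~\ref{LFib.fctr} and~\ref{LFib.rep}, Theorem~\ref{theorem-cocart}, and Lemma~\ref{indeed.not.flagged}, which is exactly the route you follow, including computing the pullback squares pointwise as intersections of subfunctors of $\EFib^\sim$ and identifying $\Spaces^\sim$ with the classifier of those fibrations that are simultaneously left and right. One terminological caveat: what your argument (correctly) produces for $\Spaces\hookrightarrow\Corr$ is a monomorphism, as asserted in Theorem~\ref{main.thm'}; it is not an equivalence on mapping spaces, since a morphism in $\Corr$ between two $\infty$-groupoids is an arbitrary correspondence.
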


\subsection{Sub-left/right fibrations of coCartesian/Cartesian fibrations}
We explain that there is a maximal sub-left/right fibration of a coCartesian/Cartesian fibration, which we identify explicitly.

\begin{cor}\label{coCarts.morphs.compose}
Let $\cE \xra{\pi} \cK$ be a functor between $\infty$-categories.
\begin{enumerate}
\item 
Suppose $\pi$ is a coCartesian fibration.
Consider the subfunctor
\[
\Cat_{/\cK}^{\sf cCart}(-,\cE)\colon (\Cat_{/\cK})^{\op}\longrightarrow  \Spaces~,\qquad (\cJ\to \cK)\mapsto \Cat_{/\cK}^{\sf cCart}(\cJ,\cE)~\subset~\Cat_{/\cK}(\cJ,\cE)~,
\]
whose value on $(\cJ\to \cK)$ consists of those functors $\cJ \to \cE$ over $\cK$ that carry each morphism in $\cJ$ to a $\pi$-coCartesian morphism.  
This functor is represented by a left fibration over $\cK$.  

\item 
Suppose $\pi$ is a Cartesian fibration.
The subfunctor
\[
\Cat_{/\cK}^{\sf Cart}(-,\cE)\colon (\Cat_{/\cK})^{\op}\longrightarrow  \Spaces~,\qquad (\cJ\to \cK)\mapsto \Cat_{/\cK}^{\sf Cart}(\cJ,\cE)~\subset~\Cat_{/\cK}(\cJ,\cE)~,
\]
whose value on $(\cJ\to \cK)$ consists of those functor $\cJ \to \cE$ over $\cK$ that carry each morphism in $\cJ$ to a $\pi$-Cartesian morphism.  
This functor is represented by a right fibration over $\cK$.

\end{enumerate}

\end{cor}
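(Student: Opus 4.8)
The plan is to prove assertion~(1) and deduce assertion~(2) by taking opposites, since a functor $\cE\to\cK$ is a Cartesian fibration if and only if $\cE^{\op}\to\cK^{\op}$ is a coCartesian fibration (Observation~\ref{cCart.ops}), and this operation interchanges the two subfunctors in the evident way. So assume $\pi\colon\cE\to\cK$ is a coCartesian fibration. The first step is to observe that $\Cat^{\sf cCart}_{/\cK}(-,\cE)$ is genuinely a \emph{subpresheaf} of the representable-in-$\fCat$ presheaf $\Cat_{/\cK}(-,\cE)$: this requires checking that if $\cJ'\to\cJ$ is a functor over $\cK$ and $\cJ\to\cE$ carries every morphism to a $\pi$-coCartesian morphism, then so does the composite $\cJ'\to\cE$ --- which is immediate --- and conversely requires no extra input. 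The key point is then to identify this subpresheaf. By Corollary~\ref{free.fib.adj}(1), applied with the roles reversed, for each $\cJ\to\cK$ the $\infty$-groupoid of $\pi$-coCartesian-morphism-preserving functors $\cJ\to\cE$ over $\cK$ is what one wants to compute; but more usefully, one invokes the straightening--unstraightening equivalence. Via Theorem~\ref{st.un.cCart}, the coCartesian fibration $\pi$ corresponds to a functor $\chi\colon\cK\to\Cat$, and a $\pi$-coCartesian-preserving section of $\cE_{|\cJ}\to\cJ$ is precisely a natural transformation from the constant functor $\ast$ to the composite $\cJ\to\cK\xra{\chi}\Cat$ that lands fiberwise in maximal subgroupoids --- equivalently, a lift of $\cJ\to\cK\xra{\chi}\Cat$ through $\cK\xra{(-)^\sim\circ\chi}\Spaces$ followed by the unstraightening of $\Spaces^{\ast/}\to\Spaces$.

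Concretely: I would define $L\to\cK$ to be the unstraightening (as a left fibration, Construction~\ref{def.left.un}) of the composite $\cK\xra{\chi}\Cat\xra{(-)^\sim}\Spaces$, where $\chi$ is the straightening of $\pi$. Then the content of the corollary is the natural equivalence of spaces, for each $\cJ\to\cK$,
\[
\Cat_{/\cK}(\cJ,L)~\simeq~\Cat^{\sf cCart}_{/\cK}(\cJ,\cE)~.
\]
To establish this, the main computation is the following: a functor $\cJ\to\cE$ over $\cK$ sending every morphism of $\cJ$ to a $\pi$-coCartesian morphism is the same data as a functor $\cJ\to\cE'$ over $\cK$, where $\cE'\subset\cE$ is the (non-full) subcategory with the same objects and only the $\pi$-coCartesian morphisms; and by Theorem~\ref{st.un.cCart} together with the description of coCartesian morphisms via the left adjoints $\cE_{/\pi e}\to\cE_{|\pi e}$ (Observation~\ref{coCart.morphi.via.adjoints}), this subcategory $\cE'$ fibered over $\cK$ is exactly the left fibration $L$ --- because a morphism is $\pi$-coCartesian iff its image under $\chi$ becomes an equivalence, so $\cE'$ is the fiberwise maximal subgroupoid, which unstraightens from $(-)^\sim\circ\chi$. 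Thus $\Cat^{\sf cCart}_{/\cK}(\cJ,\cE)\simeq\Cat_{/\cK}(\cJ,\cE')=\Cat_{/\cK}(\cJ,L)$, naturally in $\cJ$.

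The main obstacle is the careful identification of $\cE'$ (the wide subcategory of $\pi$-coCartesian morphisms, fibered over $\cK$) as a left fibration, and in particular checking that it really is closed under composition and thus forms an $\infty$-category --- this uses Lemma~\ref{cCart.compose} (coCartesian morphisms compose) in the fibered setting, and then Proposition~\ref{left.is.coCart}(1) to recognize the result as a left fibration once one knows it is a conservative locally coCartesian fibration. I would argue conservativity of $\cE'\to\cK$ from the fact that a $\pi$-coCartesian morphism lying over an equivalence is an equivalence (Definition~\ref{def.coCart}), and the locally coCartesian property from the fact that each morphism of $\cK$ admits a $\pi$-coCartesian lift, which lies in $\cE'$ by construction and is coCartesian for the base change by Lemma~\ref{Cart.base.change}. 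Once $\cE'\to\cK$ is known to be a left fibration, representability of $\Cat^{\sf cCart}_{/\cK}(-,\cE)$ by it is formal: a functor $\cJ\to\cE$ over $\cK$ factors through $\cE'$ iff it sends all morphisms to $\pi$-coCartesian morphisms, essentially by definition of $\cE'$, and there is nothing further to check since $\cE'\hookrightarrow\cE$ is a monomorphism over $\cK$.
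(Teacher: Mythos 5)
Your proposal is correct and follows essentially the same route as the paper: the representing object is the wide $\infty$-subcategory of $\cE$ spanned by the $\pi$-coCartesian morphisms, recognized as a left fibration over $\cK$ via Proposition~\ref{left.is.coCart}, exactly as in the paper's proof (your additional straightening--unstraightening identification of this subcategory, anticipating Corollary~\ref{sub.left}, is consistent but not needed). The one citation to adjust is that closure of $\pi$-coCartesian morphisms under composition is not Lemma~\ref{cCart.compose}, which concerns composites of coCartesian \emph{fibrations}; the in-paper source for this point, and the one the paper itself invokes for representability of the subfunctor, is Proposition~\ref{just.over.2}(1)(f).
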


\begin{proof}
The two assertions imply one another, as implemented by taking opposites.
We are therefore reduced to proving the first.

Consider the restricted presheaf
\[
\Cat_{/\cK}^{\sf cCart}( [\bullet] ,\cE) \colon
(\bDelta_{/\cK})^{\op} 
\longrightarrow
(\Cat_{/\cK})^{\op}
\xra{~\Cat_{/\cK}^{\sf cCart}(-,\cE)~}
\Spaces
~.
\]
We will show that $\Cat_{/\cK}^{\sf cCart}( [\bullet] ,\cE)$ is a univalent Segal space over $\cK$.  
This presheaf $\Cat_{/\cK}^{\sf cCart}( [\bullet] ,\cE)$ is a subpresheaf of $\Cat_{/\cK}([\bullet],\cE)$, which represents the univalent Segal space $\cE\to \cK$ over $\cK$.
Note that the monomorphism between spaces
\[
\Cat_{/\cK}^{\sf cCart}( [0] ,\cE)
~\hookrightarrow~
\Cat_{/\cK}( [0] ,\cE)
\]
is an equivalence.
Therefore, to verify that $\Cat_{/\cK}^{\sf cCart}( [\bullet] ,\cE)$ satisfies the complete and Segal conditions it is sufficient to verify that the solid diagram of spaces
\[
\xymatrix{
\Cat_{/\cK}^{\sf cCart}( [2] ,\cE) \ar[d]_-{\rm monomorphism} \ar@{-->}[rr]
&&
\Cat_{/\cK}^{\sf cCart}( \{0<2\} ,\cE)  \ar[d]^-{\rm monomorphism}
\\
\Cat_{/\cK}( [2] ,\cE) \ar[rr]^-{\underset{\cE}\circ}
&&
\Cat_{/\cK}(\{0<2\} ,\cE)
}
\]
admits a filler.
(Note that, because the vertical maps are monomorphisms among spaces, such a filler is unique if it exists.)
Proposition~\ref{just.over.2}(f), which states that the composition of two composible $\pi$-coCartesian morphisms is a $\pi$-coCartesian morphism, gives that such a filler exists.  
Denote the $\infty$-category over $\cK$ presented by this univalent Segal space over $\cK$ as
\[
\cE^{\sf cCart}
\longrightarrow
\cK
~.
\]
By construction, it is equipped with a canonical monomorphism $\cE^{\sf cCart}\hookrightarrow \cE$ over $\cK$.

The commutative diagram of $\infty$-categories
\[
\xymatrix{
(\bDelta_{/\cK})^{\op} \ar[rr]^-{\Cat_{/\cK}^{\sf cCart}( [\bullet] ,\cE)} \ar[d]
&&
\Spaces
\\
(\Cat_{/\cK})^{\op} \ar@{-->}[urr]_-{\Cat_{/\cK}(-,\cE^{\sf cCart})}
&&
}
\]
witnesses a left Kan extension.  
There results a canonical morphism between presheaves on $\Cat_{/\cK}$:
\begin{equation}\label{e101}
\Cat_{/\cK}(-,\cE^{\sf cCart})
\longrightarrow
\Cat_{/\cK}^{\sf cCart}( - ,\cE)
~,
\end{equation}
which we will show is an equivalence.  
This morphism~(\ref{e101}) fits into a commutative diagram of presheaves on $\Cat_{/\cK}$:
\[
\xymatrix{
\Cat_{/\cK}(-,\cE^{\sf cCart})
\ar[rr]^-{(\ref{e101})}  \ar[dr]_-{\rm monomorphism}
&&
\Cat_{/\cK}^{\sf cCart}(-,\cE) \ar[dl]^-{\rm monomorphism}
\\
&
\Cat_{/\cK}(-,\cE)
&
}
\]
in which the downward morphisms are monomorphisms.
It follows that~(\ref{e101}) is a monomorphism.  
It remains to verify, for each $\cJ\to \cK$, that the value of~(\ref{e101}) on $\cJ\to \cK$ is surjective on path-components.  
So let $\cJ \to \cE$ be a functor over $\cK$ that carries each morphism in $\cJ$ to a $\pi$-coCartesian morphism in $\cE$.  
By definition of $\cE^{\sf cCart}$, presented as a univalent Segal space, its space of morphisms consists of the $\pi$-coCartesian morphisms in $\cE$.
Therefore, $\cJ\to \cE$ factors through the monomorphism $\cE^{\sf cCart} \hookrightarrow \cE$, as desired.

By construction, an $\infty$-category over $\cK$ representing this functor has the property that each of its morphisms is a $\pi$-coCartesian morphism.  
We conclude from Proposition~\ref{left.is.coCart} that such a representing $\infty$-category over $\cK$ is a left fibration over $\cK$.
\end{proof}

\begin{notation}\label{def.sub.coCart}
For $\cE\xra{\pi}\cK$ a coCartesian fibration, its \emph{maximal sub-left fibration} 
\[
\cE^{\sf cCart} \longrightarrow \cK
\]
is a left fibration over $\cK$ representing the functor $\Cat_{/\cK}^{\sf cCart}(-,\cE)$ of Corollary~\ref{coCarts.morphs.compose}.
For $\cE\xra{\pi}\cK$ a Cartesian fibration, its \emph{maximal sub-right fibration} 
\[
\cE^{\sf Cart} \longrightarrow \cK
\]
is a right fibration over $\cK$ representing the functor $\Cat_{/\cK}^{\sf Cart}(-,\cE)$ of Corollary~\ref{coCarts.morphs.compose}.

\end{notation}

Inspecting the values of the presheaves in Corollary~\ref{coCarts.morphs.compose}
on objects $\cJ \to \cK$ in which $\cJ=c_0$ or $\cJ=c_1$ leads to the following observation.

\begin{observation}\label{t200}
\begin{enumerate}
\item
[~]

\item
For $\cE\xra{\pi}\cK$ a coCartesian fibration, its \emph{maximal sub-left fibration} is the $\infty$-subcategory
\[
\cE^{\sf cCart} 
~\hookrightarrow~
\cE
\]
consisting of every object in $\cE$; the morphisms of $\cE^{\sf cCart}$ consist of the $\pi$-coCartesian morphisms of $\cE$.

\item
For $\cE\xra{\pi}\cK$ a Cartesian fibration, its \emph{maximal sub-right fibration} is the $\infty$-subcategory
\[
\cE^{\sf Cart} 
~\hookrightarrow~
\cE
\]
consisting of every object in $\cE$; the morphisms of $\cE^{\sf Cart}$ consist of the $\pi$-Cartesian morphisms of $\cE$.

\end{enumerate}

\end{observation}

\begin{remark}
After Observation~\ref{t200}, Corollary~\ref{coCarts.morphs.compose} 
can be interpreted as the statement that the maximal sub-left/right fibrations exist: i.e., that morphisms in each are closed under composition in $\cE$.
	
\end{remark}

Corollary~\ref{coCarts.morphs.compose} has the following immediate consequence.  
In the statement of this result we reference the Cartesian symmetric monoidal structure of $\Spaces$ and of $\Cat$, and the coCartesian symmetric monoidal structure of $\Spaces^{\op}$ and of $\Cat^{\op}$.
\begin{cor}\label{sub.left}
\begin{enumerate}
\item[~]

\item 
The fully-faithful symmetric monoidal functor $\Spaces \hookrightarrow \Cat$ is a symmetric monoidal left adjoint; for each functor $\cK \xra{\lag \cE\xra{\pi} \cK\rag} \Cat$ classifying the indicated coCartesian fibration, postcomposition with the right adjoint is the functor $\cK \xra{\lag \cE^{\sf cCart}\to \cK\rag} \Spaces$ classifying the maximal left fibration of $\pi$.  

\item 
The fully-faithful symmetric monoidal functor $\Spaces^{\op} \hookrightarrow \Cat^{\op}$ is a symmetric monoidal left adjoint; for each functor $\cK \xra{\lag \cE\xra{\pi} \cK\rag} \Cat$ classifying the indicated coCartesian fibration, postcomposition with the right adjoint is the functor $\cK \xra{\lag \cE^{\sf cCart}\to \cK\rag} \Spaces$ classifying the maximal left fibration of $\pi$.  

\end{enumerate}

\end{cor}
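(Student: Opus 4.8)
The plan is to reduce to the coCartesian/left-fibration half of the statement and then read it off from Corollary~\ref{coCarts.morphs.compose} together with the straightening-unstraightening equivalences. First, the two assertions are carried into one another by $(-)^{\op}$: passing to opposite $\infty$-categories exchanges coCartesian fibrations with Cartesian fibrations (Observation~\ref{cCart.ops}), left fibrations with right fibrations, the operations $(-)^{\sf cCart}$ and $(-)^{\sf Cart}$ (Notation~\ref{def.sub.coCart}), and the adjoint chain $\sB\dashv \iota\dashv (-)^\sim$ for $\iota\colon \Spaces\hookrightarrow \Cat$ with the corresponding chain of adjunctions for $\Spaces^{\op}\hookrightarrow \Cat^{\op}$. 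So it suffices to prove assertion~(1).

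That $\iota\colon \Spaces\hookrightarrow \Cat$ is a symmetric monoidal left adjoint is immediate from Terminology~\ref{mxml.gpd}: it is a left adjoint with right adjoint $(-)^\sim$, and, being also a right adjoint (to $\sB$), it preserves finite products, hence is symmetric monoidal for the Cartesian structures, and its right adjoint is then canonically lax symmetric monoidal. For the remaining clause I would argue fiberwise. By Theorem~\ref{st.un} a left fibration over $\cK$ is determined by its classifying functor $\cK\to \Spaces$, which sends an object $x$ to the fiber over $x$ and a morphism to its coCartesian transport; so it is enough to check that $\cE^{\sf cCart}\to\cK$ and the left fibration classified by $(-)^\sim\circ F$ have the same fibers and transport, where $F\colon \cK\to \Cat$ classifies the given coCartesian fibration $\cE\to\cK$. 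By Theorem~\ref{st.un.cCart} and Remark~\ref{st.loc.coCart}, $F$ sends $x$ to $\cE_{|x}$ and $f\colon x\to y$ to the transport functor $f_!\colon \cE_{|x}\to\cE_{|y}$, so $(-)^\sim\circ F$ sends $x$ to $(\cE_{|x})^\sim$ and $f$ to the restriction $(f_!)^\sim$. On the other side, $\cE^{\sf cCart}\to\cK$ is a left fibration by Corollary~\ref{coCarts.morphs.compose}; its fiber over $x$ is the maximal subgroupoid $(\cE_{|x})^\sim$ --- a morphism of $\cE^{\sf cCart}$ lying over $\id_x$ is a $\pi$-coCartesian morphism over an equivalence, hence an equivalence, and conversely every equivalence in $\cE_{|x}$ is $\pi$-coCartesian --- and its transport along $f$ is the restriction of $f_!$, namely $(f_!)^\sim$. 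These agree, which gives the claim.

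It is perhaps cleaner to package this as an adjunction: Corollary~\ref{coCarts.morphs.compose} exhibits $\cE\mapsto \cE^{\sf cCart}$ as right adjoint to the inclusion $\LFib_\cK\hookrightarrow \cCart_\cK$ --- for a left fibration $\cL\to\cK$ every morphism of $\cL$ is $\pi_\cL$-coCartesian (Proposition~\ref{left.is.coCart}), so $\cCart_\cK(\cL,\cE)\simeq \Cat_{/\cK}^{\sf cCart}(\cL,\cE)\simeq \Cat_{/\cK}(\cL,\cE^{\sf cCart})\simeq \LFib_\cK(\cL,\cE^{\sf cCart})$ using fullness of $\LFib_\cK\hookrightarrow \Cat_{/\cK}$ --- and under the straightening equivalences this inclusion is postcomposition with $\iota$, so its right adjoint $(-)^{\sf cCart}$ corresponds to postcomposition with $(-)^\sim$. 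The one point in this packaging that requires genuine care, and which I expect to be the main (if modest) obstacle, is the compatibility of the two unstraightening constructions on left fibrations, i.e. that $\LFib_\cK\hookrightarrow \cCart_\cK$ really is postcomposition with $\iota$ after straightening; this holds because $\cK^{x/}$ has an initial object, so that $\Fun_{/\cK}(\cK^{x/},\cL)\simeq \cL_{|x}$ is a space for any left fibration $\cL\to\cK$, matching the right adjoint of Observation~\ref{right.un} with $\iota$ composed with the left-fibration straightening of Theorem~\ref{st.un}.
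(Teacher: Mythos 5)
Your proposal is correct and is essentially the paper's own derivation: the paper offers no argument beyond declaring the statement an immediate consequence of Corollary~\ref{coCarts.morphs.compose}, and your adjunction packaging---exhibiting $(-)^{\sf cCart}$ as right adjoint to $\LFib_{\cK}\hookrightarrow\cCart_{\cK}$ via Proposition~\ref{left.is.coCart} together with the representability of Corollary~\ref{coCarts.morphs.compose}, then transporting through the straightening equivalences---is exactly that consequence made explicit. Note only that your first, fiberwise paragraph (``same fibers and transport'') does not by itself prove an equivalence of functors into $\Spaces$, but your second packaging supersedes it, and the compatibility point you flag is handled by the observation you give (for a left fibration $\cL$, $\Fun_{/\cK}(\cK^{x/},\cL)\simeq\cL_{|x}$ is a space).
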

\qed

\section{Left final/right-initial fibrations}\label{sec.final}
We introduce \emph{left-final} exponentiable fibrations, and \emph{right-initial} exponentiable fibrations, and show that they are classified by flagged $\infty$-subcategories $\LCorr \subset \Corr \supset \RCorr$.  
We show that they carry universal left/right fibrations.

\subsection{Left final and right-initial correspondences}
We introduce flagged $\infty$-subcategories
\[
\LCorr~\subset~\Corr~\supset~\RCorr
\]
of \emph{left-final} correspondences and \emph{right-initial} correspondences, and identify what they classify.

Here is the basic notion.
\begin{lemma}\label{equiv.final}
Let $\cE\to \cK$ be a functor.
\begin{enumerate}

\item  {\bf Left final:}
The following conditions on $\cE\to \cK$ are equivalent.
\begin{enumerate}
\item For each morphism $c_1\to \cK$, the fully-faithful functor $\cE_{|t} \hookrightarrow \cE_{|c_1}$ is final.
\item For each morphism $c_1\to \cK$, the Cartesian fibration $\Fun_{/\cK}(c_1,\cE) \xra{\ev_s} \cE_{|s}$ is final.
\item For each object $c_0 \xra{~\lag y\rag~} \cK$, the fully-faithful functor $\cE_{|y} \longrightarrow \cE_{/y}:=\cE\underset{\cK}\times \cK_{/y}$ is final.

\end{enumerate}

\item {\bf Right initial:}
The following conditions on $\cE\to \cK$ are equivalent.
\begin{enumerate}
\item For each morphism $c_1\to \cK$, the fully-faithful functor $\cE_{|s} \hookrightarrow \cE_{|c_1}$ is initial.
\item For each morphism $c_1\to \cK$, the coCartesian fibration $\Fun_{/\cK}(c_1,\cE) \xra{\ev_t} \cE_{|t}$ is initial.
\item For each object	 $c_0 \xra{~\lag x\rag~} \cK$, the fully-faithful functor $\cE_{|x} \longrightarrow \cE^{x/}:=\cE\underset{\cK}\times \cK^{x/}$ is initial.

\end{enumerate}

\end{enumerate}

\end{lemma}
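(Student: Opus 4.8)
The plan is to deduce the ``right initial'' statement~(2) from the ``left final'' statement~(1) by passing to opposites, and then to prove~(1) by showing (a)$\iff$(b) and (a)$\iff$(c) separately. For the reduction I would use that a functor is final if and only if its opposite is initial, together with the identifications $(\cE_{|c_1})^{\op}\simeq(\cE^{\op})_{|c_1^{\op}}$, $(\cE_{/y})^{\op}\simeq(\cE^{\op})^{y/}$, $(\cK_{/y})^{\op}\simeq(\cK^{\op})^{y/}$, and $\Fun_{/\cK}(c_1,\cE)^{\op}\simeq\Fun_{/\cK^{\op}}(c_1^{\op},\cE^{\op})$, under which the canonical equivalence $c_1^{\op}\simeq c_1$ interchanges $s$ with $t$ and $\ev_s$ with $\ev_t$; combined with Observation~\ref{cCart.ops} (Cartesian versus coCartesian), this matches each of the three conditions in~(2) for $\cE^{\op}\to\cK^{\op}$ with the corresponding condition in~(1) for $\cE\to\cK$. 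So it suffices to prove~(1), which I do using the standard criterion that a functor $\cC\xra{F}\cD$ is final if and only if, for each $d\in\cD$, the $\infty$-category $\cC\times_\cD\cD^{d/}$ has weakly contractible classifying space.

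To prove (a)$\iff$(b) I would first record the sub-fact that \emph{a Cartesian fibration $p\colon\cX\to\cB$ is final if and only if each of its fibers has weakly contractible classifying space}. Indeed, for $b\in\cB$ the $\infty$-category $\cX\times_\cB\cB^{b/}$ carries, by Lemma~\ref{Cart.base.change}, a Cartesian fibration over $\cB^{b/}$; the latter has initial object $(b\xra{=}b)$, whose fiber is $\cX_{|b}$, and the inclusion $\cX_{|b}\hookrightarrow\cX\times_\cB\cB^{b/}$ of that fiber is a left adjoint (the verification reducing to Lemma~\ref{firsthalf-locallycocart}(2)), hence is initial (Lemma~\ref{adjointcriterion}) and therefore a weak equivalence, so that $\sB(\cX\times_\cB\cB^{b/})\simeq\sB(\cX_{|b})$; the finality criterion for $p$ thereby becomes the asserted condition on fibers. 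Applying this to the Cartesian fibration $\ev_s\colon\Fun_{/\cK}(c_1,\cE)=\Fun_{/c_1}(c_1,\cE_{|c_1})\to\cE_{|s}$, condition~(b) for a fixed $c_1\to\cK$ asserts that for every $e_s\in\cE_{|s}$ the $\infty$-category of sections of $\cE_{|c_1}\to c_1$ restricting to $e_s$ has contractible classifying space; since such a section is precisely an object of $(\cE_{|c_1})^{e_s/}$ lying over $t$, this $\infty$-category is $(\cE_{|c_1})^{e_s/}\times_{\cE_{|c_1}}\cE_{|t}$. On the other hand, finality of the fully faithful functor $\cE_{|t}\hookrightarrow\cE_{|c_1}$ asks that $\sB\bigl(\cE_{|t}\times_{\cE_{|c_1}}(\cE_{|c_1})^{e/}\bigr)\simeq\ast$ for every $e\in\cE_{|c_1}$ -- and for $e$ lying over $t$ this holds automatically, since $(e\xra{=}e)$ is an initial object of that $\infty$-category, while for $e=e_s$ lying over $s$ it is exactly the classifying space appearing in~(b). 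Hence (a) and (b) impose the vanishing of the same family of classifying spaces, so they are equivalent.

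To prove (a)$\iff$(c) I would argue as in Lemma~\ref{secondhalf-locallycocart}. Fix $y\in\cK$ and an object $\gamma=(e_s\in\cE_{|x},\,x\to y)$ of $\cE_{/y}=\cE\times_\cK\cK_{/y}$, and write $c_1\to\cK$ for the morphism $x\to y$, so that $\cE_{|t}=\cE_{|y}$. The canonical functor over $\cE_{|y}$
\[
\cE_{|y}\underset{\cE_{|c_1}}\times(\cE_{|c_1})^{e_s/}\longrightarrow\cE_{|y}\underset{\cE_{/y}}\times(\cE_{/y})^{\gamma/}
\]
is a map of left fibrations over $\cE_{|y}$ whose fiber over each $e_t\in\cE_{|y}$ is the identity map $\cE_{|c_1}(e_s,e_t)\xra{=}\cE_{|c_1}(e_s,e_t)$, hence is an equivalence. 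Under this equivalence the finality criterion for $\cE_{|y}\hookrightarrow\cE_{/y}$ at $\gamma$ becomes the finality criterion for $\cE_{|t}\hookrightarrow\cE_{|c_1}$ at $e_s$ (the cases where the test object lies in the essential image of the relevant fully faithful functor being automatic, as above). Since $\gamma$ ranges over $\cE_{/y}$ exactly as the pair $(x\to y,\,e_s\in\cE_{|x})$ ranges over all morphisms $c_1\to\cK$ with target $y$ together with a lift of the source, and since further quantifying over $y\in\cK$ exhausts all morphisms $c_1\to\cK$, condition~(c) is identified with condition~(a). This proves~(1), and with it the lemma.

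The one non-formal ingredient is the sub-fact that a Cartesian fibration is final precisely when its fibers are weakly contractible; once the inclusion of the fiber over the initial object of the coslice base $\cB^{b/}$ is recognized as an adjoint -- and hence a weak equivalence -- the remainder is bookkeeping with comma $\infty$-categories and the finality criterion, amounting to a transcription of Lemmas~\ref{secondhalf-locallycocart} and~\ref{locallycocart} with ``right adjoint'' replaced by ``final''. The points I would be most careful about are the quantifier-matching in~(c) and the handling of the ``diagonal'' test objects, which are what make the relevant comma $\infty$-categories contractible for free.
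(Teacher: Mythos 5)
Your proof is correct and takes essentially the same route as the paper's: reduce (2) to (1) by taking opposites, then run Quillen's Theorem A through each of (a), (b), (c), using that the inclusion of a fiber of a Cartesian fibration into the relevant comma $\infty$-category is an adjoint and hence a classifying-space equivalence, together with the fiberwise comparison of left fibrations over $\cE_{|y}$ exactly as in Lemma~\ref{secondhalf-locallycocart}. The only differences are cosmetic --- you isolate the sub-fact that a Cartesian fibration is final if and only if its fibers have weakly contractible classifying spaces --- plus one citation slip: ``left adjoint implies initial'' is Corollary~\ref{loc.final}, not Lemma~\ref{adjointcriterion}.
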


\begin{proof}

Assertion~(1) is equivalent to assertion~(2), as implemented by replacing $\cE \to c_1$ by $\cE^{\op} \to c_1^{\op}\simeq c_1$.  
We are therefore reduced to proving assertion~(1).

We employ Quillen's Theorem A for each finality clause.  
\begin{enumerate}
\item[(a)]
Let $c_1\to \cK$ be a morphism.
The functor $\cE_{|t} \to \cE_{|c_1}$ is final if and only if, for each $e_s\in \cE_{|s}$, the classifying space of the $\infty$-undercategory $\sB (\cE_{|c_1})^{e_s/}\simeq \ast$ is terminal.

\item[(b)]
Let $c_1\ra \cK$ select a morphism.  Because the functor $\Fun_{/\cK}(c_1,\cE) \xra{\ev_s} \cE_{|s}$ is a Cartesian fibration, Lemma~\ref{firsthalf-locallycocart} gives that the functor $(\cE_{|c_1})^{e_s/}\simeq \Fun_{/\cK}(c_1,\cE)_{|e_s}\to \Fun_{/\cK}(c_1,\cE)^{e_s/}$ is a left adjoint.
In particular, the map between classifying spaces $\sB(\cE_{|c_1})^{e_s/} \to \sB\Fun_{/\cK}(c_1,\cE)^{e_s/}$ is an equivalence between spaces.

Now, the functor $\Fun_{/\cK}(c_1,\cE) \xra{\ev_s} \cE_{|s}$ is final if and only if, for each $e_s\in \cE_{|s}$, the classifying space of the $\infty$-undercategory $\sB \Fun_{/\cK}(c_1,\cE)^{e_s/}\simeq\sB (\cE_{|c_1})^{e_s/}\simeq  \ast$ is terminal.

\item[(c)]
Let $c_0\xra{\lag y \rag} \cK$ be an object.
An object in the $\infty$-overcategory $\cE_{/y}$ is the datum of a commutative diagram, $\gamma$, among $\infty$-categories:
\[
\xymatrix{
\ast  \ar[rr]^-{e_s}  \ar[d]_-{\lag s\rag }
&&
\cE  \ar[d]
\\
c_1  \ar[rr]
&&
\cK
\\
\ast  \ar[u]^-{\lag t\rag}  \ar[urr]_-{\lag y\rag }
&&
.
}
\]
Given such a diagram, $\gamma$, there is a canonical identification between $\infty$-undercategories: $(\cE_{/y})^{\gamma/}\simeq (\cE_{|c_1})^{e_s/}$.

Now, the functor $\cE_{|y} \to \cE_{/y}$ is final if and only if, for each diagram $\gamma$, the classifying space of the $\infty$-undercategory $\sB (\cE_{/y})^{\gamma/} \simeq \sB (\cE_{|c_1})^{e_s/}  \simeq \ast$ is terminal.

\end{enumerate}

\end{proof}

\begin{definition}\label{def.l.final}
A \emph{left-final/right-initial} fibration is an exponentiable fibration $\cE\ra \cK$ that satisfies any of the equivalent conditions of Lemma~\ref{equiv.final}(1)/(2).
A \emph{left-final/right-initial} correspondence is a left-final/right-initial fibration over the 1-cell.

\end{definition}

The next result offers a broad class of left-final exponentiable fibrations and of right-initial exponentiable fibrations.
\begin{prop}\label{cart.r.initial}
Let $\cE \to \cK$ be an exponentiable fibration.
If $\cE\to \cK$ is a coCartesian fibration, then it is left-final.
If $\cE\to \cK$ is a Cartesian fibration, then it is right-initial.

\end{prop}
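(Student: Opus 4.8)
The two assertions are interchanged by passing to opposites: if $\cE\to\cK$ is Cartesian then $\cE^{\op}\to\cK^{\op}$ is coCartesian (Observation \ref{cCart.ops}), and by Observation \ref{exp-op} exponentiability is preserved; moreover being ``left final'' for $\cE\to\cK$ is literally the condition of being ``right initial'' for $\cE^{\op}\to\cK^{\op}$ (compare conditions (1a) and (2a) of Lemma \ref{equiv.final}, using that a functor is final iff its opposite is initial). So I would immediately reduce to proving the first statement: a coCartesian fibration $\cE\to\cK$ is left final.

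To check left finality I would use criterion (1a) of Lemma \ref{equiv.final}: for each morphism $c_1\to\cK$, the fully faithful functor $\cE_{|t}\hookrightarrow\cE_{|c_1}$ is final. Since coCartesian fibrations are stable under base change (Lemma \ref{Cart.base.change}(1)), the pullback $\cE_{|c_1}\to c_1$ is again a coCartesian fibration, so I may assume outright that $\cK=c_1$ and that $\pi\colon\cE\to c_1$ is a coCartesian fibration; I must show $\cE_{|t}\hookrightarrow\cE$ is final. By Quillen's Theorem A it suffices to show that for every object $e_s\in\cE_{|s}$ the classifying space $\sB\bigl((\cE_{|t})^{e_s/}\bigr)$ is contractible. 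Now Lemma \ref{firsthalf-locallycocart}(1) tells us that, $\pi$ being a coCartesian fibration, the inclusion $\cE_{|t}\hookrightarrow\cE$ is a right adjoint; equivalently (Lemma \ref{adjointcriterion}(1)), for each $e_s\in\cE$ the undercategory $(\cE_{|t})^{e_s/}$ has an initial object — concretely, the object $(e_s\to f_!e_s)$ coming from a $\pi$-coCartesian lift of $(s\to t)$ starting at $e_s$. An $\infty$-category with an initial object has contractible classifying space, so $\sB\bigl((\cE_{|t})^{e_s/}\bigr)\simeq\ast$, as required. (The hypothesis that $\cE\to\cK$ is an exponentiable fibration is not needed for this direction; it is recorded only because it is the ambient setting, and because left final exponentiable fibrations are precisely the morphisms of $\LCorr$.)

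The argument is short and I expect no serious obstacle; the only point requiring a moment's care is the identification $(\cE_{|t})^{e_s/}\simeq\cE^{e_s/}\underset{c_1^{s/}}\times c_1^{t/}$ and the observation that the coCartesian lift furnishes an \emph{initial} object of this undercategory — but both are exactly what is recorded in the proof of Lemma \ref{firsthalf-locallycocart} (using the identification $(c_1^{t/}\to c_1^{s/})\simeq(\ast\xra{\lag t\rag}c_1)$ there). Thus the proof is: reduce to the Cartesian case by opposites; base change to $\cK=c_1$; invoke Lemma \ref{firsthalf-locallycocart}(1) to get the fiber inclusion as a right adjoint, hence each relevant undercategory has an initial object; conclude via Quillen's Theorem A that the inclusion is final; unwind to get left finality over a general base.
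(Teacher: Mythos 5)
Your proposal is correct and follows essentially the same route as the paper: the paper's proof is exactly "Lemma \ref{firsthalf-locallycocart} plus Corollary \ref{loc.final}," i.e.\ the fiber inclusion $\cE_{|t}\hookrightarrow\cE_{|c_1}$ is a right adjoint and right adjoints are final, checked over each $c_1\to\cK$ via criterion (1a) of Lemma \ref{equiv.final}, with the Cartesian case handled by opposites. Your unwinding of Corollary \ref{loc.final} through Quillen's Theorem A is just the paper's cited argument made explicit, so there is no substantive difference.
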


\begin{proof}
Both of these statements follow from Lemma~\ref{firsthalf-locallycocart}, using Corollary~\ref{loc.final}.

\end{proof}

\begin{example}
Let $\cA \to \cB$ be a localization between $\infty$-categories.
Then the cylinder ${\sf cyl}(\cA \to \cB) \to c_1$ is both a left-final fibration and a right-initial fibration.

\end{example}

\begin{remark}
There is a potential for confusion of terminology: note that a left-final or right-initial fibration is not necessarily a final or initial functor. For instance, every functor $\cE\ra \ast$ to the terminal category is both a left-final and right-initial fibration, but $\cE \ra \ast$ is final or initial if and only if the classifying space $\sB\cE$ is contractible.\end{remark}

The following is a salient property of left-final, and right-initial, fibrations.

\begin{prop}\label{fib.colim}
Let 
\[
\xymatrix{
\cE  \ar[rr]^-F  \ar[d]_-{\pi}
&&
\cZ
\\
\cK 
}
\]
be a diagram of $\infty$-categories.  
\begin{enumerate}
\item 
Suppose the functor $\pi$ is a left-final fibration.
The left Kan extension $\pi_! F \colon \cK \to \cZ$ exists if and only if, for each $x\in \cK$, the colimit indexed by the fiber $\colim( \cE_{|x} \to \cE \xra{F} \cZ)$ exists in $\cZ$.
Furthermore, should this left Kan extension exist, its values are given as colimits indexed by these fibers:
\[
\cK \ni ~x~\mapsto~ \colim( \cE_{|x} \to \cE \xra{F} \cZ)~\in \cZ~.
\]

\item 
Suppose the functor $\pi$ is a right-initial fibration.
The right Kan extension $\pi_\ast F \colon \cK \to \cZ$ exists if and only if, for each $x\in \cK$, the limit over the fiber $\limit( \cE_{|x} \to \cE \xra{F} \cZ)$ exists in $\cZ$.
Furthermore, should this right Kan extension exist, its values are given as limits over the fibers:
\[
\cK \ni ~x~\mapsto~ \limit( \cE_{|x} \to \cE \xra{F} \cZ)~\in \cZ~.
\]

\end{enumerate}

\end{prop}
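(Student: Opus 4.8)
The plan is to reduce the statement to the pointwise formula for Kan extensions and then invoke cofinality.

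First, assertion~(1) and assertion~(2) imply one another, as implemented by replacing $(\cE \xra{\pi} \cK)$ by its opposite $(\cE^{\op} \xra{\pi^{\op}} \cK^{\op})$. Under this replacement a right initial fibration becomes a left final fibration (this is the op-symmetry already recorded in Lemma~\ref{equiv.final}), a right Kan extension $\pi_\ast F$ becomes a left Kan extension $(\pi^{\op})_!(F^{\op})$, and a limit over a fiber $\cE_{|x}$ becomes a colimit over $\cE^{\op}_{|x}$. So it suffices to prove assertion~(1).

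Next I would recall the pointwise formula for the left Kan extension along an \emph{arbitrary} functor $\pi\colon \cE \to \cK$: the left Kan extension $\pi_!F$ exists if and only if, for each object $x\in \cK$, the colimit $\colim\bigl(\cE\underset{\cK}\times \cK_{/x} \to \cE \xra{F} \cZ\bigr)$ exists in $\cZ$, and — should it exist — $\pi_!F$ is computed by these colimits, $(\pi_!F)(x)\simeq \colim\bigl(\cE_{/x}\to \cZ\bigr)$, where $\cE_{/x}:=\cE\underset{\cK}\times\cK_{/x}$. This is standard and model-independent (HTT, \S4.3). In view of it, the proposition reduces to the following claim, to be checked for each $x\in\cK$: the colimit of $\cE_{/x}\to \cZ$ exists if and only if the colimit of its restriction $\cE_{|x}\to\cZ$ exists, and in that case the two agree.

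Finally, this claim is immediate from the defining property of a left final fibration. Lemma~\ref{equiv.final}(1)(c) states precisely that the fully faithful functor $\cE_{|x}\cofib \cE_{/x}$ is final; and restriction of a diagram along a final functor induces an equivalence on colimits — existence of either colimit implies existence of the other, with canonically identified values. Combining this with the pointwise formula yields both the asserted existence criterion for $\pi_!F$ and the asserted formula $x\mapsto \colim(\cE_{|x}\to\cE\xra{F}\cZ)$ for its values. I expect the only genuine bookkeeping point to be confirming that the comma $\infty$-category $\cE\underset{\cK}\times\cK_{/x}$ occurring in the pointwise Kan extension formula is literally the $\cE_{/x}$ appearing in Lemma~\ref{equiv.final}(1)(c), together with checking that the op-dualization in the first paragraph correctly exchanges the hypotheses and conclusions of~(1) and~(2); everything substantive is the standard interplay between pointwise Kan extensions and cofinality.
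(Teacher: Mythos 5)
Your proposal is correct and follows the paper's own argument essentially verbatim: reduce (2) to (1) by taking opposites, invoke the pointwise formula expressing $(\pi_!F)(x)$ as a colimit over $\cE_{/x}:=\cE\underset{\cK}\times\cK_{/x}$, and then use Lemma~\ref{equiv.final}(1c) (finality of $\cE_{|x}\hookrightarrow\cE_{/x}$) to replace that colimit by the colimit over the fiber. The bookkeeping points you flag are indeed harmless, since $\cE_{/x}$ is defined in the paper exactly as that fiber product.
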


\begin{proof}
Assertion~(1) and assertion~(2) are equivalent, as implemented by replacing the given diagram by its opposite.
We are therefore reduced to proving assertion~(1).  

Formally, the left Kan extension $\pi_!F$ exists if and only if, for each $x\in \cK$, the colimit indexed by the $\infty$-overcategory $\colim( \cE_{/x} \to \cE \xra{F} \cZ)$ exists in $\cZ$;
furthermore, should this left Kan extension exist, its values are given as colimits:
\[
\cK \ni ~x~\mapsto~ \colim( \cE_{/x} \to \cE \xra{F} \cZ)~\in \cZ~.
\]
The result follows directly from the Definition~\ref{def.l.final}, using Lemma~\ref{equiv.final}(1c).

\end{proof}

After Example~\ref{cart.r.initial}, Proposition~\ref{fib.colim} restricts to the following familiar result.
\begin{cor}\label{cCart.fib.colim}
Proposition~\ref{fib.colim}(1)/(2) remains valid when \emph{``left-final fibration''/``right-initial fibration''} is replaced by \emph{``coCartesian fibration''/``Cartesian fibration''}.

\end{cor}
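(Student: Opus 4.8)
The plan is to reduce the statement immediately to Proposition~\ref{fib.colim} by checking that its hypotheses are met. Recall that Proposition~\ref{fib.colim}(1) applies to any left final fibration and Proposition~\ref{fib.colim}(2) to any right initial fibration, so it suffices to observe that a coCartesian fibration is a left final fibration and that a Cartesian fibration is a right initial fibration.

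First I would invoke Lemma~\ref{exp-examples}: every coCartesian fibration and every Cartesian fibration is an exponentiable fibration. Then, by Proposition~\ref{cart.r.initial}, an exponentiable fibration is left final when it is a coCartesian fibration and right initial when it is a Cartesian fibration. Hence a coCartesian fibration $\cE \xra{\pi}\cK$ satisfies the hypothesis of Proposition~\ref{fib.colim}(1), and so for any $F\colon \cE \to \cZ$ the left Kan extension $\pi_!F$ exists if and only if each fiberwise colimit $\colim(\cE_{|x}\to \cE \xra{F}\cZ)$ exists in $\cZ$, and in that case $\pi_!F$ is computed by $x\mapsto \colim(\cE_{|x}\to \cE\xra{F}\cZ)$. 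Applying Proposition~\ref{fib.colim}(2) in the same way to a Cartesian fibration (equivalently, passing to opposites) yields the corresponding statement for right Kan extensions. This completes the argument.

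I expect no real obstacle: the proof is a direct concatenation of the three cited results, and the only point worth a moment's care is that the notion of exponentiable fibration appearing in Proposition~\ref{fib.colim} is the same homotopy-invariant one of Definition~\ref{def.exp-fib} used in Lemma~\ref{exp-examples} and Proposition~\ref{cart.r.initial}, which it is. If a self-contained proof were preferred, one could instead argue directly that the inclusion $\cE_{|x} \hookrightarrow \cE_{/x}$ is final — using Lemma~\ref{firsthalf-locallycocart} together with the cofinality of the fiber in the overcategory for a coCartesian fibration — and then apply the standard formula expressing left Kan extensions as overcategory-indexed colimits; but routing through Proposition~\ref{fib.colim} is cleaner and shorter.
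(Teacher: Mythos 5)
Your proof is correct and is essentially the paper's own argument: the corollary is deduced immediately by combining Lemma~\ref{exp-examples} and Proposition~\ref{cart.r.initial} (so that a coCartesian/Cartesian fibration is a left final/right initial fibration) and then applying Proposition~\ref{fib.colim}. Nothing further is needed.
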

\qed

\begin{definition}\label{def.fcorr}
The symmetric monoidal $\infty$-category of \emph{left-final correspondences}, respectively \emph{right-initial correspondences} is the symmetric monoidal flagged $\infty$-subcategory
\[
\LCorr~\subset~ \Corr~\supset~\RCorr
\]
with the same underlying $\infty$-groupoid and those morphisms, which are exponentiable fibrations over $c_1$, that are left-final fibrations and that are right-initial fibrations, respectively.
\end{definition}

\begin{definition}\footnote{This definition was used in the published version of this paper, but is not used in this present corrected version.}
Let $\cE\ra\cK$ be functor between $\oo$-categories and let $\cW\hookrightarrow \cE$ be an $\oo$-subcategory containing
the maximal $\oo$-subgroupoid $\cE^\sim$. For each $\oo$-category $\cX\ra\cK$, the $\infty$-category
\[
\Fun^{\cW}_{/\cK}(\cX,\cE)
\]
is the $\oo$-subcategory of $\Fun_{/\cK}(\cX, \cE)$ consisting of the same objects and only those natural transformations by $\cW$. 
That is, a morphism $f:[1]\ra \Fun_{/\cK}(\cX, \cE)$ belongs to $\Fun^{\cW}_{/\cK}(\cX, \cE)$ if and only if for every $x\in \cX$ the morphism $f(x): [1] \ra \cE$ belongs to $\cW$.\footnote{This is equivalent to the intended definition of Notation 3.7.11 from \cite{pkd}, but unfortunately there is a typo therein.}
\end{definition}

\begin{lemma}\label{lemma.new.loc.cocart}
Consider a natural transformation $\eta$ of $\Cat$-valued functors
\[
\xymatrix{
\cX\ar@/^.5pc/[rr]^-f\ar@/_.5pc/[rr]_-g&\Downarrow&\Cat}~.
\]
If the functor $\eta_x: f(x) \ra g(x)$ is a localization for every $x\in \cX$, then the induced functor on colimits
\[
\underset{\cX}\colim \ f \longrightarrow \underset{\cX}\colim \ g
\]
is a localization.
\end{lemma}

\begin{proof}
Let $g^\sim\subset g$ be the subfunctor given by the composite $(-)^{\sim}\circ g:\cX\ra\Cat\ra \Spaces$, where the second functor assigns to an $\oo$-category its maximal $\oo$-subgroupoid. Consider the functor $\eta^{-1}(g^\sim)$ defined as the limit in $\Fun(\cX,\Cat)$ in the pullback square
\[
\xymatrix{
\eta^{-1}(g^\sim)\ar[r]\ar[d]&f\ar[d]^\eta\\
g^\sim\ar[r]&g~.}
\]
That is, for each $x\in\cX$, the value
\[
\eta^{-1}(g^\sim)(x):= g(x)^\sim\underset{g(x)}\times f(x)
\]
is the $\oo$-subcategory of $f(x)$ which maps to equivalences in $g(x)$. Let $\sB\eta^{-1}(g^\sim)$ be the composite functor $\sB\circ\eta^{-1}(g^\sim)$, where $\sB:\Cat\ra\Spaces$ is the classifying-space functor. The condition that $f(x)\ra g(x)$ is a localization implies that the value of the square
\[
\xymatrix{
\eta^{-1}(g^\sim)\ar[r]\ar[d]&f\ar[d]^\eta\\
\sB\eta^{-1}(g^\sim)\ar[r]&g~.}
\]
is a pushout for every $x\in\cX$. Since colimits in $\Fun(\cX,\Cat)$ are computed object-wise in $\cX$, this implies that the preceding square is a pushout in $\Fun(\cX,\Cat)$. Since the functor $\colim: \Fun(\cX,\Cat)\ra\Cat$ is a left adjoint, it preserves colimits, and the following square is again a pushout:
\[
\xymatrix{
\underset{\cX}\colim \ \eta^{-1}(g^\sim)\ar[r]\ar[d]&\underset{\cX}\colim \ f \ar[d]^\eta\\
\underset{\cX}\colim \ \sB\eta^{-1}(g^\sim)\ar[r]& \underset{\cX}\colim \ g~.}
\]
Note also the equivalence $\underset{\cX}\colim \ \sB\eta^{-1}(g^\sim)\simeq \sB\Bigl(\underset{\cX}\colim \ \eta^{-1}(g^\sim)\Bigr)$, since $\sB$ is itself a left adjoint. This implies the equivalence
\[
\underset{\cX}\colim \ g \simeq \Bigl(\underset{\cX}\colim \ f\Bigr)\Bigl[\bigl(\underset{\cX}\colim \ \eta^{-1}(g^\sim)\bigr)^{-1}\Bigr]
\]
that the $\oo$-category $\underset{\cX}\colim \ g$ is the localization of $\underset{\cX}\colim \ f$ with respect to 
$\underset{\cX}\colim \ \eta^{-1}(g^\sim)$.

%
\end{proof}

Our result can be reformulated as follows.
\begin{cor}\label{loc.fibs}
Consider a commutative diagram of $\oo$-categories
\[
\xymatrix{
\cE\ar[rr]_F\ar[dr]_\pi&&\cE'\ar[dl]^{\pi'}\\
&\cK}
\]
for which $F_{|x}\colon \cE_{|x}\ra\cE'_{|x}$ is a localization for every object $x\in \cK$. Then $F$ is a localization if either:
\begin{itemize}
\item $\pi$ and $\pi'$ are coCartesian fibrations, and $F$ carries $\pi$-coCartesian morphisms to $\pi'$-coCartesian morphisms; or

\item $\pi$ and $\pi'$ are Cartesian fibrations, and $F$ carries $\pi$-Cartesian morphisms to $\pi'$-Cartesian morphisms.  
\end{itemize}
\end{cor}

\begin{proof}
Since $\cE\ra \cE'$ is a localization if and only $\cE^{\op}\ra \cE'^{\op}$ is, it suffices to prove the case when $\pi$ and $\pi'$ are coCartesian fibrations.

Since $\cE\ra \cK$ is a coCartesian fibration, it is the unstraightening (see Construction~\ref{def.un}) of a functor ${\sf str}(\cE):\cK\ra \Cat$, from which $\cE$ can be recovered as a coend
\[
\cE\simeq \cK^{\bullet/} \underset{\cK} \otimes {\sf str}(\cE)
\]
and likewise for $\cE'\ra \cK$. Since the functor $F:\cE\ra \cE'$ preserves coCartesian morphisms over $\cK$, it is induced by a natural transformation of functors ${\sf str}(F): {\sf str}(\cE)\ra {\sf str}(\cE')$, so that there is a commutative diagram
\[
\xymatrix{
\cE\ar[d]_F\ar[r]^-\sim&\cK^{\bullet/} \underset{\cK} \otimes {\sf str}(\cE)\ar[d]^-{{\sf id}\ot {\sf str}(F)}\\
\cE'\ar[r]^-\sim&\cK^{\bullet/} \underset{\cK} \otimes {\sf str}(\cE')~.}
\]
Recall that a coend over $\cK$ is computed, by definition, as a colimit indexed by the twisted arrow $\oo$-category $\TwAr(\cK)$: That is, given contravariant and covariant functors $R:\cK^{\op}\ra\Cat$ and $L:\cK\ra\Cat$, then the coend is
\[
R\underset{\cK}\ot L \simeq \colim\Bigl(\TwAr(\cK)\xra{R\ot L}\Cat\Bigr)
\]
where $R\ot L$ is the functor given by the composite
\[
\TwAr(\cK)\ra \cK^{\op}\times\cK \xra{R \times L} \Cat\times\Cat \xra{\times}\Cat~.
\]
Consequently, observe that if $L\ra L'$ is a natural transformation in $\Fun(\cK,\Cat)$ such that $L(x)\ra L'(x)$ is a localization for every object $x$ in $\cK$, then likewise $R\ot L\ra R\ot L'$ is a natural transformation in $\Fun(\TwAr(\cK),\Cat)$ such that $(R\ot L)(\varphi)\ra (R\ot L')(\varphi)$ is a localization for every morphism $\varphi$ in $\cK$.
We can now apply Lemma~\ref{lemma.new.loc.cocart} to the case $\cX = \TwAr(\cK)$ with the natural transformation $\eta={\sf id}\ot{\sf str}(F)$. By the preceding observation, the natural transformation of $\Cat$-valued functors ${\sf id}\ot{\sf str}(F)$ satisfies the condition of being a localization on every object $\varphi$ of $\TwAr(\cK)$. We therefore obtain that the functor $F$ is a localization.
\end{proof}

The next result, Lemma~\ref{exponentiable.localization}, is a useful consequence of Lemma~\ref{exp-char}(6).  Its proof relies also on the preceding, Corollary~\ref{loc.fibs}.  

\begin{lemma}\label{exponentiable.localization}
For $\cE \ra [2]$ an exponentiable fibration, the restriction functor
\[
\Fun_{/[2]}\bigl([2],\cE\bigr)       
\longrightarrow
\Fun_{/[2]}\bigl(\{0<2\},\cE\bigr)
\]
is a localization.

\end{lemma}
\begin{proof}
Evaluation functors assemble as a diagram of $\infty$-categories:
\[
\xymatrix{
\Fun_{/[2]}\bigl([2],\cE\bigr)     \ar[rr]    \ar[dr]_-{\ev_{02}} \ar[ddr]_-{\ev_{2}}
&&
\Fun_{/[2]}\bigl(\{0<2\},\cE\bigr)   \ar[dl]^-{\ev_{02}}  \ar[ddl]^-{\ev_{2}}
\\
&
\cE_{|\{0\}}\times \cE_{|\{2\}}    \ar[d]_-{\pr}
&
\\
& 
\cE_{|\{2\}}  
&
.
}
\]
For each object $e_0\in \cE_{|\{0\}}$, the horizontal functor implements a functor between fibers over this object:
\[
\xymatrix{
\Fun_{/[2]}\bigl([2],\cE\bigr)_{|\{e_2\}}     \ar[rr]    \ar[dr]_-{\ev_{02}}
&&
\Fun_{/[2]}\bigl(\{0<2\},\cE\bigr)_{|\{e_2\}}   \ar[dl]^-{\ev_{02}} 
\\
&
\cE_{|\{0\}}
&
.
}
\]
Lemma~\ref{exp-char}(6) states that this horizontal functor witnesses a localization between fibers over $\cE_{|\{0\}}$.  
Using that this diagram is a morphism between Cartesian fibrations over $\cE_{|\{0\}}$, Corollary~\ref{loc.fibs} gives that the horizontal functor witnesses a localization.
This is to say that the horizontal functor in the first diagram of the proof 
witnesses a localization between fibers over $\cE_{|\{2\}}$.  
Using that the outer triangle, 
\[
\xymatrix{
\Fun_{/[2]}\bigl([2],\cE\bigr)     \ar[rr]    \ar[dr]_-{\ev_{2}}
&&
\Fun_{/[2]}\bigl(\{0<2\},\cE\bigr)  \ar[dl]^-{\ev_{2}} 
\\
&
\cE_{|\{2\}}
&
,
}
\]
is a morphism between coCartesian fibrations over $\cE_{|\{2\}}$, Corollary~\ref{loc.fibs} gives that the horizontal functor above witnesses a localization, as desired. 
\end{proof}

\begin{lemma}\label{fcorr-exists}
The symmetric monoidal flagged $\infty$-categories $\LCorr$ and $\RCorr$ exist.

\end{lemma}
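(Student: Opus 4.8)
The plan is to realize $\LCorr$ and $\RCorr$ as the flagged $\infty$-categories representing suitable sub-presheaves of $\EFib^\sim$, exactly via the criterion used in the proof of Theorem~\ref{representablecorr}. It suffices to treat $\LCorr$: replacing every $\infty$-category by its opposite interchanges ``left final'' and ``right initial'' (Lemma~\ref{equiv.final} together with Observation~\ref{exp-op} and the evident compatibility of $(-)^{\op}$ with base change), so $\RCorr$ is obtained from $\LCorr$ by this involution. For each $\infty$-category $\cK$, let ${\sf LEFib}^\sim_\cK\subset\EFib^\sim_\cK$ be the full sub-$\infty$-groupoid on those exponentiable fibrations over $\cK$ that are left final. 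Because left finality is detected on the $c_1$-points of the base (Lemma~\ref{equiv.final}(1a)) and such $c_1$-points pull back, $\cK\mapsto{\sf LEFib}^\sim_\cK$ is a sub-presheaf of $\EFib^\sim$ on $\Cat^{\op}$. It is moreover closed under the fiber products of Observation~\ref{fiber-products}: for each functor $c_1\to\cK$ there is an identification $\Fun_{/c_1}\bigl(c_1,(\cE\underset{\cK}\times\cE')_{|c_1}\bigr)\simeq\Fun_{/c_1}(c_1,\cE_{|c_1})\times\Fun_{/c_1}(c_1,\cE'_{|c_1})$ compatible with $\ev_s$, so by Lemma~\ref{equiv.final}(1b) and the fact that a product of final functors is final (Quillen's Theorem~A), the fiber product $\cE\underset{\cK}\times\cE'$ is again left final. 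This produces a lift ${\sf LEFib}^\sim\colon\Cat^{\op}\to\CAlg(\SPACES)$.

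By Theorem~\ref{flagged.thm} and the reasoning of Theorem~\ref{representablecorr}, it remains to check (i) that the restriction of ${\sf LEFib}^\sim$ to $\bdelta^{\op}$ is a Segal space, and (ii) that the unit ${\sf LEFib}^\sim\to{\sf RKan}\bigl({\sf LEFib}^\sim_{|\bdelta^{\op}}\bigr)$ is an equivalence. Assertion~(ii) is inherited from $\EFib^\sim$: by Theorem~\ref{representablecorr} a compatible family of exponentiable fibrations indexed by $\bdelta_{/\cK}$ is precisely the datum of a single exponentiable fibration $\cE\to\cK$, and since every edge $\{i<j\}\hookrightarrow[p]$ of every $[p]\to\cK$ is a $c_1$-point of $\cK$, such a family consists of left final fibrations if and only if $\cE\to\cK$ is left final. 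For~(i), fix $p>0$. By Corollary~\ref{corr.segal} the $\EFib^\sim$-square on the objects $[p]$, $\{0<1\}$, $\{1\}$, $\{1<\dots<p\}$ is a pullback, so it is enough to show that an exponentiable fibration $\cE\to[p]$ presented, as in Lemma~\ref{pushout}, as the pushout of $\cE_{01}\to\{0<1\}$ and $\cE_{1p}\to\{1<\dots<p\}$ along $\cE_1\to\{1\}$ is left final if and only if $\cE_{01}$ and $\cE_{1p}$ are. The ``only if'' is base-change stability; the ``if'' is the crux.

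To prove ``if'', one checks for each edge $\{i<j\}\hookrightarrow[p]$ that $\cE_{|j}\hookrightarrow\cE_{|\{i<j\}}$ is final. For $1\le i<j$ or $0\le i<j\le 1$ this is immediate from base-change stability and the hypothesis. In the remaining case $0=i<1<j$, apply Quillen's Theorem~A: for $e_0\in\cE_0$ the relevant comma $\infty$-category is the left fibration over $\cE_j$ classifying $e_j\mapsto\Map_{\cE_{|\{0<j\}}}(e_0,e_j)$, which by Lemma~\ref{exp-char}(\ref{coend.condition}) (equivalently Lemma~\ref{pushout}(1)) is the coend $\cE_{01}(e_0,-)\underset{\cE_1}\bigotimes\cE_{1p}(-,e_j)$; hence
\[
\sB\Bigl(\cE_j\underset{\cE_{|\{0<j\}}}\times\bigl(\cE_{|\{0<j\}}\bigr)^{e_0/}\Bigr)
~\simeq~\colim_{e_j\in\cE_j}\cE_{01}(e_0,-)\underset{\cE_1}\bigotimes\cE_{1p}(-,e_j)
~\simeq~\cE_{01}(e_0,-)\underset{\cE_1}\bigotimes\colim_{e_j\in\cE_j}\cE_{1p}(-,e_j)
~\simeq~\colim_{e_1\in\cE_1}\cE_{01}(e_0,e_1)
~\simeq~\ast,
\]
where the middle equivalence commutes the colimit past the coend, the third uses that $\colim_{e_j\in\cE_j}\cE_{1p}(e_1,e_j)\simeq\ast$ for each $e_1$ (left finality of $\cE_{1p}$ applied to the edge $\{1<j\}$, via Theorem~A), and the last uses left finality of $\cE_{01}$. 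Thus $\cE_{|j}\hookrightarrow\cE_{|\{0<j\}}$ is final, completing~(i). Finally, every exponentiable fibration over $\ast$ is left final, so ${\sf LEFib}^\sim_\ast=\Cat^\sim$; hence the represented flagged $\infty$-category has full underlying $\infty$-groupoid and subset-of-correspondences morphisms, i.e.\ it is the flagged $\infty$-subcategory $\LCorr\subset\Corr$ of Definition~\ref{def.fcorr}, and its representing functor factors through $\CAlg(\SPACES)$, so it is symmetric monoidal. The construction for $\RCorr$ is the opposite one. The principal obstacle is the coend/colimit interchange in the displayed chain, which rests on the coend description of mapping spaces in a pushout of exponentiable fibrations (Lemmas~\ref{exp-char} and~\ref{pushout}).
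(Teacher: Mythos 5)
Your proposal is correct, but it reaches the crucial point by a genuinely different route than the paper. The paper realizes $\LCorr$ (and dually $\RCorr$) as a flagged $\infty$-subcategory of the already-constructed $\Corr$, so that all it must check is closure of the left final correspondences under composition and under products; composition-closure is proved by working with the $\infty$-categories of sections $\Fun_{/[2]}(-,\cE)$, using that the restriction $\Fun_{/[2]}([2],\cE)\to\Fun_{/[2]}(\{0<2\},\cE)$ is a localization (Lemma~\ref{exponentiable.localization}), that localizations are final (Proposition~\ref{localization.final}), that finality is preserved under base change along coCartesian fibrations (Corollary~\ref{finalpullback}), and the two-out-of-three property of final functors (Lemma~\ref{2.o.3}). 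You instead re-run the representability machinery of Theorem~\ref{representablecorr} for the sub-presheaf of left final exponentiable fibrations, and you establish the essential closure statement (an exponentiable $\cE\to[p]$ is left final as soon as its restrictions to $\{0<1\}$ and $\{1<\dots<p\}$ are) fiberwise, via Quillen's Theorem~A, the coend formula for mapping spaces of Lemma~\ref{exp-char}(\ref{coend.condition}), and a colimit interchange; this computation is sound, with the only points deserving a word being the naturality in $e_j$ of the coend comparison (it is the composition map, pointwise an equivalence by Lemma~\ref{exp-char}(\ref{coend.condition})) and the standard identity that a coend against the constant point-valued functor is a colimit, both at the paper's own level of rigor. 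What the paper's route buys is economy: no mapping-space computation, and no need to re-verify the Segal and right-Kan-extension conditions, since $\Corr$ already exists and a wide flagged subcategory exists once its morphism class is closed under composition. What your route buys is an explicit bimodule-level picture: it exhibits left finality of a correspondence $P\colon\cA^{\op}\times\cB\to\Spaces$ as the condition that $\colim_{\cB}P(a,-)$ be contractible for each $a$, and makes closure under composition a Fubini-type manipulation of coends, consonant with the paper's later bimodule description of morphisms in $\LCorr$.
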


\begin{proof}
The arguments for $\LCorr$ and $\RCorr$ are dual, so we only present the first.

We must verify that a composition of two morphisms in $\Corr$ that each belong to $\LCorr$ is again a morphism of $\LCorr$.
So let $\cE\to [2]$ be an exponentiable fibration.
Suppose both of the functors
$\ev_0\colon \Fun_{/[2]}(\{0<1\},\cE) \to \cE_{|0}$ and $\ev_1\colon \Fun_{/[2]}(\{1<2\},\cE) \to \cE_{|1}$
are final.
We must show that the functor $\ev_0\colon \Fun_{/[2]}(\{0<2\},\cE) \to \cE_{|0}$ is final. 
Our argument follows the canonical diagram of $\infty$-categories of partial sections and restriction functors among them:
\[
\xymatrix{
\Fun_{/[2]}(\{0<2\},\cE)    \ar[dd]
&&
\Fun_{/[2]}([2],\cE)      \ar[dr]      \ar[dl]  \ar[ll]
\\
&
\Fun_{/[2]}(\{0<1\},\cE)\ar[dr]_{\rm coCart}  \ar[dl]^{\rm final}   
&&
\Fun_{/[2]}(\{1<2\},\cE)   \ar[dr]    \ar[dl]^{\rm final}
\\
\cE_{|0}
&&
\cE_{|1}
&&
\cE_{|2}.
}
\]
In light of the left triangle in this diagram, the 2 out of 3 property for final functors (Lemma~\ref{2.o.3}) reduces finality of the functor $\Fun_{/[2]}(\{0<2\},\cE)  \to \cE_{|0}$ to finality of the functor $\Fun_{/[2]}([2],\cE)  \to \Fun_{/[2]}(\{0<2\},\cE)$ and finality of the composite functor $\Fun_{/[2]}([2],\cE) \to\Fun_{/[2]}(\{0<1\},\cE)  \to  \cE_{|0}$.  
Lemma~\ref{exponentiable.localization} states that the first of these functors is a localization; Proposition~\ref{localization.final} thus gives that this functor is final, as desired.
We now address finality of the composite functor.  
By assumption, the right factor in this composite is a final functor.
Because final functors compose (Lemma~\ref{2.o.3}), we are reduced to showing that the functor $\Fun_{/[2]}([2],\cE) \to\Fun_{/[2]}(\{0<1\},\cE)$ is final.
This is so because final functors are preserved by base change along a coCartesian fibration (Corollary~\ref{finalpullback}).
This concludes the proof that $\LCorr$ exists as a full $\oo$-subcategory of $\Corr$.

We now show that the symmetric monoidal structure on $\Corr$ restricts to one on $\LCorr$.
Note that the monomorphism $\LCorr \to \Corr$ is an equivalence on underlying $\infty$-groupoids.
Thus, we need only verify a factorization of the pairwise symmetric monoidal structure
\[
\xymatrix{
\LCorr \times \LCorr \ar@{-->}[rr]  \ar[d]
&&
\LCorr \ar[d]
\\
\Corr \times \Corr \ar[rr]^-{\otimes}
&&
\Corr.
}
\]
By definition, the vertical arrows are monomorphisms between Segal spaces; on $[0]$-points the vertical monomorphisms are equivalences between spaces.  
Therefore, such a factorization exists, and is unique, provided it does on spaces of $[1]$-points.  
Recall from~\S\ref{sym.stctr} that the symmetric monoidal structure on $\Corr$ is given on objects by products, and on $\cK$-points by fiber products over $\cK$.  
So the existence of the factorization is reduced to the following assertion.
\begin{itemize}
\item[~]
Let $\cE\to c_1$ and $\cE'\to c_1$ be left-final fibrations.  
The fiber product correspondence $\cE\underset{c_1}\times \cE' \to c_1$ is left-final.  
\end{itemize}
From the defining Lemma~\ref{equiv.final}(1a), we must then show that the canonical functor
\begin{equation}\label{e102}
\bigl(\cE\underset{c_1}\times \cE'\bigr)_{|t}
\longrightarrow
\cE\underset{c_1}\times \cE'
\end{equation}
is final.  
We use Quillen's Theorem~A (Theorem~\ref{thm.A}).
So let $(e,e')\in \cE\underset{c_1}\times \cE'$ be an object.
Because the diagonal functor $c_1\to c_1\times c_1$ is a monomorphism, so too is the canonical functor $\cE\underset{c_1}\times \cE' \to \cE\times \cE'$ a monomorphism.  
Because the above displayed functor is a monomorphism, it follows that the resulting functor between $\infty$-undercategories,
\[
\bigl(\cE\underset{c_1}\times \cE'\bigr)_{|t}
\underset{\cE\underset{c_1}\times \cE'}\times
\bigl(\cE\underset{c_1}\times \cE'\bigr)^{(e,e')/}
~ \longrightarrow ~ 
\bigl(\cE\underset{c_1}\times \cE'\bigr)_{|t}
\underset{\cE \times \cE'}\times
(\cE \times \cE')^{(e,e')/}
\]
is fully-faithful.
By inspection, this functor is surjective.
Therefore, the above functor is an equivalence between $\infty$-categories.  
So~(\ref{e102}) is a final if and only if the functor
\[
\bigl(\cE\underset{c_1}\times \cE'\bigr)_{|t}
~=~
\cE_{|t}\times \cE'_{|t}
\longrightarrow
\cE \times \cE'
\]
is final.  
Finality of this functor follows from the fact (Lemma~\ref{final.closure}) that the product of final functors is a final functor.
\end{proof}

\subsection{Universal left/right fibrations over $\LCorr$/$\RCorr$}
We define the relative classifying space of a functor, and show that this construction has some useful properties among left-final/right-initial fibrations.
We explain how this construction detemines universal left/right fibrations over $\LCorr$/$\RCorr$.

\begin{obs/def}\label{conserv.loc}
Let $\cK$ be an $\infty$-category.
Consider the full $\infty$-subcategory
\[
\CAT_{/^{\sf cons}\cK} ~\hookrightarrow~ \CAT_{/\cK}
\]
consisting of those functors $\cE\to \cK$ that are conservative.
This fully-faithful functor is a right adjoint.
The value of its left adjoint on an $\infty$-category $\cE\xra{\pi} \cK$ is its \emph{relative classifying space}, by which we mean the localization
\[
\sB^{\sf rel} \cE~:=~\sB^{\sf rel}_{\cK}\cE~:=~ \cE [ {\cE_{|\cK^\sim}}^{-1} ]  \longrightarrow \cK
\]
on the fibers of $\pi$.  
\end{obs/def}

\begin{lemma}\label{l.final.B.fib}
Let $\cE\xra{\pi} \cK$ be a functor between $\infty$-categories.
If $\pi$ is either a left-final fibration, or a right-initial fibration, then for each functor $[p]\to \cK$ from an object in $\bDelta$, the canonical map between spaces
\[
\sB \Bigl( \Fun_{/\cK}\bigl([p],\cE\bigr)\Bigr) \xra{~\simeq~} \Fun_{/\cK}\bigl([p],\sB^{\sf rel}\cE\bigr)
\]
is an equivalence.

\end{lemma}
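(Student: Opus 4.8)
The plan is to reduce the statement to the case $p=0$ by a Segal-type induction, and to establish the $p=0$ case directly using the finality/initiality hypothesis together with Quillen's Theorem A.

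First I would reduce to $p=0$. For $p>0$, factor $[p]$ as the iterated pushout $\{0<1\}\amalg_{\{1\}}\{1<2\}\amalg_{\{2\}}\cdots\amalg_{\{p-1\}}\{p-1<p\}$ in $\Cat$, so that both sides of the asserted equivalence, being limits of spaces built from $\Fun_{/\cK}(-,\cE)$ and $\Fun_{/\cK}(-,\sB^{\sf rel}\cE)$, are computed as iterated fiber products over the $p=1$ and $p=0$ pieces --- using that $\sB^{\sf rel}\cE\to\cK$ is an $\infty$-category over $\cK$ (so its sections satisfy the Segal condition over $\cK$) and that $\sB$, as a left adjoint $\Cat\to\Spaces$, preserves these pushouts, hence carries $\Fun_{/\cK}([p],\cE)$ to the appropriate iterated pushout; some care is needed because $\sB$ does not preserve all limits, but the relevant diagrams are built from pushouts and $\sB$ preserves colimits. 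Thus it suffices to treat $p=1$ and $p=0$. For $p=1$, a morphism $c_1\to\cK$ is the datum of an object $x\in\cK$ together with a further reduction, and $\Fun_{/\cK}(c_1,\cE)$ appears in the Cartesian/coCartesian fibration span of Lemma~\ref{equiv.final}(1b)/(2b); again the $p=1$ case will fall out of the $p=0$ case after invoking Lemma~\ref{firsthalf-locallycocart} to see that $\Fun_{/\cK}(c_1,\cE)\xra{\ev_s}\cE_{|s}$ (resp. $\ev_t$) induces an equivalence on classifying spaces. So the real content is $p=0$.

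For $p=0$, a functor $[0]\to\cK$ picks an object $y\in\cK$, and the claim becomes $\sB(\cE_{/y})\xra{\simeq}(\sB^{\sf rel}\cE)_{/y}$, where the right-hand side can be analyzed via the formula $(\sB^{\sf rel}\cE)_{/y}=\Fun_{/\cK}(\cK_{/y},\sB^{\sf rel}\cE)$ and $\cE_{/y}=\Fun_{/\cK}(\cK_{/y},\cE)=\cE\times_{\cK}\cK_{/y}$. The left final/right initial hypothesis enters through Lemma~\ref{equiv.final}(1c)/(2c): the fully faithful inclusion $\cE_{|y}\hookrightarrow\cE_{/y}$ is final (resp. $\cE_{|y}\hookrightarrow\cE^{y/}$ is initial), so by Quillen's Theorem A it induces an equivalence $\sB\cE_{|y}\xra{\simeq}\sB\cE_{/y}$. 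Since $\sB^{\sf rel}\cE$ is the fiberwise localization of $\cE$ at the fiberwise equivalences, its fiber over $y$ is $\sB\cE_{|y}$, and one checks --- using that $\sB^{\sf rel}\cE\to\cK$ is conservative together with the universal property of Observation/Definition~\ref{conserv.loc} --- that $(\sB^{\sf rel}\cE)_{/y}\simeq\sB((\sB^{\sf rel}\cE)_{|y})\simeq\sB\cE_{|y}$ as well (the first identification because over a conservative functor, sections out of $\cK_{/y}$ are determined by the fiber over $y$ plus the path data, which collapses on classifying spaces). Stringing these equivalences together gives $\sB\cE_{/y}\xla{\simeq}\sB\cE_{|y}\xra{\simeq}(\sB^{\sf rel}\cE)_{/y}$, and a diagram chase confirms the composite is the canonical map.

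The main obstacle I anticipate is the $p=0$ identification $(\sB^{\sf rel}\cE)_{/y}\simeq\sB\cE_{|y}$: one must argue carefully that forming the relative classifying space (a fiberwise localization) interacts correctly with slicing over an object, i.e. that $\sB^{\sf rel}_{\cK}\cE$ restricted over $\cK_{/y}$ is the relative classifying space of $\cE_{/y}\to\cK_{/y}$, and then that the latter's space of sections is just $\sB$ of the fiber over the cone point $y$ --- this last point is exactly where conservativity of $\sB^{\sf rel}\cE\to\cK$ does the work, but it requires knowing that a conservative functor over $\cK_{/y}$ has its sections over $\cK_{/y}$ controlled by the fiber over the terminal object $y\in\cK_{/y}$ up to classifying space, which is a small lemma in its own right (provable again by Theorem A applied to the final inclusion $\{y\}\hookrightarrow\cK_{/y}$ and base change, since conservative functors take fiberwise-invertible data to invertible data). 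The reduction step also has a subtlety: to split $\sB\Fun_{/\cK}([p],\cE)$ along the Segal decomposition I need $\sB$ to preserve the relevant pullbacks of spaces of sections, which is not automatic; the fix is to observe that each $\Fun_{/\cK}(-,\cE)$ on the pieces is a Cartesian fibration over the fiber (again Lemma~\ref{firsthalf-locallycocart}), so the pullbacks in question are homotopy pullbacks along fibrations and are preserved after applying $\sB$ up to the equivalences already in hand.
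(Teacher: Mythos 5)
Your argument is circular at its crucial point. For $p=0$ the statement to be proven is exactly that the fiber of $\sB^{\sf rel}\cE\to\cK$ over $y$ agrees with $\sB(\cE_{|y})$ (note that $\Fun_{/\cK}([0],\cE)$ is the fiber $\cE_{|y}$, not $\cE_{/y}:=\cE\underset{\cK}\times\cK_{/y}$ as you write), and you obtain it by asserting that ``since $\sB^{\sf rel}\cE$ is the fiberwise localization, its fiber over $y$ is $\sB\cE_{|y}$.'' That is not a formal consequence of Observation/Definition~\ref{conserv.loc}: the relative classifying space is a localization of the total $\infty$-category $\cE$, and localizations do not commute with base change; the fiberwise computation is precisely what the left final/right initial hypothesis buys, and in the paper it appears only downstream of this lemma (Corollary~\ref{indeed.fibers}). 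The ``small lemma'' you propose to patch this---that for a conservative functor over $\cK_{/y}$ the sections are controlled, after $\sB$, by the fiber over the final object $y$---is false: conservativity is much weaker than being a left/right fibration; for instance the conservative functor $\ast\underset{X}\bigstar\ast\to c_1$ has $\Fun_{/c_1}(c_1,\ast\underset{X}\bigstar\ast)\simeq X$ while its fiber over $t$ is a point. Unique lifting against initial/final functors characterizes left/right fibrations (Proposition~\ref{left.initial.lift}), and the fact that $\sB^{\sf rel}\cE\to\cK$ is a left fibration is Corollary~\ref{fib.B.exp}, itself deduced from this lemma, so it cannot be invoked here.

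The reduction step also rests on a false principle: $\sB\colon\Cat\to\Spaces$ does not preserve pullbacks along Cartesian or coCartesian fibrations---that would be Quillen's Theorem B with no hypotheses (compare Lemma~\ref{quillenthmapp}, which requires both left final and right initial). Likewise Lemma~\ref{firsthalf-locallycocart} does not say that $\ev_s\colon\Fun_{/\cK}(c_1,\cE)\to\cE_{|s}$ is an equivalence on classifying spaces; what gives that is finality of $\ev_s$, i.e.\ the left final hypothesis via Lemma~\ref{equiv.final}(1b) combined with Lemma~\ref{final.B.equiv}. This points to the repair, which is how the paper argues: do not try to compute $\Fun_{/\cK}([p],\sB^{\sf rel}\cE)$ directly. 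Instead form the simplicial space $(\bDelta_{/\cK})^{\op}\to\Spaces$, $[p]\mapsto\sB\Fun_{/\cK}([p],\cE)$; the left final hypothesis makes the restriction functors in the Segal squares final, hence equivalences after applying $\sB$, so the Segal (and univalence) conditions hold and this presents a conservative $\infty$-category over $\cK$ under $\cE$. One then checks it has the universal property of $\sB^{\sf rel}\cE$ against conservative functors $\cZ\to\cK$ (conservativity of the test object is what gets used, not of $\sB^{\sf rel}\cE$), and the asserted formula holds by construction.
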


\begin{proof}
Suppose $\pi$ is a left-final fibration; the case that $\pi$ is a right-initial fibration follows from this case, as implemented by taking opposites.

We show that the simplicial space over $\cK$,
\begin{equation}\label{66}
(\bDelta_{/\cK})^{\op} \xra{~\Fun_{/\cK}(-,\cE)~} \Cat \xra{~\sB~}\Spaces
\end{equation}
presents an $\infty$-category $\sB'\cE \to \cK$ over $\cK$, equipped with a functor from $\cE$ over $\cK$.  
The result follows upon checking that $\sB'\cE\to \cK$ is conservative, and upon checking that the functor $\cE \to \sB'\cE$ over $\cK$ demonstrates $\sB'\cE \to \cK$ as initial among conservative functors to $\cK$ under $\cE$.

We must show that the functor~(\ref{66}) satisfies the Segal condition over $\cK$, as well as the univalence condition over $\cK$.
Let $[p]\in \bDelta$ be an object with $p>0$ a positive integer.
Let $[p]\to \cK$ be a functor.
Consider the canonical diagram of $\infty$-categories of sections
\[
\xymatrix{
\Fun_{/\cK}\bigl([p],\cE\bigr) \ar[rr]  \ar[d]
&&
\Fun_{/\cK}\bigl(\{p-1<p\},\cE\bigr)  \ar[d]
\\
\Fun_{/\cK}\bigl(\{0<\dots<p-1\},\cE\bigr)  \ar[rr]
&&
\Fun_{/\cK}\bigl(\{p-1\},\cE\bigr).
}
\]
This square is a pullback because the canonical functor from the pushout $\{0<1\}\underset{\{1\}}\amalg \{1<\dots<p\} \xra{\simeq} [p]$ is an equivalence over $\cK$.
Since $\pi$ is a left-final fibration, therefore the right vertical functor is final.
The horizontal functors are coCartesian fibrations.  
It follows from Lemma~\ref{finalpullback} that the left vertical functor is final as well.  
Invoking Lemma~\ref{final.B.equiv}, applying classifying spaces determines the square of spaces
\begin{equation}\label{degenerate.square}
\xymatrix{
\sB\Fun_{/\cK}\bigl([p],\cE\bigr) \ar[rr]  \ar[d]_{\simeq}
&&
\sB\Fun_{/\cK}\bigl(\{p-1<p\},\cE\bigr)  \ar[d]^-{\simeq}
\\
\sB\Fun_{/\cK}\bigl(\{0<\dots<p-1\},\cE\bigr)  \ar[rr]
&&
\sB\Fun_{/\cK}\bigl(\{p-1\},\cE\bigr)
}
\end{equation}
in which the vertical maps are equivalences.  
Therefore, this square of spaces is a pullback.
We conclude that the functor~(\ref{66}) satisfies the Segal condition over $\cK$.

The functor~(\ref{66}) satisfies the univalence condition over $\cK$
because each fiber of $\pi$ is an $\infty$-category (and therefore presents a simplicial space satisfying the Segal and univalence conditions).
We conclude that the functor~(\ref{66}) presents an $\infty$-category $\sB'\cE\to \cK$ over $\cK$.

Now, by construction, the fiber of $\sB'\cE\to \cK$ over an object $\ast\xra{\lag x\rag} \cK$,
\[
(\sB'\cE)_{|x}~:=~(\sB \cE_{|x})~,
\]
is the classifying space of the fiber $\infty$-category.  
In particular, this fiber $\infty$-category is an $\infty$-groupoid.
We conclude that the functor $\sB'\cE\to \cK$ is conservative.

The canonical morphism $\Cat_{/\cK}([\bullet],\cE) \to \sB \Fun_{/\cK}([\bullet],\cE)$ between functors $(\bDelta_{/\cK})^{\op} \to \Spaces$ presents a functor $\cE \to \sB'\cE$ over $\cK$. 
Let $\cZ\to \cK$ be a conservative functor from an $\infty$-category.  
The main result of~\cite{rezk} gives that the canonical map from the $\infty$-category of functors over $\cK$
\[
\Fun_{/\cK}(\cE,\cZ)
\xra{~\simeq~}
\Map\Bigl(\Fun_{/\cK}\bigl([\bullet],\cE\bigr) , \Fun_{/\cK}\bigl([\bullet],\cZ\bigr)\Bigr)
\]
to the $\infty$-category of natural transformations between functors $(\bDelta_{/\cK})^{\op} \to \Cat$, is an equivalence.  
Because $\cZ\to \cK$ is assumed conservative, the functor $\Fun_{/\cK}\bigl([\bullet],\cZ\bigr)$ takes values in $\infty$-groupoids.
From the definition of $\sB$ as the $\infty$-groupoid completion, the canonical restriction functor
\begin{eqnarray}
\nonumber
\Fun_{/\cK}(\cE,\cZ)
&
\xra{~\simeq~}
&
\Map\Bigl(\Fun_{/\cK}\bigl([\bullet],\cE\bigr) , \Fun_{/\cK}\bigl([\bullet],\cZ\bigr)\Bigr)
\\
\nonumber
&
\longleftarrow
&
\Map\Bigl(\sB \Fun_{/\cK}\bigl([\bullet],\cE\bigr) , \Fun_{/\cK}\bigl([\bullet],\cZ\bigr)\Bigr)
\\
\nonumber
&
\xla{~\simeq~}
&
\Fun_{/\cK}(\sB'\cE,\cZ)
\end{eqnarray}
is then an equivalence between $\infty$-categories.  
We conclude that $\sB'\cE\to \cK$ is initial among conservative functors to $\cK$ under $\cE$, which is to give the desired canonical identificaiton
\[
\sB'\cE~\simeq~\sB^{\sf rel}\cE
\]
between $\infty$-categories over $\cK$.
\end{proof}

\begin{remark}

We expect that the conclusion of Lemma~\ref{l.final.B.fib} is valid for a weaker condition on $\pi$ than that of being a left-final fibration or a right-initial fibration.
Specifically, the only place where this condition on $\pi$ was used was for showing the square of spaces~(\ref{degenerate.square}) is a pullback; the left-final/right-initial condition is stronger than necessary for this to be the case.
See Question~\ref{q.cons.B}.

\end{remark}

\begin{remark}
This remark referred to a preceding result in the previous, published version of this paper, and it remains only so as not to change the numbering of results.
\end{remark}

Relative classifying space respects base change in the following sense.
\begin{cor}\label{fib.B.base.change}
For each pullback square
\[
\xymatrix{
\cE'  \ar[r]  \ar[d]
&
\cE  \ar[d]
\\
\cK'  \ar[r]
&
\cK
}
\]
among $\infty$-categories in which $\cE\ra\cK$ is either a left-final fibration or a right-initial fibration, the resulting square among $\infty$-categories
\[
\xymatrix{
\sB^{\sf rel}_{\cK'}\cE'  \ar[r]  \ar[d]
&
\sB^{\sf rel}_{\cK}\cE  \ar[d]
\\
\cK'  \ar[r]
&
\cK
}
\]
is a pullback.  

\end{cor}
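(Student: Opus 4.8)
The plan is to reduce to the left final case, observe that the hypothesis is inherited by $\cE'\to\cK'$, and then compare the two relative classifying spaces through the explicit simplicial presentations furnished by Lemma~\ref{l.final.B.fib}.

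As in the preceding arguments, the right initial case is obtained from the left final case by passing to opposites, so I would assume throughout that $\cE\to\cK$ is a left final fibration. The first point to check is that $\cE'\to\cK'$ is again a left final fibration. It is an exponentiable fibration by Corollary~\ref{exps-pullback}. For condition~(a) of Lemma~\ref{equiv.final}(1), let $c_1\to\cK'$ be a morphism and write $c_1\to\cK$ for the composite $c_1\to\cK'\to\cK$. Pasting of pullback squares identifies $\cE'_{|c_1}$ with $\cE_{|c_1}$ and $\cE'_{|t}$ with $\cE_{|t}$, compatibly with the inclusions, so the functor $\cE'_{|t}\hookrightarrow\cE'_{|c_1}$ is identified with $\cE_{|t}\hookrightarrow\cE_{|c_1}$, which is final by hypothesis on $\cE\to\cK$. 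Thus $\cE'\to\cK'$ is left final, and in particular Lemma~\ref{l.final.B.fib} applies to it: for each $[p]\to\cK'$ with $[p]\in\bDelta$, the space of sections satisfies $\Cat_{/\cK'}\bigl([p],\sB^{\sf rel}_{\cK'}\cE'\bigr)\simeq\sB\Fun_{/\cK'}\bigl([p],\cE'\bigr)$.

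Pulling back the structure functor $\cE\to\sB^{\sf rel}_{\cK}\cE$ along $\cK'\to\cK$ gives a functor $\cE'=\cK'\underset{\cK}\times\cE\to\cK'\underset{\cK}\times\sB^{\sf rel}_{\cK}\cE$ over $\cK'$ whose target is conservative over $\cK'$, so the universal property of $\sB^{\sf rel}_{\cK'}\cE'$ from Observation/Definition~\ref{conserv.loc} produces a canonical comparison functor
\[
\sB^{\sf rel}_{\cK'}\cE'~\longrightarrow~\cK'\underset{\cK}\times\sB^{\sf rel}_{\cK}\cE
\]
over $\cK'$; it remains to see this is an equivalence. Since a functor over $\cK'$ between $\infty$-categories is an equivalence as soon as it induces an equivalence on spaces of sections over every $[p]\to\cK'$ with $[p]\in\bDelta$ (full faithfulness of restricted Yoneda), I would evaluate both sides on such a $[p]$. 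For the target, the composition-restriction adjunction $\Cat_{/\cK'}\rightleftarrows\Cat_{/\cK}$ identifies $\Cat_{/\cK'}\bigl([p],\cK'\underset{\cK}\times\sB^{\sf rel}_{\cK}\cE\bigr)$ with $\Cat_{/\cK}\bigl([p],\sB^{\sf rel}_{\cK}\cE\bigr)$, for the composite $[p]\to\cK'\to\cK$, which is $\sB\Fun_{/\cK}([p],\cE)$ by Lemma~\ref{l.final.B.fib}. For the source we have $\Cat_{/\cK'}\bigl([p],\sB^{\sf rel}_{\cK'}\cE'\bigr)\simeq\sB\Fun_{/\cK'}([p],\cE')$ as above. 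Finally, the universal property of the pullback $\cE'$ identifies sections of $\cE'\to\cK'$ over $[p]\to\cK'$ with sections of $\cE\to\cK$ over the composite $[p]\to\cK$, i.e.\ $\Fun_{/\cK'}([p],\cE')\simeq\Fun_{/\cK}([p],\cE)$; applying $\sB$ and tracking that the comparison functor induces precisely this identification finishes the proof.

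The substantive inputs are entirely Lemma~\ref{l.final.B.fib} and its applicability to $\cE'\to\cK'$; the remaining points — stability of condition~(a) of Lemma~\ref{equiv.final} under base change, and the naturality of the three identifications in the last paragraph so that the comparison functor really is the exhibited equivalence — are routine, and I expect the mild bookkeeping around this naturality to be the only place requiring care.
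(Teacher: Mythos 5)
Your proposal is correct and follows essentially the same route as the paper: both reduce to evaluating sections over each $[p]\to\cK'$, identify $\Fun_{/\cK'}([p],\cE')\simeq\Fun_{/\cK}([p],\cE)$ from the pullback, apply $\sB$, and invoke Lemma~\ref{l.final.B.fib} on both sides to conclude the comparison square is a pullback. Your explicit verification that $\cE'\to\cK'$ is again a left final fibration (needed to apply Lemma~\ref{l.final.B.fib} to it) and your construction of the comparison functor via Observation/Definition~\ref{conserv.loc} are steps the paper leaves implicit, and they are correct.
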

\begin{proof}
Let $[p] \to \cK'$ be a functor from an object in $\bDelta$.
Because the given square is a pullback, the canonical functor between $\infty$-categories of sections,
\[
\Fun_{/\cK'}\bigl([p],\cE'\bigr)  \longrightarrow   \Fun_{/\cK}\bigl([p],\cE\bigr) ~,
\]
is an equivalence.
In particular, the resulting map between classifying spaces $\sB\Fun_{/\cK'}\bigl([p],\cE'\bigr)  \to   \sB\Fun_{/\cK}\bigl([p],\cE\bigr)$ is an equivalence.
Through Lemma~\ref{l.final.B.fib}, we conclude that the canonical functor between $\infty$-categories of sections of relative classifying spaces,
\[
\Fun_{/\cK'}\bigl([p],\sB^{\sf rel}_{\cK'}\cE'\bigr)  \longrightarrow   \Fun_{/\cK}\bigl([p],\sB^{\sf rel}_{\cK}\cE\bigr) ~,
\]
is an equivalence.
It follows that the desired square among $\infty$-categories is indeed a pullback.

\end{proof}

The next result shows that $\sB^{\sf rel}$ is computed \emph{fiberwise} in the present context.
\begin{cor}\label{indeed.fibers}
Let $\cE\xra{\pi}\cK$ be a functor between $\infty$-categories that is either left-final or right-initial.
For each point $\ast \xra{\lag x\rag} \cK$, the canonical square
\[
\xymatrix{
\sB\cE_{|x} \ar[r]  \ar[d]
&
\sB^{\sf rel}\cE  \ar[d]
\\
\ast \ar[r]^-{\lag x \rag}
&
\cK
}
\]
is a pullback.
\end{cor}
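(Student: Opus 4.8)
The plan is to obtain this as a special case of the base-change statement Corollary~\ref{fib.B.base.change}, taken along the point $\ast \xra{\lag x\rag} \cK$. The pullback of $\pi$ along $\lag x\rag$ is the fiber $\cE_{|x}\to \ast$, so the first thing I would check is that this functor is again left final, respectively right initial, whenever $\pi$ is. Exponentiability is stable under base change by Corollary~\ref{exps-pullback}, and the remaining conditions in Lemma~\ref{equiv.final}(1)/(2) are phrased purely in terms of the fibers $\cE_{|c_1}$ over morphisms $c_1\to\cK$ --- which are unchanged upon restricting along $\ast\to\cK$ --- so the left final / right initial property passes to $\cE_{|x}\to\ast$. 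Consequently Corollary~\ref{fib.B.base.change} applies to the pullback square with corners $\cE_{|x}$, $\cE$, $\ast$, $\cK$, and produces a pullback square whose corners are $\sB^{\sf rel}_{\ast}\cE_{|x}$, $\sB^{\sf rel}_{\cK}\cE = \sB^{\sf rel}\cE$, $\ast$, and $\cK$, with bottom edge $\lag x\rag$.

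It then remains to identify $\sB^{\sf rel}_{\ast}\cE_{|x}$ with $\sB\cE_{|x}$. This is an unwinding of Observation/Definition~\ref{conserv.loc}: a functor $\cF\to\ast$ to the terminal $\infty$-category is conservative exactly when $\cF$ is an $\infty$-groupoid, so the fully faithful inclusion $\CAT_{/^{\sf cons}\ast}\hookrightarrow\CAT_{/\ast}$ is the inclusion $\Spaces\hookrightarrow\CAT$, whose left adjoint is $\infty$-groupoid completion (Terminology~\ref{mxml.gpd}). Hence $\sB^{\sf rel}_{\ast}\cE_{|x} = \cE_{|x}[\cE_{|x}^{-1}] \simeq \sB\cE_{|x}$, and substituting this identification into the pullback square of the previous paragraph yields exactly the asserted square.

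Since every step is a direct invocation of an earlier result, I do not expect a genuine obstacle. The only point meriting explicit comment is the stability of the left final / right initial conditions under base change, and even that is formal, following from their fiberwise characterization in Lemma~\ref{equiv.final}.
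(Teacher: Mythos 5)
Your proof is correct and is exactly the argument the paper intends: this corollary is recorded without proof as an immediate consequence of Corollary~\ref{fib.B.base.change}, applied to base change along $\ast\xra{\lag x\rag}\cK$, together with the observation that $\sB^{\sf rel}_{\ast}$ over a point is $\infty$-groupoid completion (Observation/Definition~\ref{conserv.loc} plus Terminology~\ref{mxml.gpd}). The only superfluous step is your preliminary check that $\cE_{|x}\to\ast$ is again left final/right initial, since Corollary~\ref{fib.B.base.change} hypothesizes that condition only on the right-hand vertical functor $\cE\to\cK$.
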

\qed

\begin{cor}\label{fib.B.exp}
Let $\cE\xra{\pi}\cK$ be a functor between $\infty$-categories.
\begin{enumerate}
\item
If the functor $\pi$ is a left-final fibration, the initial functor to a left fibration over $\cK$,
\[
\xymatrix{
\cE  \ar[rr]    \ar[dr]
&&
\cE^{\widehat{~}}_{\sf l.fib}   \ar[dl]
\\
&
\cK
&
}
\]
witnesses the relative classifying space:
\[
\sB^{\sf rel} \cE \underset{\rm over~\cK}{\xra{~\simeq~}}   \cE^{\widehat{~}}_{\sf l.fib}~.
\]
In particular, this relative classifying space $\sB^{\sf rel} \cE \to \cK$ is a left fibration.

\item
If the functor $\pi$ is a right-initial fibration, the initial functor to a right fibration over $\cK$,
\[
\xymatrix{
\cE  \ar[rr]    \ar[dr]
&&
\cE^{\widehat{~}}_{\sf r.fib}   \ar[dl]
\\
&
\cK
&
}
\]
witnesses the relative classifying space:
\[
\sB^{\sf rel} \cE \underset{\rm over~\cK}{\xra{~\simeq~}}   \cE^{\widehat{~}}_{\sf r.fib}~.
\]
In particular, this relative classifying space $\sB^{\sf rel} \cE \to \cK$ is a right fibration.

\end{enumerate}

\end{cor}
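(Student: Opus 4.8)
The plan is to reduce to statement (1) by duality and then prove two things: that $\sB^{\sf rel}\cE\to\cK$ is a left fibration, and that the tautological functor $\cE\to\sB^{\sf rel}\cE$ over $\cK$ is initial among left fibrations over $\cK$ under $\cE$. For the duality reduction, passing to opposites turns a left final fibration over $\cK$ into a right initial fibration over $\cK^{\op}$ (as in the proof of Lemma~\ref{equiv.final}), exchanges left fibrations with right fibrations, exchanges $\cE^{\widehat{~}}_{\sf l.fib}$ with $\cE^{\widehat{~}}_{\sf r.fib}$, and commutes with the formation of relative classifying spaces, since $\sB^{\sf rel}\cE = \cE[{\cE_{|\cK^\sim}}^{-1}]$ is a localization and localization commutes with $(-)^{\op}$. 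So (2) follows from (1) applied to $(\cE^{\op}\xra{\pi^{\op}}\cK^{\op})$.

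Assume then that $\pi$ is a left final fibration. To see that $\sB^{\sf rel}\cE\to\cK$ is a left fibration, I would verify condition (d) of Proposition~\ref{left.is.coCart}(1): for each morphism $c_1\to\cK$, the restriction functor $\ev_s\colon\Fun_{/\cK}(c_1,\sB^{\sf rel}\cE)\to(\sB^{\sf rel}\cE)_{|s}$ is an equivalence of $\infty$-categories. Lemma~\ref{l.final.B.fib} identifies the domain with $\sB\Fun_{/\cK}(c_1,\cE)$, and Corollary~\ref{indeed.fibers} identifies the target with $\sB\cE_{|s}$; under these identifications $\ev_s$ becomes the map on classifying spaces induced by the restriction functor $\Fun_{/\cK}(c_1,\cE)\to\cE_{|s}$. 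Since $\pi$ is left final, Lemma~\ref{equiv.final}(1b) says that this restriction functor is final, and a final functor induces an equivalence on classifying spaces (the fact used in the proof of Lemma~\ref{l.final.B.fib}); hence $\ev_s$ is an equivalence, so $\sB^{\sf rel}\cE\to\cK$ is a left fibration.

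It then remains to identify $\cE\to\sB^{\sf rel}\cE$ with the initial functor $\cE\to\cE^{\widehat{~}}_{\sf l.fib}$ to a left fibration over $\cK$. By Proposition~\ref{left.is.coCart}, every left fibration over $\cK$ is in particular a conservative functor to $\cK$; hence, by the defining universal property of the relative classifying space (Observation/Definition~\ref{conserv.loc}), for every left fibration $\cZ\to\cK$ the restriction $\Fun_{/\cK}(\sB^{\sf rel}\cE,\cZ)\to\Fun_{/\cK}(\cE,\cZ)$ is an equivalence. Combined with the previous paragraph, which places $\sB^{\sf rel}\cE$ itself inside $\LFib_\cK\subset\Cat_{/\cK}$, this exhibits $\cE\to\sB^{\sf rel}\cE$ as initial among left fibrations over $\cK$ under $\cE$, i.e. as the left-fibration replacement, giving $\sB^{\sf rel}\cE\simeq\cE^{\widehat{~}}_{\sf l.fib}$ over $\cK$; the ``in particular'' clause is then immediate.

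The one point requiring actual checking rather than citation is the compatibility used in the second paragraph: that the equivalence of Lemma~\ref{l.final.B.fib} for $c_1$ and the fiber identification of Corollary~\ref{indeed.fibers} for $\{s\}$ intertwine the two functors ``restrict along $\{s\}\hookrightarrow c_1$''. This follows from the naturality of the simplicial-space presentation $[p]\mapsto\sB\Fun_{/\cK}([p],\cE)$ built in the proof of Lemma~\ref{l.final.B.fib}: that presentation is equipped with a natural equivalence to the simplicial space presenting $\sB^{\sf rel}\cE$ over $\cK$, hence respects all structure maps in $\bDelta_{/\cK}$, in particular the one induced by $\{s\}\to c_1$. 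Once this naturality is recorded, the rest is a formal assembly of the cited statements, so I expect that bookkeeping to be the main (and only real) obstacle.
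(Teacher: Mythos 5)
Your proposal is correct and follows essentially the same route as the paper's proof: reduce (2) to (1) by opposites, use the universal property of the localization $\sB^{\sf rel}$ against conservative (hence all left-fibration) targets, and verify the left-fibration condition for $\sB^{\sf rel}\cE\to\cK$ by identifying the relevant restriction functor, via Lemma~\ref{l.final.B.fib}, with the classifying-space map of the final functor $\Fun_{/\cK}(c_1,\cE)\to\cE_{|s}$. The naturality point you flag is exactly the compatibility the paper uses implicitly, and it does follow from the simplicial-space construction in Lemma~\ref{l.final.B.fib}.
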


\begin{proof}
Assertion~(1) implies assertion~(2), as implemented by replacing $\cE\to \cK$ by its opposite.
We are therefore reduced to proving assertion~(1).

Let $\cZ\to \cK$ be a left fibration.
Because this functor is, in particular, conservative, the Definition~\ref{conserv.loc} of $\sB^{\sf rel}$ as a localization gives that the canonical map between spaces of morphisms over $\cK$,
\[
\Cat_{/\cK}(\sB^{\sf rel}\cE,\cZ)\longrightarrow\Cat_{/\cK}(\cE,\cZ)~,
\]
is an equivalence.
We are therefore reduced to showing that the functor $\sB^{\sf rel}\cE\to \cK$ is a left fibration.

Invoking Lemma~\ref{left.is.coCart}, we must show that, for each solid diagram of $\infty$-categories
\[
\xymatrix{
\ast \ar[rr]  \ar[d]_-{\lag s\rag}
&&
\sB^{\sf rel}\cE  \ar[d]
\\
c_1  \ar[rr]  \ar@{-->}[urr]
&&
\cK,
}
\]
the $\infty$-category of fillers is a contractible $\infty$-groupoid.
Equivalently, using that the functor $\sB^{\sf rel}\cE\to \cK$ is conservative, we must show that restriction functor between $\infty$-categories of sections
\begin{equation}\label{55}
\Fun_{/\cK}(c_1,\sB^{\sf rel}\cE) \longrightarrow \Fun_{/\cK}(\ast,\sB^{\sf rel}\cE)
\end{equation}
is an equivalence.  
Consider the functor between $\infty$-categories of sections,
\[
\Fun_{/\cK}(c_1,\cE) \longrightarrow \Fun_{/\cK}(\ast,\cE)~.
\]
Precisely because $\cE\to \cK$ is a left-final fibration, this functor is final. Consequently,
Lemma~\ref{final.B.equiv} gives that this functor induces an equivalence on classifying spaces.
Through Lemma~\ref{l.final.B.fib}, this implies the functor~(\ref{55}) is an equivalence, as desired.

\end{proof}

\begin{cor}\label{fib.B.fib.prods}
For 
\[
\xymatrix{
\cE''  \ar[r]  \ar[d]
&
\cE'  \ar[d]
\\
\cE  \ar[r]
&
\cK
}
\]
a pullback diagram of $\infty$-categories in which each functor is either a left-final or right-initial fibration, the square among $\infty$-categories
\[
\xymatrix{
\sB^{\sf rel}_\cK \cE''  \ar[r]  \ar[d]
&
\sB^{\sf rel}_\cK \cE'  \ar[d]
\\
\sB^{\sf rel}_\cK \cE  \ar[r]
&
\cK
}
\]
is a pullback.

\end{cor}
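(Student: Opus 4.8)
The plan is to reduce the claim to a pointwise statement about classifying spaces, presented via relative Rezk nerves, and then invoke that $\sB\colon \Cat\to\Spaces$ carries finite products to products. As in the proofs of Lemma~\ref{l.final.B.fib} and Corollary~\ref{fib.B.exp}, the right initial case follows from the left final case by passing to opposite $\infty$-categories, so I would treat the case in which each of $\cE$, $\cE'$ and $\cE''$ is a left final fibration over $\cK$, so that Lemma~\ref{l.final.B.fib} applies to all three.

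First I would present each of the four $\infty$-categories over $\cK$ in the target square by its relative Rezk nerve $[p]\mapsto \Fun_{/\cK}([p],-)$, for $[p]\in\bDelta$. Since $\sB^{\sf rel}_\cK(-)\to \cK$ is conservative (Observation/Definition~\ref{conserv.loc}), these section $\infty$-categories are $\infty$-groupoids, and by Lemma~\ref{l.final.B.fib} the nerve of $\sB^{\sf rel}_\cK\cE$ is computed by $[p]\mapsto \sB\Fun_{/\cK}([p],\cE)$ (and likewise for $\cE'$, $\cE''$), while the nerve of $\cK$ over $\cK$ is constant at $\ast$. Because the restricted Yoneda functor $\Cat\hookrightarrow\PShv(\bDelta)$ preserves limits and the relative Rezk nerve detects pullbacks over $\cK$, the target square is a pullback in $\Cat_{/\cK}$ provided that for every $[p]\to\cK$ with $[p]\in\bDelta$ the square of spaces
\[
\xymatrix{
\sB\Fun_{/\cK}\bigl([p],\cE''\bigr) \ar[r] \ar[d] & \sB\Fun_{/\cK}\bigl([p],\cE'\bigr) \ar[d] \\
\sB\Fun_{/\cK}\bigl([p],\cE\bigr) \ar[r] & \ast
}
\]
is a pullback, i.e.\ the upper-left space is the product of the other two. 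Now I would use that the given square is a pullback: this gives $\Fun_{/\cK}([p],\cE'')\simeq \Fun_{/\cK}([p],\cE)\times_{\Fun_{/\cK}([p],\cK)}\Fun_{/\cK}([p],\cE')$, and since $\Fun_{/\cK}([p],\cK)\simeq\ast$ this is simply the product $\Fun_{/\cK}([p],\cE)\times\Fun_{/\cK}([p],\cE')$ of $\infty$-categories. Applying $\sB$ and using that $\sB$ carries finite products to products then yields $\sB\Fun_{/\cK}([p],\cE'')\simeq \sB\Fun_{/\cK}([p],\cE)\times\sB\Fun_{/\cK}([p],\cE')$, which is precisely the displayed pullback.

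The only non-formal ingredient is the fact that $\sB$ preserves finite products — equivalently, the compatibility of $\infty$-groupoid completion with the Cartesian monoidal structures (the same fact underlying the symmetric monoidality of $\sB\colon\LCorr\to\Spaces$ in Theorem~\ref{other.thm}), which is standard, following from the compatibility of geometric realization of simplicial spaces with finite products. I expect this to be the only conceptual point; the rest is bookkeeping with relative Rezk nerves and Lemma~\ref{l.final.B.fib}. (Alternatively, in the uniformly left final case one can argue pointwise on $\cK$: by Corollary~\ref{fib.B.exp} the three relative classifying spaces are left fibrations over $\cK$, straightening (Theorem~\ref{st.un}) reduces the pullback assertion to a pullback of $\Spaces$-valued functors, hence to a pullback of fibers at each $x\in\cK$, and by Corollary~\ref{indeed.fibers} those fibers are $\sB(\cE_{|x})$, $\sB(\cE'_{|x})$, $\sB(\cE''_{|x})$ with $\cE''_{|x}\simeq\cE_{|x}\times\cE'_{|x}$, so again one concludes by product-preservation of $\sB$.)
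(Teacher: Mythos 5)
Your main argument is essentially the paper's own proof: evaluate sections over each $[p]\to\cK$, use the pullback hypothesis together with $\Fun_{/\cK}([p],\cK)\simeq\ast$ to get a product of section $\infty$-categories, apply $\sB$ (which preserves finite products), and convert back via Lemma~\ref{l.final.B.fib} to conclude the square of relative classifying spaces is a pullback. One small caveat: your opening reduction ``by taking opposites, assume all three fibrations are left final'' is not a valid reduction, since the hypothesis allows a mix of left final and right initial fibrations in the same square and taking opposites merely swaps the two types; but this reduction is also unnecessary, because Lemma~\ref{l.final.B.fib} applies to each functor separately under either hypothesis, so your argument goes through verbatim in the mixed case --- exactly as in the paper.
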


\begin{proof}
Let $[p] \to \cK'$ be a functor from an object in $\bDelta$.
Because the given square is a pullback, the canonical functor involving $\infty$-categories of sections,
\[
\Fun_{/\cK}\bigl([p],\cE''\bigr)  \longrightarrow   \Fun_{/\cK}\bigl([p],\cE\bigr)\times  \Fun_{/\cK}\bigl([p],\cE'\bigr) ~,
\]
is an equivalence.
Because the classifying space functor $\sB\colon \Cat \to \Spaces$ preserves products, the resulting map involving classifying spaces 
\[
\sB\Fun_{/\cK}\bigl([p],\cE''\bigr)  \longrightarrow   \sB \Fun_{/\cK}\bigl([p],\cE\bigr)\times \sB \Fun_{/\cK}\bigl([p],\cE'\bigr)
\]
is an equivalence.  
Through Lemma~\ref{l.final.B.fib}, we conclude that the canonical functor involving $\infty$-categories of sections of relative classifying spaces,
\[
\Fun_{/\cK'}\bigl([p],\sB^{\sf rel}_{\cK}\cE''\bigr)  \longrightarrow   \Fun_{/\cK}\bigl([p],\sB^{\sf rel}_{\cK}\cE\bigr) \times \Fun_{/\cK}\bigl([p],\sB^{\sf rel}_{\cK}\cE'\bigr) ~,
\]
is an equivalence.
It follows that the desired square among $\infty$-categories is indeed a pullback.

\end{proof}

The next result gives universal left/right fibrations over $\LCorr$/$\RCorr$.  
\begin{theorem}\label{fib.B.ladj}
Taking classifying spaces defines morphisms between symmetric monoidal flagged $\infty$-categories
\[
\sB\colon \LCorr \longrightarrow \Spaces
\qquad\text{ and }\qquad
\sB\colon \RCorr \longrightarrow \Spaces^{\op}~.
\]

\end{theorem}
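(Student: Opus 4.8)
The plan is to realize $\sB$ as the natural transformation of represented presheaves induced by the relative classifying space construction of Observation/Definition~\ref{conserv.loc}, then to invoke the representability of Theorem~\ref{flagged.thm}. By definition $\LCorr$ represents the functor carrying an $\infty$-category $\cK$ to the $\infty$-groupoid of left final fibrations over $\cK$, and $\RCorr$ represents right initial fibrations; by Corollary~\ref{LFib.rep} the flagged $\infty$-categories $\Spaces$ and $\Spaces^{\op}$ represent $\LFib^\sim$ and $\RFib^\sim$, all equipped with their fiber-product symmetric monoidal structures. Under the equivalence $\fCAT \simeq \PShv^{\sf Segal}(\bDelta)$ of Theorem~\ref{flagged.thm}, together with the corresponding equivalence on commutative algebra objects recalled in \S\ref{sym.stctr}, a morphism of symmetric monoidal flagged $\infty$-categories $\LCorr \to \Spaces$ is exactly the data of a natural transformation between the $\CAlg(\SPACES)$-valued presheaves on $\Cat$ that these objects represent. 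Hence it suffices to produce, naturally in $\cK$ and compatibly with fiber products over $\cK$, a map of $\infty$-groupoids from left final fibrations over $\cK$ to left fibrations over $\cK$.

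First I would define this map on $\cK$-points by $(\cE\to\cK)\mapsto(\sB^{\sf rel}_{\cK}\cE\to\cK)$. Since $\sB^{\sf rel}_{\cK}$ is the value of the left adjoint to the inclusion $\CAT_{/^{\sf cons}\cK}\hookrightarrow\CAT_{/\cK}$, it is functorial; Corollary~\ref{fib.B.exp}(1) ensures that the value of $\sB^{\sf rel}_{\cK}$ on a left final fibration is a left fibration, so the construction restricts to a map of $\infty$-groupoids with the asserted target. Naturality in $\cK$, that is, compatibility with base change along a functor $\cK'\to\cK$, is precisely Corollary~\ref{fib.B.base.change}.

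For the symmetric monoidal refinement, recall (Lemma~\ref{fcorr-exists}, Corollary~\ref{corr-mon}) that the monoidal structure on $\LCorr$ restricts from that on $\Corr$ and is given on $\cK$-points by fiber products over $\cK$ of left final fibrations, while the Cartesian structure on $\Spaces$ is given on $\LFib^\sim_{\cK}$ by fiber products over $\cK$. Compatibility of $\sB^{\sf rel}$ with these products is Corollary~\ref{fib.B.fib.prods}, and the monoidal unit is preserved since $(\cK\xra{=}\cK)$ is already a left fibration and is therefore fixed by $\sB^{\sf rel}$. Assembling these coherences over $\cK\in\Cat$ and transporting back along Theorem~\ref{flagged.thm} produces the morphism $\sB\colon\LCorr\to\Spaces$ of symmetric monoidal flagged $\infty$-categories. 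The statement for $\sB\colon\RCorr\to\Spaces^{\op}$ then follows by passing to opposites: a functor $\cE\to\cK$ is right initial if and only if $\cE^{\op}\to\cK^{\op}$ is left final (Observation~\ref{exp-op} and Lemma~\ref{equiv.final}), and $(-)^{\op}$ interchanges right fibrations with left fibrations, $\Spaces^{\op}$ with $\Spaces$, and $\RCorr$ with $\LCorr$, carrying $\sB^{\sf rel}$ to $\sB^{\sf rel}$.

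The only content beyond citing the preceding corollaries is the bookkeeping that the pointwise assignment $\cE\mapsto\sB^{\sf rel}\cE$ genuinely assembles into a natural transformation of $\CAlg(\SPACES)$-valued presheaves rather than merely a family of maps of $\infty$-groupoids; I expect this to be the main — though routine — obstacle. All the required inputs are in hand: functoriality of the left adjoint $\sB^{\sf rel}$, base change via Corollary~\ref{fib.B.base.change}, products via Corollary~\ref{fib.B.fib.prods}, and the landing in left/right fibrations via Corollary~\ref{fib.B.exp}; what remains is to organize them into the required coherence through the equivalence $\fCAT\simeq\PShv^{\sf Segal}(\bDelta)$.
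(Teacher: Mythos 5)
Your route is the same as the paper's in outline and in inputs (reduce to $\LCorr$ by taking opposites; use Corollary~\ref{fib.B.exp} to land in left fibrations, Corollary~\ref{fib.B.base.change} for base change, Corollary~\ref{fib.B.fib.prods} for products), but the one step you set aside as ``routine bookkeeping'' is precisely where the paper has to do its only real work, and as written your proposal does not close it. In the $\infty$-categorical setting you cannot produce a natural transformation of functors $\Cat^{\op}\to\CAlg(\SPACES)$ by specifying its components $(\cE\to\cK)\mapsto(\sB^{\sf rel}_{\cK}\cE\to\cK)$ and then observing that each individual base-change square commutes up to equivalence: Corollary~\ref{fib.B.base.change} supplies exactly those square-by-square equivalences, but not the infinite hierarchy of coherences that the datum of a natural transformation requires. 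So the assignment ``$\sB^{\sf rel}$, objectwise'' plus Corollaries~\ref{fib.B.base.change} and~\ref{fib.B.fib.prods} is not yet a morphism $\LCorr\to\Spaces$; a construction is needed that outputs the coherent datum all at once.

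The paper's device for this is worth internalizing: rather than building $\sB^{\sf rel}$ directly, start from the inclusion $\LFib_{\cK}\hookrightarrow\EFib^{\sf l.final}_{\cK}$, which \emph{is} already coherent in $\cK$ (it is a levelwise full $\infty$-subcategory of the base-change functor $\cK\mapsto\Cat_{/\cK}$, so it defines a functor $\Cat^{\op}\to\Ar(\CAT)$ with no further checking). Corollary~\ref{fib.B.exp} says each component of this inclusion is a right adjoint with left adjoint $\sB^{\sf rel}_{\cK}$, and Corollary~\ref{fib.B.base.change} says exactly that the naturality squares have commuting left-adjoint mates; hence the classifying functor $\Cat^{\op}\to\Ar(\CAT)$ factors through the $\infty$-subcategory $\Fun_{(\infty,2)}({\sf Adj},\CAT)\subset\Ar(\CAT)$ of right-adjoint arrows and mate-commuting squares. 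Composing with the formal ``pass to left adjoints'' functor then produces, in one stroke, the coherent transformation $\sB^{\sf rel}\colon\EFib^{\sf l.final}\to\LFib$ of $\CAT$-valued functors, and Corollary~\ref{fib.B.fib.prods} lifts it to $\CAlg(\CAT)$-values, whence the symmetric monoidal morphism $\LCorr\to\Spaces$ after passing to maximal $\infty$-subgroupoids and invoking Theorem~\ref{flagged.thm}. Your reduction of the $\RCorr$ statement to the $\LCorr$ statement by taking opposites is fine and matches the paper. So: right skeleton, correct supporting lemmas, but the coherence step needs the adjoint-mate mechanism (or an equivalent one) spelled out rather than deferred.
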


\begin{proof}
Taking opposites, the assertion concerning $\LCorr$ implies that concerning $\RCorr$.
We are therefore reduced to showing the assertion concerning $\LCorr$.

Corollary~\ref{fib.B.exp} gives, for each $\infty$-category $\cK$, a filler in the diagram of $\infty$-categories.
\[
\xymatrix{
\LFib_\cK  \ar[d]
&&
\EFib^{\sf l.final}_\cK  \ar[d]   \ar@{-->}[ll]   
\\
\Cat_{/^{\sf cons}\cK} 
&&
\Cat_{/\cK}  \ar[ll]_-{\sB^{\sf rel}}
}
\]
in which the right vertical arrow is the fully-faithful embedding from the left-final fibrations over $\cK$.
Lemma~\ref{fib.B.base.change} gives that the top horizontal functor in this diagram determines a lift of the functor
\[
\bigl(\LFib\hookrightarrow \EFib^{\sf l.final}\bigr) \colon \Cat^{\op} \longrightarrow \Ar(\CAT) = \Fun(c_1,\CAT)
\]
through the $\infty$-subcategory $\Fun_{(\infty,2)}({\sf Adj},\CAT)\hookrightarrow \Ar(\CAT)$, which we now define.
This $\infty$-subcategory consists of those arrows $\cC\to \cD$ that are right adjoints, and those morphisms $(\cC\to \cD) \to (\cC'\to \cD')$ for which the resulting lax commutative diagram of $\infty$-categories
\[
\xymatrix{
\cC  \ar[d]
&&
\cD    \ar[d]   \ar[ll]^-{\rm left~adjoint}
\\
\cC'  
&
\Uparrow
&
\cD'   \ar@(d,d)[ll]^-{\rm left~adjoint}
}
\]
in fact commutes.
Restricting to left adjoints defines a functor $\Fun_{(\infty,2)}({\sf Adj},\CAT)\hookrightarrow \Ar(\CAT^{\op})^{\op} = \Fun(c_1^{\op},\CAT)\simeq \Ar(\CAT)$ over $\CAT\times \CAT$.
This in turn defines a functor
\[
\bigl(\LFib\xla{\sB^{\sf rel}} \EFib^{\sf l.final}\bigr) \colon \Cat^{\op} \longrightarrow \Fun(c_1^{\op},\CAT)\simeq \Ar(\CAT)~.
\]
Through Corollary~\ref{fib.B.fib.prods}, each value of this functor on an $\infty$-category $\cK$ preserves products, which are fiber products over $\cK$.
There results a lift of the above functor
\[
\bigl(\LFib\xla{\sB^{\sf rel}} \EFib^{\sf l.final}\bigr) \colon \Cat^{\op} \longrightarrow \Fun(c_1^{\op},\CAT)\simeq \Ar\bigl(\CAlg\CAT)\bigr)~.
\]
This establishes the result.  

\end{proof}

\begin{remark}
The utility of Theorem~\ref{fib.B.ladj} is that to construct a functor $\cK \to \Spaces$ it is enough to construct an exponentiable fibration $\cE\to \cK$ then check that this exponentiable fibration is left-final.  In light of Lemma~\ref{equiv.final}, this latter check can take place over morphisms in $\cK$ at a time.  
The advantage here is that the exponentiable fibration $\cE\to \cK$ can be weaker than a coCartesian fibration.  

\end{remark}

\begin{remark}\label{l.adj.efib.cons}
Premised on a 2-categorical enhancement of $\Corr$ (see Question~\ref{q.2.Corr}), we expect that Observation~\ref{conserv.loc} gives a sense in which each pair of symmetric monoidal functors $\LCorr \rightleftarrows \Spaces$ and $\RCorr \rightleftarrows \Spaces$ between flagged $\infty$-categories lifts as an adjunction.

\end{remark}

\subsection{Left/right fibration-replacement}
We describe, for each $\infty$-category $\cK$, left adjoints to the monomorphisms $\LFib_{\cK}\hookrightarrow \Cat_{/\cK}$ and left adjoints to the monomorphisms $\RFib_{\cK} \hookrightarrow  \Cat_{/\cK}$.

\begin{prop}\label{its.fib.B}
Let $\cK$ be an $\infty$-category.
The monomorphisms between Cartesian symmetric monoidal $\infty$-categories
\[
\LFib_{\cK}~ \hookrightarrow~\Cat_{/\cK}
\qquad\text{ and }\qquad
\RFib_{\cK}~\hookrightarrow~\Cat_{/\cK}
\]
are each symmetric monoidal right adjoints; their symmetric monoidal left adjoints respectively evaluate as the left/right fibration-replacement of the coCartesian/Cartesian-replacement:
\[
(-)^{\widehat{~}}_{\sf l.fib}\colon \Cat_{/\cK}\longrightarrow \LFib_{\cK}~,\qquad 
(\cE\to \cK)\mapsto \Bigl(\sB_{\cK}^{\sf rel}\bigl(\Ar(\cK)^{|\cE}\bigr) \xra{\ev_t}\cK\Bigr)
\]
and
\[
(-)^{\widehat{~}}_{\sf r.fib}\colon \Cat_{/\cK}\longrightarrow \RFib_{\cK}~,\qquad 
(\cE\to \cK)\mapsto \Bigl(\sB_{\cK}^{\sf rel}\bigl(\Ar(\cK)_{|\cE}\bigr) \xra{\ev_s}\cK\Bigr)~.
\]

\end{prop}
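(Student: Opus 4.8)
The assertions for $\LFib$ and $\RFib$ are interchanged by passing to opposites: replacing each functor $\cE\to\cK$ by $\cE^{\op}\to\cK^{\op}$ swaps left and right fibrations (Proposition~\ref{left.is.coCart}), carries $\Ar(\cK)^{|\cE}$ to $\Ar(\cK^{\op})_{|\cE^{\op}}$, and commutes with $\sB^{\sf rel}$, so it suffices to treat $\LFib$. The plan is to recognize the candidate left adjoint $(-)^{\widehat{~}}_{\sf l.fib}$ as the composite
\[
\Cat_{/\cK}\xra{~(-)^{\widehat{~}}_{\sf cCart}~}\cCart_\cK\xra{~\sB^{\sf rel}~}\Cat_{/\cK},
\]
where $(-)^{\widehat{~}}_{\sf cCart}$ is the coCartesian-replacement of Theorem~\ref{cCart.adjoint} and $\sB^{\sf rel}$ the relative classifying space of Observation/Definition~\ref{conserv.loc}, and then to check three things: that this composite factors through $\LFib_\cK$, that it is left adjoint to the inclusion $\LFib_\cK\hookrightarrow\Cat_{/\cK}$, and that the inclusion together with this left adjoint is symmetric monoidal.

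First I would check the composite lands in $\LFib_\cK$ and simultaneously get the formula. For any $\cE\to\cK$ the coCartesian-replacement $\Ar(\cK)^{|\cE}\to\cK$ is a coCartesian fibration (Lemma~\ref{candidate}), hence in particular an exponentiable fibration, and it is left final by Proposition~\ref{cart.r.initial}. So Corollary~\ref{fib.B.exp}(1) applies: $\sB^{\sf rel}_\cK(\Ar(\cK)^{|\cE})\to\cK$ is a left fibration, and it is initial among left fibrations under $\Ar(\cK)^{|\cE}$ over $\cK$. This gives the claimed value $\cE^{\widehat{~}}_{\sf l.fib}=\sB^{\sf rel}_\cK(\Ar(\cK)^{|\cE})$ and shows both that the composite factors through $\LFib_\cK$ and that $\sB^{\sf rel}$ restricts to a functor $\cCart_\cK\to\LFib_\cK$. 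For the adjunction itself, fix $\cE\to\cK$ and a left fibration $\cZ\to\cK$; I would exhibit the natural chain of equivalences of mapping spaces
\[
\Cat_{/\cK}\bigl(\sB^{\sf rel}_\cK(\Ar(\cK)^{|\cE}),\cZ\bigr)\;\simeq\;\Cat_{/\cK}\bigl(\Ar(\cK)^{|\cE},\cZ\bigr)\;\simeq\;\cCart_\cK\bigl(\Ar(\cK)^{|\cE},\cZ\bigr)\;\simeq\;\Cat_{/\cK}(\cE,\cZ).
\]
The first equivalence holds because $\cZ\to\cK$ is a left fibration, hence conservative, so mapping into $\cZ$ is insensitive to the localization $\sB^{\sf rel}$ (Observation/Definition~\ref{conserv.loc}). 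The second holds because every functor over $\cK$ from a coCartesian fibration into $\cZ$ automatically carries coCartesian morphisms to coCartesian morphisms — every morphism of $\cZ$ is $\pi$-coCartesian since $\cZ\to\cK$ is a left fibration (Proposition~\ref{left.is.coCart}(1)(f)) — so the subspace of coCartesian-morphism-preserving functors is the entire mapping space. The third is Theorem~\ref{cCart.adjoint}. Checking naturality in both variables then identifies $\sB^{\sf rel}\circ(-)^{\widehat{~}}_{\sf cCart}$ as left adjoint to $\LFib_\cK\hookrightarrow\Cat_{/\cK}$; equivalently, running the same computation with $\cE$ itself a coCartesian fibration shows $\sB^{\sf rel}\colon\cCart_\cK\to\LFib_\cK$ is left adjoint to the (full, by Proposition~\ref{left.is.coCart}(1)(f)) inclusion $\LFib_\cK\hookrightarrow\cCart_\cK$, and one composes left adjoints.

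For the symmetric monoidal refinement, note that the Cartesian monoidal structures on $\Cat_{/\cK}$ and $\LFib_\cK$ have tensor given by the fiber product over $\cK$ and unit $(\cK\xra{=}\cK)$; the inclusion preserves both (left fibrations are closed under fiber products over $\cK$, Corollary~\ref{LFib.fctr}), so it is symmetric monoidal. To promote the left adjoint, I would first observe that the unit is preserved, since $(\cK\xra{=}\cK)$ is already a left fibration and hence equals its own $(-)^{\widehat{~}}_{\sf l.fib}$. For binary tensors the plan is to invoke Corollary~\ref{fib.B.fib.prods}, applied to the pullback square assembled from the coCartesian — hence left final — fibrations $\Ar(\cK)^{|\cE}\to\cK$ and $\Ar(\cK)^{|\cE'}\to\cK$, whose projections out of their fiber product over $\cK$ are again coCartesian (base changes of coCartesian fibrations): this yields that $\sB^{\sf rel}$ commutes with the fiber products over $\cK$ at issue, and one combines it with the comparison $\Ar(\cK)^{|\cE\times_\cK\cE'}\to\Ar(\cK)^{|\cE}\times_\cK\Ar(\cK)^{|\cE'}$ coming from functoriality of $\Ar(\cK)^{|-}$ to identify $(\cE\times_\cK\cE')^{\widehat{~}}_{\sf l.fib}$ with $\cE^{\widehat{~}}_{\sf l.fib}\times_\cK(\cE')^{\widehat{~}}_{\sf l.fib}$.

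I expect this last step — the compatibility of $(-)^{\widehat{~}}_{\sf l.fib}$ with the fiber products that define the Cartesian monoidal structure — to be the main obstacle; Corollary~\ref{fib.B.fib.prods} (and the left final hypotheses it needs, supplied by Proposition~\ref{cart.r.initial}) is precisely the tool calibrated for it, whereas the adjunction and the pointwise formula are essentially formal consequences of Theorem~\ref{cCart.adjoint}, Observation/Definition~\ref{conserv.loc}, and Corollary~\ref{fib.B.exp}.
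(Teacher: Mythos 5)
Your reduction by opposites, the pointwise formula, and the adjunction are all correct, and they follow essentially the paper's route: the paper factors the monomorphism as $\LFib_{\cK}\hookrightarrow \cCart_{\cK}\hookrightarrow \Cat_{/\cK}$ and composes the left adjoints supplied by Theorem~\ref{cCart.adjoint} and by Corollary~\ref{fib.B.exp} (via Proposition~\ref{cart.r.initial}); your chain of mapping-space equivalences is exactly that composition made explicit, and your observation that any functor over $\cK$ into a left fibration automatically preserves coCartesian morphisms is the right justification for the middle equivalence.

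The symmetric monoidal step, however, is a genuine gap -- precisely where you anticipated the main obstacle. The comparison functor $\Ar(\cK)^{|\cE\times_\cK\cE'}\to \Ar(\cK)^{|\cE}\times_\cK \Ar(\cK)^{|\cE'}$ is not an equivalence, and applying $\sB^{\sf rel}_\cK$ does not repair it: the identification $(\cE\times_\cK\cE')^{\widehat{~}}_{\sf l.fib}\simeq \cE^{\widehat{~}}_{\sf l.fib}\times_\cK(\cE')^{\widehat{~}}_{\sf l.fib}$ fails in general. Take $\cK=c_1$, $\cE=\{s\}$, $\cE'=\{t\}$: then $\cE\times_{c_1}\cE'\simeq \emptyset$, so the left-hand side is $\emptyset$, whereas $\cE^{\widehat{~}}_{\sf l.fib}\simeq (c_1^{s/}\to c_1)\simeq (c_1\xra{=}c_1)$ and $(\cE')^{\widehat{~}}_{\sf l.fib}\simeq (c_1^{t/}\to c_1)\simeq(\{t\}\hookrightarrow c_1)$, whose fiber product over $c_1$ is $\{t\}\not\simeq\emptyset$. (Under straightening, $(-)^{\widehat{~}}_{\sf l.fib}$ sends $\cE\xra{\pi}\cK$ to the left Kan extension $\pi_!(\ast_{\cE})$ in $\Fun(\cK,\Spaces)$, and left Kan extension does not commute with pointwise products.) Corollary~\ref{fib.B.fib.prods} cannot bridge this: it only says that $\sB^{\sf rel}_\cK$ preserves fiber products of functors that are \emph{already} left final fibrations, and the discrepancy above is created at the coCartesian-replacement stage, before $\sB^{\sf rel}$ is applied. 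So no argument can produce a strong symmetric monoidal structure on $(-)^{\widehat{~}}_{\sf l.fib}$ for a general $\cK$; what one has formally is the oplax symmetric monoidal structure canonically carried by a left adjoint to a symmetric monoidal right adjoint, and this is strong when $\cK$ is an $\infty$-groupoid -- in particular for $\cK\simeq\ast$, where the left adjoint is $\sB\colon\Cat\to\Spaces$, which is the case that Corollary~\ref{l.fib.adjoint} actually uses. You should know the paper's own proof is no more careful at this point: it attributes symmetric monoidality of coCartesian-replacement to Theorem~\ref{cCart.adjoint}, which asserts no such thing, and then cites Corollary~\ref{fib.B.fib.prods}. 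So you have located a real soft spot in the statement itself, not merely failed to reproduce the paper's argument; everything else in your proposal (the reflection onto $\LFib_\cK$, its value, and the monoidality of the inclusions) is sound.
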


\begin{proof}
The assertions concerning left fibrations and coCartesian-replacement imply those concerning right fibrations and Cartesian-replacement, as implemented by taking opposites.
We are therefore reduced to proving the assertions concerning left fibrations and coCartesian-replacement.

The named symmetric monoidal monomorphism canonically factors $\LFib_{\cK} \hookrightarrow \cCart_{\cK} \hookrightarrow \Cat_{/\cK}$.
Theorem~\ref{cCart.adjoint} gives that the right factor in this composition is a symmetric monoidal right adjoint, and that its left adjoint is given by coCartesian-replacement.
Using that coCartesian fibrations are left-final fibrations (Proposition~\ref{cart.r.initial}), 
Corollary~\ref{fib.B.exp} identifies a left adjoint to the left factor in the above composition, which is given by left fibration-replacement.  
Lemma~\ref{fib.B.fib.prods} gives that this left adjoint preserves finite products, and is therefore a symmetric monoidal left adjoint.

\end{proof}

\begin{terminology}\label{def.fibration-replacement}
Let $\cE\xra{\pi}\cK$ be a functor between $\infty$-categories.
We refer to the values of the left adjoint $(\cE\to \cK)^{\widehat{~}}_{\sf l.fib}$ as the \emph{left fibration-replacement (of $\pi$)}.
We refer to the values of the left adjoint $(\cE\to \cK)^{\widehat{~}}_{\sf r.fib}$ as the \emph{right fibration-replacement (of $\pi$)}.

\end{terminology}

Proposition~\ref{its.fib.B} has the following immediate consequence.
In the statement of this result we reference the Cartesian symmetric monoidal structure of $\Spaces$ and of $\Cat$, and the coCartesian symmetric monoidal structure of $\Spaces^{\op}$ and of $\Cat^{\op}$.
\begin{cor}\label{l.fib.adjoint}
\begin{enumerate}
\item[~]

\item 
The fully-faithful symmetric monoidal functor $\Spaces \hookrightarrow \Cat$ is a symmetric monoidal right adjoint; for each functor $\cK \xra{\lag \cE\xra{\pi} \cK\rag} \Cat$ classifying the indicated coCartesian fibration, postcomposition with the left adjoint is the functor $\cK \xra{\lag \sB_{\cK}^{\sf rel}\bigl(\Ar(\cK)^{|\cE}\bigr) \to \cK\rag} \Spaces$ classifying the left fibration-replacement of the coCartesian-replacement of $\pi$.  

\item 
The fully-faithful symmetric monoidal functor $\Spaces^{\op} \hookrightarrow \Cat^{\op}$ is a symmetric monoidal right adjoint; for each functor $\cK^{\op} \xra{\lag \cE\xra{\pi} \cK\rag} \Cat$ classifying the indicated Cartesian fibration, postcomposition with the left adjoint is the functor $\cK^{\op} \xra{\lag \sB_{\cK}^{\sf rel}\bigl(\Ar(\cK)_{|\cE}\bigr) \to \cK\rag} \Spaces$ classifying the right fibration-replacement of the Cartesian-replacement of $\pi$.  

\end{enumerate}

\end{cor}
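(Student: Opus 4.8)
The plan is to transport Proposition~\ref{its.fib.B} through the straightening--unstraightening equivalences. Assertion~(2) follows from assertion~(1) by replacing every functor by its opposite (Observation~\ref{cCart.ops}), so I would treat only~(1). First I would record that $\iota\colon\Spaces\hookrightarrow\Cat$ is a symmetric monoidal right adjoint: it is symmetric monoidal for the Cartesian structures since $\Spaces$ is closed under finite products in $\Cat$; it is a right adjoint, with left adjoint the classifying space functor $\sB$, by Terminology~\ref{mxml.gpd}; and $\sB$ preserves finite products and the terminal $\infty$-category, so $\sB\dashv\iota$ is a symmetric monoidal adjunction. Since $\Fun(\cK,-)$ preserves finite products and carries adjunctions to adjunctions, this gives a symmetric monoidal adjunction in which postcomposition with $\iota$, namely $\iota\circ(-)\colon\Fun(\cK,\Spaces)\to\Fun(\cK,\Cat)$, is a right adjoint with left adjoint $\sB\circ(-)$.

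Next I would invoke straightening--unstraightening --- Theorem~\ref{st.un} for left fibrations, and its $\infty$-categorical counterpart for coCartesian fibrations from~\cite{HTT} (of which Theorem~\ref{st.un.cCart} records the underlying equivalence of $\infty$-groupoids) --- to obtain symmetric monoidal equivalences of $\infty$-categories $\Fun(\cK,\Spaces)\simeq\LFib_\cK$ and $\Fun(\cK,\Cat)\simeq\cCart_\cK$. The key compatibility I would establish is that, under these equivalences, the functor $\iota\circ(-)$ corresponds to the monomorphism $\LFib_\cK\hookrightarrow\cCart_\cK$ (a monomorphism by Proposition~\ref{left.is.coCart}): concretely, the unstraightening of a groupoid-valued functor $\cK\to\Spaces\hookrightarrow\Cat$ as a coCartesian fibration (Construction~\ref{def.un}) recovers its unstraightening as a left fibration (Construction~\ref{def.left.un}), equivalently a left fibration is exactly a coCartesian fibration with $\infty$-groupoid fibers, so its straightening as a coCartesian fibration is fiberwise an $\infty$-groupoid and hence factors through $\Spaces\subset\Cat$. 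This compatibility is the one nonformal step; it is essentially contained in~\cite{HTT}, and I expect reconciling the two (coend versus base-change) formulas for the unstraightening constructions to be the main obstacle, everything else being formal.

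Granting this, I would conclude by uniqueness of symmetric monoidal left adjoints. The monomorphism $\LFib_\cK\hookrightarrow\cCart_\cK$ is a symmetric monoidal right adjoint whose left adjoint, evaluated on a coCartesian fibration $\cE\to\cK$, is $\sB^{\sf rel}_\cK\cE$: this is precisely the content extracted in the proof of Proposition~\ref{its.fib.B} from Corollary~\ref{fib.B.exp}(1), which is applicable since coCartesian fibrations are left final by Proposition~\ref{cart.r.initial}, and which identifies the universal left fibration under $\cE$ with $\sB^{\sf rel}_\cK\cE$; symmetric monoidality of this left adjoint is Corollary~\ref{fib.B.fib.prods}. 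Since $\sB\circ(-)$ (transported to the left fibration side) and $\sB^{\sf rel}_\cK$ are thus both symmetric monoidal left adjoints to the same symmetric monoidal right adjoint, they agree. Therefore, for a functor $F\colon\cK\xra{\lag\cE\xra{\pi}\cK\rag}\Cat$ classifying a coCartesian fibration, the composite $\sB\circ F\colon\cK\to\Spaces$ classifies $\sB^{\sf rel}_\cK\cE\to\cK$, which by Corollary~\ref{fib.B.exp}(1) equals $(\cE\to\cK)^{\widehat{~}}_{\sf l.fib}=\sB^{\sf rel}_\cK\bigl(\Ar(\cK)^{|\cE}\bigr)\to\cK$ --- that is, the left fibration-replacement of the coCartesian-replacement of $\pi$, in the description of Proposition~\ref{its.fib.B}. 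This is the asserted statement, together with its symmetric monoidal refinement.
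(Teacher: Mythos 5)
Your proposal is correct and matches the paper's (unwritten) argument: the paper records this corollary as an immediate consequence of Proposition~\ref{its.fib.B}, and your proof is exactly the transport of that adjunction through straightening--unstraightening, using that coCartesian fibrations are left final so that Corollary~\ref{fib.B.exp} identifies the left adjoint to $\LFib_\cK\hookrightarrow\cCart_\cK$ with $\sB^{\sf rel}_\cK$ (whose values agree with $\sB^{\sf rel}_\cK\bigl(\Ar(\cK)^{|\cE}\bigr)$ by Proposition~\ref{its.fib.B}). The only ingredient beyond what the paper explicitly records is the categorical (rather than merely groupoid-level) form of straightening for coCartesian fibrations and its compatibility with the left-fibration unstraightening, which you correctly flag and which is indeed supplied by \cite{HTT}.
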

\qed

\section{$\Corr$ as bimodules and as bifibrations}
\subsection{Correspondences as bimodules and bifibrations}

We now show how a correspondence can also be presented as a bifibration or as a bimodule. By a \emph{bifibration}, we mean a pair $(\cA,\cB)$ of $\infty$-categories together with a functor $\cX \to \cA\times \cB$ that satisfies the following properties.
\begin{itemize}

\item The composite functor $\cX \to \cA \times \cB \xra{\pr} \cB$ is a coCartesian fibration.

\item For each $a\in \cA$, the composite functor $\cX_{|\{a\}\times \cB} \to \cX \to \cA \times \cB \xra{\pr} \cB$ is a left fibration.

\item The composite functor $\cX \to \cA\times \cB \xra{\pr} \cA$ is a Cartesian fibration.

\item For each $b\in \cB$, the composite functor $\cX_{|\cA \times \{b\}} \to \cX \to \cA\times \cB \xra{\pr} \cA$ is a right fibration.

\end{itemize}
By a \emph{bimodule}, we mean a pair $(\cA,\cB)$ of $\infty$-categories together with a functor $\cA^{\op}\times \cB\to \Spaces$. 
The following characterization overlaps with Proposition B.3.17 of \cite{HA}.
Our proof will rely on the following technical result (Lemma~\ref{t.p.join}) concerning \emph{parametrized joins} (Terminology~\ref{def.p.join}).

Consider the subdivision
\[
\sd(c_1)
~=~
\Bigl(
\{s\}
\longrightarrow
c_1
\longleftarrow
\{t\}
\Bigr)
~,
\]
which is a full $\infty$-subcategory of the $\infty$-overcategory $\Cat_{/c_1}$.
Consider the resulting (internal) restricted Yoneda functor
\[
\Gamma\colon
\Cat_{/c_1}
\longrightarrow
\Fun\bigl( \sd(c_1)^{\op} , \Cat \bigr)
~,
\]
\[
(\cE\to c_1)
\mapsto 
\Bigl( \sd(c_1)^{\op} \ni (x \to c_1)
\mapsto \Fun_{/c_1}\bigl( x , \cE \bigr) \Bigr)
~=~
\Bigl( 
\cE_{|s} 
\xla{~\ev_s~}
\Fun_{/c_1}\bigl(c_1 , \cE \bigr)
\xra{~\ev_t~}
\cE_{|t} 
\Bigr)
~.
\]
Left Kan extension along the fully-faithful functor $\sd(c_1)\hookrightarrow \Cat_{/c_1}$ defines a left adjoint to this functor $\Gamma$:
\[
\bigstar\colon
\Fun\bigl( \sd(c_1)^{\op} , \Cat\bigr)
\longrightarrow
\Cat_{/c_1}
~,
\]
whose values are given by the parametrized join construction:
\[
(\cA \la \cX \ra \cB)\mapsto  \cA\underset{\cX}\bigstar \cB~:=~ \cA \underset{\cX\times \{s\}} \amalg \cX\times c_1\underset{\cX\times \{t\}}\amalg \cB~.
\]
Lastly, consider the full $\infty$-subcategory
\[
{\sf BiFib}
~\subset~
\Fun\bigl( \sd(c_1)^{\op} , \Cat \bigr)
\]
consisting of the bifibrations.
For $\cA$ and $\cB$ $\infty$-categories, denote the pullback $\infty$-categories
\[
\xymatrix{
{\sf BiFib}(\cA,\cB) \ar[rr] \ar[d]
&&
{\sf BiFib} \ar[d]^-{(\ev_s,\ev_t)}
\\
\ast \ar[rr]^-{\lag \cA , \cB\rag }
&&
\Cat\times \Cat
}
\]
and
\[
\xymatrix{
(\Cat_{/c_1})^{|\cA}_{|\cB} \ar[rr] \ar[d]
&&
\Cat_{/c_1} \ar[d]^-{(s^\ast,t^\ast)}
\\
\ast \ar[rr]^-{\lag \cA , \cB\rag }
&&
\Cat\times \Cat~.
}
\]

\begin{lemma}\label{t.p.join}	
Let $\cA$ and $\cB$ be $\infty$-categories.
Parametrized join and sections restrict as mutual equivalences between $\infty$-categories:
\[
\cA \underset{-}\bigstar \cB
\colon 
{\sf BiFib}(\cA,\cB)
~{}~\simeq~{}~
(\Cat_{/c_1})^{|\cA}_{|\cB}
\colon 
\Gamma
~.
\]
In particular, for $\cA \la \cX \to \cB$ a span of $\infty$-categories, the canonical functor over $\cA\times \cB$,
\[
\cX
\longrightarrow
\Fun_{/c_1}\bigl(c_1,\cA\underset{\cX}\bigstar \cB\bigr)~,
\]
is an equivalence if $\cA \la \cX \ra \cB$ is a bifibration.

\end{lemma}

\begin{proof}
Our proof has two steps. In the first step, we show that the parametrized join functor and $\Gamma$ are adjoint to one another. In the second step, we construct a functor $\w{(-)}$, in $(\ref{e111})$, which we then exhibit as inverse to $\Gamma$. Since an inverse functor to $\Gamma$ is a left adjoint to $\Gamma$, and because left adjoints are unique when they exist, we then conclude that this inverse to $\Gamma$ agrees with $\bigstar$, which completes the proof.

Toward the first step, we first show $\Gamma$ factors through the full $\infty$-subcategory consisting of the bifibrations:
\[
\Gamma\colon
\Cat_{/c_1}
\longrightarrow
{\sf BiFib}
~\subset~
\Fun\bigl( \sd(c_1)^{\op} , \Cat \bigr)
~.
\]
Let $\cE\to c_1$ be an $\infty$-category over the 1-cell.
Recognize the value $\Gamma(\cE)$ as the pullback
\[
\xymatrix{
\Gamma(\cE)  \ar[rr]  \ar[d]
&&
\Ar(\cE)  \ar[d]
\\
\cE_{|s}\times \cE_{|t}  \ar[rr]
&&
\cE\times\cE.
}
\]
Tautologically, the span $\cE \xla{\ev_s}\Ar(\cE) \xra{\ev_t}\cE$ is a bifibration.
Because right/Cartesian/coCartesian/left fibrations are closed under pullbacks, it follows that $\cE_{|s} \la \Gamma(\cE)\to \cE_{|t}$ is a bifibration.

Next, for $\cA \la \cX \ra \cB$ a diagram among $\infty$-categories, there there is a canonical diagram among $\infty$-categories involving the parametrized join:
\begin{equation}\label{e200}
\xymatrix{
\cA \ar[rr] \ar[d]
&&
\cA \underset{\cX}\bigstar \cB \ar[d]
&&
\cB \ar[ll] \ar[d]
\\
\{s\} \ar[rr]
&&
c_1
&&
\{t\} . \ar[ll]
}
\end{equation}
Note that the functor $\{s\}\to c_1$ is a fully-faithful right fibration, and the functor $c_1\la \{t\}$ is a fully-faithful left fibration.  
So each of these functors is an exponentiable fibration.  
It follows from Theorem~\ref{theorem-l.fib} that each square in the above diagram is, in fact, a pullback square. Consequently, because the functors in the bottom horizontal cospan are fully-faithful and jointly surjective, the same is true for the top horizontal cospan.
It follows that the $(\bigstar,\Gamma)$-adjunction restricts as an adjunction:
\[
\cA \underset{-}\bigstar \cB
\colon 
{\sf BiFib}(\cA,\cB)
~\rightleftarrows~
(\Cat_{/c_1})^{|\cA}_{|\cB}
\colon 
\Gamma
~.
\]
The result is therefore established upon showing this $(\bigstar,\Gamma)$-adjunction is an equivalence.
We now turn to the second step, constructing an inverse functor to $\Gamma$.

Consider the resulting composite functor
\[
\bDelta_{/c_1}
~\hookrightarrow~
\Cat_{/c_1}
\xra{~\Gamma~}
\Fun\bigl( \sd(c_1)^{\op} , \Cat \bigr)
~,
\]
\[
\bigl( [p] \to c_1 \bigr)
\mapsto 
\bigl( [p]_{|s} \xla{~\pr~} [p]_{|s} \times [p]_{|t} \xra{~\pr~} [p]_{|t} \bigr)
~,
\]
whose values are as indicated.
There results a restricted Yoneda functor
\begin{equation}\label{e111}
\w{(-)}\colon
\Fun\bigl( \sd(c_1)^{\op} , \Cat \bigr)
\longrightarrow
\PShv(\bDelta_{/c_1} \bigr)
~,
\end{equation}
\[
(\cA\la \cX \ra \cB)
\mapsto 
\Bigl(
\bigl( [p] \to c_1 \bigr)
\mapsto 
\Map^{\sd(c_1)^{\op}}\bigl(~ ([p]_{|s} \la [p]_{|s} \times [p]_{|t} \ra [p]_{|t} )~,~ (\cA \la \cX \ra \cB) ~ \bigr)
\Bigr)
~.
\]
We will denote the value of the functor~(\ref{e111}) on $(\cA\la \cX \ra \cB)$ simply as $\w{\cX}$.

Consider the full $\infty$-subcategory ${\sf BiFib}\subset \Fun \bigl( \sd(c_1)^{\op} , \Cat\bigr)$ consisting of the bifibrations.  
We summarize the functors just constructed as the diagram of $\infty$-categories:
\begin{equation}\label{e121}
\xymatrix{
\sd(c_1) \ar[rr] \ar[d]_-{\sf f.f.}
&&
{\sf BiFib} \ar@{-->}[ddrr]^-{\w{(-)}}
&&
\\
\bDelta_{/c_1} \ar[urr] \ar[d]_-{\sf f.f.}
&&
\\
\Cat_{/c_1} \ar[uurr]_-{\Gamma} \ar[rrrr]^-{\rm restricted~Yoneda}_-{\sf f.f.}
&&
&&
\PShv(\bDelta_{/c_1})
;
}	
\end{equation}
here, the solid diagram commutes and those functors labeled with $\sf f.f.$ are fully-faithful.

Toward showing $\w{(-)}$ is inverse to $\Gamma$, we next show that $\w{(-)}$ takes values in $\Cat_{/c_1} \subset \PShv(\bDelta_{/c_1})$, consisting of those presheaves that satisfy the Segal and univalence conditions. 
By definition of $\w{\cX}$, its value on an object $([p]\to c_1)$ in $\bDelta_{/c_1}$ fits into a pullback square of spaces:
\begin{equation}\label{e112}
\xymatrix{
\w{\cX}([p]\to c_1)
\ar[rr] \ar[d]
&&
\Map\bigl( [p]_{|s}\times [p]_{|t} , \cX \bigr) \ar[d]
\\
\Map\bigl( [p]_{|s} , \cA \bigr)
\times
\Map\bigl( [p]_{|t} , \cB \bigr) \ar[rr]
&&
\Map\bigl( [p]_{|s}\times [p]_{|t} , \cA \bigr) 
\times
\Map\bigl( [p]_{|s}\times [p]_{|t} , \cB \bigr)
.
}
\end{equation}
The following three observations are direct consequences of this square being a pullback.  
\begin{itemize}
\item
If the functor $[p]\to c_1$ factors through $\{s\} \hookrightarrow c_1$, then the fiber $[p]_{|s} = [p]$ is entire and the fiber over $t$ is empty: $[p]_{|t} =\emptyset$.  
In this case, the pullback square of spaces~(\ref{e112}) identifies this value of $\w{\cX}$ as the space of $[p]$-points of $\cA$:
\[
\w{\cX}([p]\to c_1)
\xra{~\simeq~}
\Map([p],\cA)
~.
\]

\item
If the functor $[p]\to c_1$ factors through $\{t\} \hookrightarrow c_1$, then the fiber $[p]_{|s} =\emptyset$ is empty and the fiber over $t$ is entire: $[p]_{|t} = [p]$.
In this case, the pullback square of spaces~(\ref{e112}) identifies this value of $\w{\cX}$ as the space of $[p]$-points of $\cA$:
\[
\w{\cX}([p]\to c_1)
\xra{~\simeq~}
\Map([p],\cB)
~.
\]

\item
Suppose the functor $[p]\to c_1$ is surjective.
Consider the section $c_1\xra{\sigma} [p]$ defined by declaring that the composition $\{s\}\to c_1 \to [p]$ selects the maximum in the fiber $[p]_{|s}$ and that the composition $\{t\} \to c_1 \to [p]$ selects the minimum in the fiber $[p]_{|t}$.
Because $\cX\to \cA\times \cB$ is a bifibration, the canonical square of spaces,
\begin{equation}\label{e113}
\xymatrix{
\w{\cX}([p]\to c_1)
\ar[rr]^-{\sigma^\ast} \ar[d]
&&
\Map\bigl(\{s\}\times\{t\},\cX\bigr) = \cX^{\sim} \ar[d]
\\
\Map\bigl( [p]_{|s} , \cA \bigr) 
\times
\Map\bigl( [p]_{|t} , \cB \bigr) \ar[rr]^-{\sigma^\ast}
&&
\Map\bigl( \{s\} , \cA \bigr) 
\times
\Map\bigl( \{t\} , \cB \bigr)
=
\cA^\sim \times \cB^\sim
,
}
\end{equation}
is a pullback diagram.  
Two applications of Definition~\ref{def.left.fib} give that each solid diagram of spaces
\[
\xymatrix{
\{s\}\times \{t\}  \ar[rr]  \ar[d]_-{\sigma_{|s}\times \sigma_{|t}}
&&
\cX  \ar[d]
\\
[p]_{|s}\times [p]_{|t}  \ar[rr] \ar@{-->}[urr]^-{\exists !}
&&
\cA\times \cB
}
\]
admits a unique filler.

\end{itemize}

We now show that $\w{\cX}$ satisfies the Segal and univalence conditions.  
We first show $\w{\cX}$ satisfies the Segal condition.
So let $[p]\to c_1$ be an object in $\bDelta_{/c_1}$ with $p>0$.
We must show that the canonical diagram of spaces
\begin{equation}\label{e114}
\xymatrix{
\w{\cX}([p] \to c_1) \ar[rr] \ar[d]
&&
\w{\cX}(\{1<\dots<p\}\to c_1) \ar[d]
\\
\w{\cX}(\{0<1\} \to c_1) \ar[rr]
&&
\w{\cX}(\{1\}\to c_1) 
}
\end{equation}
is a pullback diagram. 
There are a few cases to examine separately.  
\begin{itemize}
\item
Suppose $[p]\to c_1$ factors through $\{s\}\hookrightarrow c_1$.
In this case, the preceding observations within this proof identify this square~(\ref{e114}) of spaces as the square
\[
\xymatrix{
\w{\cX}([p] \to c_1) \ar[rr] \ar[d]^-{\simeq}
&&
\ast \ar[d]
\\
\Map([p],\cA) \ar[rr]
&&
\ast
}
\]
which is a pullback square for trivial reasons. 

\item
Suppose $[p]\to c_1$ factors through $\{t\}\hookrightarrow c_1$.
In this case, the preceding observations within this proof identify this square~(\ref{e114}) of spaces as the square
\[
\xymatrix{
\w{\cX}([p] \to c_1) \ar[rr] \ar[d]^-{\simeq}
&&
\ast \ar[d]
\\
\Map([p],\cB) \ar[rr]
&&
\ast
}
\]
which is a pullback square for trivial reasons.

\item
Suppose $[p]\to c_1$ is surjective.
\begin{itemize}
\item
Suppose the composite functor $\{1\}\hookrightarrow [p]\to c_1$ factors through $\{s\}\to c_1$.
In this case, the composite functor $\{1<\dots<p\}\hookrightarrow [p]\to c_1$ is necessarily surjective, and the composite functor $\{0<1\} \hookrightarrow [p]\to c_1$ factors through $\{s\}\hookrightarrow c_1$.
The preceding observations within this proof serve to identify this square~(\ref{e114}) of spaces as the outer square in the diagram
\[
\xymatrix{
\bigl(\Map([p]_{|s},\cA)\times \Map([p]_{|t},\cB) \bigr) \underset{\cA^\sim \times \cB^\sim}\times \cX^\sim
\ar[rr] \ar[d]
&&
\Bigl(\Map(\{1<\dots<p\}_{|s},\cA)\times \Map(\{1<\dots<p\}_{|t},\cB) \Bigr) \underset{\cA^\sim \times \cB^\sim}\times \cX^\sim \ar[d]
\\
\Map([p]_{|s},\cA) \ar[rr] \ar[d]
&& 
\Map(\{1<\dots<p\}_{|s},\cA) \ar[d]
\\
\Map(\{0<1\},\cA) \ar[rr]
&&
\Map(\{1\},\cA)
.
}
\]
The lower square is a pullback because $\cA$ satisfies the Segal condition.
The upper square is pullback because, by assumptions, the canonical inclusion between fibers $\{1<\dots<p\}_{|t} \hookrightarrow [p]_{|t}$ is an equivalence. 
It follows that the outer square is a pullback, as desired.

\item
Suppose the composite functor $\{1\}\hookrightarrow [p]\to c_1$ factors through $\{t\}\to c_1$.
In this case, the composite functor $\{0<1\}\hookrightarrow [p]\to c_1$ is necessarily surjective, and the composite functor $\{1<\dots<p\} \hookrightarrow [p] \to c_1$ factors through $\{t\}$.
The preceding observations within this proof serve to identify this square~(\ref{e114}) of spaces as the square 
\[
\xymatrix{
\bigl(\Map([p]_{|s},\cA)\times \Map([p]_{|t},\cB) \bigr) \underset{\cA^\sim \times \cB^\sim}\times \cX^\sim
\ar[rr] \ar[d]
&&
\Map(\{1<\dots<p\}_{|t},\cB) \bigr) \ar[d]
\\
\cX^\sim
\ar[rr]
&&
\Map(\{1\},\cB)
.
}
\]
By assumptions, $\{0<1\}_{|s} = \{0\}$ and $\{1<\dots<p\}_{|t} \hookrightarrow \{1<\dots<p\}$ are equivalences.
It follows that this square is pullback, as desired.

\end{itemize}

\end{itemize}
This shows that $\w{\cX}$ satisfies the Segal condition.

Inspecting the definition of this Segal space, $\w{\cX}$, reveals that it fits into a pair of pullback squares of simplicial spaces:
\begin{equation}\label{e210}
\xymatrix{
\cA \ar[rr] \ar[d]
&&
\w{\cX} \ar[d]
&&
\cB \ar[ll] \ar[d]
\\
\{s\} \ar[rr]
&&
c_1
&&
\{t\} \ar[ll]
.
}
\end{equation}
Note that the $\infty$-category $c_1$ has the feature that each endomorphism in $c_1$ is an identity morphism.  
Therefore, each endomorphism in the Segal space $\w{\cX}$ over $c_1$ necessarily factors through the fibers, $\cA$ or $\cB$.
Therefore, the Segal space $\w{\cX}$ satisfies the univalence condition since both $\cA$ and $\cB$ present univalent Segal spaces.
We will denote the $\infty$-category over $c_1$ presenting this univalent Segal simplicial space over $c_1$ as $\w{\cX}$ again.
To summarize, we have constricted a pair of functors between $\infty$-categories:
\[
\w{(-)}\colon
{\sf BiFib}
~\rightleftarrows~
\Cat_{/c_1}
\colon \Gamma
~.
\]
As explained above, we seek to show these functors are mutual inverses to one another.
This is to construct invertible 2-cells making the following two diagrams commute:
\begin{equation}\label{e223}
\xymatrix{
{\sf BiFib} \ar[rr]^-{=} \ar[dr]_-{\w{(-)}}
&&
{\sf BiFib} 
\\
&
\Cat_{/c_1} \ar[ur]_-{\Gamma}
&
.
}
\end{equation}
and
\begin{equation}\label{e222}
\xymatrix{
\Cat_{/c_1} \ar[rr]^-{=} \ar[dr]_-{\Gamma}
&&
\Cat_{/c_1}
\\
&
{\sf BiFib} \ar[ur]_-{\w{(-)}}
&
.
}
\end{equation}

Now, in the pullback squares~(\ref{e210}), the top horizontal cospan consists of fully-faithful functors that are jointly surjective,because this is the case for the bottom horizontal cospan.
The pullback squares~(\ref{e113}) assemble as a pullback square of simplicial spaces:
\begin{equation}\label{e221}
\xymatrix{
\w{\cX}([\bullet]\times c_1\xra{\pr} c_1)
\ar[rr] \ar[d]
&&
\Map\bigl([\bullet]\times \{s\}\times\{t\},\cX\bigr)  \ar[d]
\\
\Map\bigl( [\bullet] , \cA \bigr) 
\times
\Map\bigl( [\bullet] , \cB \bigr) \ar[rr]^-{=}
&&
\Map\bigl( [\bullet] \times \{s\} , \cA \bigr) 
\times
\Map\bigl( [\bullet] \times \{t\} , \cB \bigr)
,
}
\end{equation}
Because the bottom morphism is an equivalence, so too is the top morphism.
This is to say that there is a canonical equivalence between $\infty$-categories over $\cA\times \cB$:
\[
\cX
~{}~\simeq~{}~
\Fun_{/c_1}(c_1,\w{\cX})
~.
\]
This identification is evidently functorial in the argument $\cX\in {\sf BiFib}(\cA,\cB)$.
This establishes the sought invertible 2-cell making the diagram~(\ref{e223}) commute.

It remains to construct an invertible 2-cell making the diagram of $\infty$-categories,~(\ref{e222}), commute.
Let $\cE\to c_1$ be an $\infty$-category over the 1-cell.
Consider the bifibration $\Gamma(\cE) \in {\sf BiFib}(\cE_{|s},\cE_{|t})$.
By definition of the value $\w{\Gamma(\cE)}$, which is an $\infty$-category over $c_1$, it is presented by the pullback simplicial space over $c_1$:
\[
\xymatrix{
\w{\Gamma(\cE)}([\bullet]\to c_1)
\ar[rr] \ar[d]
&&
\Map\bigl( [\bullet]_{|s}\times [\bullet]_{|t} , \Gamma(\cE) \bigr) \ar[d]
\\
\Map\bigl( [\bullet]_{|s} , \cE_{|s} \bigr)
\times
\Map\bigl( [\bullet]_{|t} , \cE_{|t} \bigr) \ar[rr]
&&
\Map\bigl( [\bullet]_{|s}\times [\bullet]_{|t} , \cE_{|s} \bigr) 
\times
\Map\bigl( [\bullet]_{|s}\times [\bullet]_{|t} , \cE_{|t} \bigr)
.
}
\]
By definition of the bifibration $\Gamma(\cE)$, the top right simplicial space over $c_1$ can be further identified, making a pullback diagram of simplicial spaces over $c_1$:
\begin{equation}\label{e224}
\xymatrix{
\w{\Gamma(\cE)}([\bullet]\to c_1)
\ar[rr] \ar[d]
&&
\Map_{/c_1}\bigl( [\bullet]_{|s}\times [\bullet]_{|t}\times c_1 , \cE \bigr) \ar[d]
\\
\Map_{/c_1}\bigl( [\bullet]_{|s} , \cE \bigr)
\times
\Map_{/c_1}\bigl( [\bullet]_{|t} , \cE \bigr) \ar[rr]
&&
\Map_{/c_1}\bigl( [\bullet]_{|s}\times [\bullet]_{|t} \times\{s\} , \cE \bigr) 
\times
\Map_{/c_1}\bigl( [\bullet]_{|s}\times [\bullet]_{|t} \times \{t\} , \cE \bigr)
.
}
\end{equation}
Now, consider the canonical morphism between cosimplicial spaces over $c_1$,
\[
[\bullet]_{|s}\underset{[\bullet]_{|s}\times [\bullet]_{|t}}\bigstar [\bullet]_{|t}
~:=~
[\bullet]_{|s}\underset{[\bullet]_{|s}\times [\bullet]_{|t}\times\{s\}}\coprod
[\bullet]_{|s}\times [\bullet]_{|t}\times c_1 
\underset{[\bullet]_{|s}\times [\bullet]_{|t}\times \{t\}}\coprod
[\bullet]_{|t}
~\xra{~\simeq~}~
[\bullet]
~.
\]
This morphism is an equivalence between cosimplicial spaces over $c_1$: for each object $\bigl([p]\to c_1\bigr)\in \bDelta_{/c_1}$, this equivalence witnesses $[p]$ as the join of the finite totally ordered sets $[p]_{|s}$ and $[p]_{|t}$.  
Through this double pushout description of the canonical cosimplicial $\infty$-category over $c_1$, we recognize the pullback term in the diagram~(\ref{e224}) of simplicial spaces over $c_1$ as
\[
\w{\Gamma(\cE)}([\bullet]\to c_1)
~{}~\simeq~{}~
\Map_{/c_1}\bigl([\bullet]_{|s}\underset{[\bullet]_{|s}\times [\bullet]_{|t}}\bigstar [\bullet]_{|t}
, \cE \bigr)
\xla{~\simeq~}
\Map_{/c_1}\bigl( [\bullet] , \cE \bigr)
~.
\]
We conclude an identification between $\infty$-categories over $c_1$:
\[
\cE
~{}~\simeq~{}~
\w{\Gamma(\cE)}
~.
\]
This identification is evidently functorial in the argument $\cE\in \Cat_{/c_1}$.
This establishes the sought invertible 2-cell making the diagram~(\ref{e223}) commute.
\end{proof}

\begin{lemma}\label{equiv.correspondences}
Let $\cE_s$ and $\cE_t$ be $\infty$-categories.
There are canonical equivalences between the following spaces:
\begin{enumerate}

\item the space of correspondences $\Corr(\cE_s,\cE_t)$, i.e., the maximal $\infty$-subgroupoid of
\[
\{\cE_s\}\underset{\Cat}\times \Cat_{/c_1} \underset{\Cat}\times \{\cE_t\}
\]

\item the maximal $\infty$-subgroupoid of $\Cat_{/\cE_s\times \cE_t}$ consisting of those $\cX \to \cE_s\times \cE_t$ that are bifibrations

\item the maximal $\infty$-subgroupoid of $(\cE_s, \cE_t)$-bimodules: $\Fun\bigl(\cE_s^{\op}\times \cE_t , \Spaces\bigr)$.

\end{enumerate}

\end{lemma}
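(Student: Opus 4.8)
The plan is to prove the three spaces are equivalent by producing explicit constructions passing between the three descriptions, and then checking that the round-trips are equivalences. I would organize the proof around the two auxiliary passages: correspondences $\leftrightarrow$ bifibrations, and bifibrations $\leftrightarrow$ bimodules.

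First I would treat $(1)\simeq (3)$ more or less directly. Given a correspondence $\cE\to c_1$ with fibers $\cE_s,\cE_t$, Corollary~\ref{over-cell} says $\cE\to c_1$ is an exponentiable fibration, so by Lemma~\ref{exp-char}(4) the correspondence is determined, up to equivalence, by the bimodule
\[
P\colon \cE_s^{\op}\times \cE_t \longrightarrow \Spaces~,\qquad (e_s,e_t)\mapsto \cE(e_s,e_t)~,
\]
together with the Segal spaces $\cE_s,\cE_t$ themselves, since the Segal space presenting $\cE$ over $c_1$ is reconstructed from $\cE_s$, $\cE_t$, and $P$ by the coend formulas of Lemma~\ref{pushout}(1). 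Conversely, given a bimodule $P$, the parametrized join / collage
\[
\cE_s\underset{P}{\bigstar}\cE_t \longrightarrow c_1
\]
(the unstraightening over $c_1$ of the datum $(\cE_s,\cE_t,P)$, cf.\ the cylinder construction of Construction~\ref{def.un}) is a correspondence, and one checks the two assignments are mutually inverse on maximal $\infty$-subgroupoids. The cleanest way to phrase this is to observe that both sides compute the same restricted Yoneda presheaf: an exponentiable fibration over $c_1$ is the same as a Segal space over $\Delta^{\op}$ that is levelwise of the displayed form, and this is exactly the data of $(\cE_s,\cE_t,P)$.

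Next, for $(2)\simeq (3)$, I would use straightening/unstraightening (Theorem~\ref{st.un.cCart}, Theorem~\ref{st.un}). Given a bimodule $P\colon \cE_s^{\op}\times\cE_t\to\Spaces$, regard it as a functor $\cE_t\to \Fun(\cE_s^{\op},\Spaces)=\PShv(\cE_s)$, unstraighten to get a left fibration, or equivalently regard $P$ as a functor $\cE_s^{\op}\times\cE_t\to\Spaces\hookrightarrow\Cat$ and unstraighten over $\cE_s^{\op}\times\cE_t$; composing the two unstraightenings produces $\cX\to\cE_s\times\cE_t$. One then checks the four conditions in the definition of a bifibration hold: the coCartesian/Cartesian fibration conditions over $\cE_t$ and $\cE_s$ respectively follow because unstraightening a functor $\cE_t\to\PShv(\cE_s)$ yields a coCartesian fibration and unstraightening in the first variable yields a Cartesian fibration (Lemma~\ref{left.is.coCart}, Theorem~\ref{st.un.cCart}), and the left/right fibration conditions on the slices $\cX_{|\{a\}\times\cB}$ and $\cX_{|\cA\times\{b\}}$ follow because fixing one variable of $P$ leaves a $\Spaces$-valued, hence groupoid-valued, functor, whose unstraightening is a left/right fibration (Proposition~\ref{left.is.coCart}). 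Conversely, given a bifibration $\cX\to\cE_s\times\cE_t$, I would straighten the coCartesian fibration $\cX\to\cE_t$ to a functor $\cE_t\to\Cat_{/\cE_s}$ and argue, using the left-fibration-on-slices condition, that it lands in $\RFib_{\cE_s}\simeq\PShv(\cE_s)$; this produces the bimodule. That these assignments are mutually inverse is then a formal consequence of the two straightening equivalences.

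I expect the main obstacle to be the bookkeeping in $(2)\leftrightarrow(3)$: verifying that the four bifibration conditions are together exactly equivalent to $\Spaces$-valuedness of the classifying functor, i.e.\ that a functor $\cX\to\cE_s\times\cE_t$ which is a coCartesian fibration over $\cE_t$ and a Cartesian fibration over $\cE_s$, with the slice conditions, is precisely the unstraightening of a two-variable presheaf. The subtle direction is showing that the slice conditions force the straightened functor $\cE_t\to\Cat_{/\cE_s}$ to take values in right fibrations over $\cE_s$ (equivalently that each fiber $\cX_{|(a,b)}$ is a space and the transition functors are maps of spaces), and symmetrically; this is where one must use that a coCartesian fibration whose restriction to each fiber-slice is a left fibration is conservative on those slices, hence groupoidal there. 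Everything else reduces to citing Theorems~\ref{st.un.cCart} and~\ref{st.un} and the naturality of unstraightening under base change (Lemma~\ref{Cart.base.change}, Corollary~\ref{cCart.functor}). I would also remark, as the excerpt's Problem~\ref{q.spans} anticipates, that the section-$\infty$-category construction $\Gamma(\cE\to c_1)=\Fun_{/c_1}(c_1,\cE)\to\cE_s\times\cE_t$ gives a fourth, equivalent, model and that it recovers the bifibration of $(2)$ after passing to the appropriate localization; but this is not needed for the statement as phrased.
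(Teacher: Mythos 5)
Your route differs from the paper's (which compares (1) with (2) via the adjunction between the section functor $\Gamma=\Fun_{/c_1}(c_1,-)$ and the parametrized join, and compares (1) with (3) via the twisted-arrow left fibration $\TwAr(\cE)^{|\cE_s^{\op}}_{|\cE_t}$ over $\cE_s^{\op}\times\cE_t$ together with straightening of left fibrations), and in your version the step $(1)\simeq(3)$ has a genuine gap. Extracting from a correspondence $\cE\to c_1$ the bimodule $(e_s,e_t)\mapsto\cE(e_s,e_t)$ \emph{as a functor} $\cE_s^{\op}\times\cE_t\to\Spaces$ cannot be done by writing the formula on objects; producing this functoriality is exactly the kind of problem the paper's motivational section warns about, and it is what the twisted-arrow left fibration (followed by straightening, Theorem~\ref{st.un}) is there to solve. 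The converse direction is also not supplied by the results you cite: Lemma~\ref{pushout}(1) takes as input a diagram of $\infty$-categories over $\{0<1\}\la\{1\}\to\{1<\dots<p\}$ and computes mapping spaces in the glued $\infty$-category -- it does not build a correspondence out of a bimodule -- and Construction~\ref{def.un} unstraightens functors $c_1\to\Cat$, so it produces only cylinders of functors, not collages of general bimodules. The sentence ``an exponentiable fibration over $c_1$ is \ldots exactly the data of $(\cE_s,\cE_t,P)$'' is the statement of the lemma rather than an argument for it; to close the gap you must either construct the collage levelwise as a Segal space and verify Segal/univalence and the inverse identifications, or run the paper's adjunction $\TwAr(-)^{|\cE_s^{\op}}_{|\cE_t}\colon \{\cE_s\}\underset{\Cat}\times\Cat_{/c_1}\underset{\Cat}\times\{\cE_t\}\rightleftarrows\LFib_{\cE_s^{\op}\times\cE_t}$ and check that its unit and counit are equivalences, which is where the paper's actual work lies.

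Your step $(2)\simeq(3)$ is closer to a standard argument (essentially Proposition B.3.17 of \cite{HA}, which the paper acknowledges it overlaps with), but as written it also omits the crucial compatibility: to straighten the coCartesian fibration $\cX\to\cE_t$ to a functor valued in $\Cat_{/\cE_s}$ (rather than merely $\Cat$), you must know that coCartesian morphisms of $\cX\to\cE_t$ are carried to equivalences in $\cE_s$, i.e.\ that $\cX\to\cE_s\times\cE_t$ is a map of coCartesian fibrations over $\cE_t$. This does not follow merely from the observation that the slice restrictions are left/right fibrations and hence have groupoidal fibers, which is the only point your sketch addresses; it needs its own argument (or a citation to the bifibration straightening in \cite{HA}). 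The paper sidesteps both issues by never comparing (2) and (3) directly: it shows $\Gamma$ lands in bifibrations, identifies the fibers of the unit and counit using classifying spaces of slice $\infty$-categories and adjunctions, and handles (3) separately by twisted arrows. If you wish to keep your outline, import the bifibration straightening from \cite{HA} for $(2)\leftrightarrow(3)$ and supply the missing twisted-arrow/collage construction for $(1)\leftrightarrow(3)$.
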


\begin{proof}
Lemma~\ref{t.p.join} implements an equivalence between~(1) and~(2).
We now establish an equivalence between~(1) and~(3).

Twisted arrows organize as a functor in the diagram of $\infty$-categories
\[
\xymatrix{
\Cat_{/c_1}  \ar[rrr]^-{\TwAr_{|s}^{|t}}  \ar[d]^-{{|t}}_-{{|s}}
&&&
\Ar^{\sf l.fib}(\Cat)  \ar[d]^-{\rm target}
\\
\Cat  \times \Cat  \ar[rr]^-{{\op}\times \id}
&&
\Cat\times \Cat  \ar[r]^-{\times}
&
\Cat
}
\]
in which the top right term is the full $\infty$-subcategory of the arrow $\oo$-category $\Ar(\Cat) :=\Fun(c_1,\Cat)$ consisting of left fibrations.
In other words, twisted arrows organize as a functor
\begin{equation}\label{e199}
\xymatrix{
\Cat_{/c_1}  \ar[rr]^-{\TwAr_{|s}^{|t}}  \ar[dr]_-{(|s,|t)}
&&
\Ar^{\sf l.fib}(\Cat)_{|\Cat\times \Cat}  \ar[dl]
\\
&
\Cat  \times \Cat 
&
}
\end{equation}
to the base change of the above diagram.
Note that the equivalence between~(1) and~(3) follows upon showing the horizontal functor in this diagram is an equivalence between $\infty$-categories.  
Indeed,~(1) is the maximal $\infty$-subgroupoid of the fiber of the lefthand downward functor over $(\cE_s,\cE_t)$, while~(3) is the maximal $\infty$-subgroupoid of the fiber of the righthand downward functor over $(\cE_s,\cE_t)$.  
So we are reduced to showing the above horizontal functor is an equivalence between $\infty$-categories, which we do through the following approach.

{\bf Approach}:
We first observe that the functor~(\ref{e199}) implements an equivalence between spaces of morphisms, so long as the domain of such morphisms is an object in $\bDelta_{/c_1}$.
We then construct an adjoint functor $R$ (which we ultimately show is inverse to the functor~(\ref{e199})).  We do so by constructing, for each object $\cX$ in the codomain of~(\ref{e199}), a presheaf $R\cX$ on $\bDelta_{/c_1}$, then checking that this presheaf satisfies the Segal and univalence conditions. This verifies that $R\cX$ defines an object in the domain of~(\ref{e199}).
The assignment $\cX\mapsto R\cX$ will be evidently functorial, and so we lastly argue that $R$ is an inverse to~(\ref{e199}).

Let $\cE\to c_1$ be an object in $\Cat_{/c_1}$.
For each object $\cA\to c_1$ in $\Cat_{/c_1}$, the above diagram of $\infty$-categories determines the diagram of spaces of morphisms:
\begin{equation}\label{e210}
\xymatrix{
\Cat_{/c_1}(\cA,\cE)  \ar[rr]^-{\TwAr_{|s}^{|t}}  \ar[dr]_-{(|s,|t)}
&&
\Ar(\Cat)_{|\Cat\times \Cat}\Bigl( 
\TwAr(\cA)^{|\cA_{|s}^{\op}}_{|\cA_{|t}} 
, 
\TwAr(\cE)^{|\cE_{|s}^{\op}}_{|\cE_{|t}} 
\Bigr)  \ar[dl]^-{\rm forget}
\\
&
\Cat(\cA_{|s}^{\op},\cE_{|s}^{\op})  \times \Cat(\cA_{|t} , \cE_{|t})  
&
.
}
\end{equation}

Observation: We now argue that this horizontal map is an equivalence for each $\cA\to c_1$ in which $\cA\simeq [p]\in \bDelta$ is a finite non-empty linearly ordered set.
Note that the source-target functor 
\begin{equation}\label{e211}
\TwAr([p])^{|[p]_{|s}^{\op}}_{|[p]_{|t}}
 \xra{~(s,t)~}
[p]_{|s}^{\op} \times [p]_{|t}
\end{equation}
is an equivalence between $\infty$-categories.
Suppose the functor $[p]\to c_1$ factors through either $\{s\}$ or $\{t\}$.  
Then $\TwAr([p])^{|[p]_{|s}^{\op}}_{|[p]_{|t}} = \emptyset$ is empty.
In this case, both of the vertical maps in~(\ref{e210}) are equivalences, and the horizontal map in~(\ref{e210}) is an equivalence.  
So suppose $[p]\to c_1$ does not factor through either $\{s\}$ or $\{t\}$.
The $\infty$-category $\TwAr([p])^{|[p]_{|s}^{\op}}_{|[p]_{|t}}$ then has an initial object,
\begin{equation}\label{e212}
\ast 
\xra{~\lag (m-1<m)\rag~}
\TwAr([p])^{|[p]_{|s}^{\op}}_{|[p]_{|t}}
~,
\end{equation}
where $m={\sf Min}\{i\in [p]_{|t}\}$.  
Consider the diagram among spaces extending~(\ref{e210}):
\begin{equation}\label{e213}
\xymatrix{
\Cat_{/c_1}([p],\cE)  \ar[d]_-{\TwAr_{|s}^{|t}}  \ar[rr]^-{(|s,|t)}
&&
\Cat([p]_{|s}^{\op},\cE_{|s}^{\op})  \times \Cat([p]_{|t} , \cE_{|t}) \ar[d]^-=
\\
\Ar(\Cat)_{|\Cat\times \Cat}\Bigl( 
\TwAr([p])^{|[p]_{|s}^{\op}}_{|[p]_{|t}} 
, 
\TwAr(\cE)^{|\cE_{|s}^{\op}}_{|\cE_{|t}} 
\Bigr)  \ar[rr]^-{\rm forget}  \ar[d]_{(\ref{e212})^\ast}
&&
\Cat([p]_{|s}^{\op},\cE_{|s}^{\op})  \times \Cat([p]_{|t} , \cE_{|t})   \ar[d]^{(\ref{e212})^\ast}
\\
\Hom\Bigl( 
\ast
,  
\TwAr(\cE)^{|\cE_{|s}^{\op}}_{|\cE_{|t}}
\Bigr)
\simeq
\Cat_{/c_1}(c_1,\cE)        \ar[rr]^-{(s^\ast,t^\ast)}
&&
\cE_{|s}^\sim \times \cE_{|t}^\sim
.
}
\end{equation}
The source-target functor $\TwAr(\cE)^{|\cE_{|s}^{\op}}_{|\cE_{|t}}\to \cE_{|s}^{\op} \times \cE_{|t}$ is a left fibration.
Using Proposition~\ref{left.is.coCart}, the two expressions~(\ref{e211}) and~(\ref{e212}) grant that the bottom square in the above diagram of spaces is a pullback.  
On the other hand, the canonical functor 
\[
[p]_{|s}
\underset{\{m-1\}} \coprod 
\{m-1<m\}
\underset{\{m\}} \coprod
[p]_{|t}
\xra{~\simeq~}
[p]
\]
is an equivalence between $\infty$-categories over $c_1$.
It follows that the outer square of the diagram~(\ref{e213}) of spaces is a pullback.  
We conclude that the top square in~(\ref{e213}) is a pullback, from which it follows that the horizontal map~(\ref{e210}) is an equivalence in the case that $\cA\in \bDelta$, as desired.

We now use the above observation to construct an inverse, $R$, to the functor~(\ref{e199}).
Let $\cX \to \cE_s^{\op}\times \cE_t$ be a left fibration, which is an object in the codomain of~(\ref{e199}).
Consider the presheaf on $\bDelta_{/c_1}$:
\[
R\cX\colon 
(\bDelta_{/c_1})^{\op}
\longrightarrow
\Spaces
~,\qquad
([p]\to c_1)
\mapsto 
\Ar(\Cat)_{|\Cat\times \Cat}\Bigl( 
\TwAr([p])^{|[p]_{|s}^{\op}}_{|[p]_{|t}} 
, 
\TwAr(\cE)^{|\cE_{|s}^{\op}}_{|\cE_{|t}}  
\Bigr)
~.
\]
We argue that this presheaf satisfies the Segal and univalence conditions.  
To that end, let $([p]\to c_1)\in \bDelta_{/c_1}$ be an object.
Suppose $[p]\to c_1$ factors through $\{s\}$ or $\{t\}$.
Under this supposition, the expression~(\ref{e211}) reveals that the $\infty$-category $\TwAr([p])^{|[p]_{|s}^{\op}}_{|[p]_{|t}} = \emptyset$ is empty, and we have either $[p]_{|t} = \emptyset$ or $[p]_{|s} = \emptyset$.
Then tautologically, the canonical map
\begin{equation}\label{e214}
R\cX\bigl( [p]\to c_1 \bigr)
\xra{~\simeq~}
\Cat\bigl( [p]_{|s}^{\op} , \cE_s^{\op} \bigr)
\times
\Cat\bigl( [p]_{|t} , \cE_t \bigr)
\end{equation}
is an equivalence between spaces.  
Now, suppose $[p]\to c_1$ does not factor through $\{s\}$ or $\{t\}$.  
Recall the expressions~(\ref{e211}) and~(\ref{e212}).
Consider the diagram among spaces
\begin{equation}\label{e215}
\xymatrix{
R\cX\bigl( [p]\to c_1 \bigr) \ar[rr] \ar[d]
&&
\Ar(\Cat)_{|\Cat\times \Cat}\Bigl( 
\ast \la \ast \to \ast 
,  
\cE_{s}^{\op} \la \cX \to \cE_{t}
\Bigr)   
\simeq
\cX^{\sim}   \ar[d]
\\
\Cat([p]_{|s}^{\op},\cE_{s}^{\op})  \times \Cat([p]_{|t} , \cE_{t})   \ar[rr]^{(\ref{e212})^\ast}
&&
\cE_{s}^\sim \times \cE_{t}^\sim
.
}
\end{equation}
Proposition~\ref{left.is.coCart} grants that this square is a pullback.

Now, by direct examination, the expressions~(\ref{e214}) and~(\ref{e215}) immediately reveal that the presheaf $R\cX$ is satisfies the univalence condition, and they also reveal that $R\cX$ satisfies the Segal condition.  
This is to say that the presheaf $R\cX \in \PShv(\bDelta_{/c_1})$ presents an $\infty$-category over $c_1$.  
As so, the expression~(\ref{e214}) determines identifications between $\infty$-categories:
\begin{equation}\label{e216}
(R\cX)_{|s}
~\simeq~
\cE_s
\qquad\text{ and } \qquad
(R\cX)_{|t} 
~\simeq~
\cE_t
~.
\end{equation}

Being defined in terms of limit constructions, the assignment $\cX\mapsto R\cX$ defines a functor over $\Cat\times \Cat$:
\begin{equation}\label{e190}
\xymatrix{
\Ar^{\sf l.fib}(\Cat)_{|\Cat\times \Cat}   \ar[rr]^-{R}  \ar[dr]_-{\rm target}
&&
\Cat_{/c_1}   \ar[dl]^-{(|s,|t)}
\\
&
\Cat  \times \Cat 
&
.
}
\end{equation}
We now argue that $R$ is inverse to~(\ref{e199}).
We do this directly, by constructing natural transformations between identity functors and composites of $R$ and~(\ref{e199}).

By construction, there is a canonical natural transformation between endofunctors on $\Cat_{/c_1}$:
\begin{equation}\label{e198}
\eta\colon 
\id
\longrightarrow
R \circ \TwAr(-)^{|s}_{|t}
~,
\end{equation}
which evaluates on an $\infty$-category $\cE\to c_1$ over $c_1$ as the map between presheaves on $\bDelta_{/c_1}$ whose value on $([p]\to c_1)$ is the canonical map~(\ref{e210}) between spaces:
\[
\Cat_{/c_1}\bigl( [p] , \cE \bigr)
\xra{~(\ref{e210})~}
\Ar(\Cat)_{|\Cat\times \Cat}
\Bigl(
~
[p]_{|s}^{\op} \la [p]_{|s}^{\op}\times [p]_{|t} \to [p]_{|t}
~
,
~
\cE_{|s}^{\op} \la \TwAr(\cE)^{|\cE_{|s}^{\op}}_{|\cE_{|t}} \to \cE_{|t}
~
\Bigr)
.
\]
It was established above that this map~(\ref{e210}) is an equivalence, in this case.
The natural transformation $\eta$ is therefore an equivalence.

By construction, there is a canonical natural transformation between endofunctors of $\Ar(\Cat)_{|\Cat\times \Cat}$:
\begin{equation}\label{e197}
\epsilon\colon 
\TwAr(-)^{|s}_{|t} \circ R
\longrightarrow
\id
~,
\end{equation}
whose value on a left fibration $\cX\to \cE_s^{\op}\times \cE_t$ is the following functor between left fibrations over $\cE_s^{\op}\times \cE_t$:
\begin{equation}\label{e196}
\TwAr(R\cX)^{|\cE_s^{\op}}_{|\cE_t}
\longrightarrow
\cX
~.
\end{equation}
Unwinding the definition of $\TwAr$, the domain of this functor presents the presheaf on $\bDelta_{/\cE_s^{\op}\times \cE_t}$ whose value on $([p]\to\cE_s^{\op},[p]\to \cE_t)$ is the space of extensions
\[
\xymatrix{
[p]^{\op} \coprod [p] \ar[rr] \ar[d]
&&
\cE_s \coprod \cE_t \ar[rr]
&&
R\cX
\\
[p]^{\op} \underset{[p]^{\op}\times [p]} \bigstar [p] \ar@{-->}[urrrr]
&&
&&
.
}
\]
Unwinding the definition of $R$, this space of extensions is identical to the space of lifts
\[
\xymatrix{
&&
\cX \ar[d]
\\
[p]\times [p] \ar[rr] \ar@{-->}[urr]
&&
\cE_s^{\op} \times \cE_t
~.
}
\]
Through this unpacking of the domain of~(\ref{e196}), the functor~(\ref{e196}) presents the map between presheaves on $\bDelta_{/\cE_s^{\op}\times \cE_t}$ that is implemented by restriction along a diagonal functor:
\begin{equation}\label{e195}
{\sf diag}^{\ast}
\colon
\Cat_{/\cE_s^{\op}\times \cE_t}\bigl( [\bullet]\times [\bullet], \cX \bigr)
\longrightarrow
\Cat_{/\cE_s^{\op}\times \cE_t}\bigl( [\bullet], \cX \bigr)
~.
\end{equation}
Notice that, for each $[p]\in \bDelta$, the diagonal functor $[p]\xra{\sf diag}[p]\times [p]$ carries the initial object to the initial object.
By way of Proposition~\ref{left.is.coCart}, it follows that the map~(\ref{e195}) is an equivalence between spaces.  
We have established that the natural transformation $\epsilon$ is a natural equivalence.
We conclude that~(\ref{e199}) is an equivalence between $\infty$-categories, which concludes the equivalence between~(1) and~(3).
\end{proof}

\begin{example}\label{ex.identity.corr.more}
Let $\cC$ be an $\infty$-category. As we have seen in Example \ref{ex.identity.corr}, the \emph{identity correspondence} is the projection $\cC\times c_1\xra{\pr}c_1$.  
As a bifibration, it is $\Ar(\cC)\xra{\ev_{s,t}} \cC\times \cC$.
As a bimodule, it is $\cC^{\op}\times \cC \xra{\cC(-,-)} \Spaces$, the Yoneda functor.  
\end{example}

\begin{remark}
We can likewise describe morphisms of $\LCorr$ or $\RCorr$ in terms of bimodules.
For $\cC$ and $\cD$ two $\infty$-categories, a morphism in $\LCorr$ from $\cC$ to $\cD$ is a bimodule $\cC^{\op} \times \cD\to \Spaces$, for which, for each object $c\in \cC$, the colimit of the restriction $\colim\bigl(\{c\}\times \cD \to \cC^{\op}\times \cD \xra{M} \Spaces)\simeq \ast$ is terminal.

\end{remark}

\subsection{Composition of correspondences, as bimodules and as bifibrations}

Theorem~\ref{representablecorr} gave a composition rule for correspondences. In Lemma~\ref{composition}, we present this composition rule in terms of each of the three equivalent notions of a correspondence named in Lemma~\ref{equiv.correspondences}.

\begin{lemma}\label{composition}
Let $\cE_0$, $\cE_1$, and $\cE_2$ be $\infty$-categories.  
\begin{enumerate}
\item\label{comp.corr}
For $\cE_{01} \to \{0<1\}$ a correspondence from $\cE_0$ to $\cE_1$, and for $\cE_{12}\to \{1<2\}$ a correspondence from $\cE_1$ to $\cE_2$, the composite correspondence from $\cE_0$ to $\cE_2$ is the left vertical functor in the pullback of $\infty$-categories:
\[
\xymatrix{
\cE_{02}  \ar[rr]  \ar[d]
&&
\cE_{01}\underset{\cE_1}\amalg  \cE_{12}  \ar[d]
\\
\{0<2\}  \ar[rr]
&&
\{0<1\}\underset{\{1\}}\amalg \{1<2\} = [2].
}
\]

\item\label{comp.bifib}
For $\cX_{01} \to \cE_0\times \cE_1$ a bifibration over $(\cE_0,\cE_1)$, and for $\cX_{12}\to \cE_1\times \cE_2$ a bifibration over $(\cE_1,\cE_2)$, the composite bifibration over $(\cE_0,\cE_2)$ is the localization
\[
\cX_{012}[W^{-1}]
\]
in which $\cX_{012}$ is the pullback
\[
\xymatrix{
\cX_{012}  \ar[rr]  \ar[d]
&&
\cX_{12}  \ar[d]
\\
\cX_{01} \ar[rr]
&&
\cE_1
}
\]
and $W:=(\cX_{012})_{|\cE_0^\sim\times \cE_2^\sim}$ is the $\infty$-subcategory of $\cX_{012}$ consisting of those morphisms that the canonical functors $\cE_0\la \cX_{012} \to \cE_2$ carry to equivalences.

\item\label{comp.bimod}
For $P_{01}\colon \cE_0^{\op}\times \cE_1\to \Spaces$ a $(\cE_1,\cE_0)$-bimodule, and for $P_{12}\colon \cE_1^{\op}\times \cE_2\to \Spaces$ a $(\cE_2,\cE_1)$-bimodule, the composite $(\cE_2,\cE_0)$-bimodule is the coend over $\cE_1$,
\[
P_{02}~=~P_{12}\underset{\cE_1}\otimes P_{01} ~,
\]
defined as the left Kan extension
\[
\xymatrix{
\cE_0^{\op}\times \TwAr(\cE_1)^{\op} \times \cE_2 \ar[rr]^-{\id\times \ev_{s,t}\times \id}   \ar[d]^-{\pr}
&&
\cE_0^{\op}\times \cE_1\times \cE_1^{\op} \times \cE_2 \ar[rr]^-{P_{01}\times P_{12}}
&&
\Spaces \times \Spaces  \ar[d]^-{\times}
\\
\cE_0^{\op}   \times \cE_2  \ar[rrrr]_-{\sf LKan}^{P_{02}}
&&
&&
\Spaces   .
}
\]

\end{enumerate}

\end{lemma}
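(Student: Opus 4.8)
The plan is to prove all three composition descriptions by reducing to the already-established composition rule for correspondences, namely part~(\ref{comp.corr}), which is itself a direct consequence of Lemma~\ref{pushout} and the presentation of $\Corr$ as a Segal space in Theorem~\ref{representablecorr}. So I would begin by recalling that $\Corr$ is presented by the simplicial space $\EFib^\sim_{|\bdelta^{\op}}$, so that a $2$-simplex is an exponentiable fibration $\cE\to [2]$, and its three faces are the correspondences $\cE_{|\{0<1\}}$, $\cE_{|\{1<2\}}$, $\cE_{|\{0<2\}}$. By the Segal condition established in Corollary~\ref{corr.segal}, together with the explicit computation of the left adjoint in the proof there, the composite of $\cE_{01}$ and $\cE_{12}$ is $\cE_{|\{0<2\}}$ where $\cE := \cE_{01}\underset{\cE_1}\amalg \cE_{12}\to [2]$ is the pushout, which is an exponentiable fibration by Lemma~\ref{pushout}(2). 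This is precisely statement~(\ref{comp.corr}); I would present it as an unwinding of those results rather than reprove anything.

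For statement~(\ref{comp.bifib}), the plan is to apply the functor $\Gamma = \Fun_{/c_1}(c_1,-)$ of Lemma~\ref{equiv.correspondences} (extended to $\Fun_{/[2]}([2],-)$ over the $2$-cell) to the composite correspondence $\cE_{|\{0<2\}}$ and to identify the result. Writing $\cE := \cE_{01}\underset{\cE_1}\amalg \cE_{12} \to [2]$, I would first observe that the pullback $\cX_{012} := \cX_{01}\underset{\cE_1}\times \cX_{12}$ is naturally identified with $\Fun_{/[2]}([2],\cE)$: a section of $\cE\to [2]$ is a compatible pair of sections of $\cE_{01}\to\{0<1\}$ and $\cE_{12}\to\{1<2\}$ agreeing over $\{1\}$, which by Lemma~\ref{equiv.correspondences} (applied fiberwise) is exactly an object of $\cX_{012}$. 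Then Lemma~\ref{exponentiable.localization} says that restriction $\Fun_{/[2]}([2],\cE)\to \Fun_{/[2]}(\{0<2\},\cE) = \Gamma(\cE_{|\{0<2\}})$ is a localization, inverting precisely the morphisms sent to equivalences under $\ev_{0,2}$; since the bifibration associated to $\cE_{|\{0<2\}}$ is $\Gamma(\cE_{|\{0<2\}})\to \cE_0\times\cE_2$, this is the asserted localization $\cX_{012}[W^{-1}]$ with $W = (\cX_{012})_{|\cE_0^\sim\times\cE_2^\sim}$. The one point needing care is checking that the localizing class $W$ in Lemma~\ref{exponentiable.localization} matches the fiberwise description of $W$ here, i.e., that the morphisms of $\Fun_{/[2]}([2],\cE)$ that $\ev_{0,2}$ carries into $\cE_{|0}^\sim\times\cE_{|2}^\sim$ correspond under the identification $\cX_{012}\simeq\Fun_{/[2]}([2],\cE)$ to those morphisms that $\cE_0\la\cX_{012}\to\cE_2$ send to equivalences; this follows since the identifications $\cE_{|0}\simeq\cE_0$, $\cE_{|2}\simeq\cE_2$ from Lemma~\ref{pushout} are compatible with the projections.

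For statement~(\ref{comp.bimod}), the plan is to transport the computation through the equivalence between correspondences and bimodules established in the proof of Lemma~\ref{equiv.correspondences}, which sends $\cE\to c_1$ to the bimodule unstraightening $\TwAr(\cE)^{|\cE_s^{\op}}_{|\cE_t}$. I would combine this with the bifibration picture just obtained: the bimodule $P_{02}(e_0,e_2)$ is, by the equivalence~(\ref{X.bifib}) in the proof of Lemma~\ref{equiv.correspondences}, the classifying space of the fiber of the bifibration $\Gamma(\cE_{|\{0<2\}})$ over $(e_0,e_2)$, which by statement~(\ref{comp.bifib}) and the identification of classifying spaces with localizations is $\sB$ of the fiber of $\cX_{012}$ over $(e_0,e_2)$ — and that fiber is $(\cX_{01})^{|e_0} \underset{\cE_1}\times {}_{|e_2}(\cX_{12})$, whose classifying space computes exactly the coend $\int^{c\in\cE_1} P_{01}(e_0,c)\times P_{12}(c,e_2)$, i.e., $(P_{12}\underset{\cE_1}\otimes P_{01})(e_0,e_2)$. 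So I would spell out that the coend over $\cE_1$, as a colimit indexed by $\TwAr(\cE_1)$, is computed as the classifying space of the two-sided fiber product (a standard identification, e.g.\ that $\sB$ of $\cA^{a/}\underset{\cA}\times\cC\underset{\cA}\times\cA_{/b}$ against a functor pairing computes the coend), matching the left Kan extension formula in the statement.

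The main obstacle is statement~(\ref{comp.bifib}): one must carefully verify that the pullback $\cX_{012}$ genuinely coincides with the section $\infty$-category $\Fun_{/[2]}([2],\cE)$, and that the localizing class matches, before Lemma~\ref{exponentiable.localization} can be invoked; the bimodule statement then follows essentially formally by applying $\sB$ fiberwise and recognizing the coend, using the identifications $\sB(\text{double-slice})\simeq\sB(\text{fiber product})$ that already appear (as adjunctions inducing equivalences on classifying spaces) in the proof of Lemma~\ref{equiv.correspondences}.
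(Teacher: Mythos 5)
Your treatments of parts (1) and (2) coincide with the paper's: part (1) is exactly the unwinding of Corollary~\ref{corr.segal} (whose inverse to the Segal map is the pushout of Lemma~\ref{pushout}), and part (2) is the paper's argument verbatim --- identify $\cX_{012}$ with $\Fun_{/[2]}\bigl([2],\cE_{01}\underset{\cE_1}\amalg\cE_{12}\bigr)$ via the pushout presentation of $[2]$ and the identification $\cX_{0i}\simeq \Gamma(\cE_{0i})$ from Lemma~\ref{equiv.correspondences}, then invoke Lemma~\ref{exponentiable.localization}; your extra care about matching the localizing class $W$ is welcome and unproblematic.

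For part (3) you diverge from the paper, and there is a gap in the step where you pass from part (2) to the fiberwise statement. You assert that the fiber of the composite bifibration $\cX_{012}[W^{-1}]\to\cE_0\times\cE_2$ over $(e_0,e_2)$ is $\sB$ of the fiber of $\cX_{012}$ over $(e_0,e_2)$, justified by ``statement (2) and the identification of classifying spaces with localizations.'' That identification only produces a comparison map: since every morphism of the fiber $(\cX_{012})_{|(e_0,e_2)}$ lies in $W$, the localization functor induces $\sB\bigl((\cX_{012})_{|(e_0,e_2)}\bigr)\to \bigl(\cX_{012}[W^{-1}]\bigr)_{|(e_0,e_2)}$; but localizations do not commute with base change in general, so the claim that this map is an equivalence --- i.e.\ that the localization of part (2) is computed fiberwise over $\cE_0\times\cE_2$ --- is essentially the substance of what is to be proved and needs its own argument (compare Lemma~\ref{l.final.B.fib}, which establishes such fiberwise behavior only under a left final/right initial hypothesis). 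Your remaining input, that $\sB$ of the fiber product of a left fibration and a right fibration over $\cE_1$ computes the coend, is true but is also not established anywhere in the paper, so it would have to be proved or cited.

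Both issues can be repaired by short-circuiting the localization step, which is in effect what the paper does. The composite bifibration is $\Gamma(\cE_{02})=\Fun_{/\{0<2\}}(\{0<2\},\cE_{02})$, whose fiber over $(e_0,e_2)$ is the mapping space $\cE_{02}(e_0,e_2)\simeq \bigl(\cE_{01}\underset{\cE_1}\amalg\cE_{12}\bigr)(e_0,e_2)$, and Lemma~\ref{pushout}(1c) identifies this with the coend $\cE_{01}(e_0,-)\underset{\cE_1}\otimes\cE_{12}(-,e_2)$. On the other side, since the projection $\pr\colon \cE_0^{\op}\times\TwAr(\cE_1)^{\op}\times\cE_2\to\cE_0^{\op}\times\cE_2$ is a coCartesian fibration, the left Kan extension defining $P_{12}\underset{\cE_1}\otimes P_{01}$ is computed fiberwise (Proposition~\ref{final.cocart}), so its value at $(e_0,e_2)$ is the same coend of mapping spaces; the paper packages this as a fiberwise check that the comparison functor of left fibrations ${\sf Un}\bigl(P_{12}\underset{\cE_1}\otimes P_{01}\bigr)\to \TwAr\bigl(\cE_{01}\underset{\cE_1}\amalg\cE_{12}\bigr)^{|\cE_0^{\op}}_{|\cE_2}$ is an equivalence. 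If you rewrite your part (3) along these lines, no unproved ``standard identification'' and no fiberwise-localization claim is needed.
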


\begin{proof}
By definition, the composition rule for correspondences is given as the composite of the maps
\[
\corr(\{0<1\}) \underset{\corr(\{1\})}\times\corr(\{1<2\})\overset{\simeq}\longleftarrow \corr([2])\longrightarrow \corr(\{0<2\})~.
\]
The second map is restriction along $\{0<2\}\hookrightarrow [2]$. As shown in verifying the Segal condition for the functor $\corr_{|\bdelta}$ in Corollary \ref{corr.segal}, the first map is an equivalence with inverse given by sending a diagram
\[
\xymatrix{
\cE_{01}\ar[d]&\cE_1\ar[d]\ar[l]\ar[r]&\cE_{12}\ar[d]\\
\{0<1\}&\{1\}\ar[l]\ar[r]&\{1<2\}}
\]
to the exponentiable fibration $\cE_{01}\amalg_{\cE_1}\cE_{12}\ra [2]$. This verifies the composition rule in (\ref{comp.corr}).

We next verify the composition rule for bifibrations given in (\ref{comp.bifib}). By the proof of Lemma \ref{equiv.correspondences}, the bifibration $\cX_{01}\ra \cE_0\times\cE_1$ is equivalent to $\oo$-category of sections $\cX_{01}\simeq \Fun_{/\{0<1\}}\bigl(\{0<1\}, \cE_{01}\bigr)$ for $\cE_{01}:=\cE_0\underset{\cX_{01}}\bigstar\cE_1$; likewise $\cX_{12} \simeq \Fun_{\{1<2\}}\bigl( \{1<2\} , \cE_{12}\bigr)$ for $\cE_{02} := \cE_1 \underset{\cX_{12}} \bigstar \cE_2$.
Using the already established composition rule in (\ref{comp.corr}), the composition of bifibrations is given by the $\oo$-category of sections
\[
\Fun_{/\{0<2\}}(\{0<2\},\cE_{02})\longrightarrow \cE_0\times \cE_2~.
\]
By Lemma \ref{exponentiable.localization}, the restriction of sections
\[
\Fun_{/[2]}\Bigl([2],\cE_{01}\underset{\cE_1}\amalg\cE_{12}\Bigr)\longrightarrow \Fun_{/\{0<2\}}(\{0<2\},\cE_{02})
\]
is a localization. The source of this localization is equivalent to $\cX_{012}$, so the composition rule (\ref{comp.bifib}) follows.

We lastly verify the composition rule for bimodules as a coend given in (\ref{comp.bimod}). Let $P_{01}$ and $P_{12}$ be the bimodules associated to the exponentiable fibrations $\cE_{01}\ra \{0<1\}$ and $\cE_{12}\ra \{1<2\}$ as in Lemma \ref{equiv.correspondences}. That is, $P_{01}$ is the straightening of the left fibration $\TwAr(\cE_{01})^{|\cE_0^{\op}}_{|\cE_1}$ and likewise for $P_{12}$. From the universal property of left Kan extension, there is a natural functor of left fibrations over $\cE_0^{\op}\times\cE_2$
\[
{\sf Un}\bigl(P_{12}\underset{\cE_1}\ot P_{01}\bigr)\longrightarrow \TwAr\Bigl(\cE_{01}\underset{\cE_1}\amalg\cE_{12}\Bigr)^{|\cE_0^{\op}}_{|\cE_2}
\]
from the unstraightening of the coend of the bimodules to the left fibration over $\cE_0^{\op}\times\cE_2$ associated to exponentiable fibration given by composing $\cE_{01}$ and $\cE_{12}$ according to (\ref{comp.corr}). To check that this functor is an equivalence can be accomplished fiberwise over $\cE_0^{\op}\times\cE_2$. Since the projection ${\sf pr}: \cE_0^{\op}\times \TwAr(\cE_1)^{\op} \times \cE_2\ra \cE_0^{\op}\times\cE_2$ is a coCartesian fibration, left Kan extension along ${\sf pr}$ is computed fiberwise: so the space of maps in ${\sf Un}(P_{12}\underset{\cE_1}\ot P_{01})$ from $e_0\in \cE_0$ to $e_2\in \cE_2$ is equivalent to the coend $\cE_{01}(e_0,-)\ot_{\cE_1} \cE_{12}(-,e_2)$. The space of maps from $e_0\in \cE_0$ to $e_2\in \cE_2$ in $\cE_{01}\underset{\cE_1}\amalg\cE_{12}$ is computed by the identical expression by Lemma \ref{pushout}, so the result follows.
\end{proof}

\begin{remark}\label{corr-explicit}
The $\infty$-category $\Cat$ is to the $\infty$-category $\Corr$ as the category of rings is to the Morita category of rings.
This is justified by the following descriptions.
\begin{itemize}

\item {\bf Objects:}
An object in $\Corr$ is an $\infty$-category $\cA$, viewed as the exponentiable fibration $\cA\to \ast$ over the $0$-cell.

\item {\bf Morphisms:}
A morphism in $\Corr$ from $\cA$ to $\cB$ is a bimodule $\cA^{\op}\times \cB\xra{M} \Spaces$, viewed as the exponentiable fibration
\[
\cE_M  \longrightarrow c_1
\]
over the 1-cell whose fibers are identified $\cE_{|s}\simeq \cA$ and $\cE_{|t} \simeq \cB$, which is defined so that, for each $\infty$-category $\cJ\to c_1$ over the 1-cell, and each solid diagram of $\infty$-categories
\[
\xymatrix{
\cJ_{|s} \ar[rr]  \ar[d]
&&
\cA  \ar[d]
\\
\cJ  \ar@{-->}[rr]
&&
\cE_{M}
\\
\cJ_{|t} \ar[rr]  \ar[u]
&&
\cB,   \ar[u]
}
\]
the space of fillers is the limit
\[
\limit \bigl(\TwAr(\cJ)^{|\cJ_{|s}^{\op}}_{|\cJ_{|t}} \to \cA^{\op}\times \cB \xra{M} \Spaces  \bigr)~.
\]

\item {\bf Composition:}
For $\cA^{\op}\times \cB\xra{M}\Spaces$ a bimodule and for $\cB^{\op}\times \cC \xra{N} \Spaces$ another, their composition is the bimodule $\cA^{\op}\times \cC \xra{M\underset{\cB} \otimes N} \Spaces$ which is the coend along $\cB$, which is the left Kan extension in the diagram of $\infty$-categories:
\[
\xymatrix{
\cA^{\op}\times \TwAr(\cB)^{\op} \times \cC  \ar[rr]^-{(\ev_s,\ev_t)}  \ar[d]
&&
\cA^{\op}\times \cB\times \cB^{\op}\times \cC  \ar[r]^-{M\times N}
&
\Spaces \times \Spaces \ar[r]^-{\times}
&
\Spaces
\\
\cA^{\op}\times \cC   \ar[urrrr]_-{{\sf LKan} ~=~ M\underset{\cB}\otimes N}
&&&&
.
}
\]

\end{itemize}

\end{remark}

\begin{remark}
Through Remark~\ref{corr-explicit}, we believe that the $\infty$-category $\Corr[\Spaces]$ agrees with the $\infty$-category of spans of spaces.  
We do not require this result for our purposes; we encourage an interested reader to make this connection precise. 
See Question~\ref{q.spans}.

\end{remark}

\section{Finality and initiality}
We give a concise exposition of finality and initiality in $\infty$-category theory.
All material in~\S5.1 and~\S5.3 can be found in~\S4 of~\cite{HTT}, though we offer a different presentation here.
The material in~\S5.2 is more original, if not in statement then in technique.  

\subsection{Definitions and basic results}\label{subsec.final}
We use the following definition for finality of a functor.

\begin{definition}\label{def.final.initial}
Let $f\colon \cC \to \cD$ be a functor beteen $\infty$-categories.
This functor $f$ is \emph{final} if, for any functor $\cD \ra \cZ$ to an $\infty$-category, the canonical morphism in $\cZ$,
\[
\colim(\cC \xra{f} \cD\to \cZ) \longrightarrow \colim(\cD \ra \cZ)~,
\]
exists and is an equivalence whenever either colimit exists.
This functor $f$ is \emph{initial} if, for any functor $\cD \to \cZ$ to an $\infty$-category, the canonical morphism in $\cZ$,
\[
\limit(\cD\to \cZ) \longrightarrow \limit(\cC\xra{f}\cD \ra \cZ)~,
\]
exists and is an equivalence whenever either limit exists.
\end{definition}

\begin{remark}
In other literature, final functors are also known as cofinal, or right cofinal, functors. 
Initial functors are also known as coinitial, or left cofinal, functors.
\end{remark}

\begin{example}\label{final.objects}
Consider a functor $\ast \to \cC$ from a terminal $\infty$-category to an $\infty$-category.
This functor is final if and only if it selects a final object in $\cC$.
This functor is initial if and only if it selects an initial object in $\cC$.  

\end{example}

\begin{observation}\label{final.op}
A functor $\cC\to \cD$ between $\infty$-categories is final if and only if its opposite $\cC^{\op} \to \cD^{\op}$ is initial.

\end{observation}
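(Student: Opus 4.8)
The plan is to unwind the definitions of \emph{final} and \emph{initial} from Definition~\ref{def.final.initial} and observe that they are exchanged under the operation $(-)^{\op}$, using only the standard fact that limits are colimits in the opposite $\infty$-category. Concretely, I would argue as follows.

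First I would recall that for any functor $F\colon \cD \to \cW$ between $\infty$-categories, the formation of the opposite gives a functor $F^{\op}\colon \cD^{\op} \to \cW^{\op}$, that $(-)^{\op}\colon \CAT \to \CAT$ is an equivalence (in particular an involution), and that $\limit(G) \simeq \bigl(\colim(G^{\op})\bigr)^{\op}$ for any $G\colon \cD^{\op} \to \cZ$, naturally in $G$ and compatibly with restriction along functors. In particular, for a functor $f\colon \cC \to \cD$, the canonical comparison morphism
\[
\limit(\cD^{\op}\xra{~G~}\cZ) \longrightarrow \limit(\cC^{\op}\xra{~f^{\op}~}\cD^{\op}\xra{~G~}\cZ)
\]
is obtained by applying $(-)^{\op}$ to the canonical comparison morphism
\[
\colim(\cC \xra{~f~}\cD \xra{~G^{\op}~}\cZ^{\op}) \longrightarrow \colim(\cD\xra{~G^{\op}~}\cZ^{\op})~,
\]
and one exists/is an equivalence if and only if the other does.

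Then the proof is a direct substitution. By Definition~\ref{def.final.initial}, the functor $f^{\op}\colon \cC^{\op}\to\cD^{\op}$ is initial if and only if, for every functor $G\colon \cD^{\op}\to\cZ$ to an $\infty$-category, the first displayed comparison morphism exists and is an equivalence whenever either limit exists. Since every functor out of $\cD^{\op}$ is of the form $G = H^{\op}$ for a unique functor $H\colon \cD \to \cZ^{\op}$ (taking $H := G^{\op}$ under $(\cZ^{\op})^{\op}\simeq \cZ$), and since every $\infty$-category occurs as $\cZ^{\op}$, the displayed equivalence of comparison morphisms shows this condition is equivalent to: for every functor $H\colon \cD\to\cW$ to an $\infty$-category, the canonical morphism $\colim(\cC\xra{f}\cD\xra{H}\cW) \to \colim(\cD\xra{H}\cW)$ exists and is an equivalence whenever either colimit exists. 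By Definition~\ref{def.final.initial} again, this last condition is precisely that $f$ is final. Reversing the roles (or applying the established implication to $f^{\op}$ and using that $(-)^{\op}$ is an involution) gives the converse.

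There is essentially no obstacle here: the only point requiring a word of care is the naturality of the identification $\limit(G)\simeq(\colim(G^{\op}))^{\op}$ and its compatibility with restriction along $f$, which is what makes the two canonical comparison maps correspond; everything else is formal manipulation of the definitions. I would keep the write-up to a few lines, matching the status of the statement as an observation.
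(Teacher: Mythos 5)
Your argument is correct and is exactly the intended one: the paper records this as an unproved Observation precisely because it follows by unwinding Definition~\ref{def.final.initial} and using that $(-)^{\op}$ exchanges limits and colimits, which is what you do. The one point you flag — naturality of $\limit(G)\simeq(\colim(G^{\op}))^{\op}$ and its compatibility with restriction along $f$, so that the two comparison morphisms correspond — is indeed the only substantive step, and your handling of it is fine.
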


\begin{lemma}\label{2.o.3}
Consider a commutative diagram of $\infty$-categories:
\[
\xymatrix{
\cC  \ar[rr]^-h  \ar[dr]_-f
&&
\cE   
\\
&
\cD  \ar[ur]_-{g}
&
.
}
\]
The following assertions are true.
\begin{enumerate}
\item If $f$ and $g$ are final, then $h$ is final.

\item If $f$ and $g$ are initial, then $h$ is initial.

\item If $f$ and $h$ are final, then $g$ is final.

\item If $f$ and $h$ are initial, then $g$ is initial.

\end{enumerate}

\end{lemma}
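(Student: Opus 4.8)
The plan is to reduce everything to the standard ``2-out-of-3'' statement for final functors between ordinary quasi-categories, which is proved in \S4.1 of~\cite{HTT}; since our definition of finality (Definition~\ref{def.final.initial}) is the homotopy-invariant colimit-preservation formulation and agrees with Lurie's (via the characterization in terms of contractibility of the relevant slice classifying spaces, i.e. Quillen's Theorem A), this is legitimate. But it is cleaner, and keeps the exposition self-contained, to argue directly from Definition~\ref{def.final.initial}.

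First I would observe that assertions~(2) and~(4) follow from assertions~(1) and~(3) by applying Observation~\ref{final.op}: a functor is initial if and only if its opposite is final, and the opposite of the given commutative triangle is again a commutative triangle with $h^{\op} = g^{\op}\circ f^{\op}$. So it suffices to prove~(1) and~(3). For~(1), fix a functor $\cE\to \cZ$ to an $\infty$-category and suppose one of the two colimits $\colim(\cC\to \cZ)$, $\colim(\cE\to \cZ)$ exists. Since $g$ is final, the canonical morphism $\colim(\cD\to \cE\to \cZ)\to \colim(\cE\to \cZ)$ exists and is an equivalence whenever either side exists; since $f$ is final, the canonical morphism $\colim(\cC\to \cD\to \cE\to \cZ)\to \colim(\cD\to \cE\to \cZ)$ exists and is an equivalence whenever either side exists. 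The composite diagram $\cC\to \cD\to \cE\to \cZ$ is, up to the given commutativity, the diagram $\cC\xra{h}\cE\to \cZ$. Now I would chase the implications: if $\colim(\cE\to \cZ)$ exists, finality of $g$ gives that $\colim(\cD\to\cE\to\cZ)$ exists and maps equivalently to it, then finality of $f$ gives that $\colim(\cC\xra{h}\cE\to\cZ)$ exists and maps equivalently to $\colim(\cD\to\cE\to\cZ)$; composing the two equivalences yields the desired equivalence $\colim(\cC\xra h \cE\to\cZ)\xra{\simeq}\colim(\cE\to\cZ)$. Conversely, if $\colim(\cC\xra h\cE\to\cZ)$ exists, finality of $f$ forces $\colim(\cD\to\cE\to\cZ)$ to exist (since the canonical map from the former to the latter is asserted to exist and be an equivalence whenever either side exists), and then finality of $g$ forces $\colim(\cE\to\cZ)$ to exist; again the composite of the two equivalences is the required morphism. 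This proves~(1).

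For~(3), the argument is a short ``cancellation'' variant of the same chase. Fix $\cE\to\cZ$ and suppose one of $\colim(\cD\to\cE\to\cZ)$, $\colim(\cE\to\cZ)$ exists; I must show the canonical map between them exists and is an equivalence. By finality of $h$, the canonical map $\colim(\cC\xra h\cE\to\cZ)\to\colim(\cE\to\cZ)$ exists and is an equivalence whenever either side exists; by finality of $f$, the canonical map $\colim(\cC\xra h\cE\to\cZ)\to\colim(\cD\to\cE\to\cZ)$ (which factors through the triangle) exists and is an equivalence whenever either side exists. If $\colim(\cE\to\cZ)$ exists, finality of $h$ gives existence of $\colim(\cC\xra h\cE\to\cZ)$ and an equivalence to it; then finality of $f$ gives existence of $\colim(\cD\to\cE\to\cZ)$ and an equivalence $\colim(\cC\xra h\cE\to\cZ)\xra\simeq\colim(\cD\to\cE\to\cZ)$; composing, $\colim(\cD\to\cE\to\cZ)\xra\simeq\colim(\cE\to\cZ)$ as the map is the canonical one by naturality of these comparison morphisms in the triangle. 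If instead $\colim(\cD\to\cE\to\cZ)$ exists, finality of $f$ yields existence of $\colim(\cC\xra h\cE\to\cZ)$ with an equivalence, and then finality of $h$ yields existence of $\colim(\cE\to\cZ)$ with an equivalence; again compose. Assertions~(2) and~(4) then follow by passing to opposites as above.

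The only genuinely delicate point — and the one I would be most careful to state precisely rather than wave at — is the bookkeeping of ``the canonical morphism is the composite of the canonical morphisms.'' This is where one must invoke the naturality of the comparison maps $\colim(\cC_0\to\cZ)\to\colim(\cC_1\to\cZ)$ in a functor $\cC_0\to\cC_1$ over $\cZ$: applied to the factorization $\cC\to\cD\to\cE$, it says precisely that the comparison map attached to $h=g f$ equals the composite of the comparison maps attached to $f$ and to $g$. Granting that functoriality (which is part of the setup of colimits as a left adjoint/left Kan extension), everything else is the elementary logical shuffle above. I expect no real obstacle here; the proof is essentially formal.
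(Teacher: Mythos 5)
Your proposal is correct and follows essentially the same route as the paper's proof: reduce assertions~(2) and~(4) to~(1) and~(3) by passing to opposites, then observe that the canonical comparison morphism for $h$ factors as the composite of those for $f$ and $g$, and conclude by two-out-of-three for equivalences in $\cZ$. Your only addition is a more explicit bookkeeping of the ``whenever either colimit exists'' clause, which the paper's proof leaves implicit; this is a fair point of care but not a different argument.
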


\begin{proof}
The assertions~(1) and~(2) are equivalent to one another, and assertions~(3) and~(4) are equivalent to one another, as implemented by replacing each $\infty$-category by its opposite.  
We are therefore reduced to proving assertions~(1) and~(3).

Let $\cE \to \cZ$ be a functor whose colimit exists.
The given triangle of $\infty$-categories determines the commutative diagram 
\[
\xymatrix{
\colim\bigl(\cC\xra{h} \cE \to \cZ\bigr)  \ar[rr]  \ar[dr]
&&
\colim\bigl(\cE \to \cZ\bigr)
\\
&
\colim\bigl(\cD\xra{g} \cE \to \cZ\bigr)  \ar[ur]
&
}
\]
in $\cZ$.
Assumption~(1) implies the diagonal legs of this triangle are equivalences.
We conclude that the top horizontal map is an equivalence.
This establishes assertion~(1).
Likewise, assumption~(3) implies the top horizontal morphism is an equivalence, and also that the downrightward morphism is an equivalence.
We conclude that the uprightward morphism is an equivalence.
This establishes assertion~(3).

\end{proof}

\begin{lemma}\label{final.closure}
Let $\cA\xra{f} \cB$ and $\cC \xra{g} \cD$ be functors.  
\begin{enumerate}
\item If both $f$ and $g$ are final, then their product $\cA\times \cC \xra{f\times g}\cB \times \cD$ is final.  

\item If both $f$ and $g$ are initial, then their product $\cA\times \cC \xra{f\times g}\cB \times \cD$ is initial. 

\end{enumerate}

\end{lemma}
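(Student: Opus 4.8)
\textbf{Proof plan for Lemma~\ref{final.closure}.}

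The two assertions are equivalent via passage to opposites (Observation~\ref{final.op}), since $(\cA\times\cC)^{\op}\simeq \cA^{\op}\times\cC^{\op}$ and $(f\times g)^{\op}\simeq f^{\op}\times g^{\op}$; so the plan is to prove assertion~(1) only. The most economical route is to factor $f\times g$ as the composite $\cA\times\cC \xra{f\times \id_\cC} \cB\times \cC \xra{\id_\cB\times g} \cB\times \cD$ and to apply the composition closure of final functors (Lemma~\ref{2.o.3}(1)); this reduces the problem to the single-variable statement: if $f\colon \cA\to \cB$ is final, then for any $\infty$-category $\cC$ the functor $f\times \id_\cC\colon \cA\times\cC \to \cB\times\cC$ is final. (The symmetric statement $\id_\cB\times g$ being final then follows by the same argument with the roles of the two factors swapped, so the composite is final.)

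For this single-variable reduction I would use Quillen's Theorem A, in the form already invoked throughout the paper: $f\times\id_\cC$ is final if and only if, for every object $(b,c)\in \cB\times\cC$, the $\infty$-undercategory $(\cA\times\cC)^{(b,c)/}$ has contractible classifying space. There is a canonical identification of $\infty$-categories
\[
(\cA\times\cC)^{(b,c)/}~\simeq~ \cA^{b/}\underset{\cA\times \cC}{}\cdots~\simeq~ \cA^{b/}\times \cC^{c/}~,
\]
more precisely $(\cA\times\cC)^{(b,c)/}\simeq \cA^{b/}\times\cC^{c/}$, since undercategories in a product are computed factorwise. Because the classifying space functor $\sB\colon \Cat\to\Spaces$ preserves finite products, we get $\sB\bigl((\cA\times\cC)^{(b,c)/}\bigr)\simeq \sB(\cA^{b/})\times \sB(\cC^{c/})$. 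The object $c\in\cC$ is initial in $\cC^{c/}$, so $\sB(\cC^{c/})\simeq \ast$; and finality of $f$ gives, again by Quillen's Theorem A, that $\sB(\cA^{b/})\simeq\ast$ for every $b\in\cB$. Hence the product is contractible, which is exactly the criterion for $f\times\id_\cC$ to be final.

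There is no serious obstacle here; the only point requiring a modicum of care is the identification of undercategories in a product $\infty$-category with the product of undercategories, together with the fact that $\sB$ commutes with finite products — both are standard and used elsewhere in the paper (e.g. in the proofs of Lemmas~\ref{l.final.B.fib} and~\ref{fib.B.fib.prods}). Alternatively, one could argue directly from Definition~\ref{def.final.initial} using that colimits in a functor $\infty$-category, or colimits of diagrams indexed by a product, can be computed iteratively, but the Theorem~A argument is cleaner and matches the style of the surrounding section.
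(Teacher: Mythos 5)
Your proof is correct, and its opening moves (reduction to assertion~(1) by taking opposites, then factoring $f\times g$ as $(\id_\cB\times g)\circ(f\times\id_\cC)$ and invoking Lemma~\ref{2.o.3}) are exactly the paper's. Where you genuinely diverge is in the single-variable step. The paper does not use Quillen's Theorem~A here: it argues straight from Definition~\ref{def.final.initial}, computing $\colim\bigl(\cB\times\cC\xra{\id\times g}\cB\times\cD\xra{F}\cZ\bigr)$ by left Kan extension along the projection $\pr\colon\cB\times\cD\to\cD$, and using that $\pr$ is a coCartesian fibration so that the base-change morphism $\pr_!\bigl(F\circ(\id\times g)\bigr)\to\pr_!(F)\circ g$ is an equivalence; finality of $g$ then concludes. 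In effect the paper's key step is an inline instance of what later becomes Proposition~\ref{final.cocart}/Corollary~\ref{finalpullback}, since $\id_\cB\times g$ is the pullback of $g$ along the coCartesian fibration $\pr$. Your route instead identifies $(\cA\times\cC)^{(b,c)/}\simeq\cA^{b/}\times\cC^{c/}$, uses that $\sB$ preserves finite products and that $\cC^{c/}$ has an initial object, and concludes by Theorem~\ref{thm.A}; this is valid and arguably shorter, and the two auxiliary facts you need are indeed used elsewhere in the paper. The one caveat is organizational rather than mathematical: in the paper Theorem~\ref{thm.A} is proved after this lemma (later in the same section), so your argument would require either moving Theorem~A earlier or noting that its proof nowhere uses Lemma~\ref{final.closure} (it does not, so there is no genuine circularity). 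What the paper's approach buys is independence from the straightening-based proof of Theorem~A and a self-contained argument for an arbitrary target $\cZ$; what yours buys is brevity and a proof in the same Theorem-A style as the surrounding finality results.
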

\begin{proof}
After Observation~\ref{final.op}, assertion~(1) and assertion~(2) imply one another, as implemented by replacing each $\infty$-category by its opposite, using the canonical equivalence $(\cX\times \cY)^{\op} \simeq \cX^{\op} \times \cY^{\op}$.  
We are therefore reduced to proving assertion~(1).

Let $\cB\times \cD \xra{F} \cZ$ be a functor to an $\infty$-category.
In light of the factorization $f\times g\colon \cA \times \cC \xra{f\times \id} \cB\times \cC \xra{\id \times g} \cB\times \cD$, Lemma~\ref{2.o.3} gives that it is enough to show that $\cB\times \cC\xra{\id\times g} \cB\times \cD$ is final.  
Provided the colimit for $F$ exists, the comparison morphism in $\cZ$ is canonically identified as the morphism
\[
\colim(\cB\times \cC\xra{\id\times g} \cB\times \cD \xra{F} \cZ)
\simeq 
\colim( \cC\xra{\pr_!(F\circ \id\times g)} \cZ)  \longrightarrow
\colim( \cD\xra{\pr_!(F)} \cZ) 
\simeq
\colim(\cB\times \cD \xra{F} \cZ)~.
\]
where each $\pr_!$ is left Kan extension along projection off of $\cB$:
\[
\xymatrix{
\cB\times \cC  \ar[r]^-{\id\times g}  \ar[d]
&
\cB\times \cD  \ar[d]^-{\pr}
\\
\cC \ar[r]^-g
&
\cD.
}
\]
This diagram of $\infty$-categories is a pullback, and the vertical functors are coCartesian fibrations.
This implies that the canonical morphism $\pr_!(F\circ \id\times g)\to \pr_!(F)\circ g$ between functors $\cC\to \cZ$ is an equivalence.
The result follows from the assumption that $\cC\xra{g}\cD$ is final.  
\end{proof}

We use the next result frequently.
\begin{lemma}\label{just.spaces}
Let $\cC \xra{F} \cD$ be a functor between $\infty$-categories.
\begin{enumerate}
\item
This functor $F$ is final if and only if, for each 
presheaf $\cF\in \PShv(\cD)$, the canonical map between limit spaces
\[
\limit\bigl( \cD^{\op} \xra{\cF} \Spaces \bigr)
\longrightarrow
\limit\bigl( \cC^{\op} \xra{F^{\op}} \cD^{\op} \xra{\cF} \Spaces\bigr)
\]
is an equivalence.

\item
This functor $F$ is initial if and only if, for each
copresheaf $\cF\in {\sf cPShv}(\cD)$, the canonical map between colimit spaces
\[
\colim\bigl( \cC \xra{F} \cD \xra{\cF} \Spaces \bigr)
\longrightarrow
\colim\bigl( \cD\xra{\cF} \Spaces \bigr)
\]
is an equivalence.

\end{enumerate}

\end{lemma}

\begin{proof}
The two assertions imply one another, as implemented by replacing $\cC\to \cD$ by its opposite, $\cC^{\op}\to \cD^{\op}$.
We are therefore reduced to proving assertion~(2).

Initiality of $\cC\xra{F} \cD$ evidently implies that the canonical map between limit spaces is an equivalence.  
We now establish the converse.
Let $\cD\xra{A} \cZ$ be a functor to an $\infty$-category.
Consider the canonical morphism
\[
\limit\bigl( \cD \xra{A} \cZ \bigr)
\longrightarrow
\limit \bigl( \cC \xra{F} \cD \xra{A} \cZ \bigr)
\]
in $\cZ$.
Using that the Yoneda functor is fully-faithful, it is 
enough to show this morphism in $\cZ$ is carried by the 
Yoneda functor to an equivalence in the $\infty$-category 
${\sf cPShv}(\cZ)$ of copresheaves on $\cZ$.
We are therefore reduced to showing, for each $z\in \cZ$, that the map between spaces of morphisms in $\cZ$,
\[
\cZ\Bigl( z , \limit(\cD \xra{A} \cZ) \Bigr)
\longrightarrow
\cZ\Bigl( z , \limit (\cC \xra{F} \cD \xra{A} \cZ ) \Bigr)
\]
is an equivalence.
The universal property of limits in $\infty$-categories is exactly so that this map between morphism spaces is identified as the map between limits of morphism spaces:
\[
\limit \Bigl( \cD \xra{A} \cZ \xra{\cZ(z,-)} \Spaces \Bigr)
\longrightarrow
\limit \Bigl( \cC \xra{F} \cD \xra{A} \cZ \xra{\cZ(z,-)}
\Spaces \Bigr)
~.
\]
This map is an equivalence precisely by assumption on $F$.
\end{proof}

\begin{observation}\label{diagram.final}
A functor $f\colon \cC \to \cD$ is final if and only if the canonical lax commutative diagram
\[
\xymatrix{
\Fun(\cC,\Spaces)  \ar[dr]_-{\colim}   
&
\Downarrow
&
\Fun(\cD,\Spaces)  \ar@(u,u)[ll]_-{f^\ast}  \ar[dl]^-{\colim}
\\
&
\Spaces
&
}
\]
in fact commutes.

\end{observation}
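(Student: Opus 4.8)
The plan is to recognize that the asserted lax triangle is governed by a single natural transformation between functors $\Fun(\cC,\Spaces)\la \Fun(\cD,\Spaces)\to \Spaces$, and that its invertibility is detected objectwise, where objectwise it is exactly the comparison map appearing in Definition~\ref{def.final.initial}.

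First I would make the 2-cell explicit. The functor $\colim_{\cC}\colon \Fun(\cC,\Spaces)\to \Spaces$ is left adjoint to the constant-diagram functor $\Delta_{\cC}$, and likewise for $\cD$; since $f^\ast\circ \Delta_{\cD}\simeq \Delta_{\cC}$, restricting the tautological cocone $F\Rightarrow \Delta_{\cD}\bigl(\colim_{\cD}F\bigr)$ along $f$ produces a cocone on $f^\ast F$ with vertex $\colim_{\cD}F$, hence a canonical morphism
\[
\theta_F\colon \colim_{\cC}(f^\ast F)\longrightarrow \colim_{\cD}(F)
\]
natural in $F$. This $\theta$ is precisely the 2-cell $\Downarrow$ witnessing the lax commutativity of the displayed triangle, and the triangle commutes (that is, $\theta$ is invertible) if and only if $\theta$ is an equivalence of functors $\Fun(\cD,\Spaces)\to \Spaces$. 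Because a natural transformation of $\Spaces$-valued functors is an equivalence if and only if it is an equivalence on each object, the triangle commutes if and only if $\theta_F$ is an equivalence for every $F\in \Fun(\cD,\Spaces)$.

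It then remains only to match this with finality. Unwinding the construction of $\theta_F$ from the adjunction $\colim_{\cC}\dashv \Delta_{\cC}$, the morphism $\theta_F$ is exactly the canonical comparison map $\colim(\cC \xra{f}\cD\xra{F}\Spaces)\to \colim(\cD\xra{F}\Spaces)$ of Definition~\ref{def.final.initial}; note that both colimits exist, since $\Spaces$ admits all small colimits, so the clause ``whenever either colimit exists'' in that definition is vacuous here. Therefore $\theta_F$ is an equivalence for all $F$ if and only if the condition of Lemma~\ref{just.spaces}(1) holds, which by that lemma is equivalent to $f$ being final. This proves the claim. There is no genuine obstacle: the only point requiring care is the identification of the component $\theta_F$ with the comparison map of Definition~\ref{def.final.initial}, which is a routine unwinding of the counit of $\colim_{\cC}\dashv \Delta_{\cC}$.
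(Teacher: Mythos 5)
Your proposal is correct and matches the paper's intent: the paper states this as an Observation with no proof, treating it as an immediate reformulation of Lemma~\ref{just.spaces}(1), which is exactly the reduction you carry out. Your only added content --- identifying the canonical 2-cell via the $\colim\dashv\Delta$ adjunctions and noting that invertibility of a natural transformation is detected objectwise --- is the routine unwinding the paper leaves implicit.
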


\begin{lemma}\label{final.B.equiv}
If a functor $\cC\to \cD$ is either final or initial, then the resulting map between classifying spaces,
\[
\sB\cC\to \sB \cD~,
\]
is an equivalence.

\end{lemma}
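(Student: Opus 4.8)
The plan is to reduce the assertion to the very definition of a final functor (Definition~\ref{def.final.initial}) by way of the identification of the classifying space with a colimit of a constant diagram. The starting observation is that, for any $\infty$-category $\cX$, there is a canonical equivalence
\[
\sB\cX~\simeq~\colim\bigl(\cX\to \ast \xra{~\lag \ast\rag~} \Spaces\bigr)
\]
between $\sB\cX$ and the colimit in $\Spaces$ of the constant functor at the terminal space. I would justify this using the defining adjunction $\sB\dashv(\Spaces\hookrightarrow \Cat)$ of Terminology~\ref{mxml.gpd}: for each space $\cG$, a natural transformation in $\Fun(\cX,\Spaces)$ from the constant functor at $\ast$ to the constant functor at $\cG$ is the same datum as a functor $\cX\to \cG$, so the universal property of $\colim$ as a left adjoint to the constant-diagram functor gives $\Map_{\Spaces}\bigl(\colim(\cX\to\Spaces),\cG\bigr)\simeq \Fun(\cX,\cG)\simeq \Map_{\Spaces}(\sB\cX,\cG)$, and the Yoneda lemma finishes it. (Alternatively one can invoke Observation~\ref{diagram.final}, or Lemma~\ref{just.spaces}, applied to the terminal presheaf on $\cD$.)

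With this in hand, suppose first that $f\colon \cC\to \cD$ is final. Because $\Spaces$ is cocomplete, the colimit $\colim(\cD\xra{\lag\ast\rag}\Spaces)\simeq \sB\cD$ exists; and the composite $\cC\xra{f}\cD\xra{\lag\ast\rag}\Spaces$ is again the constant functor at $\ast$, whose colimit is $\sB\cC$. Definition~\ref{def.final.initial} then says precisely that the canonical comparison morphism between these two colimits,
\[
\sB\cC~\simeq~\colim\bigl(\cC\xra{f}\cD\xra{\lag\ast\rag}\Spaces\bigr) \longrightarrow \colim\bigl(\cD\xra{\lag\ast\rag}\Spaces\bigr)~\simeq~\sB\cD~,
\]
is an equivalence; the functoriality packaged into the identification of the previous paragraph identifies this comparison morphism with $\sB f$.

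Finally, the case that $f$ is initial reduces to the case just treated: by Observation~\ref{final.op}, $f$ is initial if and only if $f^{\op}\colon \cC^{\op}\to\cD^{\op}$ is final, and the same adjunction, together with the canonical equivalence $\cG^{\op}\simeq\cG$ for an $\infty$-groupoid $\cG$, supplies equivalences $\sB(\cX^{\op})\simeq\sB\cX$ natural in $\cX$; under these, $\sB f$ is carried to $\sB(f^{\op})$, which is an equivalence by the final case.

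I do not anticipate a genuine obstacle here: the only step that is not a direct unwinding of definitions is the identification of $\sB$ with the colimit of a constant diagram, which is standard and whose verification is indicated above; everything else is formal.
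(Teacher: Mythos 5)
Your proof is correct and follows essentially the same route as the paper: identify $\sB\cC$ and $\sB\cD$ as colimits of the constant functor at $\ast\in\Spaces$, apply Definition~\ref{def.final.initial} directly in the final case, and reduce the initial case to it via opposites using $\sB(\cX^{\op})\simeq\sB\cX$. Your extra justification of the colimit description of $\sB$ via the adjunction of Terminology~\ref{mxml.gpd} is a fine elaboration of a step the paper takes for granted.
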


\begin{proof}
Observation~\ref{final.op}, together with the canonical equivalence $\sB\cC \simeq \sB \cC^{\op}$ between classifying spaces, give that the assertion concerning finality implies that concerning initiality.
We are therefore reduced to proving the assertion concerning finality.
Consider the canonical map between classifying spaces
\[
\sB \cC\simeq \colim\bigl(\cC \to \cD \xra{!} \ast \xra{\lag \ast\rag} \Spaces  \bigr)  \longrightarrow \colim \bigl( \cD \xra{!} \ast \xra{\lag \ast\rag} \Spaces \bigr)  \simeq \sB \cD~.
\]
Finality of $\cC\to \cD$ directly gives that this map is an equivalence.

\end{proof}

See Joyal~\cite{joyal}, Lurie~\cite{HTT}, or Heuts--Moredijk~\cite{heuts.moredijk} for a treatment of Quillen's Theorem A (\cite{quillen}) at the generality of $\oo$-categories.
We provide a proof here, which relies on the straightening-unstraightening equivalence, established in~\S2 of~\cite{HTT} as well as in~\cite{heuts.moredijk}, recorded as Theorem~\ref{st.un} in this article.

\begin{remark}\label{st.un.maps}
Through the straightening-unstraightening equivalences (Theorem~\ref{st.un}), for each functor $\cX \xra{f} \cY$ between $\infty$-categories, the associated adjunction $f_!\colon \PShv(\cX) \rightleftarrows \PShv(\cY)\colon f^\ast$ is identified as the adjunction
\[
( \cE_{|\cX} \to \cX) \mapsfrom  (\cE\to \cY)
\qquad,~
\RFib_\cX~\rightleftarrows ~\RFib_\cY
~,\qquad 
(\cE\xra{\pi} \cX)\mapsto (  \cE \xra{f\circ \pi} \cY  )^{\widehat{~}}_{\sf r.fib}~,
\]
the rightward assignment given by replacing the composition by the first right fibration it maps to over the same base (see Proposition~\ref{its.fib.B}).

\end{remark}

\begin{theorem}[Quillen's Theorem A]\label{thm.A}
Let $f:\cC \ra \cD$ be a functor between $\infty$-categories.
This functor $f$ is final if and only if, for each object $d\in \cD$, the classifying space
\[
\sB\bigl(\cC^{d/}\bigr)~\simeq~\ast
\]
is terminal.
This functor $f$ is initial if and only if, for each object $d\in \cD$, the classifying space
\[
\sB\bigl(\cC_{/d}\bigr)~\simeq~\ast
\]
is terminal.
\end{theorem}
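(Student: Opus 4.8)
The plan is to reduce finality to a statement about classifying spaces of total spaces of left fibrations, via straightening--unstraightening, and then deduce the comma-category criterion by density. First, by Lemma~\ref{just.spaces} it suffices to test finality against $\Spaces$-valued functors: $f$ is final if and only if, for every $F\colon \cD\to \Spaces$, the canonical map $\colim(f^\ast F)\to \colim(F)$ is an equivalence. Next, combining the straightening--unstraightening equivalence $\Fun(\cD,\Spaces)\xra{\simeq}\LFib_\cD$ of Theorem~\ref{st.un} with Proposition~\ref{its.fib.B}, I would identify the colimit functor $\colim_\cD\colon \Fun(\cD,\Spaces)\to \Spaces$, under unstraightening, with the functor $(\cE\to \cD)\mapsto \sB\cE$: indeed $\colim_\cD$ is left adjoint to the constant-diagram functor, which unstraightens to base change along $\cD\to \ast$, and the left fibration replacement over $\ast$ of a functor $\cE\to\ast$ is $\sB\cE\to\ast$ by Proposition~\ref{its.fib.B} (observe $\Ar(\ast)^{|\cE}\simeq\cE$ and $\sB^{\sf rel}_\ast\cE\simeq\sB\cE$). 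Since unstraightening is compatible with base change --- the inverse equivalence of Theorem~\ref{st.un} being $(\cE\to\cK)\mapsto \Cat_{/\cK}(\cK^{\bullet/},\cE)$, which is the left-fibration analogue of Remark~\ref{st.un.maps} --- the functor $f^\ast F$ unstraightens to the pullback $\cE\underset{\cD}\times\cC\to \cC$, where $\cE\to\cD$ unstraightens $F$. Thus $f$ is final if and only if, for every left fibration $\cE\to \cD$, the map $\sB(\cE\underset{\cD}\times\cC)\to \sB\cE$ induced by the projection is an equivalence.

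Now both assignments $F\mapsto \sB(\cE\underset{\cD}\times\cC)=\colim(f^\ast F)$ and $F\mapsto \sB\cE=\colim(F)$ are colimit-preserving functors $\Fun(\cD,\Spaces)\to\Spaces$ (each is a composite of left adjoints: $f^\ast$ is a left adjoint and so is $\colim$), and they are connected by the natural transformation above. Every $F\colon \cD\to\Spaces$ is a colimit of corepresentable functors $\cD(d,-)$ (the dual Yoneda density statement, the tautological diagram being indexed by the opposite of the total space of the unstraightening of $F$). Therefore the natural transformation is an equivalence on all of $\Fun(\cD,\Spaces)$ if and only if it is an equivalence on each corepresentable $\cD(d,-)$. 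The unstraightening of $\cD(d,-)$ is the left fibration $\cD^{d/}\to \cD$, whose total space $\cD^{d/}$ has the initial object $\id_d$, so $\sB(\cD^{d/})\simeq\ast$ by Lemma~\ref{final.B.equiv} (or Example~\ref{final.objects}); and $\cD^{d/}\underset{\cD}\times\cC\simeq \cC^{d/}$. Hence the component of the natural transformation at $\cD(d,-)$ is the map $\sB(\cC^{d/})\to\ast$, which is an equivalence precisely when $\sB(\cC^{d/})\simeq\ast$. This proves $f$ is final if and only if $\sB(\cC^{d/})\simeq\ast$ for all $d\in\cD$. The statement about initiality then follows by applying this to $f^{\op}\colon \cC^{\op}\to\cD^{\op}$, using Observation~\ref{final.op} together with the canonical equivalences $(\cC^{\op})^{d/}\simeq (\cC_{/d})^{\op}$ and $\sB\cX\simeq\sB\cX^{\op}$.

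The main obstacle is the middle step: cleanly identifying, with only the tools available in the paper, the colimit of a $\Spaces$-valued functor with the classifying space of the total space of its unstraightening, and doing so compatibly with base change along $f$. Everything downstream --- the density argument and the computation on corepresentables --- is then formal. I would also need to take care that the ``canonical map'' appearing in Definition~\ref{def.final.initial} is indeed the one induced by the projection $\cE\underset{\cD}\times\cC\to\cE$ on classifying spaces, which is immediate from tracing through the identifications, and that colimits in $\Spaces$ always exist, so the ``whenever either colimit exists'' clause of Definition~\ref{def.final.initial} is vacuous after the reduction of Lemma~\ref{just.spaces}.
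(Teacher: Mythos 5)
Your proposal is correct and follows essentially the same route as the paper's proof: transport the finality criterion across straightening--unstraightening so that $\colim$ becomes $\sB$ of the total space of a left fibration, then use that the corepresentables $\cD(d,-)$ (i.e.\ the left fibrations $\cD^{d/}\to\cD$) generate $\Fun(\cD,\Spaces)$ under colimits and that both sides preserve colimits, reducing the check to the components $\sB(\cC^{d/})\to\ast$. The only cosmetic difference is bookkeeping: you justify the identification of $\colim$ with $\sB$ via Lemma~\ref{just.spaces} and Proposition~\ref{its.fib.B} specialized to $\cK=\ast$, where the paper invokes the left-fibration analogue of Remark~\ref{st.un.maps} and phrases the density step as strong generation by the Yoneda functor $\cD^{\op}\to\LFib_\cD$.
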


\begin{proof}
The two assertions imply each other by taking opposites.  
We are therefore reduced to proving the statement concerning finality.  

By definition, the functor $f$ is final precisely if the lax commutative diagram
\[
\xymatrix{
\Fun(\cC,\Spaces)  \ar[dr]_-{\sf colim}
&
\Downarrow
&
\Fun(\cD, \Spaces)  \ar[dl]^-{\sf colim}  \ar@(u,u)[ll]_-{f^\ast}    
\\
&
\Fun(\ast,\Spaces) = \Spaces
&
}
\]
in fact commutes. 
Through Remark~\ref{st.un.maps}, we identify this lax commutative diagram as 
\begin{equation}\label{lfib.quillen}
\xymatrix{
\LFib_\cC  \ar[dr]_-{\sB}
&
\Downarrow
&
\LFib_\cD  \ar[dl]^-{\sB}  \ar@(u,u)[ll]_-{f^\ast}    
\\
&
\LFib_\ast = \Spaces
&
.
}
\end{equation}
Consider the Yoneda functor,
\[
\cD^{\op}\xra{\TwAr(\cD)^{\op}} \LFib_\cD~,\qquad d\mapsto \bigl(\cD^{d/}\to \cD\bigr)~.
\]
This Yoneda functor strongly generates: that is, the diagram
\[
\xymatrix{
\cD^{\op}  \ar[rr]^-{\TwAr(\cD)^{\op}}  \ar[d]_-{\TwAr(\cD)^{\op}}
&&
\LFib_{\cD} 
\\
\LFib_{\cD}  \ar[urr]_-{\id}
}
\]
exhibits ${\sf id}$ as the left Kan extension.
In particular, each object in $\LFib_{\cD}$ is canonically a colimit of a diagram that factors through the fully-faithful functor $\cD^{\op}\xra{\TwAr(\cD)^{\op}} \LFib_{\cD}$.  
Consider the restriction of~(\ref{lfib.quillen}) along this Yoneda functor 
\[
\xymatrix{
\LFib_\cC  \ar[dr]_-{\sB}
&
\Downarrow
&
\cD^{\op}  \ar[dl]^-{\sB\cD^{\bullet/}\simeq\ast}  \ar@(u,u)[ll]_-{\cC^{\bullet/}}    
\\
&
\LFib_\ast = \Spaces
&
.
}
\]
Note that each arrow in~(\ref{lfib.quillen}) is a left adjoint, and therefore preserves colimits.  
Therefore, finality of $f$ is equivalent to, for each object $d\in \cD$, the canonical map between spaces
\[
\sB\bigl(\cC^{d/}\bigr) \longrightarrow  \ast
\]
being an equivalence, which concludes this proof.

\end{proof}

\begin{cor}\label{loc.final}
Let $\cC \to \cD$ be a functor between $\infty$-categories.
If this functor is a right adjoint, then this functor is final.
If this functor is a left adjoint, then this functor is initial.

\end{cor}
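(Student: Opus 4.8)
The plan is to deduce this from Quillen's Theorem A (Theorem~\ref{thm.A}), for which the essential input has already been assembled in Lemma~\ref{adjointcriterion}. First I would handle the two assertions together by a standard opposite-category reduction: by Observation~\ref{final.op}, a functor is initial exactly when its opposite is final, and $(\cC\to \cD)^{\op}\colon \cC^{\op}\to \cD^{\op}$ is a right adjoint whenever $\cC\to \cD$ is a left adjoint. So it suffices to prove that every right adjoint $F\colon \cC\to \cD$ is final.

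Next, given such a right adjoint $F$, with left adjoint $L$ and unit $\eta$, I would recall from Lemma~\ref{adjointcriterion}(1) (and the explicit description in its proof) that for each object $d\in \cD$ the $\infty$-undercategory $\cC^{d/}$ has an initial object, namely the one corresponding to $\bigl(L(d),\, d\xra{\eta_d} FL(d)\bigr)$. The inclusion $\ast \to \cC^{d/}$ selecting this initial object is then an initial functor by Example~\ref{final.objects}, so Lemma~\ref{final.B.equiv} gives that it induces an equivalence $\ast\simeq \sB(\ast) \xra{\ \simeq\ } \sB\bigl(\cC^{d/}\bigr)$ on classifying spaces. Hence $\sB(\cC^{d/})$ is terminal for every $d\in \cD$, and Theorem~\ref{thm.A} immediately concludes that $F$ is final.

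There is no serious obstacle in this argument; it is essentially a one-line application of Quillen's Theorem A once Lemma~\ref{adjointcriterion} is in hand. The only points warranting care are: (i) checking there is no circularity, which there is not, since Lemma~\ref{adjointcriterion}, Example~\ref{final.objects}, Lemma~\ref{final.B.equiv}, and Theorem~\ref{thm.A} are all established prior to and independently of this corollary; and (ii) using that $\cC^{d/}$ has a genuine initial object — not merely a weakly initial one — so that Example~\ref{final.objects} and Lemma~\ref{final.B.equiv} apply verbatim, which is exactly what Lemma~\ref{adjointcriterion}(1) provides.
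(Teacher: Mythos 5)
Your proof is correct and follows essentially the same route as the paper: Lemma~\ref{adjointcriterion} supplies an initial object of each $\cC^{d/}$, its classifying space is therefore terminal, and Quillen's Theorem~A concludes. The only cosmetic differences are that you reduce to the right-adjoint case by taking opposites (the paper treats both cases symmetrically) and that you justify contractibility of $\sB(\cC^{d/})$ explicitly via Example~\ref{final.objects} and Lemma~\ref{final.B.equiv}, where the paper simply asserts it.
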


\begin{proof}
The functor $\cC\to \cD$ is a left adjoint if and only if, for each $d\in \cD$, the $\infty$-overcategory $\cC_{/d}$ has a final object.
The first statement follows directly from Quillen's Theorem~A, because the classifying space of an $\infty$-category with a final object is terminal.
Likewise, the functor $\cC\to \cD$ is a right adjoint if and only if, for each $d\in \cD$, the $\infty$-undercategory $\cC^{d/}$ has an initial object.  
The second statement follows directly from Quillen's Theorem~A, because the classifying space of an $\infty$-category with an initial object is terminal.

\end{proof}

\subsection{Auxiliary finality results}
We finish this section with several useful finality/initiality properties of functors.  We expect that these results are known to experts. However, with the exception of Quillen's Theorem B and Corollary~\ref{finalpullback} (see the citations discussion just before we state this result, Theorem~\ref{thm.B}), to our knowledge they do not appear in prior literature.

In what follows, each assertion concerning finality has an evident version concerning initiality. These assertions for initiality are implied by taking opposites.

\begin{prop}\label{localization.final}
A localization $f:\cC \ra \cD$ between $\oo$-categories is both final and initial.
\end{prop}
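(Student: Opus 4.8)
The plan is to reduce to $\Spaces$-valued diagrams via Lemma~\ref{just.spaces}, and then to recognize the canonical colimit-comparison map as the effect of the counit of the adjunction $f_!\dashv f^\ast$, which is an equivalence precisely because $f$ is a localization. Since the assertion concerning initiality follows from that concerning finality (the opposite of a localization $\cD\simeq \cC[W^{-1}]$ is again a localization $\cD^{\op}\simeq \cC^{\op}[W^{\op,-1}]$, so one applies the finality statement to $f^{\op}\colon \cC^{\op}\to \cD^{\op}$ and invokes Observation~\ref{final.op}), I would concentrate on showing $f$ is final.

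First I would unwind what ``localization'' buys us. By the universal property of $\cD\simeq \cC[W^{-1}]$, for every $\infty$-category $\cZ$ the restriction functor $f^\ast\colon \Fun(\cD,\cZ)\to \Fun(\cC,\cZ)$ is fully faithful (with essential image the functors inverting $W$); by Lemma~\ref{just.spaces} it will in fact suffice to use the case $\cZ=\Spaces$. As $\Spaces$ is cocomplete, $f^\ast\colon \Fun(\cD,\Spaces)\to \Fun(\cC,\Spaces)$ admits a left adjoint given by left Kan extension $f_!$, and fully faithfulness of the right adjoint $f^\ast$ is equivalent to the counit $\varepsilon\colon f_!f^\ast\to \id_{\Fun(\cD,\Spaces)}$ being a natural equivalence. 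Hence $\varepsilon_F\colon f_!f^\ast F\xra{\ \simeq\ } F$ for every $F\colon \cD\to \Spaces$.

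Next comes the computation. Write $p_\cC\colon \cC\to \ast$ and $p_\cD\colon \cD\to \ast$ for the terminal functors, so $p_\cC=p_\cD\circ f$; composition of left Kan extensions gives $(p_\cC)_!\simeq (p_\cD)_!\circ f_!$. Therefore, for $F\colon \cD\to \Spaces$,
\[
\colim\bigl(\cC\xra{f}\cD\xra{F}\Spaces\bigr)\ =\ (p_\cC)_!(f^\ast F)\ \simeq\ (p_\cD)_!\bigl(f_!f^\ast F\bigr)\ \xra[\ \simeq\ ]{(p_\cD)_!(\varepsilon_F)}\ (p_\cD)_!F\ =\ \colim\bigl(\cD\xra{F}\Spaces\bigr)\,.
\]
The remaining point is to check that this composite equivalence is the \emph{canonical} comparison morphism of Definition~\ref{def.final.initial}: the colimiting cocone on $F$ over $\cD$ restricts along $f$ to a cocone on $f^\ast F$ over $\cC$ with the same vertex, and the induced map is, under the identification $(p_\cC)_!\simeq (p_\cD)_!f_!$ (equivalently, under the identification $\colim_\cD f_!(-)\simeq \colim_\cC(-)$), exactly $(p_\cD)_!$ applied to the counit $\varepsilon_F$. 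Granting this, the displayed map is an equivalence for all $F$, so $f$ is final by Lemma~\ref{just.spaces}(1).

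I expect the only real friction to be the last step — confirming that the abstractly produced equivalence $\colim_\cC f^\ast F\simeq \colim_\cD F$ is literally the canonical comparison map rather than merely some equivalence of spaces — and, relatedly, being careful that the ambient notion of localization is the Dwyer--Kan one ($\cD=\cC[W^{-1}]$), for which fully faithfulness of $f^\ast$ on all functor $\infty$-categories is the defining universal property; everything else is a routine manipulation of Kan extensions and adjunctions. (An alternative route via Quillen's Theorem~A, verifying $\sB(\cC^{d/})\simeq\ast$ for each $d\in\cD$, is available but appears strictly more laborious, so I would not pursue it.)
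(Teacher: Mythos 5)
Your proof is correct and is essentially the paper's own argument: both reduce initiality to finality by taking opposites, pass to $\Spaces$-valued diagrams, and use fully faithfulness of $f^\ast$ (from the localization) together with the identification $\colim_{\cD}\circ f_!\simeq \colim_{\cC}$ to see that the canonical comparison 2-cell is invertible --- the paper phrases this last step as factoring $\id_{\colim}$ through a pasted diagram, which is exactly your identification of the comparison map with $(p_{\cD})_!(\varepsilon_F)$. If anything, your appeal to the counit $f_!f^\ast\to\id$ is the careful version of the paper's (slightly misstated) invocation of the ``unit'' of the $(f_!,f^\ast)$-adjunction.
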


\begin{proof}
Let $f\colon \cC\to \cD$ be a localization.
Then the functor between opposites $\cC^{\op}\ra \cD^{\op}$ is also a localization.
So it is sufficient to show that $f$ is final.  

The commutative diagram of $\infty$-categories
\[
\xymatrix{
\cC \ar[dr]  \ar[rr]^-{f}
&
&
\cD  \ar[dl]
\\
&
\ast
&
}
\]
determines the commutative diagram of $\infty$-categories
\[
\xymatrix{
\Fun(\cC,\Spaces)  \ar[dr]_-{\sf colim}  \ar[rr]^-{f_!}
&
&
\Fun(\cD,\Spaces)  \ar[dl]^-{\sf colim}  
\\
&
\PShv(\ast) = \Spaces.
&
}
\]
Because $f$ is a localization, the right adjoint $f^\ast$ to $f_!$ in the above diagram is fully-faithful.  
Therefore the unit ${\sf id} \to f^\ast f_!$ of the $(f_!,f^\ast)$-adjunction is an equivalence.  
It follows that the identity 2-cell ${\sf id}_{\sf colim}$ factors as a composition of an invertible 2-cell and an, a priori, non-invertible 2-cell: 
\[
\xymatrix{
\Fun(\cD,\Spaces)  \ar[drr]_-{\sf colim}
&
&
\Fun(\cC,\Spaces)  \ar[d]_-{\sf colim}    \ar@(u,u)[ll]_-{f_!}
&
\Downarrow
&
\Fun(\cD,\Spaces)  \ar[dll]^-{\sf colim}  
\ar@(u,u)[ll]_-{f^\ast}    
\\
&&
\PShv(\ast) = \Spaces.
&&
}
\]
By the 2-out-of-3 property for equivalences, we conclude that the 2-cell in this diagram is, in fact, an equivalence, which is the assertion of finality of $f$.  
\end{proof}

\begin{prop}\label{final.cocart}
Consider a pullback diagram of $\oo$-categories
\[
\xymatrix{
\cE_0\ar[d]_{\ov{p}}\ar[r]^{\ov{f}}&\cE\ar[d]_p^{\rm coCart.}\\
\cC_0\ar[r]^f&\cC}
\]
in which $p$ is a coCartesian fibration, as indicated.
For any cocomplete $\oo$-category $\cZ$, the a priori lax commutative diagram of $\infty$-categories
\[
\xymatrix{
\Fun(\cE_0,\cZ)  \ar[d]_-{\ov{p}_!}
&
\Downarrow
&
\Fun(\cE,\cZ)  \ar@(u,u)[ll]_-{\ov{f}^\ast}  \ar[d]^-{p_!}
\\
\Fun(\cC_0,\cZ) 
&&
\Fun(\cC,\cZ)  \ar[ll]_-{f^\ast}
}
\]
in fact commutes; equivalently, there is a canonical equivalence
\[
\ov{p}_!\ov{f}^\ast \xra{~\simeq~} f^\ast p_!
\]
between functors $\Fun(\cE, \cZ) \ra \Fun(\cC_0,\cZ)$.
\end{prop}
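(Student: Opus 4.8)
The plan is to reduce the assertion to the pointwise formula for left Kan extension along a coCartesian fibration recorded as Corollary~\ref{cCart.fib.colim}. Since $\cZ$ is cocomplete, the functors $p_!$ and $\ov{p}_!$ are genuine left adjoints to $p^\ast$ and $\ov{p}^\ast$ (all indexing colimits exist), and the natural transformation in question is the Beck--Chevalley mate of the tautological identification $\ov{f}^\ast p^\ast = (p\,\ov{f})^\ast = (f\,\ov{p})^\ast = \ov{p}^\ast f^\ast$ of pullback functors attached to the commuting square. To prove this mate is an equivalence it suffices, since equivalences of functors into $\cZ$ are detected objectwise, to check that it induces an equivalence on the value at each object $c_0\in\cC_0$; write $c:=f(c_0)\in\cC$.

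First I would note that $\ov{p}$ is again a coCartesian fibration, by Lemma~\ref{Cart.base.change}. Applying Corollary~\ref{cCart.fib.colim} to $p$ and to $\ov{p}$, for $G\in\Fun(\cE,\cZ)$ there are canonical identifications
\[
(f^\ast p_! G)(c_0)\;=\;(p_! G)(c)\;\simeq\;\colim\bigl(\cE_{|c}\to\cE\xra{G}\cZ\bigr),
\]
\[
(\ov{p}_!\,\ov{f}^\ast G)(c_0)\;\simeq\;\colim\bigl((\cE_0)_{|c_0}\to\cE_0\xra{\ov{f}}\cE\xra{G}\cZ\bigr).
\]
Because the given square is a pullback and the point $\ast\xra{\lag c_0\rag}\cC_0\xra{f}\cC$ agrees with $\ast\xra{\lag c\rag}\cC$, base change identifies the fibers $(\cE_0)_{|c_0}\xra{\simeq}\cE_{|c}$ canonically, and under this identification the functor $(\cE_0)_{|c_0}\hookrightarrow\cE_0\xra{\ov{f}}\cE$ becomes $\cE_{|c}\hookrightarrow\cE$. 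Postcomposing with $G$, the two functors to $\cZ$ above are canonically identified, hence so are the two displayed colimits.

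The one delicate point is to verify that this identification of the two pointwise colimits really is the one induced by the Beck--Chevalley mate, i.e.\ to match the abstract mate $2$-cell with the concrete fiberwise comparison. I would handle this by unwinding the proof of Corollary~\ref{cCart.fib.colim}/Proposition~\ref{fib.colim} — which factors the overcategory formula for $p_!$ through the finality of $\cE_{|c}\hookrightarrow\cE_{/c}$ (Lemma~\ref{equiv.final}(1c)) — and tracking the unit/counit $2$-cells of the adjunctions $(p_!,p^\ast)$ and $(\ov{p}_!,\ov{p}^\ast)$ across the pullback square; the required compatibility is forced by naturality of all maps involved, together with the observation that at the object $c_0$ there is an essentially unique map of colimits compatible with the cocone structure, necessarily the one induced by the equivalence $(\cE_0)_{|c_0}\xra{\simeq}\cE_{|c}$. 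I expect this bookkeeping — entirely formal, but fiddly — to be the main obstacle; once it is in place, the objectwise equivalences above show $\ov{p}_!\,\ov{f}^\ast\to f^\ast p_!$ is an equivalence, which is exactly the asserted commutativity of the diagram.
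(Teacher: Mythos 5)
Your argument is correct and is essentially the paper's own proof: both reduce to evaluating the Beck--Chevalley map at an object of $\cC_0$, note that $\ov{p}$ is again a coCartesian fibration by base change, and identify both values as colimits over fibers, which the pullback square canonically identifies. The compatibility point you flag as delicate is treated just as briskly in the paper, where the canonical morphism is simply ``recognized'' as the induced map of fiberwise colimits, so your extra bookkeeping would only make explicit what the paper leaves implicit.
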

\begin{proof}
Let $F\colon \cE\to \cZ$ be a functor, and let $c\in \cC_0$ be an object.
We must show that the canonical morphism in $\cZ$ between values
\[
(\ov{p}_!\ov{f}^\ast F)(c) \longrightarrow  (f^\ast p_! F)(c)
\]
is an equivalence.  
Because coCartesian fibrations are closed under base change, then $p$ being a coCartesian fibration implies $\ov{p}$ is a coCartesian fibration as well.
We therefore recognize the values of the left Kan extensions in the above expression as the morphism in $\cZ$ involving colimits over fiber $\infty$-categories:
\[
\colim\bigl((\cE_0)_{|c} \to \cE_0 \xra{\ov{f}} \cE \xra{F} \cZ \bigr)  \longrightarrow \colim\bigl((\cE)_{|fc} \to \cE \xra{F} \cZ \bigr)~.
\]
Because the given square among $\infty$-categories is a pullback, the canonical functor $(\cE_0)_{|c} \to (\cE)_{|fc}$ between fiber $\infty$-categories is an equivalence between $\infty$-categories over $\cE$.  
In this way, we recognize that the above morphism in $\cZ$ is an equivalence, as desired.

\end{proof}

We have the following corollary, that finality is preserved under pullbacks along coCartesian fibrations.
This result also appears in~\cite{HTT} as Proposition~4.1.2.15.

\begin{cor}\label{finalpullback}
Consider a pullback diagram of $\oo$-categories
\[
\xymatrix{
\cE_0\ar[d]_{\ov{p}}\ar[r]^{\ov{f}}&\cE\ar[d]_p^{\rm coCart.}\\
\cC_0\ar[r]^f &\cC~.}
\]
If $f$ is final and $p$ is a coCartesian fibration, then $\ov{f}$ is final.  

\end{cor}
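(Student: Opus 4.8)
The plan is to deduce Corollary~\ref{finalpullback} directly from Proposition~\ref{final.cocart} together with the colimit criterion for finality recorded in Lemma~\ref{just.spaces}(1). First I would fix an arbitrary functor $G\colon \cE \to \Spaces$ and aim to show that the canonical map $\colim(\cE_0 \xra{\ov f}\cE\xra{G}\Spaces) \to \colim(\cE\xra{G}\Spaces)$ is an equivalence between spaces; by Lemma~\ref{just.spaces}(1) this is exactly what is needed. Note that $\ov p$ is automatically a coCartesian fibration, coCartesian fibrations being closed under base change, so all the left Kan extensions appearing below exist since $\Spaces$ is cocomplete.

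Next I would factor each of the two colimits through the appropriate side of the square. Because left Kan extensions compose, there are canonical identifications $\colim(\cE\xra{G}\Spaces) \simeq \colim(\cC \xra{p_! G}\Spaces)$ and $\colim(\cE_0 \xra{\ov f}\cE\xra{G}\Spaces)\simeq \colim(\cC_0 \xra{\ov p_! \ov f^\ast G}\Spaces)$. Proposition~\ref{final.cocart}, applied with $\cZ = \Spaces$, then provides a canonical equivalence $\ov p_! \ov f^\ast G \xra{\simeq} f^\ast p_! G$ of functors $\cC_0 \to \Spaces$, so the left-hand colimit is further identified with $\colim(\cC_0 \xra{f^\ast p_! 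G}\Spaces)$. The upshot is that the map whose invertibility we want is identified with the canonical map $\colim(\cC_0 \xra{f^\ast p_! G}\Spaces)\to \colim(\cC \xra{p_! G}\Spaces)$ induced by $f$; this is an equivalence because $f$ is final, by Lemma~\ref{just.spaces}(1) applied to the functor $p_! G\colon \cC \to \Spaces$. That concludes the argument.

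The one point requiring care — and the step I expect to be the main obstacle — is checking that the chain of identifications above is compatible with the canonical comparison maps: that after transporting along $\colim_\cE G \simeq \colim_\cC p_! G$ and $\colim_{\cE_0}\ov f^\ast G \simeq \colim_{\cC_0}\ov p_!\ov f^\ast G \simeq \colim_{\cC_0} f^\ast p_! G$, the map induced by $\ov f$ really does become the map induced by $f$. This is precisely the naturality content packaged into Proposition~\ref{final.cocart} (the assertion that the a priori lax square commutes, i.e., that the canonical comparison $2$-cell $\ov p_!\ov f^\ast \to f^\ast p_!$ is invertible), so the obstacle is already handled by that result; the remaining bookkeeping is a routine diagram chase which I would not write out in full. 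I would avoid the alternative route through Quillen's Theorem~A (Theorem~\ref{thm.A}), since the undercategories $(\cE_0)^{e/}$ of the pullback $\cE_0 = \cC_0 \times_\cC \cE$ are not themselves expressible as pullbacks, making that approach less direct.
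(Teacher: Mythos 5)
Your proposal is correct and is essentially the paper's own argument: the paper also reduces to $\Spaces$-valued functors and combines Proposition~\ref{final.cocart} (invertibility of the Beck--Chevalley $2$-cell $\ov{p}_!\ov{f}^\ast \to f^\ast p_!$) with finality of $f$, merely phrasing it as a pasting of two invertible $2$-cells in the diagram of functor $\infty$-categories $\Fun(-,\Spaces)$ rather than pointwise on a test functor $G$. The coherence bookkeeping you flag at the end is exactly what that $2$-cell pasting formulation packages, so your argument and the paper's coincide in substance.
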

\begin{proof}
The commutative diagram of $\infty$-categories
\[
\xymatrix{
\cE_0        \ar[d]_{\ov{p}}          \ar[rr]^{\ov{f}}
&&
\cE      \ar[d]^p
\\
\cC_0     \ar[rr]^f   \ar[rd]_-{!}
&&
\cC  \ar[dl]^-{!}
\\
&
\ast
&
}
\]
determines the a priori lax commutative diagram of $\infty$-categories:
\[
\xymatrix{
\Fun(\cE_0,\Spaces)        \ar[d]_{\ov{p}_!}         
&
\Downarrow
&
\Fun(\cE,\Spaces)      \ar[d]^{p_!}    \ar@(u,u)[ll]_-{\ov{f}^\ast}
\\
\Fun(\cC_0,\Spaces)      \ar[rd]_-{\sf colim}
&
\Downarrow
&
\Fun(\cC, \Spaces)  \ar[dl]^-{\sf colim}     \ar@(u,u)[ll]^-{f^\ast}
\\
&
\Spaces
&
.
}
\]
Proposition~\ref{final.cocart} gives that the upper 2-cell is invertible.  
Finality of $f$ exactly gives that the lower 2-cell is invertible.
It follows that the composite 2-cell is invertible, so that there is a canonical commutative diagram of $\infty$-categories
\[
\xymatrix{
\Fun(\cE_0,\Spaces)        \ar[dr]_-{\colim}         
&
&
\Fun(\cE,\Spaces)      \ar[dl]^-{\colim}    \ar[ll]_-{\ov{f}^\ast}
\\
&
\Spaces
&
.
}
\]
This is precisely the statement that $\ov{f}$ is final.

\end{proof}

Using Quillen's Theorem A, we give a proof of Quillen's Theorem B for $\oo$-categories. See also \cite{barwick-q} for a first treatment of Quillen's Theorem B in the context of quasi-categories, and \cite{heuts.moredijk} for a more central treatment, and Theorem~4.23 of~\cite{MG2} for a more model-independent treatment.

\begin{theorem}[Quillen's Theorem B]\label{thm.B}
Let $\cC\ra\cD$ be a functor between $\infty$-categories such that for each morphism $d\ra d'$ in $\cD$, the functor between $\infty$-overcategories
\[
\cC_{/d}\longrightarrow \cC_{/d'}
\] 
induces an equivalence on classifying spaces: $\sB\cC_{/d}\xra{\simeq} \sB\cC_{/d'}$. 
In this case, for each object $d\in \cD$ the canonical diagram of classifying spaces
\[
\xymatrix{
\sB(\cC_{/d})    \ar[r]      \ar[d]
&
\sB\cC      \ar[d]
\\
\{d\}     \ar[r]
& 
\sB\cD
}
\]
is a pullback diagram of spaces.
\end{theorem}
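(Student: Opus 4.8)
The plan is to deduce Theorem~B from Quillen's Theorem~A (Theorem~\ref{thm.A}) via the straightening--unstraightening equivalence for left fibrations (Theorem~\ref{st.un}), mimicking the classical argument in which one replaces the diagram $d\mapsto \sB(\cC_{/d})$ by a left fibration whose fibers are the $\sB(\cC_{/d})$ and then checks this left fibration straightens to a \emph{constant} functor on each connected component. First I would package the hypothesis: the assignment $d\mapsto \cC_{/d}$ is the right fibration $\TwAr(\cC)\to \cD$ pulled back from the Yoneda situation, i.e.\ $\cC_{/d}=\cD_{/d}\underset{\cD}\times\cC$, and applying fiberwise classifying space (Observation/Definition~\ref{conserv.loc} and Lemma~\ref{l.final.B.fib}, noting $\TwAr(\cC)\to\cD$ is a right fibration hence in particular right initial) produces a functor $\cD\to\Spaces$, call it $G\colon d\mapsto \sB(\cC_{/d})$. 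The hypothesis that each $\sB\cC_{/d}\to\sB\cC_{/d'}$ is an equivalence says precisely that $G$ inverts every morphism of $\cD$, i.e.\ $G$ factors through the localization $\cD\to\sB\cD$; equivalently, the left fibration $\sB^{\sf rel}(\cC_{/\bullet})\to\cD$ (a right fibration-replacement's dual; here use Corollary~\ref{fib.B.exp}) is pulled back from a left fibration over $\sB\cD$.

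Next I would identify $\sB\cC$ with the colimit of $G$. Indeed, through the identification of $\Fun(\cD,\Spaces)$ with $\LFib_\cD$ and of $\colim$ with $\sB$ (as in the proof of Theorem~\ref{thm.A}, display~(\ref{lfib.quillen})), one has $\colim_{d\in\cD}G(d)\simeq \sB\bigl(\text{unstraightening of }G\bigr)$; but the unstraightening of $G$ is exactly $\sB^{\sf rel}(\cC_{/\bullet})\to\cD$, whose total space has the same classifying space as $\TwAr(\cC)$ by Lemma~\ref{final.B.equiv} (the functor $\TwAr(\cC)\to \sB^{\sf rel}\TwAr(\cC)$ is initial, being a localization on fibers composed appropriately, or directly by Lemma~\ref{l.final.B.fib})\,---\,and $\sB\TwAr(\cC)\simeq\sB\cC$ since $\TwAr(\cC)\to\cC$ (evaluation at the source) is a Cartesian fibration with overcategory fibers $\cC_{/c}$ each having a terminal object, hence final by Corollary~\ref{loc.final} and Lemma~\ref{final.B.equiv}. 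So $\sB\cC\simeq\colim_{\cD}G$. Now because $G$ factors as $\cD\to\sB\cD\xra{\ov G}\Spaces$ with $\cD\to\sB\cD$ a localization, Proposition~\ref{localization.final} gives $\colim_{\cD}G\simeq\colim_{\sB\cD}\ov G$. A functor $\ov G\colon \sB\cD\to\Spaces$ out of an $\infty$-groupoid is the same as a left fibration over $\sB\cD$, and its colimit is the total space; moreover for a left fibration over a space, the square
\[
\xymatrix{
\ov G(d) \ar[r] \ar[d] & \colim_{\sB\cD}\ov G \ar[d]\\
\{d\} \ar[r] & \sB\cD
}
\]
is a pullback of spaces (this is the basic fact that a map of spaces with its fiber sits in a homotopy pullback square; equivalently, left fibrations over a space are classified and fibers recover the pullback). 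Since $\ov G(d)=G(d)=\sB(\cC_{/d})$, this is exactly the asserted pullback square, once we record $\sB\cC\simeq\colim_{\sB\cD}\ov G$ from the previous step.

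The main obstacle I expect is the careful bookkeeping in the middle step: justifying $\sB\cC\simeq\colim_{d\in\cD}\sB(\cC_{/d})$ cleanly and in a way compatible with the fiber-over-$d$ structure, so that after descending along $\cD\to\sB\cD$ the fiber inclusion $\sB(\cC_{/d})\hookrightarrow\sB\cC$ is literally the map appearing in the pullback square. This requires being precise that the fiberwise-classifying-space construction of Lemma~\ref{l.final.B.fib} applies to $\TwAr(\cC)\to\cD$ (it does, this being a right fibration, hence right initial in the sense of Definition~\ref{def.l.final}), that its relative classifying space is the left fibration unstraightening $G$, and that "colimit of a left fibration = classifying space of its total space" is being used consistently; once these identifications are in place, the descent along the localization $\cD\to\sB\cD$ and the elementary pullback fact over a space finish the argument. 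A secondary point to handle with care is that $G$ genuinely factors through $\sB\cD$: this needs that inverting all morphisms of $\cD$ in the target $\Spaces$ forces factorization through the localization, which is the universal property of $\cD\to\sB\cD=\cD[\cD^{-1}]$ applied to $\Spaces$-valued functors.
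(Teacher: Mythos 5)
Your overall strategy is the same as the paper's: build a fibration over $\cD$ whose fiber over $d$ is $\cC_{/d}$, apply the fiberwise/relative classifying space (Lemma~\ref{l.final.B.fib}, Corollary~\ref{fib.B.exp}) to obtain a left fibration straightening to $G\colon d\mapsto\sB(\cC_{/d})$, use the hypothesis to factor $G$ through the localization $\cD\to\sB\cD$, identify $\sB\cC$ with the colimit of $G$ (equivalently, the classifying space of the total space), and finish with the fiber-sequence pullback over the space $\sB\cD$. Your phrasing of the last step via colimits and finality of localizations (Proposition~\ref{localization.final}) is the same move the paper makes via left Kan extension along $\cD\to\sB\cD$ and unstraightening.

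Two identifications in your write-up are wrong as stated and need repair, though neither changes the architecture. First, the fibration over $\cD$ with fibers $\cC_{/d}$ is not $\TwAr(\cC)$ and is not a right fibration: the assignment $d\mapsto\cC_{/d}$ is covariant and its values are not $\infty$-groupoids, so no right fibration can have these fibers. The correct object is the coCartesian fibration $\Ar(\cD)^{|\cC}\xra{\ev_t}\cD$ (the pullback of $\Ar(\cD)\xra{\ev_s}\cD$ along the given functor $f\colon\cC\to\cD$, projected via the target). CoCartesian fibrations are left final (Proposition~\ref{cart.r.initial}), which is what licenses Lemma~\ref{l.final.B.fib} and makes $\sB^{\sf rel}$ of it a \emph{left} fibration (Corollary~\ref{fib.B.exp}) straightening to the covariant $G$; had it genuinely been a right fibration, the straightening would be contravariant and your factorization through $\sB\cD$ would be misaligned. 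Second, your justification that the total space has classifying space $\sB\cC$ misidentifies the fibers: the projection $\Ar(\cD)^{|\cC}\to\cC$ has fibers the undercategories $\cD^{fc/}$, not $\cC_{/c}$. The clean argument, which the paper uses, is that the identity-arrow section $\cC\to\Ar(\cD)^{|\cC}$ is an adjoint (pulled back from the adjunction between $\cD$ and $\Ar(\cD)$), hence induces an equivalence $\sB\cC\simeq\sB\bigl(\Ar(\cD)^{|\cC}\bigr)$ compatibly over $\cD$; this compatibility is also what guarantees that the fiber inclusion $\sB(\cC_{/d})\to\sB\cC$ appearing in your final pullback square is the canonical map of the theorem. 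With these corrections your proof is complete and coincides with the paper's.
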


\begin{proof}
Consider the $\oo$-category $\Ar(\cD)^{|\cC}$ of arrows from $\cC$ to $\cD$, defined as the pullback
\[
\xymatrix{
\Ar(\cD)^{|\cC}  \ar[r]  \ar[d]
&
\Ar(\cD)  \ar[d]^-{\ev_s}
\\
\cC  \ar[r]
&
\cD.
}
\]
The functor $\Ar(\cD) \xra{\ev_s}\cD$ is a left adjoint, and its right adjoint $\cD \to \Ar(\cD)$ is given by the identity arrows of $\cD$.
This adjunction base changes as an adjunction $\cC \rightleftarrows \Ar(\cD)^{|\cC}$.
Since adjoint functors induce equivalences on classifying spaces, we obtain an equivalence
\[
\sB\cC\simeq \sB(\Ar(\cD)^{|\cC})~.
\]
The right adjoint $\cC\ra \Ar(\cD)^{|\cC}$ lies over the target functor $\ev_t: \Ar(\cD)^{|\cC}\ra \cD$. Taking classifying spaces, this gives a commutative diagram of spaces:
\[
\xymatrix{
\sB\cC  \ar[rr]^\simeq  \ar[dr]
&&
\sB(\Ar(\cD)^{|\cC})  \ar[dl]^-{\ev_{t}}
\\
&
\sB\cD
&
.
}
\]
From this equivalence, we reduce to showing that applying the classifying space functor $\sB$ to the pullback square of $\infty$-categories
\begin{equation}\label{co.cart.ing}
\xymatrix{
\cC_{/d}  \ar[r]  \ar[d]
&
\Ar(\cD)^{|\cC}  \ar[d]^-{\ev_t}
\\
\{d\} \ar[r]
&
\cD
}
\end{equation}
gives a pullback square of spaces.  
This square~(\ref{co.cart.ing}) factors as
\begin{equation}\label{next}
\xymatrix{
\cC_{/d}  \ar[r]  \ar[d]
&
\Ar(\cD)^{|\cC}  \ar[d]
\\
\sB\bigl(\cC_{/d}\bigr)  \ar[r]  \ar[d]
&
\sB^{\sf rel}\bigl(\Ar(\cD)^{|\cC}\bigr)  \ar[d]
\\
\{d\} \ar[r]
&
\cD
}
\end{equation}
where $\sB^{\sf rel}\bigl(\Ar(\cD)^{|\cC}\bigr)$ is the relative classifying space (Definition \ref{conserv.loc}) of the functor $\Ar(\cD)^{|\cC}\xra{\ev_t}\cD$. Since $\Ar(\cC)^{|\cC}\ra \cD$ is coCartesian fibration, it is a left-final fibration by Proposition \ref{cart.r.initial}. Consequently this bottom square
\begin{equation}\label{l.fib.ing}
\xymatrix{
\sB\bigl(\cC_{/d}\bigr)  \ar[r]  \ar[d]
&
\sB^{\sf rel}\bigl(\Ar(\cD)^{|\cC}\bigr)    \ar[d]
\\
\{d\} \ar[r]
&
\cD
}
\end{equation}
is a pullback by application of Lemma \ref{l.final.B.fib}, which asserts that relative classifying spaces are computed fiberwise for left-final fibrations. Since the functor $\Ar(\cD)^{|\cC}\ra \sB^{\sf rel}(\Ar(\cD)^{|\cC})$ is a localization, it induces an equivalence on classifying spaces. We are thereby reduced to showing that the value of the classifying space functor $\sB$ on the square (\ref{l.fib.ing}) is a pullback.

We now use our single assumption, that morphisms $d\ra d'$ induce equivalences $\sB(\cC_{/d})\xra{\sim}\sB(\cC_{/d'})$: this implies that the right vertical functor in~(\ref{l.fib.ing}), which is a priori only a left fibration, is a Kan fibration.  
The functor $\sB \cC_{/\bullet}$ classifying this right vertical left fibration of~(\ref{l.fib.ing})
\[
\xymatrix{
\cD \ar[rr]^-{\sB \cC_{/\bullet}}    \ar[d]
&&
\Spaces  
\\
\sB\cD  \ar@{-->}[urr]_-{\exists !}
}
\]
therefore factors through the canonical epimorphism $\cD \to \sB\cD$ to its classifying space.
It follows that the above triangle among $\infty$-categories witnesses this unique extension as the left Kan extension of the straightening $\sB \cC_{/\bullet}$ along $\cD\to \sB \cD$.  
This left Kan extension classifies the left fibration $\sB \bigl(\sB^{\sf rel}\bigl(\Ar(\cD)^{|\cC}\bigr)\Bigr)  \to \sB \cD$, which is the map given by taking classifying spaces on the right vertical functor in~(\ref{l.fib.ing}).  
Unstraightening the left fibrations then establishes that the diagram of $\oo$-categories
\[
\xymatrix{
\sB^{\sf rel}\bigl(\Ar(\cD)^{|\cC}\bigr) \ar[r]  \ar[d]
&
\sB \Bigl(\sB^{\sf rel}\bigl(\Ar(\cD)^{|\cC}\bigr)\Bigr)   \ar[d]
\\
\cD    \ar[r]
&
\sB \cD
}
\]
is a pullback.
Horizontally concatenating this pullback square with the pullback square~(\ref{l.fib.ing}) gives that the composite square
\[
\xymatrix{
\sB\bigl(\cC_{/d}\bigr) \ar[r]  \ar[d]
&
\sB \Bigl(\sB^{\sf rel}\bigl(\Ar(\cD)^{|\cC}\bigr)\Bigr)   \ar[d]
\\
\{d\}    \ar[r]
&
\sB \cD
}
\]
is a pullback, which establishes the last reduction.
\end{proof}

The following is an application of Quillen's Theorem B and the special property of the relative classifying space for left-final and right-initial fibrations, that it is computed fiberwise.

\begin{lemma}\label{quillenthmapp}
Let
\[
\xymatrix{
\cX'   \ar[r]   \ar[d]
&
\cX      \ar[d]
\\
\cY'      \ar[r]
& 
\cY
}
\]
be a pullback diagram of $\oo$-categories. If the right vertical functor $\cX\ra \cY$ is both a left-final fibration and a right-initial fibration, then the diagram induced by taking classifying spaces
\[
\xymatrix{
\sB\cX'   \ar[r]   \ar[d]
&
\sB \cX      \ar[d]
\\
\sB \cY'      \ar[r]
& 
\sB \cY
}
\]
is a pullback diagram of spaces.
\end{lemma}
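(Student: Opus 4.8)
The plan is to first reduce, by replacing $\cX$ and $\cX'$ by their relative classifying spaces, to the case in which the right vertical functor is simultaneously a left fibration and a right fibration; then to exploit that such a functor is classified by a functor inverting all morphisms, so that the classifying space of each base change is computed as a colimit over a space, where pullbacks are well behaved.

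First I would invoke Corollary~\ref{fib.B.base.change}: since $\cX\to\cY$ is a left final fibration (equivalently, since it is a right initial fibration), the square obtained by replacing $\cX$ by $\sB^{\sf rel}_\cY\cX=:\cZ$ and $\cX'$ by $\sB^{\sf rel}_{\cY'}\cX'=:\cZ'$, and keeping $\cY'$ and $\cY$, is again a pullback of $\infty$-categories, with $\cZ'\to\cY'$ the base change of $\cZ\to\cY$ along $g\colon\cY'\to\cY$. The canonical functors $\cX\to\cZ$ and $\cX'\to\cZ'$ are localizations (localization on the fibers, inverting $\cX_{|\cY^\sim}$, resp.\ $\cX'_{|\cY'^\sim}$), and they are compatible with $\cX'\to\cX$; so by Proposition~\ref{localization.final} and Lemma~\ref{final.B.equiv} they induce equivalences $\sB\cX\xra{\simeq}\sB\cZ$ and $\sB\cX'\xra{\simeq}\sB\cZ'$ fitting into a map of squares. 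Thus it suffices to prove the claim for the square with corners $\cZ'$, $\cZ$, $\cY'$, $\cY$.

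Next I would record that $\cZ\to\cY$ is both a left fibration and a right fibration: by Corollary~\ref{fib.B.exp}(1), $\sB^{\sf rel}_\cY\cX\to\cY$ is a left fibration because $\cX\to\cY$ is left final; by Corollary~\ref{fib.B.exp}(2), the same functor is a right fibration because $\cX\to\cY$ is right initial. A functor that is both a left and a right fibration induces an equivalence $\cZ_{|y_0}\xra{\simeq}\cZ_{|y_1}$ on fibers for each morphism $y_0\to y_1$ of $\cY$ (a coCartesian lift is also Cartesian, which forces the coCartesian pushforward and the Cartesian pullback to be mutually inverse). Hence the functor $F\colon\cY\to\Spaces$ classifying the left fibration $\cZ\to\cY$ (Theorem~\ref{st.un}) carries every morphism of $\cY$ to an equivalence of spaces, and so $F$ factors as $\cY\xra{q}\sB\cY\xra{\bar F}\Spaces$ where $q$ is the groupoid completion; the map $q$ (and likewise $q'\colon\cY'\to\sB\cY'$) is a localization, hence final by Proposition~\ref{localization.final}.

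Finally I would compute. Under straightening--unstraightening (Theorem~\ref{st.un}), with $\sB$ corresponding to $\colim$ as in the proof of Theorem~\ref{thm.A}, $\sB\cZ\simeq\colim_\cY F\simeq\colim_{\sB\cY}\bar F$ using finality of $q$; and since $\cZ'\to\cY'$ is the left fibration classified by $F\circ g$, likewise $\sB\cZ'\simeq\colim_{\cY'}(F\circ g)\simeq\colim_{\sB\cY'}(\bar F\circ\bar g)$, where $\bar g:=\sB g$ and $\bar g\circ q'\simeq q\circ g$. Comparing with $\sB\cY=\colim_{\sB\cY}\ast$ and $\sB\cY'=\colim_{\sB\cY'}\ast$, the natural transformation $\bar F\to\ast$ over $\sB\cY$ and its restriction over $\sB\cY'$ exhibit the square $\sB\cZ'\to\sB\cZ$, $\sB\cZ'\to\sB\cY'$, $\sB\cZ\to\sB\cY$ as a pullback of spaces, because colimits of space-valued diagrams indexed by a space are universal (equivalently, $\Spaces_{/B}\simeq\Fun(B,\Spaces)$ naturally in the space $B$). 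Alternatively, the last two steps can be run through Quillen's Theorem~B (Theorem~\ref{thm.B}) applied to $\cZ\to\cY$, whose comma fibers satisfy $\sB\cZ_{/y}\simeq\cZ_{|y}$ and vary by equivalences. The only genuine work is the reduction in the first paragraph, i.e.\ checking that passing to relative classifying spaces preserves both the pullback square and the relevant classifying spaces; this is exactly what Corollaries~\ref{fib.B.base.change} and~\ref{fib.B.exp} supply, so the main obstacle is just assembling these inputs correctly.
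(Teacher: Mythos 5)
Your argument is correct, but it runs along a genuinely different track than the paper's. The paper proves this lemma by applying Quillen's Theorem~B (Theorem~\ref{thm.B}) \emph{directly} to $\cX\to\cY$ and $\cX'\to\cY'$: the left final/right initial hypotheses give a hexagon of final/initial functors among $\cX_{/y}$, $\cX_{|y}$, $\cX^{y/}$ for each morphism $y\to y'$, whence equivalences $\sB\cX_{/y}\simeq\sB\cX_{|y}\simeq\sB\cX_{/y'}$; Theorem~B then identifies the fiber of $\sB\cX\to\sB\cY$ over $y$ with $\sB\cX_{|y}$ (and likewise for the primed square), and the pullback hypothesis makes the comparison of fibers an equivalence. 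You instead reduce first, via Corollary~\ref{fib.B.base.change} and the localization-finality of $\cX\to\sB^{\sf rel}_\cY\cX$, to the case of a functor that is simultaneously a left and a right fibration (Corollary~\ref{fib.B.exp}), and then finish by straightening over the groupoid completions and invoking universality of colimits in $\Spaces$ (equivalently $\Spaces_{/B}\simeq\Fun(B,\Spaces)$). That final descent step, together with the identification of $\sB\cZ\to\sB\cY$ with the map on colimits induced by $\bar F\to\ast$, is precisely the mechanism embedded in the paper's own proof of Theorem~B (the factorization through $\cD\to\sB\cD$ and the unstraightening at the end), so your main route effectively inlines that proof rather than quoting the theorem; your ``alternative'' route, applying Theorem~B to the replaced fibration $\cZ\to\cY$, is closer still to the paper's argument. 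What your route buys is transparency: after the $\sB^{\sf rel}$ reduction, everything is a statement about functors to $\Spaces$ out of a space, and no finality hexagon is needed. What the paper's route buys is brevity: with Theorem~B and the finality toolkit in hand, no replacement of the total categories is required, and the pullback-of-fibers check is immediate. Do make sure, in your last step, that the identifications $\sB\cZ\simeq\colim_{\sB\cY}\bar F$ and $\sB\cZ'\simeq\colim_{\sB\cY'}(\bar F\circ\bar g)$ are made naturally over $\sB\cY'\to\sB\cY$ (naturality of straightening in the base plus finality of the groupoid completions); this is the only point where the assembly is more than formal, and it is the same point the paper addresses at the end of its proof of Theorem~\ref{thm.B}.
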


\begin{proof}
Let $\cE \to \cK$ be a functor between $\infty$-categories.
Suppose that $\cE\to \cK$ is both a left-final fibration and a right-initial fibration.
Corollary~\ref{fib.B.exp} gives that the relative classifying space $\sB^{\sf rel}\cE\to \cK$ is both a left fibration and a right fibration.  
Therefore, for each morphism $k\to k'$ in $\cK$, the coCartesian monodromy maps and the Cartesian monodromy maps for this left/right fibration $\sB^{\sf rel}\cE\to \cK$ implement a pair of maps between fiber spaces:
\[
\sB \cE_{|k} 
~ \overset{\simeq}{\rightleftarrows} ~
\sB \cE_{|k'}~.
\]
Theorem~\ref{theorem-l.fib} states, in particular, that the $\infty$-category $\Spaces^\sim$ classifies left/right fibrations.  
In particular, these two monodromy maps are mutual inverse equivalences to one another.

Applying the above discussion to each of the functors $\cX \to \cY$ and $\cX' \to \cY'$, Quillen's Theorem B applies to identify the respective fibers of $\sB \cX \to \sB \cY$ and $\sB \cX' \to \sB \cY'$: the fiber of the first over $y\in \cY$ is $\sB \cX_{|y}$ and the fiber of the second over $y'\in \cY'$ is $\sB \cX'_{|y'}$.  
The canonical comparison map between these fibers is an equivalence, then, because the given square among $\infty$-categories is a pullback.

\end{proof}

\subsection{Left/right fibrations via lifting criteria}\label{sec.l.fib.lifting}
We show that left/right fibrations are characterized as those functors between $\infty$-categories that have a lifting property with respect to initial/final functors.

\begin{prop}\label{left.initial.lift}
Let $\cE\xra{\pi} \cK$ be a functor between $\infty$-categories.
\begin{enumerate}
\item The following two conditions on $\pi$ are equivalent.  
\begin{enumerate}
\item $\pi$ is a left fibration.

\item For each solid diagram of $\infty$-categories
\[
\xymatrix{
\cJ_0  \ar[rr]  \ar[d]_-{\rm initial}
&&
\cE  \ar[d]^-\pi  
\\
\cJ  \ar[rr]   \ar@{-->}[urr]^-{\exists !}
&&
\cK
}
\]
in which the left vertical functor is initial, the $\infty$-category of fillers is a contractible $\infty$-groupoid; equivalently, for each initial functor $\cJ_0 \to \cJ$, the restriction functor between $\infty$-categories of sections
\[
\Fun_{/\cK}(\cJ,\cE) \longrightarrow  \Fun_{/\cK}(\cJ_0,\cE)
\]
is an equivalence.  

\end{enumerate}

\item The following two conditions on $\pi$ are equivalent.  
\begin{enumerate}
\item $\pi$ is a right fibration.

\item For each solid diagram of $\infty$-categories
\[
\xymatrix{
\cJ_0  \ar[rr]  \ar[d]_-{\rm final}
&&
\cE  \ar[d]^-\pi  
\\
\cJ   \ar[rr]  \ar@{-->}[urr]^-{\exists !}
&&
\cK
}
\]
in which the left vertical functor is final, the $\infty$-category of fillers is a contractible $\infty$-groupoid; that is, for each final functor $\cJ_0 \to \cJ$, the restriction functor between $\infty$-categories of sections
\[
\Fun_{/\cK}(\cJ,\cE) \longrightarrow  \Fun_{/\cK}(\cJ_0,\cE)
\]
is an equivalence.  

\end{enumerate}

\end{enumerate}

\end{prop}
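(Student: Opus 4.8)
Assertion~(1) and assertion~(2) are equivalent, as implemented by replacing $(\cE\xra{\pi}\cK)$ by its opposite $(\cE^{\op}\xra{\pi^{\op}}\cK^{\op})$, using Observation~\ref{final.op} that a functor is initial if and only if its opposite is final, together with the fact that $\Fun_{/\cK}(\cJ,\cE)^{\op}\simeq \Fun_{/\cK^{\op}}(\cJ^{\op},\cE^{\op})$. So I would reduce to proving assertion~(1).

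For the implication (b)$\implies$(a): the inclusion $\{s\}\hookrightarrow c_1$ is an initial functor (it selects the initial object of $c_1$, which is initial by Example~\ref{final.objects} and Corollary~\ref{loc.final}), and more generally $\cJ\hookrightarrow \cJ^{\tl}$ is initial for any $\cJ$, since $\cJ^{\tl}$ has an initial object lying in the image (again Corollary~\ref{loc.final}, as $\cJ\hookrightarrow \cJ^{\tl}$ is a left adjoint). Wait --- I want $\cJ^{\tl}$ in Definition~\ref{def.left.fib}, so I should check that the cone point inclusion $\ast\hookrightarrow \cJ^{\tl}$ is the relevant initial functor: it selects the initial object of $\cJ^{\tl}$, hence is initial. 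Then condition~(b), applied to the initial functor $\ast\hookrightarrow \cJ^{\tl}$, gives precisely that $\Fun_{/\cK}(\cJ^{\tl},\cE)\to \Fun_{/\cK}(\ast,\cE)$ is an equivalence, which is the defining condition of a left fibration. So (b)$\implies$(a) is essentially immediate.

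For the harder implication (a)$\implies$(b): assume $\pi$ is a left fibration and let $\cJ_0\to \cJ$ be an initial functor. I want to show $\Fun_{/\cK}(\cJ,\cE)\to \Fun_{/\cK}(\cJ_0,\cE)$ is an equivalence. The plan is to use Quillen's Theorem~A (Theorem~\ref{thm.A}) together with the straightening--unstraightening identification of Remark~\ref{st.un.maps}, or more directly the following: by Proposition~\ref{left.is.coCart}, $\pi$ is a conservative coCartesian fibration, and via the straightening equivalence (Theorem~\ref{st.un}) $\cE\to \cK$ corresponds to a functor $F\colon \cK\to \Spaces$. For a functor $\cJ\to \cK$, the $\infty$-category (in fact $\infty$-groupoid) $\Fun_{/\cK}(\cJ,\cE)$ is then identified with the space of sections of the left fibration $\cE_{|\cJ}\to \cJ$, i.e.\ with $\limit\bigl(\cJ\to \cK\xra{F}\Spaces\bigr)$ --- this uses Theorem~\ref{st.un} and the compatibility of unstraightening with base change. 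Now an initial functor $\cJ_0\to \cJ$, by Definition~\ref{def.final.initial}, induces an equivalence $\limit(\cJ\to\Spaces^{\op})\to\limit(\cJ_0\to\Spaces^{\op})$ for every functor to $\Spaces^{\op}$; applying this to $(\cJ\xra{F|_\cJ}\Spaces)^{\op}$ --- equivalently, using that initial functors are also detected on limits of $\Spaces$-valued functors by Lemma~\ref{just.spaces}(2) and Observation~\ref{final.op} --- gives that $\limit\bigl(\cJ\xra{F}\Spaces\bigr)\to \limit\bigl(\cJ_0\xra{F}\Spaces\bigr)$ is an equivalence. Identifying both sides with spaces of sections completes the argument. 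The main obstacle is making the identification $\Fun_{/\cK}(\cJ,\cE)\simeq \limit(\cJ\to\cK\xra{F}\Spaces)$ precise and natural in $\cJ$; this is where one must invoke the straightening--unstraightening equivalence (Theorem~\ref{st.un}) carefully, checking that the right adjoint $(\cE\to\cK)\mapsto \Cat_{/\cK}(\cK_{/\bullet},\cE)$ there, combined with base change along $\cJ\to\cK$, yields exactly the end computing the section space, and that everything is functorial in $\cJ\in (\Cat_{/\cK})^{\op}$. Alternatively, one can bypass straightening entirely: observe that $\Fun_{/\cK}(-,\cE)\colon (\Cat_{/\cK})^{\op}\to\Spaces$ sends colimits to limits, reduce via $\colim(\bDelta_{/\cJ}\to\Cat)\simeq\cJ$ and the pullback-square argument used in the proof of Proposition~\ref{left.is.coCart}(b)$\implies$(a) to the case $\cJ\in\bDelta$, and then to the case of a point, where the left fibration condition applies directly after noting initial functors between simplices factor through the relevant cones --- this second route avoids Theorem~\ref{st.un} but requires repeating the inductive bookkeeping of that earlier proof.
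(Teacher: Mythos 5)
Your proposal is correct, and your reduction of (2) to (1) and your proof of (b)$\implies$(a) (via the initial cone-point inclusion $\ast\hookrightarrow\cJ^{\tl}$) coincide with the paper's. For (a)$\implies$(b), however, you take a genuinely different route. You straighten the left fibration $\cE\to\cK$ to a functor $F\colon\cK\to\Spaces$ via Theorem~\ref{st.un}, identify $\Fun_{/\cK}(\cJ,\cE)$ with $\limit(\cJ\to\cK\xra{F}\Spaces)$, and conclude from the defining property of initial functors (Definition~\ref{def.final.initial}, applied with $\cZ=\Spaces$ -- note you do not need Lemma~\ref{just.spaces}(2), only the trivial direction of the definition). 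The paper instead never straightens $\cE$: it uses the left-fibration-replacement adjunction of Proposition~\ref{its.fib.B} to replace the restriction map by the map $(\cJ_0)^{\widehat{~}}_{\sf l.fib}\to\cJ^{\widehat{~}}_{\sf l.fib}$ between replacements, detects that this is an equivalence fiberwise (Lemma~\ref{l.fib.fibers}), identifies the fibers as classifying spaces $\sB(\cJ_0\underset{\cK}\times\cK_{/x})\to\sB(\cJ\underset{\cK}\times\cK_{/x})$ via Lemma~\ref{fib.B.base.change}, and finishes with invariance of classifying spaces under initial functors (Lemma~\ref{final.B.equiv}) together with stability of initiality under base change along the right fibration $\cK_{/x}\to\cK$ (the dual of Corollary~\ref{finalpullback}). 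What each buys: your argument is shorter and more conceptual, but it leans on two facts you only assert -- that $\Fun_{/\cK}(\cJ,\cE)$ is an $\infty$-groupoid when $\pi$ is a left fibration (needed so that an equivalence of section \emph{spaces} yields the claimed equivalence of $\infty$-categories of fillers), and the compatibility of the identification $\Fun_{/\cK}(\cJ,\cE)\simeq\limit(F|_{\cJ})$ with restriction along $\cJ_0\to\cJ$; the latter is essentially automatic since unstraightening in Construction~\ref{def.left.un} is defined by pullback of the universal left fibration $\Spaces^{\ast/}\to\Spaces$, and the former is standard, so these are details to record rather than gaps. The paper's route avoids this bookkeeping by staying entirely within its own replacement/relative-classifying-space machinery (and is thus independent of the strength of Theorem~\ref{st.un}), at the cost of a longer chain of lemmas. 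Your sketched second alternative, redoing the Segal-decomposition induction of Proposition~\ref{left.is.coCart}, would also work but duplicates effort already spent there.
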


\begin{proof}
The two assertions imply one another, as implemented by taking opposites.
We are therefore reduced to proving assertion~(1).

Condition~(b) implies condition~(a) because, for each $\infty$-category $\cJ$, the functor $\ast \to \cJ^{\tl}$, which selects the cone point, is initial.  

It remains to establish the implication (a)$\implies$(b).
Suppose $\pi$ is a left fibration.
Let $\cJ_0\to \cJ$ be an initial functor between $\infty$-categories.
The problem is to show the restriction functor between $\infty$-categories of sections
\[
\Fun_{/\cK}(\cJ,\cE)  \longrightarrow \Fun_{/\cK}(\cJ_0,\cE)
\]
is an equivalence.
Because $\cE\to \cK$ is assumed a left fibration, this functor is identified as the functor
\[
\Fun_{/\cK}(\cJ^{\widehat{~}}_{\sf l.fib},\cE)  \longrightarrow \Fun_{/\cK}((\cJ_0)^{\widehat{~}}_{\sf l.fib},\cE)
\]
involving left fibration-replacements of $\cJ_0\to \cK$ and $\cJ\to \cK$.
Thus, it is sufficient to show the functor over $\cK$
\[
(\cJ_0)^{\widehat{~}}_{\sf l.fib} \longrightarrow \cJ^{\widehat{~}}_{\sf l.fib}
\]
between left fibration-replacements over $\cK$ is an equivalence.
Proposition~\ref{its.fib.B} recognizes this functor over $\cK$ as the functor
\[
\sB^{\sf rel}_\cK\bigl( \cJ_0 \underset{\cK}\times \Ar(\cK)\bigr)  \longrightarrow \sB^{\sf rel}_{\cK} \bigl(\cJ\underset{\cK}\times \Ar(\cK)\bigr)
\]
between relative classifying spaces of the coCartesian fibration-replacements of $\cJ_0\to \cK$ and $\cJ\to \cK$.  
Because equivalences between left fibrations are detected on fibers (Lemma~\ref{l.fib.fibers}, using Lemma~\ref{left.is.coCart}), we are reduced to showing that, for each $x\in \cK$, the map between fibers
\[
\bigl(\sB^{\sf rel}_\cK\bigl( \cJ_0 \underset{\cK}\times \Ar(\cK)\bigr)\bigr)_{|x} 
\longrightarrow 
\bigl(\sB^{\sf rel}_{\cK} \bigl(\cJ\underset{\cK}\times \Ar(\cK)\bigr)\bigr)_{|x} 
\]
is an equivalence.
Lemma~\ref{fib.B.base.change} identifies this map as the map between classifying spaces
\[
\sB \cJ_0\underset{\cK}\times \cK_{/x} 
\longrightarrow
\sB \cJ\underset{\cK}\times \cK_{/x}~.
\]
Through Lemma~\ref{final.B.equiv}, this map being an equivalence is implied by finality of the canonical functor
\[
\cJ_0\underset{\cK}\times \cK_{/x} 
\longrightarrow
\cJ\underset{\cK}\times \cK_{/x}~.
\]
Using that $\cJ_0\to \cJ$ is assumed final, this finality is implied by Corollary~\ref{finalpullback}.
This concludes the proof.

\end{proof}

\begin{cor}\label{fib.B.final}
Let $\cJ \to \cK$ be a functor between $\infty$-categories.

\begin{enumerate}
\item 
The canonical functor $\cJ \to \cJ^{\widehat{~}}_{\sf l.fib}$, to the left fibration-replacement over $\cK$, is initial.
Furthermore, it is a final object in the full $\infty$-subcategory of $(\Cat_{/\cK})^{\cJ\to \cK/}$ consisting of the initial functors $\cJ \to \cJ'$ over $\cK$.

\item 
The canonical functor $\cJ \to \cJ^{\widehat{~}}_{\sf r.fib}$, to the right fibration-replacement over $\cK$, is final.
Furthermore, it is a final object in the full $\infty$-subcategory of $(\Cat_{/\cK})^{\cJ\to \cK/}$ consisting of the final functors $\cJ \to \cJ'$ over $\cK$.

\end{enumerate}

\end{cor}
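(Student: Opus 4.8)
The plan is to deduce assertion~(2) from assertion~(1) by replacing $\cJ\to\cK$ with its opposite and invoking Observation~\ref{final.op}, exactly as in the proof of Proposition~\ref{its.fib.B}; so I will treat only~(1). The key input is the formula of Proposition~\ref{its.fib.B} exhibiting the left fibration-replacement as the composite
\[
\cJ ~\longrightarrow~ \Ar(\cK)^{|\cJ} ~\longrightarrow~ \sB^{\sf rel}_{\cK}\bigl(\Ar(\cK)^{|\cJ}\bigr) ~=~ \cJ^{\widehat{~}}_{\sf l.fib}~,
\]
in which the first arrow is the canonical functor appearing in Lemma~\ref{candidate} and the second is the localization to the relative classifying space of the coCartesian fibration $\Ar(\cK)^{|\cJ}\xra{\ev_t}\cK$.

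First I would show the canonical functor $\cJ \to \cJ^{\widehat{~}}_{\sf l.fib}$ is initial. By Lemma~\ref{candidate}, applied to $\cJ\to\cK$, the functor $\cJ \to \Ar(\cK)^{|\cJ}$ is a fully faithful left adjoint, hence initial by Corollary~\ref{loc.final}. By the construction of $\sB^{\sf rel}$ in Observation/Definition~\ref{conserv.loc}, the functor $\Ar(\cK)^{|\cJ} \to \sB^{\sf rel}_{\cK}\bigl(\Ar(\cK)^{|\cJ}\bigr)$ is a localization, hence both final and initial by Proposition~\ref{localization.final}. Since initial functors are closed under composition (Lemma~\ref{2.o.3}), the composite $\cJ \to \cJ^{\widehat{~}}_{\sf l.fib}$ is initial; being evidently a functor over $\cK$ and under $\cJ$, it is an object of the full $\infty$-subcategory $\cI \subset (\Cat_{/\cK})^{\cJ\to\cK/}$ spanned by the initial functors $\cJ \to \cJ'$ over $\cK$.

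Next I would establish the universal property. Fix $(\cJ \to \cJ') \in \cI$, i.e.\ an initial functor over $\cK$. Since $\cJ^{\widehat{~}}_{\sf l.fib}\to \cK$ is a left fibration (Proposition~\ref{its.fib.B}), the lifting criterion of Proposition~\ref{left.initial.lift}(1) --- applied to the initial functor $\cJ \to \cJ'$ together with the evident diagram over $\cK$ --- gives that restriction along $\cJ \to \cJ'$ is an equivalence of $\infty$-categories of sections $\Fun_{/\cK}\bigl(\cJ', \cJ^{\widehat{~}}_{\sf l.fib}\bigr) \xra{~\simeq~} \Fun_{/\cK}\bigl(\cJ, \cJ^{\widehat{~}}_{\sf l.fib}\bigr)$. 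Passing to maximal $\infty$-subgroupoids and taking the fiber over the object $(\cJ \to \cJ^{\widehat{~}}_{\sf l.fib})$ then identifies the mapping space $\Map_{(\Cat_{/\cK})^{\cJ/}}\bigl((\cJ\to\cJ'),(\cJ\to\cJ^{\widehat{~}}_{\sf l.fib})\bigr)$ with a point; since $\cI$ is full, this exhibits $(\cJ \to \cJ^{\widehat{~}}_{\sf l.fib})$ as a final object of $\cI$, as claimed. Assertion~(2) follows by the dual argument, using the formula $\cJ^{\widehat{~}}_{\sf r.fib} = \sB^{\sf rel}_{\cK}\bigl(\Ar(\cK)_{|\cJ}\bigr)$ and the fact that $\cJ \to \Ar(\cK)_{|\cJ}$ is a right adjoint.

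The argument is essentially an assembly of earlier results: the substantive input is the lifting characterization of left fibrations, Proposition~\ref{left.initial.lift}, which itself already rests on the relative classifying space, so no circularity is introduced. The one place meriting care --- and the main, though mild, obstacle --- is the passage from the equivalence of section $\infty$-categories to the contractibility of the relevant mapping space in the undercategory $(\Cat_{/\cK})^{\cJ/}$; here one uses that this mapping space is computed as a fiber of the map of spaces underlying the restriction functor $\Fun_{/\cK}(-,\cJ^{\widehat{~}}_{\sf l.fib})$, so that an equivalence of section $\infty$-categories indeed yields the claim.
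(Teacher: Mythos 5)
Your proposal is correct and follows essentially the same route as the paper: initiality via the factorization $\cJ \to \Ar(\cK)^{|\cJ} \to \sB^{\sf rel}_{\cK}\bigl(\Ar(\cK)^{|\cJ}\bigr)$ (left adjoint, then localization, then Lemma~\ref{2.o.3}), and finality in the subcategory via Proposition~\ref{left.initial.lift} together with the identification of the undercategory mapping space as a fiber of the restriction map of section spaces. The paper phrases that last step as a pullback square of mapping spaces, but this is the same argument as yours.
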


\begin{proof}
The two assertions imply one another, as implemented by taking opposites.
We are therefore reduced to proving the first assertion.

Proposition~\ref{its.fib.B} witnesses the composite functor over $\cK$,
\[
\cJ \longrightarrow \Ar(\cK)^{|\cJ} \longrightarrow \sB^{\sf rel}_{\cK}\bigl(\Ar(\cK)^{|\cJ}\bigr)~,
\]
as the canonical functor to the left fibration-replacement $\cJ \to \cJ^{\widehat{~}}_{\sf l.fib}$.
The right factor in this composition is a localization.  Therefore, Proposition~\ref{localization.final} gives that this right factor is initial.  
Lemma~\ref{candidate} gives that the left factor in this composition is a left adjoint.  Therefore, Lemma~\ref{loc.final} gives that this left factor is initial.
Through Lemma~\ref{2.o.3}, we conclude that the composite functor is initial, as desired.

Now, let $\cJ \xra{\rm initial} \cJ' \to \cK$ and $\cJ \xra{\rm initial} \cJ'' \to \cK$ be two objects in the named $\infty$-subcategory of $(\Cat_{/\cK})^{\cJ\to \cK/}$.
Notice that the canonical square among spaces involving the space of morphisms in this $\infty$-subcategory,
\[
\xymatrix{
(\Cat_{/\cK})^{\cJ\to \cK/}\Bigl((\cJ \to \cJ' \to \cK),(\cJ \to \cJ'' \to \cK)  \Bigr) \ar[rr]  \ar[d]
&&
\Cat_{/\cK}(\cJ',\cJ'')  \ar[d]
\\
\ast  \ar[rr]^-{\lag \cJ\to \cJ''\rag}
&&
\Cat_{/\cK}(\cJ,\cJ'')
}
\]
is a pullback.  
Taking $(\cJ \to \cJ'' \to \cK) =(\cJ \to \cJ^{\widehat{~}}_{\sf l.fib} \to \cK)$, Proposition~\ref{left.initial.lift} gives that the right vertical map in this square is an equivalence.  
Using that the square is a pullback, the space of morphisms in $(\Cat_{/\cK})^{\cJ\to \cK/}$ from $(\cJ \to \cJ'\to \cK)$ to $(\cJ \to \cJ^{\widehat{~}}_{\sf l.fib} \to \cK)$ is contractible provided $\cJ \to \cJ'$ is initial.  
We conclude that the object $(\cJ \to \cJ^{\widehat{~}}_{\sf l.fib} \to \cK)$ of the named full $\infty$-subcategory of $(\Cat_{/\cK})^{\cJ\to \cK/}$ is final, as desired.

\end{proof}

\end{document}